\algrenewcommand\algorithmicrequire{\textbf{Precondition:}}
\newcommand{\multiline}[1]{%
	\begin{tabularx}{\dimexpr\linewidth-\ALG@thistlm}[t]{@{}X@{}}
		#1
	\end{tabularx}
}
\newcolumntype{N}{@{}m{0pt}@{}}
\newcommand*{\centerfloat}{%
  \parindent \z@
  \leftskip \z@ \@plus 1fil \@minus \textwidth
  \rightskip\leftskip
  \parfillskip \z@skip}
\def\mathrlap{\mathpalette\mathrlapinternal}
\def\mathrlapinternal#1#2{%
	\rlap{$\mathsurround=0pt#1{#2}$}}
\newtheorem{theorem}{Theorem}[section]
\newtheorem{corollary}{Corollary}
\newtheorem{lemma}[theorem]{Lemma}
\newtheorem{prop}[theorem]{Proposition}
\newtheorem{rem}{Remark}[section]
\numberwithin{equation}{section}
\algrenewcommand\algorithmicrequire{\textbf{Precondition:}}
\newcolumntype{N}{@{}m{0pt}@{}}
\def\mathrlap{\mathpalette\mathrlapinternal}
\def\mathrlapinternal#1#2{%
	\rlap{$\mathsurround=0pt#1{#2}$}}
\newcommand{\dv}{\textrm{d}\boldsymbol{v}}
\newcommand{\Lv}{L_{\boldsymbol{v}}}
\newcommand{\OmegaLv}{\Omega_{\Lv}}
\newcommand{\Rd}{\mathbb{R}^d}
\newcommand{\Nd}{\mathbb{N}^d}
\newcommand{\Chi}{\mbox{\Large$\chi$}}
\newcommand{\gchi}{\mbox{\Large$\chi$} g}
\newcommand{\hchi}{\mbox{\Large$\chi$} h}
\newcommand{\Meq}{\mathcal{M}_{eq}}
\newcommand{\delchi}{\delta_{\chi}}
\title[]{Convergence and Error Estimates for the Conservative Spectral Method for Fokker-Planck-Landau Equations}
\author[Clark A. Pennie  and Irene M. Gamba]{}
\begin{document}
\maketitle		
	
	\centerline{\scshape  Clark A. Pennie  and Irene M. Gamba}
\medskip
{\footnotesize
	\centerline{Department of Mathematics and Oden Institute }
	\centerline{University of Texas at Austin}
	\centerline{2515 Speedway Stop C1200
		Austin, Texas, 78712-1202, USA}

}

\date{}
\setcounter{tocdepth}{2}
\setcounter{secnumdepth}{2}

	
	
	
	
	\begin{abstract}	
	Error estimates are rigorously derived for a semi-discrete version of a conservative spectral method for approximating the space-homogeneous Fokker-Planck-Landau (FPL) equation associated to hard potentials.  The analysis included shows that the semi-discrete problem has a unique solution with bounded moments.  In addition, the derivatives of such a solution up to any order also remain bounded in $L^2$ spaces globally time, under certain conditions.  These estimates, combined with control of the spectral projection, are enough to obtain error estimates to the analytical solution and convergence to equilibrium states.  It should be noted that this is the first time that an error estimate has been produced for any numerical method which approximates FPL equations associated to any range of potentials.

	\medskip
	
	{\bf Keyword:}
		Fokker-Planck-Landau Type Equations; Boltzmann Equation; Numerical Schemes; Conservative Spectral Methods; Error Estimates; Numerical Analysis.

	\end{abstract}

	


\section{Introduction}
An important model for plasmas is the Landau equation, which results from the grazing collision limit of the Boltzmann equation.  This limit, first derived by Landau \cite{LandauEq}, assumes that colliding particles are travelling almost parallel to each other due to repulsive Coulomb forces.  

A more mathematical description of the limit was detailed by Degond and Lucquin-Desreux \cite{Degond&LD}, Desvillettes \cite{Desvillettes92, Desvillettes15}, Villani \cite{VillaniLandau} and Desvillettes and Villani \cite{DesvillettesVillani}, even for extended potential rates higher than Coulomb interactions and up to hard spheres.  When rates different to Coulomb interactions are used, the equation is referred to as being of Fokker-Planck-Landau type.  Computationally, the limiting problem has been studied by Bobylev and Potapenko \cite{BobylevPotapenko}, using Monte Carlo methods, and in Fourier space by Haack and Gamba \cite{GambaHaack1, GambaHaack2}.

The Landau equation is rather difficult to model, either analytically or numerically, due to the high dimensionality, non-linearity and non-locality.  For numerical simulations, a deterministic scheme can be used, such as the conservative spectral method, developed by Zhang and Gamba \cite{Chenglong}, which is the model of choice for the current work.  The method described in \cite{Chenglong} is in fact a solver for the space-inhomogeneous Landau equation, coupled to Poisson's equation, where the advection is modeled by a discontinuous Galerkin scheme, but here only the space-homogeneous version is considered.  Qualitative evidence of the strength of this scheme has been provided by the present authors \cite{RGDPaper, EntropyDecayPaper} by showing that it can capture the correct decay rate to equilibrium for a range of potentials, from Coulomb to hard sphere interactions.  This was exemplified by simulations of entropy decay for both the space-homogeneous equation and by coupling to Poisson's equation in the mean-field limit.

Even less is known analytically about Fokker-Planck-Landau equations than the Boltzmann equation.  For hard potentials, many important results regarding existence and uniqueness, as well as moment and entropy estimates, can be found in the work of Desvillettes and Villani \cite{D&V_1, D&V_2}.  They use entropy methods to prove convergence to equilibrium at a rate faster than any polynomial in time  but this was improved to an exponential rate by Carrapatoso \cite{Carrapatoso_hard}.  

In \cite{D&V_2} there is an explicit lower bound for the spectral gap of the linearised collision operator associated to hard potentials, which improves on the original results by Degond and Lemou \cite{Degond&Lemou}.  Long-time behaviour of linearised Fokker-Planck-Landau equations has also been considered by several authors, including Guo \cite{Guo} and Baranger and Mouhot \cite{Baranger&Mouhot}.  For soft potentials, Carrapatoso \cite{Carrapatoso_soft} used entropy methods to prove convergence to equilibrium at a rate faster than any polynomial for the fully nonlinear equation and exponentially in the linear setting.

Spectral methods as an approximating  model for the space-homogeneous Landau equation were first considered by Pareschi et al.\ \cite{Pareschi_Spectral}, and later by Filbet and Pareschi\cite{Filbet_InhomSpectral} and Crouseilles and Filbet \cite{Crouseilles_Spectral}, but did not preserve the conservation properties of the Landau equation.  This limited the ability of these schemes to compute accurate dynamics for long time approximations to the Maxwell-Boltzmann equilibrium. 

The version of the spectral method in this work exploits the weak form of the Landau equation in order to calculate the Fourier transform of the collision operator.  It does so in just $\mathcal{O}(N^3 \log{N})$ operations, where the number of Fourier modes $N$ can be rather small, thanks to the conservation enforcement.  No periodising is required either, once again thanks to conservation.  Instead, a cut-off domain in velocity space is used, within which the majority of the solution's mass should be supported, based on a result by Gamba et al.\ \cite{BoltzmannSupport} for the Boltzmann equation.  This general construction was first applied to the Boltzmann equation by Gamba and Tharkabhushaman \cite{Harsha} and the details for the derivation of the Landau equation scheme can be found in the paper by Zhang and Gamba \cite{Chenglong}.

It is important to note that, despite the importance of deterministic methods for simulating Fokker-Planck-Landau type equations, there has been no analysis into the convergence of the approximations of these schemes to the true solutions.  Error estimates were developed, however, by Alonso et.\ al \cite{BoltzmannConvergence} for the spectral method used to solve the Boltzmann equation.  These will be closely followed when deriving error estimates for Fokker-Planck-Landau type equations, and coupled with the analytical theory developed by Desvillettes and Villani \cite{D&V_1}.

The focus of the present work is to obtain an error estimate on the conservative spectral method approximation to the space-homogeneous Landau equation associated to hard potentials.  The problem being approximated is described in Section \ref{Landau_Introdcution}.  All work presented here is actually carried out on a semi-discrete version, which is introduced in Section \ref{semi_discrete_section}.  The general idea to produce an error estimate on the solution of that semi-discrete problem begins with proving that the distance between the conserved and unconserved collision operators is small in $L_2$-norm, which is achieved in Section \ref{Conservation_Estimate}.  This fact is essential for every result that follows.  The next step is to prove that the moments and $L_2$-norm of the semi-discrete problem propagate in time, which is carried out in Section \ref{Propagation} after obtaining an estimate on the derivatives of these quantities.  In that section it is also shown that the negative part of the numerical approximation can be controlled.

It is then shown in Section \ref{ExistenceAndRegularity} that the semi-discrete problem has a unique solution which also satisfies the propagation of moments and $L^2$-norm and in Section \ref{Regularity} the regularity of that solution is considered by bounding its $H^s$ norm, for any $s > 0$.  This work is concluded by deriving the $L^2$ error estimate in Section \ref{L2_Estimate} and proving that the approximation will always converge to the appropriate equilibrium Maxwellian, provided it is close enough, in Section \ref{Long_Time_Behaviour}.  Before starting this analysis, however, some notation must be introduced in Section \ref{Notation} and useful results from previous authors on Fokker-Planck-Landau type equations stated in Section \ref{Previous_Results}.

\section{The Landau Equation} \label{Landau_Introdcution}
The initial value problem associated to a space-homogeneous Fokker-Planck-Landau type equation is to find $f(t, \boldsymbol{v})$, where $(t, \boldsymbol{v}) \in (\mathbb{R}^+, \Rd)$, for dimension $d \in \mathbb{N}^+$, such that
\begin{equation}
f_t(t, \boldsymbol{v}) = Q(f,f)(t, \boldsymbol{v}), \label{Landau_homo_Kn1}
\end{equation}
where $f(0, \boldsymbol{v}) = f_0(\boldsymbol{v})$ and $Q(f,f)$ is the collision operator defined as
\begin{flalign}
	&& Q(f,f) := \nabla_{\boldsymbol{v}}\cdot \int_{\mathbb{R}^3} & S(\boldsymbol{v} - \boldsymbol{v}_*) (f_*\nabla_{\boldsymbol{v}}f - f \nabla_{\boldsymbol{v}_*}f_*)~\textrm{d} \boldsymbol{v}_*, && \label{Q_FPL} \\
	\textrm{for} && S(\boldsymbol{u}) &:= |\boldsymbol{u}|^{\lambda + 2}\left(\textrm{I} - \frac{\boldsymbol{uu}^T}{|\boldsymbol{u}|^2}\right), && \nonumber
\end{flalign}
with $I \in \mathbb{R}^{d \times d}$ the identity matrix and the subscript notation $f_*$ meaning evaluation at $\boldsymbol{v_*}$ (the velocity of a colliding particle).  In general, $\lambda > 0$  corresponds to hard potentials and $\lambda < 0$ to soft potentials.  Here, $\lambda = 1$ models hard sphere interactions; $\lambda = 0$ is known as a Maxwell type interaction; and $\lambda = -3$ models Coulomb interactions between particles.  For the current work, the attention is on $0 \leq \lambda \leq 1$.

\subsection{Properties of Fokker-Planck-Landau Type Equations}
An important identity associated to the Fokker-Planck-Landau operator (\ref{Q_FPL}) is the weak form.  After multiplying the operator (\ref{Q_FPL}) by a sufficiently smooth function $\phi$ and integrating over $\Rd$, along with some variable changes and an application of the divergence theorem, the weak form is given by
\begin{equation}
	\int_{\mathbb{R}^d} Q(f,f) \phi ~\textrm{d}\boldsymbol{v} = \int_{\mathbb{R}^{2d}} (\nabla_{\boldsymbol{v}_*}\phi_* - \nabla_{\boldsymbol{v}}\phi)^T S(\boldsymbol{v} - \boldsymbol{v}_*) f_* \nabla_{\boldsymbol{v}}f ~\textrm{d}\boldsymbol{v}_* \textrm{d}\boldsymbol{v}. \label{Q_FPL_weak1}
\end{equation}
Furthermore, since Fokker-Planck-Landau type equations are a limit of the Boltzmann equation, they enjoy the same conservation laws.  In particular, for the set of collision invariants $\left\{\phi_k(\boldsymbol{v})\right\}_{k=0}^{d+1} := \left\{1, v_1, \ldots, v_d, |\boldsymbol{v}|^2\right\}$,
\begin{equation}
	\int_{\mathbb{R}^d} Q(f, f)(\boldsymbol{v}) \phi_k (\boldsymbol{v}) ~\textrm{d}\boldsymbol{v} = 0, ~~~~~\textrm{for } k = 0, 1, \ldots, d. \label{conservation_L}
\end{equation}
This is important because it leads to the conservation of mass $\rho$, average velocity $\boldsymbol{V}$ and kinetic energy $T$, where each of these quantities are found via
\begin{equation*}
\rho := \int_{\mathbb{R}^d} f(t, \boldsymbol{v}) ~\textrm{d}\boldsymbol{v}, ~~~\boldsymbol{V} := \frac{1}{\rho} \int_{\mathbb{R}^d} f(t, \boldsymbol{v}) \boldsymbol{v} ~\textrm{d}\boldsymbol{v}
\end{equation*}
\begin{flalign*}
\textrm{and} && T := \frac{1}{d\rho} \int_{\Rd} f(t, \boldsymbol{v}) |\boldsymbol{v}|^2 ~\textrm{d}\boldsymbol{v}. && 
\end{flalign*}

These moments will always be conserved for the single-species space-homo-geneous Fokker-Planck-Landau type equation (\ref{Landau_homo_Kn1}).  They will also be conserved for the space-inhomogeneous version when solved with appropriate boundary conditions, such as reflective or periodic conditions.  If the initial mass, average velocity and kinetic energy are denoted by $\rho_0$, $\boldsymbol{V}_0$ and $T_0$, respectively, the equilibrium solution of the Landau equation is a Gaussian distribution with the same moments, namely
\begin{equation}
	\mathcal{M}_{eq}(\boldsymbol{v}) := \frac{\rho_0}{(2\pi T_{0})^{\frac{d}{2}}} e^{-\frac{|\boldsymbol{v} - \boldsymbol{V}_0|^2}{2 T_{0}}}. \label{M_eq_hom}
\end{equation}

Initially $f(0, \boldsymbol{v}) = f_0(\boldsymbol{v})$ and it is assumed that $\textrm{supp}f \Subset \Omega_{\boldsymbol{v}}$, since $f$ should have sufficient decay in velocity-space \cite{BoltzmannSupport} and $\Omega_{\boldsymbol{v}} \subset \Rd$ is chosen depending on the initial data.  It should still be that $\boldsymbol{v} \in \mathbb{R}^d$ but values of $f$ are negligible outside a sufficiently large ball.  The initial data is then extended by zero outside the computational domain, which means it can be controlled by $e^{-c|\boldsymbol{v}|^2}$, for $c > 0 $ depending on the moments of $f_0$.  Under such conditions, it is expected that the computational solution will remain supported on $\Omega_{\boldsymbol{v}}$ up to a fixed small error that depends on the initial data.  More details on this choice of domain can be seen in Subsection \ref{Domain_Choice}.

\subsection{The Spectral Method for Space-homogeneous Fokker-Planck-Landau Type Equations}
The method used for solving the space-homogeneous Fokker-Planck-Landau type equation (\ref{Landau_homo_Kn1}) is referred to as spectral because it exploits the weighted convolution structure of the collision operator (\ref{Q_FPL}) to compute its values using the fast Fourier transform (FFT).  To see this, first note that by choosing the Fourier transform kernel $\phi(\boldsymbol{v}) = (2\pi)^{-\frac{d}{2}}e^{-i \boldsymbol{\xi}\cdot\boldsymbol{v}}$ in the weak form identity (\ref{Q_FPL_weak1}), the Fourier transform of $Q$ at a given Fourier mode $\boldsymbol{\xi} \in \mathbb{R}^d$ is
\begin{align*}
	\hat{Q}(\boldsymbol{\xi}) &= (2\pi)^{-\frac{d}{2}} \int_{\mathbb{R}^d} Q(f,f) e^{-i \boldsymbol{\xi}\cdot\boldsymbol{v}} ~\textrm{d}\boldsymbol{v} \\
	&= (2\pi)^{-\frac{d}{2}} \int_{\mathbb{R}^{2d}} \Bigl(\nabla_{\boldsymbol{v}_*}\left(e^{-i \boldsymbol{\xi}\cdot\boldsymbol{v}_*} \right) - \nabla_{\boldsymbol{v}}\left(e^{-i \boldsymbol{\xi}\cdot\boldsymbol{v}} \right)\Bigr)^T S(\boldsymbol{v} - \boldsymbol{v}_*) f_* \nabla_{\boldsymbol{v}}f ~\textrm{d}\boldsymbol{v}_* \textrm{d}\boldsymbol{v}.
\end{align*}
By evaluating the derivatives of the exponential functions and carrying out many manipulations which take advantage of properties of the Fourier transform (the details of which can be found in \cite{Chenglong}), this expression for the Fourier transform $\hat{Q}(\boldsymbol{\xi})$ reduces to
\begin{equation}
	\hat{Q}\left(\boldsymbol{\xi}\right) = \int_{\mathbb{R}^d}\hat{f}\left(\boldsymbol{\xi} - \boldsymbol{\omega}\right)\hat{f} (\boldsymbol{\omega})\Bigl({\boldsymbol{\omega}}^T\hat{S}~\left(\boldsymbol{\omega}\right)\boldsymbol{\omega} ~-~ {\left(\boldsymbol{\xi} - \boldsymbol{\omega}\right)}^T\hat{S}~\left(\boldsymbol{\omega}\right)\left(\boldsymbol{\xi} - \boldsymbol{\omega}\right)\Bigr)~\textrm{d}\boldsymbol{\omega}, \label{qHat0}
\end{equation}
where $\hat{S}$ is the Fourier transform of $S$ given in (\ref{Q_FPL}).

It should be noted that the Fourier transformed collision operator $\hat{Q}$ retains the convolution structure and can be written as
\begin{equation}
	\hat{Q}\left(\boldsymbol{\xi}\right) = \int_{\mathbb{R}^d}\hat{f}\left(\boldsymbol{\xi} - \boldsymbol{\omega}\right)\hat{f} (\boldsymbol{\omega}) \hat{G}_L(\boldsymbol{\xi}, \boldsymbol{\omega}) ~\textrm{d}\boldsymbol{\omega}, \label{qHat_G}
\end{equation}
for some weight $G_L$.  When applying similar manipulations to find the Fourier transform of the Boltzmann collision operator, it can be written in the same form as (\ref{qHat_G}) but with a different weight, say, $G_B$.  This is the basis behind the proof of convergence to the Boltzmann collision operator to the Landau one in Fourier space described by Gamba and Haack \cite{GambaHaack1, GambaHaack2}, as the problem reduces to proving that the weight $G_B$ converges to $G_L$.

To save time computationally, the weight $\hat{G}_L$ can be precomputed.  The evaluation of $\hat{Q}$ then only involves an application of the FFT to find $\hat{f}$ and a weighted convolution which can be calculated by some quadrature method.  Due to the speed of the FFT, this can therefore be completed in $O(N^d \log(N))$ operations for the number of Fourier modes $N$ in each velocity dimension.  The final step is to take the inverse fast Fourier transform (IFFT) to recover $N^d$ many discrete evaluations of $Q$.

\subsection{Choosing a Computational Domain} \label{Domain_Choice}
For computational purposes, the evaluation of the integral in expression (\ref{qHat0}) for $\hat{Q}$ cannot be carried out over all of $\mathbb{R}^d$ and a cut-off domain must be defined, say $\Omega_{\boldsymbol{\xi}} \subset \mathbb{R}^d$, which is paired with an associated velocity domain $\Omega_{\boldsymbol{v}} \subset \mathbb{R}^d$.  This also means that the Landau collision operator must be approximated on the computational domain $\Omega_{\boldsymbol{v}}$.  In doing so, the conservative property of the Landau equation is lost because the integrals (\ref{conservation_L}) which enforce the conservation no longer use the collision operator $Q$ defined on the entire space $\Rd$.  This point is addressed by the conservation method but the effects of introducing a bounded domain can be minimised by choosing it large enough.  

In particular, as was proven by Gamba et al.\ \cite{BoltzmannSupport} for the Boltzmann equation, if the initial data $f_0(\boldsymbol{v})$ is bounded by a Gaussian distribution then there is a slightly larger Gaussian distribution for which the solution $f(t, \boldsymbol{v})$ is supported underneath for any time $t > 0$.  This means that if the domain is chosen large enough then, not only will the approximation remain bounded, it also decays with Gaussian tails and so it can be guaranteed that the majority of the mass and energy remain contained in $\Omega_{\boldsymbol{v}}$ during any simulation.

More precisely, in order to choose the domain, assume that $f_0$ satisfies
\begin{equation}
	f_0(\boldsymbol{v}) \leq \frac{C_0 \rho_0}{(2 \pi T_0)^{\frac{d}{2}}} e^{-\frac{r_0 |\boldsymbol{v}|^2}{2 T_0}}, \label{Gaussian_bound_init}
\end{equation}
for constants $r_0 \in (0, 1]$ and $C_0 \geq 1$ which stretch and dilate the Gaussian, as well as the initial mass $\rho_0 = \int_{\Rd} g_0(\boldsymbol{v}) ~\dv$ and temperature $T_0 = \frac{1}{d\rho_0} \int_{\Rd} g_0(\boldsymbol{v}) |\boldsymbol{v}|^2 ~\dv$ (where $\boldsymbol{V} = \boldsymbol{0}$ is chosen without loss of generality).  Then, based on the result of \cite{BoltzmannSupport}, there are new constants $r \in (0, r_0]$ and $C \geq C_0$ which depend on the initial data and the potential $\lambda$ (as well as the cross-section $b$ for the result on the Boltzmann equation) such that, for all $t > 0$,
\begin{equation}
	f(\boldsymbol{v}, t) \leq \frac{C \rho_0}{(2 \pi T_0)^{\frac{d}{2}}} e^{-\frac{r |\boldsymbol{v}|^2}{2 T_0}}. \label{Gaussian_bound}
\end{equation}

Then, since the tails of the Gaussian (\ref{Gaussian_bound}) have negligible mass and energy, a positive value $\delta_M \ll 1$ can be chosen such that
\begin{equation*}
	\int_{\Omega_{\boldsymbol{v}}^c} f(t, \boldsymbol{v}) \langle \boldsymbol{v} \rangle^{2} ~\dv \leq \int_{\Omega_{\boldsymbol{v}}^c} \frac{C \rho_0}{(2 \pi T_0)^{\frac{d}{2}}} e^{-\frac{r |\boldsymbol{v}|^2}{2 T_0}} \langle \boldsymbol{v} \rangle^{2} ~\dv \leq \delta_M \int_{\Omega_{\boldsymbol{v}}} |f_0(\boldsymbol{v})| \langle \boldsymbol{v} \rangle^{2} ~\dv.
\end{equation*}
This means that $\delta_M$ can be considered as a tolerance for what portion of the initial mass and energy is lost by using the bounded domain $\Omega_{\boldsymbol{v}}$.

\subsection{The Conservation Routine}
Even if a larger domain $\Omega_{\boldsymbol{v}}$ will give a more accurate approximation to $\hat{Q}$, and therefore $Q$, some amount of error is unavoidable whenever truncating the velocity domain.  Conservation can be enforced, however, by considering a constrained minimisation problem.  Given a collection of discrete values of the collision operator $Q$ resulting from the spectral method, say $\{\tilde{Q}_n\}_{n=1}^{N^d}$, a new set of values $\{Q_n\}_{n=1}^{N^d}$ must be found which are as close as possible to the original values in $\ell^2$-norm but satisfy the discrete form of (\ref{conservation_L}).  This discrete form replaces the integrals in (\ref{conservation_L}) with quadrature sums and can be written as
\begin{flalign*}
	&& && \sum_{n=1}^{N^d} Q_n (\phi_k)_n \omega_n = 0, && \textrm{for } k = 0, 1, \ldots, d+1,
\end{flalign*}
where $\{(\phi_k)_n\}_{k=0}^{d+1}$ are evaluations of the $d+2$ collision invariants at the same discrete point where $Q_n$ is evaluated and $\omega_n$ is the corresponding quadrature weight for that point.  If the discrete values of $Q$ are stored in the vector $\boldsymbol{Q}$ of length $N^d$, this discrete conservation can be written as 
\begin{flalign}
	A \boldsymbol{Q} = \boldsymbol{0}, && \textrm{where } A_{k,n} := (\phi_{k-1})_n \omega_n, ~~~~\textrm{for } k = 1,2\ldots,d+2, ~n = 1,2,\ldots N^d. \label{A_entries}
\end{flalign}

Then, given $\tilde{\boldsymbol{Q}} = (\tilde{Q}_1, \tilde{Q}_2, \ldots, \tilde{Q}_{N^d})$, the least squares problem is to find the vector $\boldsymbol{Q} = (Q_1, Q_2, \ldots, Q_{N^d})$ of conserved evaluations of the collision operator which solves
\begin{flalign}
	&& \min_{\boldsymbol{Q} \in \mathbb{R}^{N^d}} \bigl|\bigl|\tilde{\boldsymbol{Q}} - \boldsymbol{Q} \bigr|\bigr|^2_{\ell^2}, && \textrm{such that } A \boldsymbol{Q} = \boldsymbol{0}. && \label{least_squares_v}
\end{flalign}
This can then be solved as a $(d+2)$-dimensional Lagrange multiplier problem by defining the operator
\begin{equation*}
	\mathbb{L}(\boldsymbol{Q},\boldsymbol{\gamma}) := \sum_{n=1}^{N^d} \bigl(\tilde{Q}_n - Q_n \bigr)^2 - \boldsymbol{\gamma}^T A \boldsymbol{Q}.
\end{equation*}
By solving $\nabla_{\boldsymbol{Q}} \mathbb{L} = \boldsymbol{0}$ for the Lagrange multiplier $\boldsymbol{\gamma}$, the discrete values of the conserved collision operator are found to be
\begin{flalign}
	&& \boldsymbol{Q} = \Lambda(A) \tilde{\boldsymbol{Q}} && \textrm{where } \Lambda(A) := I - A^T(A A^T)^{-1} A. && \label{conserve_velocity}
\end{flalign}

This means that the conservation is simply matrix-vector multiplication.  The details of the derivation of $\Lambda(A)$ can be found in \cite{Harsha} and \cite{Chenglong} for the Boltzmann and Landau equations, respectively, but it should be noted that $\Lambda(A)$ is identical for both equations.  The full algorithm of the conservative spectral method for solving the space-homogeneous Fokker-Planck-Landau type equation (\ref{Landau_homo_Kn1}) when conserving in velocity space is then given in Algorithm \ref{conservation_algorithm_velocity}.

\begin{algorithm}
	\caption{The conservative spectral method for solving the space-homogeneous Fokker-Planck-Landau type equation (\ref{Landau_homo_Kn1}) when conserving in velocity space
		\label{conservation_algorithm_velocity}}
	\begin{algorithmic}[1]
		\Require{$\boldsymbol{F}$ contains evaluations of $f$ on the uniform velocity grid at a given time-step $t_n$}
		\Statex
		\For{each step in Runge-Kutta}
		\State Calculate the FFT of $\boldsymbol{F}$ and store the values in $\hat{\boldsymbol{F}}$ \Comment{$\mathcal{O}(N^d \log{N})$}
		\State \multiline{Calculate $\hat{Q}(\hat{\boldsymbol{F}})$ at each point in the uniform Fourier space grid using identity (\ref{qHat0}) and store the values in $\hat{\boldsymbol{Q}}$ \Comment{$\mathcal{O}(N^d)$}}
		\State Calculate the IFFT of $\hat{\boldsymbol{Q}}$ and store the values in $\tilde{\boldsymbol{Q}}$ \Comment{$\mathcal{O}(N^d \log{N})$}
		\State Set $\boldsymbol{Q} = \Lambda(A) \hat{\boldsymbol{Q}}$ as in (\ref{conserve_velocity}), with $A$ given in (\ref{A_entries}) \Comment{$\mathcal{O}(N^d)$}
		\State Perform the iteration of Runge-Kutta to update $\boldsymbol{F}$ \Comment{$\mathcal{O}(N^d)$}
		\EndFor
	\end{algorithmic}
\end{algorithm}	  

\section{Notation} \label{Notation}
\subsection{Spaces}
The error estimate will be obtained by reproducing the error analysis performed by Alonso et al.\ \cite{BoltzmannConvergence} on the space-homogeneous Boltzmann equation.  For this reason, a lot of the notation from their work will be introduced here.  First, it is convenient to use the angle brackets $\langle \boldsymbol{v} \rangle := \sqrt{1 + |\boldsymbol{v}|^2}$, for $\boldsymbol{v} \in \Rd$.  Then, for a measurable set $\Omega \subseteq \Rd$, $p \geq 1$ and $k \in \mathbb{R}$, the Lebesgue spaces $L^p_k(\Omega)$ are defined by
\begin{equation*}
L^p_k(\Omega) := \left\{ f: \|f\|_{L^p_k(\Omega)} < \infty \right\},
\end{equation*}
where
\begin{equation}
\|f\|_{L^p_k(\Omega)} := \begin{cases}
\left(\int_{\Omega} |f(\boldsymbol{v})\langle \boldsymbol{v} \rangle^{k}|^p ~\dv \right)^{\frac{1}{p}}, &\textrm{when } 1 \leq p < \infty, \\
\textrm{esssup } |f(\boldsymbol{v})\langle \boldsymbol{v} \rangle^{k}|, &\textrm{when } p = \infty.
\end{cases} \label{L2k_def}
\end{equation}

In addition, for $s \in \mathbb{N}$, the Hilbert spaces $H^s_k(\Omega)$ are then defined as 
\begin{equation}
H^s_k(\Omega) := \left\{ f: \|f\|_{H^s_k(\Omega)} < \infty \right\}, ~~~~~\textrm{where } \|f\|_{H^s_k(\Omega)} := \left(\sum_{\substack{\alpha \in \Nd: \\ |\alpha| \leq s}} \|D^{\alpha} f\|_{L^2_k(\Omega)}^2 \right)^{\frac{1}{2}}. \label{Hsk_def}
\end{equation}
Note that the Hilbert space $H^{s}$ is the completion of the infinitely smooth functions $C^{\infty}(\omega)$ by the norm $\|f\|_{H^s(\Omega)} = \|\langle \boldsymbol{\xi} \rangle^s \hat{f}\|_{L^2(\Omega)}$ and is sometimes referred to as an \textit{inhomogeneous Sobolev space}.  The corresponding \textit{homogeneous Sobolev space}, denoted $\dot{H}^{s}$, will sometimes be used as well and is the completion of $C^{\infty}(\Omega)$ by the norm $\|f\|_{\dot{H}^s(\Omega)} = \|\,|\boldsymbol{\xi}|^s \hat{f}\|_{L^2(\Omega)}$.

Most of the time, $\Omega$ is simply $\OmegaLv := (-\Lv, \Lv)^d$ or $\Rd$ and $d = 3$.  Note also that the classical $L^p$ and $H^{\alpha}$ norms satisfy $\|f\|_{L^p(\Omega)} = \|f\|_{L^p_0(\Omega)}$ and $\|f\|_{H^{\alpha}(\Omega)} = \|f\|_{H^{\alpha}_0(\Omega)}$.  The special case of the $L^1_k(\Rd)$ norm is referred to as the $k$th \textit{moment} of a function $f$, for $k \in \mathbb{R}$, denoted $m_k(f)$, which means
\begin{equation}
m_k(f) := \int_{\Rd} |f(\boldsymbol{v})| \langle \boldsymbol{v} \rangle^{k} ~\dv. \label{mk_def}
\end{equation}

\subsection{Fourier Series}
In order to discuss the convergence of the approximation to the true solution, it is necessary to introduce the truncated Fourier series.  To begin with, for a function $f \in L^1(\mathcal{U})$ defined on an open set $\mathcal{U} \subset \Rd$, the formulation used for the Fourier transform of $f$ here is
\begin{equation*}
\hat{f}(\boldsymbol{\xi}) := \frac{1}{(2\pi)^{\frac{d}{2}}} \int_{U} f(\boldsymbol{v}) e^{-i \boldsymbol{\xi} \cdot \boldsymbol{v}} \dv.
\end{equation*}

Then, for $\mathcal{U} = \OmegaLv = (-\Lv, \Lv)^d$, if $f \in L^2(\OmegaLv)$, the Fourier series of $f$ is given by
\begin{equation*}
f(\boldsymbol{v}) = \frac{1}{(2\Lv)^{d}} \sum_{k \in \mathbb{Z}^d} \hat{f}(\boldsymbol{\xi}_k) e^{i \boldsymbol{\xi}_k \cdot \boldsymbol{v}},
\end{equation*}
where $\boldsymbol{\xi}_k = \frac{2\pi}{\Lv} k$ for the multi-index $k = (k_1, k_2, \ldots, k_d)$.  Furthermore, the orthogonal projection onto the first $(2N+1)^d$ basis elements is found through the \textit{mode projection operator} $\Pi^N_{\Lv}: L^2(\OmegaLv) \to L^2(\OmegaLv)$, where $\Pi^N_{\Lv} f$ is defined by
\begin{equation*}
\Pi^N_{\Lv} f (\boldsymbol{v}) := \frac{1}{(2\Lv)^{d}} \sum_{\substack{k \in \mathbb{Z}^d: \\ |k_i| \leq N}} \hat{f}(\boldsymbol{\xi}_k) e^{i \boldsymbol{\xi}_k \cdot \boldsymbol{v}}.
\end{equation*}

At this point it is convenient to notice that, for a multi-index $\alpha \in \Nd$, if $f \in H^{|\alpha|}_0(\OmegaLv)$, by using the properties of derivatives of Fourier transforms,
\begin{align}
D^{\alpha} \Pi^N_{\Lv} f (\boldsymbol{v}) &= \frac{1}{(2\Lv)^{d}} \sum_{\substack{k \in \mathbb{Z}^d: \\ |k_i| \leq N}} (i \boldsymbol{\xi})^{\alpha} \hat{f}(\boldsymbol{\xi}_k) e^{i \boldsymbol{\xi}_k \cdot \boldsymbol{v}} \nonumber \\
&= \frac{1}{(2\Lv)^{d}} \sum_{\substack{k \in \mathbb{Z}^d: \\ |k_i| \leq N}} \widehat{D^{\alpha}f}(\boldsymbol{\xi}_k) e^{i \boldsymbol{\xi}_k \cdot \boldsymbol{v}} = \Pi^N_{\Lv} D^{\alpha} f (\boldsymbol{v}). \label{derivative_commuting}
\end{align}
This means that $D^{\alpha} \Pi^N_{\Lv} = \Pi^N_{\Lv} D^{\alpha}$ and so the mode projection operator commutes with differentiation.  In addition, by  Parseval's theorem, 
\begin{equation*}
\| \Pi^N_{\Lv} f\|_{L^2(\OmegaLv)} \leq \|f\|_{L^2(\OmegaLv)}, ~~~~~\textrm{for any } N \in \mathbb{N}, 
\end{equation*}
with equality when $N = \infty$, and 
\begin{equation*}
\|f - \Pi^N_{\Lv} f\|_{L^2(\OmegaLv)} \to 0, ~~~\textrm{as } N \to \infty.
\end{equation*}

\subsection{Extension Operators}
Now, restricting the original velocity domain of $\Rd$ to $\OmegaLv$ introduces some complications at the boundary as a result of the solution being truncated.  To handle these issues, a scaled cut-off function \mbox{\Large$\chi$} is introduced, given by
\begin{equation}
\Chi (\boldsymbol{v}) := \varphi \left(\frac{\boldsymbol{v}}{\Lv}\right), \label{chi_def}
\end{equation}
where $\varphi: \Rd \to [0,1]$ is any smooth non-negative function such that supp($\varphi$) $\subset$ \linebreak $(1 - \frac{1}{5}\delchi) [-1, 1]^d$ and $\varphi(\boldsymbol{v}) = 1$ when $\boldsymbol{v} \in (1 - \delchi) [-1, 1]^d$, for some small $0 < \delchi \ll 1$.  Whereas the introduction of this cut-off function is not necessary for $L^2$ estimates, it is for the higher order Sobolev regularity results since it smooths the boundary in such a way that does not introduce a significant amount of error.  In particular, for any function $g \in H^{s}(\OmegaLv)$, by the product rule,
\begin{equation}
\|\gchi\|_{H^{s}(\OmegaLv)} \leq C_{\chi} \|g\|_{H^{s}(\OmegaLv)}, \label{extension_bound}
\end{equation}
where $C_{\chi} \geq \|\Chi \|_{\mathcal{C}^{s}}$ can be taken as independent of $\Lv \geq 1$.  

Furthermore, for any function $g \in H^{s}(\OmegaLv)$, the restriction of $g$ by \mbox{\Large$\chi$} can be viewed as a function defined on all of $\Rd$ with values extended by zero outside of $\OmegaLv$, which gives
\begin{equation*}
\|\gchi\|_{H^{s}(\Rd)} = \|\gchi\|_{H^{s}(\OmegaLv)}.
\end{equation*}
As a result, the cut-off function \mbox{\Large$\chi$} can actually be viewed as an extension operator from $H^{s}(\OmegaLv)$ to $H^{s}(\Rd)$.  This will be useful when needing to compare the semi-discrete approximation (defined on $\OmegaLv$) with the true continuous solution (defined on all of $\Rd$).  Note that for $L^2$ estimates, where the boundary does not present any issues, \mbox{\Large$\chi$} may be chosen as \mbox{\Large$\chi$}$(\boldsymbol{v}) = 1$ for any $\boldsymbol{v} \in \Rd$.

\section{The Semi-discrete Problem} \label{semi_discrete_section}
First, applying the mode projection operator $\Pi^N_{\Lv}$ to both sides of the space-homogeneous Landau equation gives
\begin{equation*}
\frac{\partial}{\partial t}(\Pi^N_{\Lv} f)(t, \boldsymbol{v}) = \Pi^N_{\Lv} Q(f,f)(t, \boldsymbol{v}),
\end{equation*}
\begin{flalign*}
\textrm{with } && f(0, \boldsymbol{v}) = f_0(\boldsymbol{v}), &&
\end{flalign*}
for the pdf $f(t, \boldsymbol{v})$, with $(t,\boldsymbol{v}) \in (\mathbb{R}^+, \OmegaLv)$.  It should then be expected that, as in \cite{BoltzmannConvergence}, when $N$ is chosen to be sufficiently large,
\begin{equation*}
\Pi^N_{\Lv} Q(f,f)(t, \boldsymbol{v}) \sim \Pi^N_{\Lv} Q(\Pi^N_{\Lv} f,\Pi^N_{\Lv} f)(t, \boldsymbol{v}), ~~~~~\textrm{for } (t,\boldsymbol{v}) \in (\mathbb{R}^+, \OmegaLv).
\end{equation*}

When this analysis was applied to the Boltzmann equation, the collision operator was split as $Q = Q^+ - Q^-$ at this point, where $Q^+$ and $Q^-$ are the positive and negative parts of the collision operator, respectively.  For the Boltzmann equation, this is essential because any negative values in the collision operator come from $Q^-$ and so this part needs to be isolated.  For the Landau equation, however, the negative contribution is more entangled due to the derivatives but can be extracted by manipulations carried out by Desvillettes and Villani \cite{D&V_1}.  In order to use their calculations, it is useful to introduce the notation
\begin{align}
a_{i,j}(\boldsymbol{z}) &:= S_{i,j}(\boldsymbol{z}) = |\boldsymbol{z}|^{\lambda + 2} \left(\delta_{i,j} - \frac{z_i z_j}{|\boldsymbol{z}|^2}\right), &\textrm{for } i,j = 1, 2, \ldots, d, \label{a_def}\\
b_i(\boldsymbol{z}) &:= \frac{\partial}{\partial v_j}(a_{i,j}) = -2 |\boldsymbol{z}|^{\lambda} z_i, &\textrm{for } i = 1, 2, \ldots, d, \label{b_def}\\
\textrm{and } ~~~~~  c(\boldsymbol{z}) &:= \mathrlap{\frac{\partial^2}{\partial z_i \partial z_j}(\bar{a}_{i,j}) = \nabla \cdot \boldsymbol{b} = -2 (\lambda + 3) |\boldsymbol{z}|^{\lambda}.} \label{c_def}
\end{align}

Then, the collision operator $Q$ can be written as
\begin{equation*}
Q(f,g) = \bar{a}_{i,j} \frac{\partial^2 f}{\partial v_i \partial v_j} - \bar{c} f, 
\end{equation*}
where the bar notation means convolution with the second argument of $Q(f,g)$ (e.g.\ $\bar{a} = a * g$).

Also with this notation, assuming that $f(\boldsymbol{v}) \to 0$ as $|\boldsymbol{v}| \to 0$, the weak form of $Q$ can be written as
\begin{equation}
\int_{\Rd} Q(f,g) \phi ~\dv = \int_{\Rd} f \left(\bar{a}_{i,j} \frac{\partial^2 \phi}{\partial v_i \partial v_j} + 2 \bar{b_i} \frac{\partial \phi}{\partial v_i} \right) ~\dv, \label{Q^a_weak} 
\end{equation}
which is derived in Lemma \ref{Weak_Landau_Lemma} of the appendix.

One final thing to note is that, since $Q$ involves a convolution operator, it has support in $\Omega_{\sqrt{2}\Lv}$.  So, for simplicity, it will be assumed that $Q$ is defined on $\Omega_{2\Lv}$.  The problem then reduces to finding the pdf $g(t, \boldsymbol{v})$, with $(t,\boldsymbol{v}) \in (\mathbb{R}^+, \OmegaLv)$, such that
\begin{equation}
\frac{\partial g}{\partial t}(t, \boldsymbol{v}) = \Pi^N_{2\Lv} Q(\gchi, \gchi)(t, \boldsymbol{v}), \label{semi-discrete1}
\end{equation}
\begin{flalign*}
\textrm{with } && g(0, \boldsymbol{v}) = g_0^N(\boldsymbol{v}) = \Pi^N_{2\Lv} f_0(\boldsymbol{v}). &&
\end{flalign*}

Here, it should be that $g \approx \Pi^N_{2\Lv} f$.  The classical theory of spectral accuracy already ensures that $g$ is indeed a good approximation to the solution of the Landau equation in the cut-off domain $\Omega_{2\Lv}$, for sufficiently large $N$.  The problem here, however, is that it is more desirable to fix a smaller number of Fourier modes for computational purposes, but this removes the conservative properties of the collision operator.  For this reason, the right-hand side of equation (\ref{semi-discrete1}) is referred to as the \textit{unconserved collision operator}, denoted $Q_u$, namely
\begin{equation}
Q_u(g,g)(t, \boldsymbol{v}) := \Pi^N_{2\Lv} Q(\gchi, \gchi)(t, \boldsymbol{v}). \label{Q_u}
\end{equation}

If the operator $Q_u$ was used by itself in the numerical method, an error would accumulate due to the lack of conservation.  This has already been seen in the work of the present authors \cite{RGDPaper, EntropyDecayPaper} when the simulation was left to run with this operator alone.  The issue is fixed, however, by correcting any values of $Q_u$ computed to the solution of a Lagrange minimisation problem which enforces conservation.

In order to describe the minimisation problem more precisely, first define the Banach space $\mathcal{B}$ as
\begin{equation}
\mathcal{B} := \left\{X \in L^2(\Omega_{\Lv}): \int_{\OmegaLv} X ~\dv = \int_{\OmegaLv} X \boldsymbol{v} ~\dv = \int_{\OmegaLv} X |\boldsymbol{v}|^2 ~\dv = 0 \right\},
\end{equation}
which is simply all functions $X \in L^2(\Omega_{\Lv})$ with appropriate moments conserved.  This means that the solution $X^{\dag}$ to the minimisation problem of finding the closest $L^2(\Omega_{\Lv})$ function to $Q_u$ with the correct collision invariants can be written as
\begin{equation}
X^{\dag} := \min_{X \in \mathcal{B}} \int_{\OmegaLv} (Q_u(f,f) - X)^2 ~\dv. \label{Minimisation}
\end{equation}
Since this corrected version of $Q_u$ satisfies the correct conservation properties, it is referred to as the \textit{conserved collision operator} and denoted $Q_c(f,f) := X^{\dag}$.  It should be noted that, when $f$ is discretised onto $N^d$ many Fourier modes, the conserved collision operator is also a discrete vector $Q_c \in \mathbb{R}^{N^d}$.

After correcting $Q_u$ to $Q_c$ via the above minimisation method, the actual semi-discrete problem that will be analysed here is to find $g(t, \boldsymbol{v})$, with $(t,\boldsymbol{v}) \in (\mathbb{R}^+, \OmegaLv)$, such that
\begin{equation}
\frac{\partial g}{\partial t}(t, \boldsymbol{v}) = Q_c(g,g)(t, \boldsymbol{v}), \label{semi-discrete2}
\end{equation}
\begin{flalign*}
\textrm{with } && g(0, \boldsymbol{v}) = g_0^N(\boldsymbol{v}) = \Pi^N_{2\Lv} f_0(\boldsymbol{v}). &&
\end{flalign*}

The only assumption on the initial data required for the analysis here is that, for some $\varepsilon$ with $0 < \varepsilon \leq \frac{1}{4}$, $g_0$ satisfies
\begin{equation}
\int_{\{g_0 < 0\}} |g_0(\boldsymbol{v})|\langle\boldsymbol{v}\rangle^2 ~\dv \leq \varepsilon \int_{\{g_0 \geq 0\}} g_0(\boldsymbol{v})\langle\boldsymbol{v}\rangle^2 ~\dv. \label{stability_init}
\end{equation}
Assumption (\ref{stability_init}) can be seen as a sort of stability condition on the initial data.

\section{Useful Analytical Results} \label{Previous_Results}
Here certain useful results from the previous work of other authors will be collected and restated for use throughout the analysis.  First, perhaps the most frequently used identity is the ellipticity of $\bar{a}$, written as Proposition 4 in \cite{D&V_1}.

\begin{prop} \label{ellipticity_prop}
	If $f \in L^1_2 \bigcap L \log L (\Rd)$ then there is a constant $K_{\lambda}(f_0)$, depending on the potential $\lambda$ and the initial condition $f_0$, such that 
	\begin{equation*}
	\bar{a}_{i,j} \xi_i \xi_j \geq K_{\lambda}(f_0) (1 + |\boldsymbol{v}|^{\lambda}) |\boldsymbol{\xi}|^2, ~~~~~\textrm{for any } \boldsymbol{\xi} = (\xi_1, \xi_2, \ldots, \xi_d) \in \Rd,
	\end{equation*}
	where $K_{\lambda}(f_0)$ is defined by a combination of expressions (52) and (55) in \cite{D&V_1} and can be written as $K_{\lambda}(f_0) = \widetilde{K_{\lambda}}m_0(f_0)$, for a new constant $\widetilde{K_{\lambda}}$ which no longer depends on $g_0$, but only $\lambda$.
\end{prop}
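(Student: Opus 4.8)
The plan is to follow the proof of Proposition~4 in \cite{D&V_1}. Since $\bar a = a*f$ is linear in $f$, the asserted bound is homogeneous of degree two in $\boldsymbol\xi$, and the integrand appearing below is pointwise non-negative (so that lower bounds can be obtained by restricting the domain of integration), it suffices to take $|\boldsymbol\xi| = 1$. The first step is to rewrite the quadratic form geometrically: with $\boldsymbol z := \boldsymbol v - \boldsymbol v_*$ one has the algebraic identity
\[
|\boldsymbol z|^{\lambda+2}\Bigl(\delta_{i,j} - \tfrac{z_i z_j}{|\boldsymbol z|^2}\Bigr)\xi_i\xi_j \;=\; |\boldsymbol z|^{\lambda}\,\bigl|\boldsymbol z - (\boldsymbol z\cdot\boldsymbol\xi)\boldsymbol\xi\bigr|^2 \;=\; |\boldsymbol z|^{\lambda}\,\mathrm{dist}\bigl(\boldsymbol v_*,\ell_{\boldsymbol v,\boldsymbol\xi}\bigr)^2 ,
\]
where $\ell_{\boldsymbol v,\boldsymbol\xi} := \{\boldsymbol v + t\boldsymbol\xi : t\in\mathbb R\}$ is the line through $\boldsymbol v$ in direction $\boldsymbol\xi$, so that
\[
\bar a_{i,j}(\boldsymbol v)\,\xi_i\xi_j \;=\; \int_{\Rd} |\boldsymbol v - \boldsymbol v_*|^{\lambda}\,\mathrm{dist}\bigl(\boldsymbol v_*,\ell_{\boldsymbol v,\boldsymbol\xi}\bigr)^2\, f(\boldsymbol v_*)\,\textrm{d}\boldsymbol v_*.
\]
Everything then reduces to bounding this integral from below by discarding all $\boldsymbol v_*$ for which $\mathrm{dist}(\boldsymbol v_*,\ell_{\boldsymbol v,\boldsymbol\xi})$ is small, or $|\boldsymbol v - \boldsymbol v_*|$ is small, or $|\boldsymbol v_*|$ is large, and checking that what remains still carries a fixed positive mass of $f$ together with a factor $|\boldsymbol v - \boldsymbol v_*|^{\lambda}$ of size $\gtrsim 1 + |\boldsymbol v|^{\lambda}$ (this is where $\lambda\ge0$ enters).

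The heart of the matter is therefore a \emph{non-concentration} estimate drawn from $f\in L^1_2\cap L\log L(\Rd)$. Concretely, I would prove that there are constants $\delta_\star\in(0,1)$ and $R_\star\ge1$, depending only on $d$ and on $\rho := m_0(f)$, $E := \int_{\Rd} f|\boldsymbol v|^2\,\textrm{d}\boldsymbol v$, $H := \int_{\Rd} f\log f\,\textrm{d}\boldsymbol v$, such that for every $\boldsymbol v$ and every unit $\boldsymbol\xi$,
\[
\int_{\mathcal G(\boldsymbol v,\boldsymbol\xi)} f(\boldsymbol v_*)\,\textrm{d}\boldsymbol v_* \;\ge\; \delta_\star ,\qquad
\mathcal G(\boldsymbol v,\boldsymbol\xi) := \bigl\{\boldsymbol v_* : |\boldsymbol v_*|\le R_\star,\ |\boldsymbol v - \boldsymbol v_*|\ge\delta_\star,\ \mathrm{dist}(\boldsymbol v_*,\ell_{\boldsymbol v,\boldsymbol\xi})\ge\delta_\star\bigr\}.
\]
This follows from three exclusions, each removing only a controlled fraction of the mass $\rho$: (i) a \emph{tail} bound, $\int_{\{|\boldsymbol v_*|>R_\star\}} f \le E/R_\star^2$, small for $R_\star$ large; (ii) a \emph{no concentration at a point} bound, obtained from the classical estimate $\int f|\log f|\,\textrm{d}\boldsymbol v \le C(d,E,H)$ together with Jensen's inequality on a ball $B_\epsilon(\boldsymbol v)$, which forces $\int_{B_\epsilon(\boldsymbol v)}f$ to be small once $\epsilon=\delta_\star$ is small, uniformly in $\boldsymbol v$; (iii) a \emph{no concentration near a line} bound, obtained by writing the slab $\{\,\mathrm{dist}(\cdot,\ell_{\boldsymbol v,\boldsymbol\xi})<\delta_\star\,\}$ as the union of its intersection with $B_{R_\star}$ — which has finite measure $\lesssim \delta_\star^{\,d-1}R_\star$ and can again be handled by Jensen and the $L\log L$ bound — and its complement, whose mass is controlled by (i). Fixing $R_\star$ first and then $\delta_\star\le\rho/2$ small gives $\int_{\mathcal G}f \ge \rho/2 \ge \delta_\star$; when $|\boldsymbol v|$ is large, $B_{R_\star}$ is disjoint from $B_{\delta_\star}(\boldsymbol v)$, so exclusion (ii) is vacuous and the estimate is only easier.

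Putting the two pieces together finishes the proof: on $\mathcal G(\boldsymbol v,\boldsymbol\xi)$ one has $\mathrm{dist}(\boldsymbol v_*,\ell_{\boldsymbol v,\boldsymbol\xi})^2 \ge \delta_\star^2$ and, splitting on $|\boldsymbol v|\le 2R_\star$ versus $|\boldsymbol v|>2R_\star$ and using $|\boldsymbol v - \boldsymbol v_*|\ge\max(\delta_\star,|\boldsymbol v|/2)$ there, $|\boldsymbol v - \boldsymbol v_*|^{\lambda}\ge c(\delta_\star,R_\star,\lambda)(1+|\boldsymbol v|^{\lambda})$ since $\lambda\ge0$; hence
\[
\bar a_{i,j}(\boldsymbol v)\,\xi_i\xi_j \;\ge\; \int_{\mathcal G(\boldsymbol v,\boldsymbol\xi)} |\boldsymbol v - \boldsymbol v_*|^{\lambda}\,\mathrm{dist}(\boldsymbol v_*,\ell_{\boldsymbol v,\boldsymbol\xi})^2\,f\,\textrm{d}\boldsymbol v_* \;\ge\; c\,\delta_\star^{2}\,(1+|\boldsymbol v|^{\lambda})\int_{\mathcal G}f \;\ge\; c\,\delta_\star^{3}\,(1+|\boldsymbol v|^{\lambda}),
\]
and rescaling $\boldsymbol\xi$ recovers the claimed inequality with $K_\lambda = c\,\delta_\star^3$. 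Since $\bar a$ depends on $f$ only through its mass, energy and $L\log L$ norm — which for a Landau solution are controlled by the initial datum (mass conserved, energy bounded, entropy non-increasing) — the constant may legitimately be written as $K_\lambda(f_0)$, and the linearity of $\bar a$ in $f$ gives the stated proportionality to $m_0(f_0)$ once the scale-invariant ratios are held fixed; the explicit form of the constant is that recorded in expressions~(52) and~(55) of \cite{D&V_1}. I expect the one genuinely delicate step to be (iii): obtaining a \emph{uniform} (over all lines $\ell$ and all $\boldsymbol v$) bound on the $f$-mass of a tube of infinite Lebesgue measure, using only the energy and $L\log L$ information — everything else is the identity of the first paragraph, Markov's and Jensen's inequalities, and the case distinction $|\boldsymbol v|$ small versus large.
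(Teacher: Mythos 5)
The paper offers no proof of this proposition: it is imported verbatim from Proposition~4 of \cite{D&V_1}, with the constant read off from their expressions (52) and (55). Your reconstruction follows essentially the same strategy as the original Desvillettes--Villani argument --- rewriting $\bar a_{i,j}(\boldsymbol{v})\xi_i\xi_j=\int f(\boldsymbol{v}_*)\,|\boldsymbol{v}-\boldsymbol{v}_*|^{\lambda}\,\mathrm{dist}\bigl(\boldsymbol{v}_*,\ell_{\boldsymbol{v},\boldsymbol{\xi}}\bigr)^{2}\,\textrm{d}\boldsymbol{v}_*$ and extracting a uniform lower bound from non-concentration of the mass on small balls and thin tubes, which is precisely what the $L^1_2\cap L\log L$ hypothesis provides --- and the sketch is sound; in particular your ``delicate'' step (iii) is resolved correctly by intersecting the tube with $B_{R_\star}$ (finite measure, handled by the $L\log L$ small-sets estimate) and controlling the exterior by the energy tail bound, uniformly over all lines. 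The one caveat concerns the last claim: the constant produced this way enters through $\delta_\star$ and $R_\star$, hence depends on the energy and entropy bounds (controlled by $f_0$ along the flow) and not only on the mass, so the statement $K_{\lambda}(f_0)=\widetilde{K_{\lambda}}\,m_0(f_0)$ with $\widetilde{K_{\lambda}}$ depending only on $\lambda$ is literally true only with the normalized (scale-invariant) quantities held fixed, exactly as you hedge --- that imprecision sits in the paper's restatement of \cite{D&V_1}, not in your argument.
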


Next, an important pair of theorems from \cite{D&V_1} state that the moments and weighted Sobolev $H^2_k$-norm of the true solution $f$ to the Fokker-Planck-Landau equation (\ref{Landau_homo_Kn1}) remain bounded in time, especially because the semi-discrete analogues of these theorem will be proven on the solution $g$ of equation (\ref{semi-discrete2}).  The relevant theorem on propagation of moments is as follows.
\begin{prop} \label{moment_estimate_prop}
	If $f$ is a weak solution to the Fokker-Planck-Landau type equation (\ref{Landau_homo_Kn1}) associated to hard potentials, with initial condition $f_0 \in L^1_{2}$ then, for any $s > 0$ and time $t_0 > 0$, there is a constant $C^f(f_0) > 0$ depending on $t_0$ and $f_0$ such that
	\begin{flalign*}
	&&~~~~~~~~~~~~~~ m_k(f)(t) \leq C^f(f_0), && \textrm{for } t > t_0.	
	\end{flalign*}
\end{prop}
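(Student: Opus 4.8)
The plan is to reduce the claim to an autonomous differential inequality for the moment $t\mapsto m_k(f)(t)$ and then exploit that such an inequality ``forgets'' its initial value --- the classical generation-of-moments mechanism for hard potentials. (This is the moment theorem of \cite{D&V_1}, and the steps below outline that argument.) Throughout, the mass $\rho_0=m_0(f)$ and the energy, equivalently $m_2(f)$, are conserved, and one must take $\lambda>0$; for $\lambda=0$ moments are only propagated, so one would additionally need $f_0\in L^1_k$.

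The first step is to test the weak identity (\ref{Q^a_weak}) with $\phi(\boldsymbol v)=\langle\boldsymbol v\rangle^{k}$ --- rigorously, via truncations $\phi_R=\langle\boldsymbol v\rangle^{k}\psi(\boldsymbol v/R)$ followed by $R\to\infty$ --- so that
\[
\frac{d}{dt}m_k(f)(t)=\int_{\Rd} f\Bigl(\bar a_{ij}\frac{\partial^2\phi}{\partial v_i\partial v_j}+2\bar b_i\frac{\partial\phi}{\partial v_i}\Bigr)\dv .
\]
With $\partial_i\phi=k\,v_i\langle\boldsymbol v\rangle^{k-2}$ and $\partial^2_{ij}\phi=k\langle\boldsymbol v\rangle^{k-2}\delta_{ij}+k(k-2)v_iv_j\langle\boldsymbol v\rangle^{k-4}$, the integrand becomes $k\langle\boldsymbol v\rangle^{k-2}\bigl(\operatorname{tr}\bar a+2\bar b\cdot\boldsymbol v\bigr)+k(k-2)\langle\boldsymbol v\rangle^{k-4}\bar a_{ij}v_iv_j$. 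The key simplification --- a short computation from the explicit forms (\ref{a_def})--(\ref{b_def}) of $a$ and $b$ --- is the collapse
\[
\operatorname{tr}\bar a(\boldsymbol v)+2\bar b(\boldsymbol v)\cdot\boldsymbol v=(d-1)\int_{\Rd}|\boldsymbol v-\boldsymbol v_*|^{\lambda}\bigl(|\boldsymbol v_*|^2-|\boldsymbol v|^2\bigr)f(\boldsymbol v_*)~\dv_* ,
\]
whose leading behaviour for large $|\boldsymbol v|$ is $-(d-1)\rho_0|\boldsymbol v|^{\lambda+2}$, while $\bar a_{ij}v_iv_j=\int|\boldsymbol v-\boldsymbol v_*|^{\lambda}\bigl(|\boldsymbol v|^2|\boldsymbol v_*|^2-(\boldsymbol v\cdot\boldsymbol v_*)^2\bigr)f_*\,\dv_*$ is of strictly lower order. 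A direct estimate of the remaining terms --- splitting the $\boldsymbol v_*$-integral, using $\lambda$-subadditivity $|\boldsymbol v-\boldsymbol v_*|^\lambda\le|\boldsymbol v|^\lambda+|\boldsymbol v_*|^\lambda$, and a one-step preliminary bound on $m_{2+\lambda}(f)$ (see below) --- then yields
\[
\frac{d}{dt}m_k(f)(t)\le -c_k\,m_{k+\lambda}(f)(t)+C_k\!\!\sum_{0\le j<k+\lambda}\!\! m_j(f)(t),
\]
with $c_k>0$, and $c_k,C_k$ depending only on $\lambda,k$ and the conserved $\rho_0,T_0$.

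Next, Hölder interpolation against the conserved mass gives $m_{k+\lambda}(f)\ge\rho_0^{-\lambda/k}\,m_k(f)^{1+\lambda/k}$, while each lower moment satisfies $m_j(f)\le\varepsilon\,m_{k+\lambda}(f)+C(\varepsilon,\rho_0,T_0)$ for $j<k+\lambda$ by interpolation and Young's inequality; absorbing these into the coercive term produces a closed inequality of the form $y'\le-\alpha y^{1+\mu}+\beta$ for $y=m_k(f)$, with $\mu=\lambda/k>0$ and $\alpha,\beta>0$ \emph{independent of $y(0)$}. The conclusion then follows from the elementary comparison fact that a nonnegative $y$ with $y'\le-\alpha y^{1+\mu}+\beta$ satisfies $y(t)\le(\alpha\mu t)^{-1/\mu}+(2\beta/\alpha)^{1/(1+\mu)}$ for every $t>0$ --- compare $y$, once it exceeds its equilibrium level $(\beta/\alpha)^{1/(1+\mu)}$, with the solution $(\alpha\mu t)^{-1/\mu}$ of $z'=-\alpha z^{1+\mu}$. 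Evaluating at $t>t_0$ gives $m_k(f)(t)\le C^f(f_0)$ with $C^f(f_0)$ depending only on $t_0,\lambda,k$ and the conserved $\rho_0,T_0$, so that nothing beyond $f_0\in L^1_2$ is used.

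The step I expect to be the main obstacle is passing from the formal computation above to a rigorous one. Since $\langle\boldsymbol v\rangle^{k}$ is unbounded, one must genuinely argue with $\phi_R$ and show that the boundary terms carrying $\nabla\psi(\boldsymbol v/R)$ and $\nabla^2\psi(\boldsymbol v/R)$ vanish as $R\to\infty$, which needs a priori control --- finiteness for a.e.\ $t$, with local integrability in $t$ --- of the very moments being estimated; this is the usual bootstrap of appearance-of-moments proofs, carried out by first establishing the inequality, hence boundedness for $t>0$, for $m_{2+\lambda}(f)$ (whose ``loss'' term is only linear in $m_{2+\lambda}$ and is absorbed the same way), and then passing the bound through the regularised problems used to construct the weak solution. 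A secondary point is verifying that the only moment of order above $2$ appearing among the remainder coefficients is precisely $m_{2+\lambda}(f)$ --- which is where $0<\lambda\le1$ helps, since then $\lambda+2\le3$ --- so that once it is controlled, every remainder term interpolates to a power strictly less than one of the coercive term $m_{k+\lambda}(f)$ and is genuinely absorbed.
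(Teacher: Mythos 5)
Your proposal is correct and follows essentially the same route as the paper, which does not prove this proposition itself but imports it from Desvillettes and Villani \cite{D&V_1}; their moment theorem is established precisely by the mechanism you outline (testing the weak form (\ref{Q^a_weak}) with $\langle \boldsymbol{v}\rangle^k$, the collapse of $\operatorname{tr}\bar a + 2\bar b\cdot\boldsymbol{v}$ into a coercive $-|\boldsymbol{v}|^{\lambda+2}$ term, interpolation against conserved mass and energy, and the comparison argument for $y'\le-\alpha y^{1+\mu}+\beta$ that makes the bound independent of the initial $k$th moment). Your caveats — the need for $\lambda>0$ for appearance rather than mere propagation, and the truncation/bootstrap needed to make the formal computation rigorous — are exactly the points handled in \cite{D&V_1}.
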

Then, the theorem from \cite{D&V_1} related to the propagation of $H^2_k$-norm of the true solution $f$ is stated.  Some identities contained in the proof of this theorem are also important so they will be quoted here as well.
\begin{prop} \label{Halpha_estimate_prop}
	(a) If $f_0 \in L^1_{2+\delta} \bigcap L \log L (\Rd)$, for some $\delta > 0$, then there is a weak solution $f$ to the Fokker-Planck-Landau type equation (\ref{Landau_homo_Kn1}) associated to hard potentials such that, for any $k > 0$,
	
	(i) if $\|f_0\|_{L^2_k(\Rd)} < \infty$ and $\|f_0\|_{L^2_{\frac{5}{4}(k + \lambda)}(\Rd)} < \infty$ then
	\begin{equation*}
	\sup_{t>t_0} \|f(t, \cdot)\|_{L^2_k}(\Rd) < \infty;
	\end{equation*}
	
	(ii) for any time $t_0 > 0$ and $s \in \mathbb{N}_0$, there is a constant $C^f_{\lambda,s,k}(f_0)$ which depends on $\lambda$, $s$, $k$ and $t_0$ as much $f_0$ such that
	\begin{equation*}
	\sup_{t>t_0} \|f(t, \cdot)\|_{H^s_k}(\Rd) \leq C^f_{\lambda,s,k}(f_0);
	\end{equation*}
	
	(iii) for any time $t > 0$, $f \in C^{\infty}\bigl([t_0, \infty)); \mathcal{S}(\Rd)\bigr)$.
	
	\noindent(b) Moreover, for any $k \geq 0$, if $K_{\lambda}(f_0)$ is the constant from Proposition \ref{ellipticity_prop},
	
	(i) there is a constant $C_{\lambda}$, depending on the potential $\lambda$, such that
	\begin{equation*}
	\int_{\Rd} Q(f, f) f(\boldsymbol{v}) \langle \boldsymbol{v} \rangle^{2 k} ~\dv \leq -K_{\lambda}(f_0) \|f\|^2_{\dot{H}^1_{k + \frac{\lambda}{2}}(\Rd)} + C_{\lambda} \|f\|^{2}_{L^1_{\frac{5}{2}\left(k + \frac{\lambda}{2}\right)}(\Rd)};
	\end{equation*}
	
	(ii) for $s \in \mathbb{N}$, there is some $\nu > 1$ and constants $K_{\lambda, s}(f_0) > 0$ (related to $K_{\lambda}(f_0)$) and $C_{\lambda, s} > 0$, both depending on $\lambda$ and $s$ such that
	\begin{multline*}
	\sum_{\substack{\alpha \in \Nd: \\ |\alpha| = s}} \int_{\Rd} D^{\alpha} Q(f, f) D^{\alpha} f \langle \boldsymbol{v} \rangle^{2 k} ~\dv \leq - K_{\lambda, s}(f_0) \left(\left\| f \right\|^2_{\dot{H}^{s}_k(\Rd)} \right)^{\nu} \\
	+ C_{\lambda, s} \left\| f \right\|_{\dot{H}^{s}} \left\| f \right\|^2_{H^{s-1}_{k + \frac{\lambda}{2} + 1}(\Rd)}.
	\end{multline*}
\end{prop}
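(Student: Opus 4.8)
The statement collects several results of Desvillettes and Villani from \cite{D&V_1} (with the quantitative refinements of \cite{D&V_2}), so the plan is to reproduce their arguments; I would sketch only the mechanism and, for the details, cite \cite{D&V_1}, reproducing explicitly the weak form (\ref{Q^a_weak}) and the two dissipation inequalities of part (b), since those are the pieces actually used in the sequel of this paper. The existence of a weak solution in part (a) is obtained by a regularisation scheme: truncate the potential, mollify the initial data, add a vanishing dissipative term, solve the regularised equation by a fixed-point/compactness argument, derive a priori bounds uniform in the regularisation parameters from conservation of mass and energy together with the entropy-dissipation inequality, and pass to the limit. The structural input that makes the limiting solution instantaneously smooth, rather than merely weak, is the ellipticity of $\bar a$ (Proposition \ref{ellipticity_prop}), which converts entropy dissipation into a genuine weighted parabolic smoothing effect.

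For the propagation of norms in (a)(i)--(a)(iii) the plan is weighted energy estimates. Testing (\ref{Landau_homo_Kn1}) against $f\langle\boldsymbol{v}\rangle^{2k}$ and using (\ref{Q^a_weak}) with $\phi = f\langle\boldsymbol{v}\rangle^{2k}$, one integrates by parts so that the leading term is, up to commutators, the coercive quantity $-\int_{\Rd}\bar a_{i,j}\,(\partial_i f)(\partial_j f)\,\langle\boldsymbol{v}\rangle^{2k}\,\dv$, which by ellipticity is bounded above by $-K_\lambda(f_0)\|f\|^2_{\dot{H}^1_{k+\lambda/2}}$; every remaining term (derivatives landing on the weight, and the $\bar b$- and $\bar c$-pieces arising from the identities (\ref{b_def}) and (\ref{c_def})) is a moment of $f$ --- because $\bar a,\bar b,\bar c$ are convolutions of $f$ against kernels of polynomial growth $|\boldsymbol z|^{\lambda+2},|\boldsymbol z|^{\lambda+1},|\boldsymbol z|^{\lambda}$ --- times an $L^2$-type factor which, after a Nash/Gagliardo--Nirenberg interpolation, is partly absorbed into the dissipation, leaving exactly the $L^1$-moment term on the right of part (b)(i). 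Combining this with the moment bounds of Proposition \ref{moment_estimate_prop} and a Gronwall argument gives (a)(i); for (a)(ii) one differentiates the equation $s$ times, tests against $\sum_{|\alpha|=s}D^\alpha f\,\langle\boldsymbol{v}\rangle^{2k}$, applies the Leibniz rule so the top term is again coercive, handles the cross-terms by interpolation between $\dot{H}^s$, $\dot{H}^{s+1}$ and lower-order weighted norms --- this is precisely the inequality in part (b)(ii) --- and closes by induction on $s$; finally (a)(iii) follows by assembling the $H^s_k$ bounds for all $s$ together with the moment bounds, which places $f$ in $\mathcal{S}(\Rd)$.

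The genuinely delicate points, and where I expect the main obstacle, are the two dissipation estimates of part (b). In (b)(i) one must check that the commutators produced by moving $\langle\boldsymbol{v}\rangle^{2k}$ through the derivatives and integrating by parts truly land in $L^1$-moment norms of $f$ and not in $\dot{H}^1$, which hinges on the algebraic cancellations among $\bar a,\bar b,\bar c$. In (b)(ii) the extra subtlety is the superlinear power $\nu>1$: the coercive term supplied by ellipticity is the \emph{weighted} norm $\|f\|^2_{\dot{H}^s_{k+\lambda/2}}$, strictly stronger than $\|f\|^2_{\dot{H}^s_k}$, and one interpolates it against the unweighted top-order norm and lower moments to rewrite the dissipation as $-K_{\lambda,s}(f_0)\bigl(\|f\|^2_{\dot{H}^s_k}\bigr)^{\nu}$ with $\nu>1$. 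That superlinearity is exactly what forces the resulting differential inequality $y'\le -c\,y^{\nu}+(\text{bounded})$ for $y=\|f\|^2_{\dot{H}^s_k}$ to keep $y(t)$ bounded for $t>t_0$ by a bound independent of the size of $y(0)$, so that the regularity is gained instantaneously; and tracking how the constant $K_{\lambda,s}(f_0)$ degrades through this interpolation as $s$ grows is the bookkeeping that makes the proof long. All of this is carried out in full in \cite{D&V_1}, so in the paper I would cite it and reproduce only (\ref{Q^a_weak}) and the inequalities of part (b), which are what the later sections need.
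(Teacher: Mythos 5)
Your proposal matches what the paper actually does: this proposition is not proved in the paper but is quoted directly from Desvillettes and Villani \cite{D&V_1} (with parts (b)$(i)$--$(ii)$ extracted from the proofs of their $L^2_k$ and $H^s_k$ propagation results in Section 6 of that work), which is exactly the citation strategy you adopt. Your sketch of the underlying mechanism --- ellipticity of $\bar{a}$ giving coercivity, weighted energy estimates with the commutator terms landing in moment norms, and the interpolation producing the superlinear power $\nu>1$ in (b)$(ii)$ --- is a faithful outline of the argument in \cite{D&V_1}, so no gap to report.
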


Note that parts (b)$(i)$ and $(ii)$ of Proposition \ref{Halpha_estimate_prop} appear in the proofs of (a)$(i)$ and $(ii)$, respectively, in Section 6 of \cite{D&V_1}.  In the proof of (a)$(ii)$, the $C_{\lambda, s} \left\| f \right\|_{\dot{H}^{s}}$ term in the identity from (b)$(ii)$ is merely replaced by a generic $C$ but, by following the previous details of the proof, it must be that $C$ includes the factor $\left\| f \right\|_{\dot{H}^{s}}$.

A more recent estimate on the Fokker-Planck-Landau operator given by He will also be useful, in particular for obtaining a bound on the $L^2$-norm of the operator.  The following result is a part of Theorem 1.5 in \cite{LBH}.
\begin{prop} \label{LBH_prop}
	When $\lambda > -2$, given constants $w_1, w_2, a, b \in \mathbb{R}$ with $w_1 + w_2 = \lambda + 2$ and $a + b = 2$ and any smooth functions $F$, $G$ and $H$, there is some uniform constant $C_H > 0$ such that the Landau collision operator $Q$ satisfies
	\begin{multline*}
	\left|\int_{\OmegaLv} Q(F, G) H ~\dv \right| \leq C_H \Bigl(\|F\|_{L^1_{\lambda + 2 + (-w_1)^+ + (-w_2)^+}(\OmegaLv)} \\
	+ \|F\|_{L^2(\OmegaLv)}\Bigr) \|G\|_{H^a_{w_1}(\OmegaLv)} \|H\|_{H^b_{w_2}(\OmegaLv)},
	\end{multline*}
	where the superscript ${}^+$ notation means to take the positive part (i.e.\ \linebreak $(w)^+ := \max(w,0)$).  
	
	For the specific case of $b = w_2 = 0$, so that $a = 2$ and $w_1 = \lambda + 2$,
	\begin{multline}
	\left|\int_{\OmegaLv} Q(F, G) H ~\dv \right| \leq C_H \Bigl(\|F\|_{L^1_{\lambda + 2}(\OmegaLv)} \\
	+ \|F\|_{L^2(\OmegaLv)}\Bigr) \|G\|_{H^2_{\lambda + 2}(\OmegaLv)} \|H\|_{L^2(\OmegaLv)}. \label{LBH_bound}
	\end{multline}
\end{prop}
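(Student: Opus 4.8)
The first display of the proposition is quoted verbatim from Theorem~1.5 of \cite{LBH}, so my plan is simply to invoke that result; I would not attempt an independent proof of the general estimate. For orientation, though, the route in \cite{LBH} is to rewrite $\int_{\OmegaLv} Q(F,G) H\,\dv$ via the weak form (\ref{Q^a_weak}) as
\begin{equation*}
\int_{\OmegaLv} F\left(\overline{a}_{i,j}\,\partial_{v_i}\partial_{v_j} H + 2\,\overline{b}_i\,\partial_{v_i} H\right)\dv,
\end{equation*}
where the bars denote convolution against $G$, then to integrate by parts so as to redistribute the two derivatives between $G$ and $H$ (this is what produces the free orders $a,b$ with $a+b=2$), to bound the convolution coefficients $\overline{a}[G]$ and $\overline{b}[G]$ — which carry the kernel weights $|\boldsymbol{z}|^{\lambda+2}$ and $|\boldsymbol{z}|^{\lambda+1}$ — by suitably weighted norms of $G$, and to control the remaining $F$-factor, which occurs linearly and undifferentiated, by an interpolation of the form $\|F\|_{L^1_{\ast}}+\|F\|_{L^2}$. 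The delicate part there — and the reason one defers to \cite{LBH} — is arranging the bookkeeping so that $F$ never enters through higher weighted norms and the polynomial weight splits exactly as $w_1+w_2=\lambda+2$.

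For the specialised bound (\ref{LBH_bound}) my plan is a pure substitution. I would set $b=0$ and $w_2=0$; then $a+b=2$ forces $a=2$ and $w_1+w_2=\lambda+2$ forces $w_1=\lambda+2$. Plugging these into the general estimate turns $\|G\|_{H^a_{w_1}(\OmegaLv)}$ into $\|G\|_{H^2_{\lambda+2}(\OmegaLv)}$ and $\|H\|_{H^b_{w_2}(\OmegaLv)}$ into $\|H\|_{H^0_0(\OmegaLv)}=\|H\|_{L^2(\OmegaLv)}$.

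It then remains only to check the weight on $\|F\|_{L^1}$. In the general statement this exponent is $\lambda+2+(-w_1)^++(-w_2)^+$; with $w_2=0$ we have $(-w_2)^+=0$, and since the hypothesis $\lambda>-2$ makes $w_1=\lambda+2>0$ we also have $(-w_1)^+=0$, so the exponent collapses to $\lambda+2$ and the $F$-factor reduces to $\|F\|_{L^1_{\lambda+2}(\OmegaLv)}+\|F\|_{L^2(\OmegaLv)}$, which is exactly (\ref{LBH_bound}). I do not expect a genuine obstacle at this stage; the only point to watch is that He's normalisation of $Q$ and his polynomial-weight conventions agree with those fixed in Section~\ref{Landau_Introdcution}, so that the exponent $\lambda+2$ on $S$ and in the Sobolev norms is consistent and the absolute constant may be named $C_H$.
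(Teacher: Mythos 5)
Your proposal matches the paper exactly: the general estimate is simply quoted from Theorem 1.5 of \cite{LBH} without independent proof, and the specialised bound (\ref{LBH_bound}) is obtained by the same substitution $b=w_2=0$, $a=2$, $w_1=\lambda+2$, with the weight on the $L^1$ norm collapsing to $\lambda+2$ because $\lambda>-2$ makes $(-w_1)^+=(-w_2)^+=0$. Nothing further is needed.
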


To see how this leads to an estimate on the $L^2$-norm of the collision operator, first consider the dual space definition of the $L^2$-norm, namely
\begin{equation*}
\|Q(F, G)\|_{L^2(\OmegaLv)} = \sup_{\substack{H \in L^2(\OmegaLv): \\ \|H\|_{L^2(\OmegaLv)} \leq 1}} \left|\int_{\OmegaLv} Q(F, G) H ~\dv \right|.
\end{equation*}
By using the estimate (\ref{LBH_bound}), this means
\begin{equation}
\|Q(F, G)\|_{L^2(\OmegaLv)} \leq C_H \left(\|F\|_{L^1_{\lambda + 2}(\OmegaLv)} + \|F\|_{L^2(\OmegaLv)}\right) \|G\|_{H^2_{\lambda + 2}(\OmegaLv)}. \label{Q_L2_estimate}
\end{equation}

\section{An Extended Isoperimetric Problem for Conservation} \label{Conservation_Estimate}
To begin the analysis required toward deriving an error estimate, it will first be shown that the minimisation problem introduced in Section \ref{semi_discrete_section} to obtain the conserved collision operator $Q_c$ corresponding to its unconserved counterpart $Q_c$ has a unique solution.  This is given in the following lemma.
\begin{lemma} \label{Isoperimetric_Lemma}
	The unique minimiser to the conservation problem (\ref{Minimisation}) is given by
	\begin{equation}
	Q_c(f,f)(\boldsymbol{v}) = Q_u(f,f)(\boldsymbol{v}) - \frac{1}{2}\left(\gamma_1 + \sum_{j = 1}^{d} \gamma_{j+1} v_j + \gamma_{d+2} |\boldsymbol{v}|^2 \right), \label{Conserved_Q}
	\end{equation}
	where $Q_u$ is defined by expression (\ref{Q_u}) and $\{\gamma_j\}_{j=1}^{d+2}$ are the Lagrange multipliers associated with the minimisation problem and are given by
	\begin{align}
	\gamma_1 &= O_d M_1 + O_{d+2} M_{|\boldsymbol{v}|^2}, \nonumber \\
	\gamma_{j+1} &= O_{d+2} M_{v_j}, ~~~~~\textrm{for } j = 1, \ldots, d, \label{Lagrange_multipliers}\\ 
	\gamma_{d+2} &= O_{d+2} M_1 + O_{d+4} M_{|\boldsymbol{v}|^2}, \nonumber
	\end{align}
	where $O_r$ denotes a constant that is $\mathcal{O}({\Lv}^{-r})$ and $M_{\phi(\boldsymbol{v})} := \int_{\OmegaLv} Q_u(f,f) \phi(\boldsymbol{v}) ~\dv$, for $\phi(\boldsymbol{v}) = 1, v_1, \ldots, v_d, |\boldsymbol{v}|^2$, is a moment of $Q_u$ and is the residual error that should be zero if $Q_u$ were conservative.  Furthermore, the $L^2(\OmegaLv)$ function being minimised in (\ref{Minimisation}) can be bounded as
	\begin{align*}
	\|Q_u(f,f) - Q_c(f,f)\|_{L^2(\OmegaLv)} &\leq c_d\left(2\gamma_1^2 {\Lv}^d + \sum_{j = 1}^{d} \gamma_{j+1}^2 {\Lv}^{d+2} + \gamma_{d+2}^2 {\Lv}^{d+4} \right) \\
	&\leq \frac{C_d}{{\Lv}^d}\left(M_1^2 + \sum_{j=1}^{d} M_{v_j}^2 + \frac{M_{|\boldsymbol{v}|^2}}{L^{d+1}} \right),
	\end{align*}
	where $c_d$ is a constant that depends on the dimension $d$.  In particular, for the physically realisable case of $d = 3$, 
	\begin{align*}
	\|Q_u(f,f) - Q_c(f,f)\|_{L^2(\OmegaLv)} &\leq 2\gamma_1^2 {\Lv}^3 + \frac{2}{3} \sum_{j = 1}^{3} \gamma_{j+1}^2 {\Lv}^{5} + 4 \gamma_1 \gamma_d {\Lv}^5 + \frac{38}{15} \gamma_{5}^2 {\Lv}^{7} \\
	&\leq \frac{c_3}{{\Lv}^3}\left(M_1^2 + \sum_{j=1}^{3} M_{v_j}^2 + \frac{M_{|\boldsymbol{v}|^2}}{{\Lv}^{4}} \right).
	\end{align*}
\end{lemma}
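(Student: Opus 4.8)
The plan is to recognise the minimisation (\ref{Minimisation}) as an orthogonal projection in the Hilbert space $L^2(\OmegaLv)$ and then make that projection explicit. First I would note that each map $X \mapsto \int_{\OmegaLv} X\phi\,\dv$ with $\phi \in \{1, v_1, \ldots, v_d, |\boldsymbol{v}|^2\}$ is a bounded linear functional on $L^2(\OmegaLv)$ (the cube is bounded, so each $\phi \in L^2(\OmegaLv)$), whence $\mathcal{B} = \bigl(\mathrm{span}\{1, v_1, \ldots, v_d, |\boldsymbol{v}|^2\}\bigr)^{\perp}$ is a closed subspace. Since $Q_u(f,f)$ lies in $L^2(\OmegaLv)$ (being a trigonometric polynomial in $\boldsymbol{v}$), the projection theorem yields a unique minimiser $Q_c(f,f)$, characterised by $Q_u(f,f) - Q_c(f,f) \perp \mathcal{B}$, i.e.\ $Q_u(f,f) - Q_c(f,f) \in \mathcal{B}^{\perp} = \mathrm{span}\{1, v_1, \ldots, v_d, |\boldsymbol{v}|^2\}$ (a finite-dimensional, hence closed, subspace). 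This is exactly the Euler--Lagrange condition obtained by differentiating the Lagrangian in the $X$ variable, and it forces the representation (\ref{Conserved_Q}) for some coefficients $\gamma_1, \ldots, \gamma_{d+2}$.

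To pin down the $\gamma_j$ I would impose the $d+2$ conservation constraints $\int_{\OmegaLv} Q_c(f,f)\,\phi_\ell\,\dv = 0$. Substituting (\ref{Conserved_Q}) turns these into the linear system $\boldsymbol{M} = \tfrac12\,G\,\boldsymbol{\gamma}$, where $M_{\phi_\ell} = \int_{\OmegaLv} Q_u(f,f)\,\phi_\ell\,\dv$ and $G$ is the Gram matrix of $\{1, v_1, \ldots, v_d, |\boldsymbol{v}|^2\}$ over the cube $\OmegaLv = (-\Lv,\Lv)^d$. The decisive simplification is parity: any Gram entry pairing an odd monomial with an even one vanishes, so $G$ is block diagonal with $d$ scalar blocks $\int_{\OmegaLv} v_j^2\,\dv = \mathcal{O}(\Lv^{d+2})$ and one $2\times 2$ block coupling $1$ and $|\boldsymbol{v}|^2$, with entries $\int_{\OmegaLv} 1\,\dv = \mathcal{O}(\Lv^{d})$, $\int_{\OmegaLv} |\boldsymbol{v}|^2\,\dv = \mathcal{O}(\Lv^{d+2})$, $\int_{\OmegaLv} |\boldsymbol{v}|^4\,\dv = \mathcal{O}(\Lv^{d+4})$. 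Inverting the scalar blocks gives $\gamma_{j+1} = \mathcal{O}(\Lv^{-(d+2)})M_{v_j} = O_{d+2}M_{v_j}$ at once; inverting the $2\times 2$ block, whose determinant $\int_{\OmegaLv} 1 \cdot \int_{\OmegaLv} |\boldsymbol{v}|^4 - \bigl(\int_{\OmegaLv}|\boldsymbol{v}|^2\bigr)^2 = \mathcal{O}(\Lv^{2d+4})$ is strictly positive because $1$ and $|\boldsymbol{v}|^2$ are linearly independent on $\OmegaLv$ (strict Cauchy--Schwarz), and reading off the orders of its cofactors divided by this determinant, produces $\gamma_1 = O_d M_1 + O_{d+2} M_{|\boldsymbol{v}|^2}$ and $\gamma_{d+2} = O_{d+2}M_1 + O_{d+4}M_{|\boldsymbol{v}|^2}$, i.e.\ exactly (\ref{Lagrange_multipliers}). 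For $d = 3$ I would simply evaluate the integrals ($\int_{\OmegaLv}1 = 8\Lv^3$, $\int_{\OmegaLv}v_j^2 = \tfrac{8}{3}\Lv^5$, $\int_{\OmegaLv}|\boldsymbol{v}|^2 = 8\Lv^5$, $\int_{\OmegaLv}|\boldsymbol{v}|^4 = \tfrac{152}{15}\Lv^7$) to obtain the explicit constants.

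For the norm estimate I would expand $\|Q_u(f,f) - Q_c(f,f)\|_{L^2(\OmegaLv)}^2 = \tfrac14\,\bigl\|\gamma_1 + \sum_{j=1}^d \gamma_{j+1}v_j + \gamma_{d+2}|\boldsymbol{v}|^2\bigr\|_{L^2(\OmegaLv)}^2$ and again invoke parity to discard every cross term except the one pairing $1$ with $|\boldsymbol{v}|^2$; the surviving terms match the right-hand side of the first displayed bound, using $\|1\|_{L^2(\OmegaLv)}^2 = \mathcal{O}(\Lv^{d})$, $\|v_j\|_{L^2(\OmegaLv)}^2 = \mathcal{O}(\Lv^{d+2})$, $\||\boldsymbol{v}|^2\|_{L^2(\OmegaLv)}^2 = \mathcal{O}(\Lv^{d+4})$ and $\langle 1, |\boldsymbol{v}|^2\rangle_{L^2(\OmegaLv)} = \mathcal{O}(\Lv^{d+2})$, with the same explicit integrals yielding the coefficients $2$, $\tfrac23$, $4$, $\tfrac{38}{15}$ when $d=3$. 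Inserting the expressions (\ref{Lagrange_multipliers}) for the $\gamma_j$ into that inequality and retaining only the dominant (slowest-decaying) power of $\Lv$ in each group of terms then produces the second inequality, the $M_{|\boldsymbol{v}|^2}$ contributions being pushed into higher-order corrections in $\Lv^{-1}$. The only place that needs genuine care is the bookkeeping of powers of $\Lv$ through the $2\times 2$ inverse — in particular checking that the leading terms in the determinant do not cancel, so that the orders $O_d, O_{d+2}, O_{d+4}$ in (\ref{Lagrange_multipliers}) are correct — and beyond that I do not anticipate any real obstacle.
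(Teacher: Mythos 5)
Your proposal is correct and is essentially the argument the paper relies on: the paper does not reprove the lemma but cites the Lagrange-multiplier computation of Lemma 3.2 in \cite{BoltzmannConvergence}, and your projection-theorem formulation (minimiser characterised by $Q_u - Q_c \in \mathcal{B}^{\perp} = \mathrm{span}\{1, v_1, \ldots, v_d, |\boldsymbol{v}|^2\}$, then inversion of the parity-block-diagonal Gram system to extract the $\gamma_j$ and the explicit $d=3$ integrals $8\Lv^3$, $\tfrac{8}{3}\Lv^5$, $8\Lv^5$, $\tfrac{152}{15}\Lv^7$) is exactly that calculus-of-variations argument in Hilbert-space language, with your non-cancellation check of the $2\times 2$ determinant confirming the orders $O_d$, $O_{d+2}$, $O_{d+4}$. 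No gap.
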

The proof of this is mainly an exercise in calculus of variations and is no different for the current Fokker-Planck-Landau setting than it is for the Boltzmann equation, as the explicit form of $Q_u$ is never required.  The full proof can be found under lemma 3.2 in \cite{BoltzmannConvergence}.  An important point to notice, however, is that the difference in $Q_u$ and $Q_c$ only depends on squares of the moments $M_1$, $M_{v_1}$, \ldots,$M_{v_d}$, $M_{|\boldsymbol{v}|^2}$, which are going to be small due to being errors in quantities that should be zero.  

The next step is to find a bound on the moments of this correction, as in the following theorem.  
\begin{theorem} \label{Q_moment_theorem}
	For $f \in L^2(\OmegaLv)$, the $L^2$-error in the moments of the difference between the conserved and unconserved collision operators is proportional to the spectral error plus a negligible term inversely proportional to the size of the domain.  More precisely, if $Q_c$ is the conserved operator given by (\ref{Conserved_Q}) and $Q_u$ is the unconserved operator given by (\ref{Q_u}), for any $k \geq 0$ and $k' \geq 2$,
	\begin{align}
	&\left\|(Q_u(f,f) - Q_c(f,f)) \langle \boldsymbol{v} \rangle^{k}\right\|_{L^2(\OmegaLv)} \nonumber \\
	\leq&~ \frac{C_d}{\sqrt{2k + 1}} \biggl(O_{-k} \bigl| \bigl| \left(\Pi^N_{2\Lv} - 1 \right) Q(\Chi f, \Chi f)  \bigr| \bigr|_{L^2(\OmegaLv)} \nonumber \\
	&~~~~~~~~~~~~~~ + O_{\frac{d}{2} + k' - k} \Bigl(m_{0}(f) m_{k' + \lambda}(f) + m_{\lambda}(f) m_{k'}(f) \nonumber \\
	&~~~~~~~~~~~~~~~~~~~~~~~~~~~~~ + m_{2}(f) m_{k' + \lambda - 2}(f) + m_{\lambda + 2}(f) m_{k' - 2}(f)\Bigr)\biggr), \label{Q_moment_error_k}
	\end{align}
	where $C_d$ is a constant depending only on the dimension $d$ and $O_r = \mathcal{O}({\Lv}^{-r})$.  In particular, for $k = 0$ and $k' = 2$, this reduces to
	\begin{align}
	&\left\|Q_u(f,f) - Q_c(f,f) \right\|_{L^2(\OmegaLv)} \nonumber \\
	\leq &~ C_d \biggl(\bigl| \bigl| \left(\Pi^N_{2\Lv} - 1 \right) Q(\Chi f, \Chi f) \bigr| \bigr|_{L^2(\OmegaLv)} \nonumber \\
	&~~~~~~~~~~~~~~~~~~~~~~~~~~~~~~ + O_{\frac{d}{2} + 2} \Bigl(m_{0}(f) m_{2 + \lambda}(f) + m_{\lambda}(f) m_{2}(f) \Bigr)\biggr), \label{Q_moment_error_0}
	\end{align}
	for a potentially different constant $C_d$.
\end{theorem}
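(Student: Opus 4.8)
\medskip

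The plan is to combine the bound on $\|Q_u - Q_c\|_{L^2(\OmegaLv)}$ from Lemma~\ref{Isoperimetric_Lemma} with estimates on the residual moments $M_{\phi}$, since the only quantities appearing on the right-hand side of that lemma are $M_1^2, M_{v_1}^2,\dots, M_{v_d}^2$ and $M_{|\boldsymbol v|^2}^2$. So the first step is to express, for each collision invariant $\phi(\boldsymbol v)\in\{1,v_1,\dots,v_d,|\boldsymbol v|^2\}$, the residual
\[
M_{\phi} = \int_{\OmegaLv} Q_u(f,f)\,\phi\,\dv = \int_{\OmegaLv} \Pi^N_{2\Lv} Q(\gchi,\gchi)\,\phi\,\dv .
\]
I would split this as
\[
M_{\phi} = \int_{\OmegaLv}\bigl(\Pi^N_{2\Lv}-1\bigr) Q(\Chi f,\Chi f)\,\phi\,\dv \;+\; \int_{\OmegaLv} Q(\Chi f,\Chi f)\,\phi\,\dv .
\]
The first piece is controlled by Cauchy--Schwarz on $\OmegaLv$: it is bounded by $\|(\Pi^N_{2\Lv}-1)Q(\Chi f,\Chi f)\|_{L^2(\OmegaLv)}$ times $\|\phi\|_{L^2(\OmegaLv)}$, and the latter is an explicit power of $\Lv$ — of order $\Lv^{d/2}$ for $\phi=1$, $\Lv^{d/2+1}$ for $\phi=v_j$, $\Lv^{d/2+2}$ for $\phi=|\boldsymbol v|^2$ (these are where the $O_{-k}$-type and $\sqrt{2k+1}$ factors, after the weight $\langle\boldsymbol v\rangle^k$ is put back in, will come from; for the weighted statement one carries an extra $\langle\boldsymbol v\rangle^k$ through this same computation).

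\medskip

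The second step handles $\int_{\OmegaLv} Q(\Chi f,\Chi f)\,\phi\,\dv$, and here the key point is that on the \emph{whole} space the collision invariants annihilate $Q$, by the conservation identity~(\ref{conservation_L}); the residual is therefore entirely a boundary/truncation effect coming from the cut-off $\Chi$ and the restriction of the integral to $\OmegaLv$. The cleanest route is to use the weak form~(\ref{Q^a_weak}): since $\phi\in\{1,v_1,\dots,v_d,|\boldsymbol v|^2\}$ has $\partial^2\phi/\partial v_i\partial v_j$ constant and $\partial\phi/\partial v_i$ linear, the expression $\int Q(\Chi f,\Chi f)\phi = \int \Chi f\,(\bar a_{i,j}\partial_{ij}\phi + 2\bar b_i\partial_i\phi)$ reduces to a finite linear combination of convolution-type integrals of $\Chi f$ against $a_{i,j}$ and $b_i$ over the part of space outside $\Omega_{2\Lv}$ where cancellation fails — equivalently, one estimates the $L^1$ pairing directly using that $|a_{i,j}(\boldsymbol z)|\lesssim|\boldsymbol z|^{\lambda+2}$ and $|b_i(\boldsymbol z)|\lesssim|\boldsymbol z|^{\lambda+1}$. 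Expanding $|\boldsymbol v-\boldsymbol v_*|^{\lambda+2}\lesssim \langle\boldsymbol v\rangle^{\lambda+2}+\langle\boldsymbol v_*\rangle^{\lambda+2}$ (and the analogous splitting for the weight $\langle\boldsymbol v\rangle^{k'-2}$ or $\langle\boldsymbol v\rangle^{k'}$ introduced by $\phi$) produces exactly the four products of moments $m_0(f)m_{k'+\lambda}(f)$, $m_\lambda(f)m_{k'}(f)$, $m_2(f)m_{k'+\lambda-2}(f)$, $m_{\lambda+2}(f)m_{k'-2}(f)$ displayed in~(\ref{Q_moment_error_k}); one is free to trade moment weight for powers of $\Lv^{-1}$ because the integration is over $\Omega_{2\Lv}$, which is where the factor $O_{d/2+k'-k}$ originates. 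Using $|\boldsymbol v|\le \sqrt d\,\Lv$ on the domain to convert excess powers of $\langle\boldsymbol v\rangle$ into the power of $\Lv$ gives the stated gain.

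\medskip

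The third step is bookkeeping: feed these bounds on $|M_\phi|$ into the Lemma~\ref{Isoperimetric_Lemma} estimate $\|Q_u-Q_c\|_{L^2(\OmegaLv)}^2 \lesssim \Lv^{-d}\bigl(M_1^2+\sum_j M_{v_j}^2 + \Lv^{-(d+1)}M_{|\boldsymbol v|^2}^2\bigr)$ (or its weighted version, which costs an extra $\langle\boldsymbol v\rangle^{2k}$ and the $(2k+1)^{-1}$ normalization from $\int_{\OmegaLv}\langle\boldsymbol v\rangle^{2k}\dv$), take square roots, and collect terms; the $\Lv^{-d}$ from the Lemma cancels against the $\Lv^{d/2}$-type weights $\|\phi\,\langle\boldsymbol v\rangle^k\|_{L^2(\OmegaLv)}$ so that the spectral term appears with a clean $O_{-k}$ (and with no $\Lv$ power at all when $k=0$), while the moment term retains the small factor $O_{d/2+k'-k}$. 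Specializing to $k=0$, $k'=2$ drops the $m_2(f)m_{k'+\lambda-2}(f)$ and $m_{\lambda+2}(f)m_{k'-2}(f)$ contributions into the first two (since then $k'+\lambda-2=\lambda$ and $k'-2=0$), yielding~(\ref{Q_moment_error_0}).

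\medskip

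The main obstacle is the second step — showing that $\int_{\OmegaLv} Q(\Chi f,\Chi f)\,\phi\,\dv$ is genuinely small and producing the \emph{precise} combination of moment products with the right powers of $\Lv$. The subtlety is that $Q(\Chi f,\Chi f)$ does \emph{not} exactly conserve the invariants (the cut-off $\Chi$ breaks it, and the integral is over $\OmegaLv$ not $\Rd$), so one cannot simply invoke~(\ref{conservation_L}); one must quantify the defect, which means carefully tracking where the cut-off support $\mathrm{supp}(\Chi)\subset(1-\tfrac15\delchi)[-1,1]^d$ and the domain $\OmegaLv$ interact, and being disciplined about the weight splitting so that no spurious moment of order higher than those listed is generated. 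Everything else — Cauchy--Schwarz, Parseval, the explicit $\|\phi\,\langle\boldsymbol v\rangle^k\|_{L^2(\OmegaLv)}$ computations, and the algebra of plugging into Lemma~\ref{Isoperimetric_Lemma} — is routine.
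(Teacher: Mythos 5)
Your route is essentially the paper's own proof: write $Q_u-Q_c$ as the explicit second-order polynomial with the Lagrange multipliers from Lemma~\ref{Isoperimetric_Lemma}, reduce everything to the residual moments $M_{\phi}$, split each $M_{\phi}$ into the spectral-projection error (handled by Cauchy--Schwarz against $\|\phi\langle\boldsymbol{v}\rangle^{k}\|_{L^2(\OmegaLv)}$) plus the conservation defect caused by truncation, estimate that defect through the weak form with the kernel bounds $|a_{i,j}(\boldsymbol{z})|\lesssim|\boldsymbol{z}|^{\lambda+2}$, $|b_i(\boldsymbol{z})|\lesssim|\boldsymbol{z}|^{\lambda+1}$ (this is exactly the paper's Lemma~\ref{Q_decay}), and then do the bookkeeping in which the $O_{d}$-type factors in the multipliers cancel the $O_{-d/2}$-type norms of the invariants.

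One point needs fixing, because as literally written it would not produce the small factor: you claim the gain $O_{k'}$ comes from ``trading moment weight for powers of $\Lv^{-1}$'' using $|\boldsymbol{v}|\le\sqrt{d}\,\Lv$ on the bounded domain. On a bounded domain, bounding a positive power of $\langle\boldsymbol{v}\rangle$ by a power of $\Lv$ only ever yields \emph{positive} powers of $\Lv$ (a loss), never the negative power you need. The defect term is, by the exact conservation $\int_{\Rd}Q(\Chi f,\Chi f)\phi\,\dv=0$, an integral over the \emph{exterior} region $\Rd\setminus\OmegaLv$; there one inserts $1=|\boldsymbol{v}|^{-k'}|\boldsymbol{v}|^{k'}$, uses that $|\boldsymbol{v}|$ is bounded \emph{below} by (a multiple of) $\Lv$ outside the cube so that $|\boldsymbol{v}|^{-k'}=O_{k'}$, and absorbs the remaining $|\boldsymbol{v}|^{k'}$ together with the kernel weights into the four moment products $m_0m_{k'+\lambda}$, $m_{\lambda}m_{k'}$, $m_2m_{k'+\lambda-2}$, $m_{\lambda+2}m_{k'-2}$. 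With that mechanism stated correctly (and with the defect integral placed over $\Rd\setminus\OmegaLv$, not over or outside $\Omega_{2\Lv}$), the rest of your outline, including the specialisation $k=0$, $k'=2$, goes through as in the paper.
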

\begin{proof}
	First, by using the expression for the conserved collision operator (\ref{Conserved_Q}) in place of $Q_c$ and then the triangle inequality,
	\begin{align}
	&\left\|(Q_u(f,f) - Q_c(f,f)) \langle \boldsymbol{v} \rangle^{k}\right\|_{L^2(\OmegaLv)} \nonumber \\
	= ~&\left\| \frac{1}{2}\left(\gamma_1 + \sum_{j = 1}^{d} \gamma_{j+1} v_j + \gamma_{d+2} |\boldsymbol{v}|^2 \right) \langle \boldsymbol{v} \rangle^{k}\right\|_{L^2(\OmegaLv)}  \nonumber \\
	\leq ~&\frac{1}{2}\biggl(|\gamma_1| \left\| \langle \boldsymbol{v} \rangle^{k}\right\|_{L^2(\OmegaLv)} + \sum_{j = 1}^{d} |\gamma_{j+1}| \left\|v_j \langle \boldsymbol{v} \rangle^{k}\right\|_{L^2(\OmegaLv)} \nonumber \\
	&~~~~~~~~~~~~~~~~~~~~~~~~~~~~~~~~~~~~~~~+ |\gamma_{d+2}| \left\| |\boldsymbol{v}|^2 \langle \boldsymbol{v} \rangle^{k} \right\|_{L^2(\OmegaLv)} \biggr). \label{correction_bound1}
	\end{align}
	Here, by using spherical polar co-ordinates and noticing that $\OmegaLv \subset B_{\sqrt{d}\Lv}$, it can be shown that
	\begin{flalign}
	&&\left\| \langle \boldsymbol{v} \rangle^{k}\right\|_{L^2(\OmegaLv)} &\leq \left(\frac{\omega_{d-1} d^{\frac{d}{2}}}{2k + 1} \sum_{j = 0}^{2k} {2k + 1 \choose j + 1} {\Lv}^{j}\right)^{\frac{1}{2}} {\Lv}^{\frac{d}{2}}, && \label{v^k_L2_bound}\\
	&&\left\| v_j \langle \boldsymbol{v} \rangle^{k}\right\|_{L^2(\OmegaLv)} &\leq \left(\frac{\omega_{d-1} d^{\frac{d}{2} + 1}}{2k + 1} \sum_{j = 0}^{2k} {2k + 1 \choose j + 1} {\Lv}^{j}\right)^{\frac{1}{2}} {\Lv}^{\frac{d}{2} + 1} && \\
	\textrm{and } && \left\| |\boldsymbol{v}|^2 \langle \boldsymbol{v} \rangle^{k}\right\|_{L^2(\OmegaLv)} & \leq \left(\frac{\omega_{d-1} d^{\frac{d}{2} + 2}}{2k + 1} \sum_{j = 0}^{2k} {2k + 1 \choose j + 1} {\Lv}^{j}\right)^{\frac{1}{2}} {\Lv}^{\frac{d}{2} + 2}, &&
	\end{flalign}
	where it was assumed that $k$ is an integer.  This can be done without loss of generality by noticing that $\langle \boldsymbol{v} \rangle^{k} \leq \langle \boldsymbol{v} \rangle^{\lceil k \rceil}$, so the next highest integer can replace $k$ on the right-hand side if necessary.   
	
	Using these estimates in (\ref{correction_bound1}) and further bounding by the largest power of $d$ gives
	\begin{align}
	&\left\|(Q_u(f,f) - Q_c(f,f)) \langle \boldsymbol{v} \rangle^{k}\right\|_{L^2(\OmegaLv)} \nonumber \\
	\leq ~&\frac{1}{2} \left(\frac{\omega_{d-1} d^{\frac{d}{2} + 2}}{2k + 1} \sum_{j = 0}^{2k} {2k + 1 \choose j + 1} {\Lv}^{j}\right)^{\frac{1}{2}} \biggl(|\gamma_1| {\Lv}^{\frac{d}{2}} + \sum_{j = 1}^{d} |\gamma_{j+1}| {\Lv}^{\frac{d}{2} + 1} \nonumber \\
	&~~~~~~~~~~~~~~~~~~~~~~~~~~~~~~~~~~~~~~~~~~~~~~~~~~~~~~~~~~~~~~~~+ |\gamma_{d+2}| {\Lv}^{\frac{d}{2} + 2} \biggr). \label{correction_bound2}
	\end{align}
	This reduces the problem to finding bounds on the Lagrange multipliers $\gamma_1,$ $\gamma_2, \ldots, \gamma_{d+2}$.  As in expressions (\ref{Lagrange_multipliers}), however, these all depend on the moments of the unconserved collision operator $M_{\phi(\boldsymbol{v})}$, for the collision invariants $\phi(\boldsymbol{v}) = 1, v_1, \ldots, v_d, |\boldsymbol{v}|^2$, so these are the quantities which must be estimated.  Here, integrating the true collision operator $Q$ against the collision invariants over all $\Rd$ gives zero and so
	\begin{align*}
	\left|M_{\phi(\boldsymbol{v})} \right| &= \left|\int_{\OmegaLv} Q_u(f,f) \phi(\boldsymbol{v}) ~\dv \right| \nonumber \\
	&= \left|\int_{\OmegaLv} Q_u(f,f) \phi(\boldsymbol{v}) ~\dv - \int_{\Rd} Q(\Chi f,\Chi f) \phi(\boldsymbol{v}) ~\dv \right| \nonumber \\
	&= \biggl|\int_{\OmegaLv} \left(Q_u(f,f) - Q(\Chi f,\Chi f) \right) \phi(\boldsymbol{v}) ~\dv \nonumber\\
	&~~~~~~~~~~~~~~~~~~~~~~~~~~~~ - \int_{\Rd \backslash \OmegaLv} Q(\Chi f,\Chi f) \phi(\boldsymbol{v}) ~\dv \biggr| \nonumber \\
	&= \biggl|\int_{\OmegaLv} \left(\Pi^N_{2\Lv} - 1 \right) Q(\Chi f, \Chi f) \phi(\boldsymbol{v}) ~\dv \nonumber\\
	&~~~~~~~~~~~~~~~~~~~~~~~~~~~~ - \int_{\Rd \backslash \OmegaLv} Q(\Chi f,\Chi f) \phi(\boldsymbol{v}) ~\dv \biggr|,
	\end{align*}
	by using the definition of $Q_u$ in (\ref{Q_u}).  So, by the triangle inequality and Cauchy-Schwarz,
	\begin{equation*}
	\left|M_{\phi(\boldsymbol{v})} \right| \leq \bigl| \bigl| \left(\Pi^N_{2\Lv} - 1 \right) Q(\Chi f, \Chi f)  \bigr| \bigr|_{L^2(\OmegaLv)} \bigl| \bigl| \phi(\boldsymbol{v}) \bigr| \bigr|_{L^2(\OmegaLv)} + I_{\phi(\boldsymbol{v})},
	\end{equation*}
	\begin{flalign*}
	\textrm{for } && I_{\phi(\boldsymbol{v})} := \left|\int_{\Rd \backslash \OmegaLv} Q(\Chi f,\Chi f) \phi(\boldsymbol{v}) ~\dv \right|. &&
	\end{flalign*}
	
	Now, the $L^2$-norms of the collision invariants are given by
	\begin{flalign}
	&& \bigl| \bigl| 1 \bigr| \bigr|_{L^2(\OmegaLv)} = (2\Lv)^{\frac{d}{2}} = O_{-\frac{d}{2}}, ~~&~~~\bigl| \bigl| v_j \bigr| \bigr|_{L^2(\OmegaLv)} = \frac{1}{2 \sqrt{3}}(2\Lv)^{\frac{d}{2}+1} = O_{-(\frac{d}{2} + 1)} && \nonumber \\
	\textrm{and } && \bigl| \bigl\| |\boldsymbol{v}|^2 \bigr| \bigr|_{L^2(\OmegaLv)} = &\frac{5d^2 + 5d - 9}{180}(2\Lv)^{\frac{d}{2} + 2} = O_{-(\frac{d}{2} + 2)}, && \label{collision_invariant_L2}
	\end{flalign}
	which means, by using the expressions for the Lagrange multipliers in (\ref{Lagrange_multipliers}),
	\begin{align*}
	\gamma_1 =&~ O_d \left(O_{-\frac{d}{2}} \bigl| \bigl| \left(\Pi^N_{2\Lv} - 1 \right) Q(\Chi f, \Chi f)  \bigr| \bigr|_{L^2(\OmegaLv)} + I_{1} \right) \\
	& + O_{d+2} \left(O_{-(\frac{d}{2} + 2)} \bigl| \bigl| \left(\Pi^N_{2\Lv} - 1 \right) Q(\Chi f, \Chi f)  \bigr|  \bigr|_{L^2(\OmegaLv)} + I_{|\boldsymbol{v}|^2} \right) \\
	=&~ O_{\frac{d}{2}} \bigl| \bigl| \left(\Pi^N_{2\Lv} - 1 \right) Q(\Chi f, \Chi f)  \bigr| \bigr|_{L^2(\OmegaLv)} + O_d I_{1} + O_{d+2} I_{|\boldsymbol{v}|^2}, \\
	\gamma_{j+1} =&~ O_{d+2} \left(O_{-(\frac{d}{2} + 1)} \bigl| \bigl| \left(\Pi^N_{2\Lv} - 1 \right) Q(\Chi f, \Chi f)  \bigr| \bigr|_{L^2(\OmegaLv)} + I_{\boldsymbol{v}_j} \right)\\
	=&~ O_{\frac{d}{2} + 1} \bigl| \bigl| \left(\Pi^N_{2\Lv} - 1 \right) Q(\Chi f, \Chi f)  \bigr| \bigr|_{L^2(\OmegaLv)} + O_{d+2} I_{\boldsymbol{v}_j}, ~~~~~\textrm{for } j = 1, \ldots, d, \\ 
	\gamma_{d+2} =&~ O_{d+2} \left(O_{-\frac{d}{2}} \bigl| \bigl| \left(\Pi^N_{2\Lv} - 1 \right) Q(\Chi f, \Chi f)  \bigr| \bigr|_{L^2(\OmegaLv)} + I_{1} \right) \\
	& + O_{d+4} \left(O_{-(\frac{d}{2} + 2)} \bigl| \bigl| \left(\Pi^N_{2\Lv} - 1 \right) Q(\Chi f, \Chi f)  \bigr|  \bigr|_{L^2(\OmegaLv)} + I_{|\boldsymbol{v}|^2} \right) \\
	=&~ O_{\frac{d}{2} + 2} \bigl| \bigl| \left(\Pi^N_{2\Lv} - 1 \right) Q(\Chi f, \Chi f)  \bigr| \bigr|_{L^2(\OmegaLv)} + O_{d+2} I_{1} + O_{d+4} I_{|\boldsymbol{v}|^2}.
	\end{align*}
	
	Then, using these expressions in the bounds on the moments in (\ref{correction_bound2}) and noting that the first $O_{\frac{d}{2}+l}$ term cancels perfectly with the power of $\Lv$ in the product with the Lagrange multiplier, for the constant $C_d := \frac{1}{2} \sqrt{\omega_{d-1} d^{\frac{d}{2} + 2}}$ depending only on the dimension,
	\begin{align}
	&\left\|(Q_u(f,f) - Q_c(f,f)) \langle \boldsymbol{v} \rangle^{k}\right\|_{L^2(\OmegaLv)} \nonumber \\
	\leq ~& \frac{C_d}{\sqrt{2k + 1}} \left(\sum_{j = 0}^{2k} {2k + 1 \choose j + 1} {\Lv}^{j}\right)^{\frac{1}{2}} \biggl(\bigl| \bigl| \left(\Pi^N_{2\Lv} - 1 \right) Q(\Chi f, \Chi f)  \bigr| \bigr|_{L^2(\OmegaLv)} \nonumber \\
	&~~~~~~~~~~~~~~~~~~~~~~~~~ + O_{\frac{d}{2}} I_{1} + \sum_{j = 1}^{d} O_{\frac{d}{2}+1} I_{\boldsymbol{v}_j} + O_{\frac{d}{2}+2} I_{|\boldsymbol{v}|^2} \biggr). \label{correction_bound3} 
	\end{align}
	
	Finally, for the bounds on $I_{1}$, $I_{\boldsymbol{v}_j}$ and $I_{|\boldsymbol{v}|^2}$, since the first argument in $Q$ disappears on the interior boundary, Lemma \ref{Q_decay} from the appendix can be used.  The bound on $I_{1}$ is merely an application of the statement of the lemma.  For $I_{|\boldsymbol{v}|^2}$, however, note that
	\begin{align*}
	\frac{1}{{\Lv}^{2}} \left| \int_{\Rd \backslash \OmegaLv} Q(\Chi f,\Chi f) |\boldsymbol{v}|^2 ~\dv \right| &= \frac{1}{{\Lv}^{2}} \left| \int_{\Rd \backslash \OmegaLv} Q(\Chi f,\Chi f) |\boldsymbol{v}|^{k'} |\boldsymbol{v}|^{2 - k'} ~\dv \right| \\
	&\leq O_{k'} \left| \int_{\Rd \backslash \OmegaLv} Q(\Chi f,\Chi f) |\boldsymbol{v}|^{k'} ~\dv \right|
	\end{align*}
	and the remainder of the proof of the lemma follows in the same way.  Then, for $I_{\boldsymbol{v}_j}$,
	\begin{align*}
	\frac{1}{\Lv} \left| \int_{\Rd \backslash \OmegaLv} Q(\Chi f,\Chi f) \boldsymbol{v}_j ~\dv \right| &\leq \frac{1}{\Lv} \int_{\Rd \backslash \OmegaLv} |Q(\Chi f,\Chi f)\|\boldsymbol{v}\|\boldsymbol{v}|^{k' - 1} |\boldsymbol{v}|^{-k' + 1} ~\dv \\
	&\leq O_{k'} \int_{\Rd \backslash \OmegaLv} |Q(\Chi f,\Chi f)\|\boldsymbol{v}|^{k'} ~\dv
	\end{align*}
	and again the proof of the lemma follows.  Strictly speaking, the weak form identity should be applied to $Q$ before the absolute values are moved inside the integral but the result remains the same.  In the end, for any $k' > 2$,
	\begin{align*}
	\max(I_{1}, ~O_{1} I_{\boldsymbol{v}_j}, ~O_{2} I_{|\boldsymbol{v}|^2}) &= O_{k'} \Bigl(m_{0}(\Chi f) m_{k' + \lambda}(\Chi f) + m_{\lambda}(\Chi f) m_{k'}(\Chi f) \nonumber \\
	&~~~~~~ + m_{2}(\Chi f) m_{k' + \lambda - 2}(\Chi f) + m_{\lambda + 2}(\Chi f) m_{k' - 2}(\Chi f)\Bigr).
	\end{align*}
	
	Therefore, using this in (\ref{correction_bound3}); bounding the extension operator \mbox{\Large$\chi$} by 1; and considering the expression in the sum as an $O_{-k}$ term gives
	\begin{align}
	&\left\|(Q_u(f,f) - Q_c(f,f)) \langle \boldsymbol{v} \rangle^{k}\right\|_{L^2(\OmegaLv)} \nonumber \\
	\leq &~ \frac{C_d}{\sqrt{2k + 1}} \biggl(O_{-k} \bigl| \bigl| \left(\Pi^N_{2\Lv} - 1 \right) Q(\Chi f, \Chi f) \bigr| \bigr|_{L^2(\OmegaLv)} \nonumber \\
	&~~~~~~~~~~~~~~~ + O_{\frac{d}{2} + k' - k} \Bigl(m_{0}(f) m_{k' + \lambda}(f) + m_{\lambda}(f) m_{k'}(f) \nonumber \\
	&~~~~~~~~~~~~~~~~~~~~~~~~~~~~~~~ + m_{2}(f) m_{k' + \lambda - 2}(f) + m_{\lambda + 2}(f) m_{k' - 2}(f)\Bigr)\biggr), \label{UnconservedQ_moment_bound}
	\end{align}
	which is the required result (\ref{Q_moment_error_k}) in Theorem \ref{Q_moment_theorem}.
	
	In addition, when $k = 0$, all of the initial calculations in this proof follow through and the bound (\ref{correction_bound3}) becomes
	\begin{align}
	&\left\|Q_u(f,f) - Q_c(f,f) \right\|_{L^2(\OmegaLv)} \nonumber \\
	\leq ~& C_d \biggl(\bigl| \bigl| \left(\Pi^N_{2\Lv} - 1 \right) Q(\Chi f, \Chi f)  \bigr| \bigr|_{L^2(\OmegaLv)} + O_{\frac{d}{2}} I_{1} + \sum_{j = 1}^{d} O_{\frac{d}{2}+1} I_{\boldsymbol{v}_j} + O_{\frac{d}{2}+2} I_{|\boldsymbol{v}|^2} \biggr). \label{correction_bound4} 
	\end{align}
	Then Lemma \ref{Q_decay} is applied again to bound the $I_{\phi(\boldsymbol{v})}$ terms, but this time with $k' = 2$ it should be noted that direct application of the lemma gives
	\begin{align*}
	&\left| \int_{\Rd \backslash \OmegaLv} Q(\Chi f,\Chi f) ~\dv \right| \\
	\leq~ & 2 \Bigl(m_{0}(\Chi f) m_{2 + \lambda}(\Chi f) + m_{\lambda}(\Chi f) m_{2}(\Chi f) \Bigr) \\
	&~~~~~~ + 2 \Bigl(m_{2}(\Chi f) m_{\lambda}(\Chi f) + m_{\lambda + 2}(\Chi f) m_{0}(\Chi f)\Bigr) \\
	=~ & \frac{4}{d {\Lv}^2} \Bigl(m_{0}(\Chi f) m_{2 + \lambda}(\Chi f) + m_{\lambda}(\Chi f) m_{2}(\Chi f) \Bigr),
	\end{align*}
	which gives the result for $I_1$.  
	
	So, in a similar way to the more general case above and introducing a $|\boldsymbol{v}|^2$ term in the integrand first then continuing through the proof of the lemma, for $I_{\boldsymbol{v}_j}$,
	\begin{align*}
	\frac{1}{\Lv} \left| \int_{\Rd \backslash \OmegaLv} Q(\Chi f,\Chi f) v_j ~\dv \right| &\leq \frac{1}{\Lv} \left| \int_{\Rd \backslash \OmegaLv} Q(\Chi f,\Chi f) |\boldsymbol{v}|^{2} |\boldsymbol{v}|^{-1} ~\dv \right| \\
	&\leq \frac{4}{\sqrt{d} {\Lv}^2} \Bigl(m_{0}(\Chi f) m_{2 + \lambda}(\Chi f) \\
	&~~~~~~~~~~~~~~~~~~~~~~~~+ m_{\lambda}(\Chi f) m_{2}(\Chi f) \Bigr)
	\end{align*}
	and for $I_{|\boldsymbol{v}|^2}$,
	\begin{multline*}
	\frac{1}{{\Lv}^2} \left| \int_{\Rd \backslash \OmegaLv} Q(\Chi f,\Chi f) |\boldsymbol{v}|^2 ~\dv \right| \leq \frac{4}{{\Lv}^2} \Bigl(m_{0}(\Chi f) m_{2 + \lambda}(\Chi f) \\
	+ m_{\lambda}(\Chi f) m_{2}(\Chi f) \Bigr).
	\end{multline*}
	This means
	\begin{equation*}
	\max(I_{1}, ~O_{1} I_{\boldsymbol{v}_j}, ~O_{2} I_{|\boldsymbol{v}|^2}) = \frac{4}{{\Lv}^2} \Bigl(m_{0}(\Chi f) m_{2 + \lambda}(\Chi f) + m_{\lambda}(\Chi f) m_{2}(\Chi f) \Bigr).
	\end{equation*}
	Using this in (\ref{correction_bound4}), after bounding the extension operator \mbox{\Large$\chi$} by 1, gives
	\begin{align*}
	&\left\|(Q_u(f,f) - Q_c(f,f)) \langle \boldsymbol{v} \rangle^{k}\right\|_{L^2(\OmegaLv)} \nonumber \\
	\leq &~ C_d \biggl(\bigl| \bigl| \left(\Pi^N_{2\Lv} - 1 \right) Q(\Chi f, \Chi f) \bigr| \bigr|_{L^2(\OmegaLv)} \\
	&~~~~~~~ + O_{\frac{d}{2} + 2} \Bigl(m_{0}(f) m_{2 + \lambda}(f) + m_{\lambda}(f) m_{2}(f) \Bigr)\biggr),
	\end{align*}
	which shows that $k = 0$ and $k' = 2$ can indeed be plugged into the previous bound (\ref{UnconservedQ_moment_bound}), to give the required result (\ref{Q_moment_error_0}) in Theorem \ref{Q_moment_theorem}.
\end{proof}

\section{Numerical Moment and $L^2$-norm Estimates} \label{Propagation}
\subsection{Estimates on the Time Derivatives of the Moments}
Here it will be assumed the solution $g$ to the semi-discrete problem (\ref{semi-discrete2}) is such that $g \in \mathcal{C}([0,T]; L^2(\OmegaLv))$ and the initial condition $g_0 \in L^2(\OmegaLv)$  satisfies the stability condition (\ref{stability_init}).  It will also be assumed that, for the $\varepsilon$ from this condition, there is some $T_{\varepsilon} > 0$ such that for all $t \in [0, T_{\varepsilon}]$,
\begin{equation}
\int_{\{g(t,\boldsymbol{v}) < 0\}} |g(t, \boldsymbol{v})|\langle\boldsymbol{v}\rangle^2 ~\dv \leq \varepsilon \int_{\{g(t,\boldsymbol{v}) \geq 0\}} g(t, \boldsymbol{v})\langle\boldsymbol{v}\rangle^2 ~\dv \label{stability}
\end{equation}
\begin{flalign*}
\textrm{and } && \sup_{t \in [0, T_{\varepsilon}]} \|g(t, \cdot)\|_{L^2(\OmegaLv)} < \infty. &&
\end{flalign*}
\begin{rem}
	The condition (\ref{stability}) is important here because, even if starting with a non-negative function $g_0$, imposing conservation by use of the solution (\ref{Conserved_Q}) to the Lagrange minimisation problem may force the solution $g$ to become negative in some parts.  It will be shown later in the proof of Theorem \ref{existence_theorem} that this is in fact true for all $t > 0$ and not just as an assumption.
\end{rem}  
Then, by writing $g$ as $g = g^+ - g^-$ where $g^+ \geq 0$ and  $g^- \geq 0$ are the positive and negative parts of $g$, respectively, $|g| = g^+ + g^- = g + 2g^-$ and so
\begin{align*}
\int_{\OmegaLv} |g(t, \boldsymbol{v})|\langle\boldsymbol{v}\rangle^2 ~\dv &= \int_{\OmegaLv} g(t, \boldsymbol{v})\langle\boldsymbol{v}\rangle^2 ~\dv + 2\int_{\OmegaLv} g^-(t, \boldsymbol{v})\langle\boldsymbol{v}\rangle^2 ~\dv \\
&= \int_{\OmegaLv} g_0(\boldsymbol{v})\langle\boldsymbol{v}\rangle^2 ~\dv + 2\int_{\OmegaLv} g^-(t, \boldsymbol{v})\langle\boldsymbol{v}\rangle^2 ~\dv,
\end{align*}
where mass and energy conservation have been used for the first term, because $\langle\boldsymbol{v}\rangle^2 = 1 + |\boldsymbol{v}|^2$.  So, by using the assumption (\ref{stability}),
\begin{align*}
\int_{\OmegaLv} |g(t, \boldsymbol{v})|\langle\boldsymbol{v}\rangle^2 ~\dv &\leq \int_{\OmegaLv} g_0(\boldsymbol{v})\langle\boldsymbol{v}\rangle^2 ~\dv + 2\varepsilon \int_{\OmegaLv} g^+(t, \boldsymbol{v})\langle\boldsymbol{v}\rangle^2 ~\dv \\
&\leq \int_{\OmegaLv} g_0(\boldsymbol{v})\langle\boldsymbol{v}\rangle^2 ~\dv + 2\varepsilon \int_{\OmegaLv} |g(t, \boldsymbol{v})|\langle\boldsymbol{v}\rangle^2 ~\dv
\end{align*}
and, if $\varepsilon \leq \frac{1}{4}$,
\begin{equation}
\int_{\OmegaLv} |g(t, \boldsymbol{v})|\langle\boldsymbol{v}\rangle^2 ~\dv \leq \frac{1}{1 - 2\varepsilon} \int_{\OmegaLv} g_0(\boldsymbol{v})\langle\boldsymbol{v}\rangle^2 ~\dv \leq 2 \int_{\OmegaLv} g_0(\boldsymbol{v})\langle\boldsymbol{v}\rangle^2 ~\dv. \label{stability_consequence}
\end{equation}

Also, before obtaining an estimate on the numerical moments in the following lemma, first note that by subtracting and adding $Q_u(g,g) - Q(\gchi, \gchi)$, as well as using the definition of $Q_u$ in (\ref{Q_u}),
\begin{align}
\frac{\partial g}{\partial t} = Q_c(g,g) &= Q_c(g,g) - Q_u(g,g) + Q(\gchi, \gchi) + Q_u(g,g) - Q(\gchi, \gchi) \nonumber \\
&= Q_c(g,g) - Q_u(g,g) + Q(\gchi, \gchi) \nonumber \\
&~~~~~ + \Pi^N_{2\Lv} Q(\gchi, \gchi)(t, \boldsymbol{v}) - Q(\gchi, \gchi) \nonumber \\
&=  Q_c(g,g) - Q_u(g,g) + Q(\gchi, \gchi) - \left(1 - \Pi^N_{2\Lv}\right) Q(\gchi, \gchi). \label{Q_expansion}
\end{align}

\begin{lemma} \label{moment_derivative_lemma}
	For a solution $g$ of the semi-discrete problem (\ref{semi-discrete2}) which satisfies the stability condition (\ref{stability}) with $\varepsilon \leq \min \left(\frac{1}{4}, \varepsilon_0 \right)$, and also has bounded gradient $\nabla g$, given any $k \geq \max (3, k_0)$, 
	\begin{multline}
	\frac{\textrm{d}}{\textrm{d} t} \bigl(m_k(g) \bigr) \leq - \frac{1}{2} {\varepsilon_{\chi}}^2 K_{\lambda, k} m_{0}(g_0) m_{k + \lambda}(g) + C^1_{d,k} \bigl(m_0(g) + m_k(g) \bigr) \\
	+ C^2_{d,k} \frac{\mathcal{O} \left({\Lv}^{k + \frac{d}{2}} \right)}{N^{\frac{d - 1}{2}}} \bigl| \bigl| \gchi \bigr| \bigr|^2_{L^2_{\lambda + 1}(\OmegaLv)}, \label{moment_derivative_bound} 
	\end{multline}
	where the moment operator $m_k$ and $L^2_k$-norm are defined by (\ref{mk_def}) and (\ref{L2k_def}), respectively; $g_0$ is any initial condition satisfying the stability condition (\ref{stability_init}); $\varepsilon_0$ and $k_0$ are constants that will be defined in the proof by expressions (\ref{epsilon_0}) and (\ref{k_0}), respectively; $\varepsilon_{\chi} \in (0,1)$ can be chosen arbitrarily in Lemma \ref{moment_cutoff_lemma} in the appendix; and $K_{\lambda, k}, C^1_{d,k}, C^2_{d,k} > 0$ are constants with $K_{\lambda, k}$ depending on the potential $\lambda$ and $k$ and $C^1_{d,k}$ and $C^2_{d,k}$ depending on $k$ and the dimension $d$. 
\end{lemma}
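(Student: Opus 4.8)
The plan is to differentiate $m_k(g)$ in time, substitute the identity (\ref{Q_expansion}) for $\partial_t g$, and control the three resulting pieces separately. Because the right-hand side of (\ref{semi-discrete2}) maps $g$ into a fixed finite-dimensional subspace of $L^2(\OmegaLv)$ containing the initial datum, the solution $g$ is smooth in $t$, so $t\mapsto m_k(g)(t)=\int_{\OmegaLv}|g|\langle\boldsymbol{v}\rangle^k\,\dv$ is locally Lipschitz and
\[
\frac{\textrm{d}}{\textrm{d}t}m_k(g)=\int_{\OmegaLv}\textrm{sgn}(g)\,Q_c(g,g)\,\langle\boldsymbol{v}\rangle^k\,\dv\qquad\text{for a.e.\ }t .
\]
Inserting (\ref{Q_expansion}) rewrites this as $(\mathrm{I})+(\mathrm{II})-(\mathrm{III})$, where $(\mathrm{I})=\int\textrm{sgn}(g)\,[Q_c(g,g)-Q_u(g,g)]\langle\boldsymbol{v}\rangle^k\,\dv$, $(\mathrm{II})=\int\textrm{sgn}(g)\,Q(\gchi,\gchi)\langle\boldsymbol{v}\rangle^k\,\dv$, and $(\mathrm{III})=\int\textrm{sgn}(g)\,(1-\Pi^N_{2\Lv})Q(\gchi,\gchi)\langle\boldsymbol{v}\rangle^k\,\dv$.

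For the correction piece $(\mathrm{I})$, Cauchy--Schwarz gives $|(\mathrm{I})|\le\|(Q_u-Q_c)\langle\boldsymbol{v}\rangle^k\|_{L^2(\OmegaLv)}\,|\OmegaLv|^{1/2}$, and Theorem \ref{Q_moment_theorem} (with $k'$ chosen, e.g., $k'=\max(2,k-1)$, so that the prefactor is a negative power of $\Lv$) bounds the first factor by a constant multiple of $\|(\Pi^N_{2\Lv}-1)Q(\gchi,\gchi)\|_{L^2(\OmegaLv)}$ plus an $\mathcal{O}(\Lv^{-1})$-small multiple of products of the low moments $m_0,m_\lambda,m_2,m_{2+\lambda}$ of $g$; since $k\ge3\ge\lambda+2$, log-convexity of $p\mapsto m_p(g)$ on $[0,k]$ turns those products into $\le C(m_0(g)+m_k(g))$, feeding the $C^1_{d,k}$-term, while the spectral-projection contribution is of the same type as $(\mathrm{III})$. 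For $(\mathrm{III})$, Cauchy--Schwarz gives $|(\mathrm{III})|\le\|\langle\boldsymbol{v}\rangle^k\|_{L^2(\OmegaLv)}\,\|(1-\Pi^N_{2\Lv})Q(\gchi,\gchi)\|_{L^2(\Omega_{2\Lv})}$, with $\|\langle\boldsymbol{v}\rangle^k\|_{L^2(\OmegaLv)}=\mathcal{O}(\Lv^{k+d/2})$ by (\ref{v^k_L2_bound}); combining a classical spectral-accuracy estimate of order $N^{-(d-1)/2}$ with the divergence structure of $Q$, He's bilinear bound (Proposition \ref{LBH_prop}), and the assumed bound on $\nabla g$ controls the remaining factor by $N^{-(d-1)/2}\|\gchi\|^2_{L^2_{\lambda+1}(\OmegaLv)}$ up to constants, producing exactly the $C^2_{d,k}$-term (this is the content of the corresponding appendix lemma).

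The dissipative piece $(\mathrm{II})$ is the core. One tests the weak form (\ref{Q^a_weak}) (with both arguments equal to $\gchi$) against $\phi=\textrm{sgn}(g)\langle\boldsymbol{v}\rangle^k$, regularised as $\phi_\delta=\textrm{sgn}_\delta(g)\langle\boldsymbol{v}\rangle^k$ with $\textrm{sgn}_\delta'\ge0$; integrating by parts once in the second-order term before letting $\delta\to0$ leaves only $\textrm{sgn}_\delta'(g)$-contributions, each carrying an extra factor of $g$ from $\gchi$, so those concentrated on $\{g=0\}$ either vanish in the limit or — after writing $\bar a=a*(\gchi)$ and splitting $\gchi=\gchi^{+}-\gchi^{-}$, whence $\bar a=a*\gchi^{+}-a*\gchi^{-}$ with $a*\gchi^{+}$ positive-semidefinite — carry the favourable sign from $a*\gchi^{+}$. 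One is left with
\[
(\mathrm{II})\le\int_{\OmegaLv}|\gchi|\Big(\bar a_{ij}\,\partial_i\partial_j\langle\boldsymbol{v}\rangle^k+2\,\bar b_i\,\partial_i\langle\boldsymbol{v}\rangle^k\Big)\,\dv+\mathcal{E},
\]
in the notation of (\ref{a_def})--(\ref{c_def}), where $\mathcal{E}$ collects the terms in which $a,b,c$ are convolved against $\gchi^{-}$ rather than $\gchi^{+}$; since stability (\ref{stability})--(\ref{stability_consequence}) and Lemma \ref{moment_cutoff_lemma} force $m_2(g^{-})\lesssim\varepsilon\,m_2(g_0)$, $\mathcal{E}$ is $\mathcal{O}(\varepsilon)$ relative to the leading moment and is absorbed once $\varepsilon\le\varepsilon_0$. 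Evaluating $\partial_i\langle\boldsymbol{v}\rangle^k$ and $\partial_i\partial_j\langle\boldsymbol{v}\rangle^k$ explicitly and running the Povzner-type computation from the proof of Proposition \ref{moment_estimate_prop} (Desvillettes--Villani) — which for $d=3$ gives a strictly negative coefficient $\propto m_0(\gchi^{+})$ on the leading $\langle\boldsymbol{v}\rangle^{k+\lambda}$ term and lower-order moments otherwise — together with the mass lower bound $m_0(\gchi^{+})\ge\varepsilon_\chi^2\,m_0(g_0)$ from Lemma \ref{moment_cutoff_lemma} (a consequence of stability and the ellipticity of $\bar a$, Proposition \ref{ellipticity_prop}), bounds the main part by $-\tfrac12\varepsilon_\chi^2 K_{\lambda,k}\,m_0(g_0)\,m_{k+\lambda}(g)$, and the remainder by $C^1_{d,k}(m_0(g)+m_k(g))$ via another moment interpolation valid once $k\ge\max(3,k_0)$. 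Adding the three estimates gives (\ref{moment_derivative_bound}).

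The delicate step is $(\mathrm{II})$: one must make the manipulation of $\int\textrm{sgn}(g)Q(\gchi,\gchi)\langle\boldsymbol{v}\rangle^k$ rigorous despite the non-smoothness of $\textrm{sgn}(g)$, correctly separating the $\{g=0\}$-contributions that acquire the right sign (after extracting the positive-semidefinite part of $\bar a$) from the genuinely signed corrections that must instead be swallowed using the $\varepsilon$-smallness of $g^{-}$, all while tracking the cut-off $\chi$ and the mismatch between $g$ and $\gchi$; and one must adapt the Desvillettes--Villani Povzner cancellation so that the explicit dissipation constant $\tfrac12\varepsilon_\chi^2K_{\lambda,k}m_0(g_0)$ emerges. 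These are also exactly where the thresholds $\varepsilon_0$ and $k_0$ enter: they are fixed so that every error moment is dominated by the dissipation term plus $C^1_{d,k}(m_0(g)+m_k(g))$. The pieces $(\mathrm{I})$ and $(\mathrm{III})$ are routine given Theorem \ref{Q_moment_theorem}, Proposition \ref{LBH_prop}, and standard spectral estimates.
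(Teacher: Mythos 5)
Your outline reproduces the paper's architecture for the dissipative piece $(\mathrm{II})$ (regularise $\textrm{sgn}(g)$, integrate by parts, discard the signed terms via the ellipticity of $\bar a$ and the bounded-gradient hypothesis, split $\gchi=|\gchi|-2\gchi^-$, and run the Desvillettes--Villani Povzner computation together with Lemma \ref{moment_cutoff_lemma} and the choice $\varepsilon\le\varepsilon_0$), and your handling of $(\mathrm{III})$ matches the paper's use of the projection estimates (\ref{Fourier_projection_tail1})--(\ref{Fourier_projection_tail2}). Two small inaccuracies there: the cut-off lemma gives $m_0(\gchi)\ge(1-\varepsilon_\chi)m_0(g_0)$, and the coefficient $\varepsilon_\chi^2$ arises as $(1-\varepsilon_\chi)^2-(1-2\varepsilon_\chi)$ after the $\varepsilon_0$-absorption, not from a lower bound $m_0(\gchi^+)\ge\varepsilon_\chi^2m_0(g_0)$; likewise the negative-part contribution is not $\mathcal{O}(\varepsilon)$-small but of size $\frac{4}{1-2\varepsilon}\bigl(m_{k+\lambda}(g)+m_k(g)\bigr)\|g_0\|_{L^1_2(\OmegaLv)}$, and $\varepsilon_0$ in (\ref{epsilon_0}) is fixed precisely so that this is dominated by a $(1-2\varepsilon_\chi)$-fraction of the dissipation.

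The genuine gap is in the correction piece $(\mathrm{I})$. After Cauchy--Schwarz against $\|1\|_{L^2(\OmegaLv)}=(2\Lv)^{\frac d2}$, Theorem \ref{Q_moment_theorem} carries the prefactor $O_{\frac d2+k'-k}$ on the moment products, so the combined coefficient is of size $\mathcal{O}\bigl({\Lv}^{\,k-k'}\bigr)$: your choice $k'=\max(2,k-1)<k$ therefore yields a factor growing like $\Lv$, not the ``$\mathcal{O}(\Lv^{-1})$-small'' factor you claim (negative powers of $\Lv$ would require $k'>k$, which drags in moments above $m_{k+\lambda}(g)$ that nothing controls). With the only workable choice $k'=k$ the coefficient is a true constant, as in (\ref{moment_derivative_bound_term2_0}), but then the leading product $m_{0}(g)\,m_{k+\lambda}(g)$ appears, and it cannot be interpolated into $C^1_{d,k}\bigl(m_0(g)+m_k(g)\bigr)$ by log-convexity since $k+\lambda>k$ (using the bounded domain to lower the index would make the constant depend on $\Lv$, contradicting the statement that $C^1_{d,k}$ depends only on $d$ and $k$). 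The paper resolves exactly this term by absorbing it into half of the dissipation: using the Gaussian bound (\ref{Gaussian_bound}) to write $m_0(g)\le Cr^{\frac d2}m_0(g_0)$ and imposing $k\ge k_0$ with $k_0$ as in (\ref{k_0}), one gets $\frac{C^2_d}{\sqrt{2k+1}}m_0(g)m_{k+\lambda}(g)\le\frac12\varepsilon_\chi^2K_{\lambda,k}m_0(g_0)m_{k+\lambda}(g)$, which is the sole origin of the threshold $k_0$ and of the factor $\frac12$ in (\ref{moment_derivative_bound}); only the genuinely lower-order products are then bounded by $C_k\bigl(m_0(g)+m_k(g)\bigr)$ as in (\ref{Zk_bound}). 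Your proposal omits this absorption (and the attendant use of (\ref{Gaussian_bound})), so as written it does not deliver the stated inequality with the stated constants.
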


\begin{proof}
	First, after multiplying both sides of (\ref{Q_expansion}) by $\textrm{sgn}(g)(\boldsymbol{v}) \langle \boldsymbol{v} \rangle^{k}$ and integrating with respect to $\boldsymbol{v}$ over $\OmegaLv$,
	\begin{align}
	&\int_{\OmegaLv} \frac{\textrm{d}g}{\textrm{d} t} \textrm{sgn}(g)(\boldsymbol{v}) \langle \boldsymbol{v} \rangle^{k} ~\dv \nonumber \\
	=& \int_{\OmegaLv} Q(\gchi, \gchi) \textrm{sgn}(g)(\boldsymbol{v}) \langle \boldsymbol{v} \rangle^{k} ~\dv \nonumber \\
	&+ \int_{\OmegaLv} \Bigl((Q_c(g,g) - Q_u(g,g)) - \left(1 - \Pi^N_{2\Lv}\right) Q(\gchi, \gchi) \Bigr) \textrm{sgn}(g)(\boldsymbol{v}) \langle \boldsymbol{v} \rangle^{k} ~\dv. \label{moment_sgn_bound}
	\end{align}
	Then, since $\frac{\textrm{d}}{\textrm{d}t}\left(|g| \right) = \frac{\textrm{d}g}{\textrm{d}t} \textrm{sgn}(g)$ by the chain rule (at least when $g(t, \boldsymbol{v}) \neq 0$ but the integral is blind to this point),
	\begin{equation}
	\int_{\OmegaLv} \frac{\textrm{d}g}{\textrm{d} t} \textrm{sgn}(g)(\boldsymbol{v}) \langle \boldsymbol{v} \rangle^{k} ~\dv = \frac{\textrm{d}}{\textrm{d} t} \left(\int_{\OmegaLv}|g(t, \boldsymbol{v})| \langle \boldsymbol{v} \rangle^{k} ~\dv \right) = \frac{\textrm{d}}{\textrm{d} t} \bigl(m_k(g) \bigr), \label{moment_derivative_identity}
	\end{equation}
	because $g$ is only defined inside $\OmegaLv$.  This means $g$ can be assumed to be zero in $\Rd \backslash \OmegaLv$ and the integral over $\OmegaLv$ can be replaced with one over $\Rd$ as in the definition of $m_k$ in (\ref{mk_def}).  Also, bounding the last integral in (\ref{moment_sgn_bound}) by its absolute value, then using the triangle inequality, allows $\frac{\textrm{d}}{\textrm{d} t} \bigl(m_k(g) \bigr)$ to be estimated by
	\begin{align}
	\frac{\textrm{d}}{\textrm{d} t} \bigl(m_k(g) \bigr) \leq& \int_{\OmegaLv} Q(\gchi, \gchi) \textrm{sgn}(g)(\boldsymbol{v}) \langle \boldsymbol{v} \rangle^{k} ~\dv \nonumber \\
	&~+ \left\|(Q_c(g,g) - Q_u(g,g)) \langle \boldsymbol{v} \rangle^{k} \right\|_{L^1(\OmegaLv)} \nonumber \\
	&~~+ \left\|\left(1 - \Pi^N_{2\Lv}\right) Q(\gchi, \gchi) \langle \boldsymbol{v} \rangle^{k} \right\|_{L^1(\OmegaLv)}. \label{moment_derivative_bound1}
	\end{align}
	
	Now, for the remaining integral involving $\textrm{sgn}(g)$, the weak form identity (\ref{Q^a_weak}) gives
	\begin{align*}
	& \int_{\OmegaLv} Q(\gchi, \gchi) \textrm{sgn}(g)(\boldsymbol{v}) \langle \boldsymbol{v} \rangle^{k} ~\dv \nonumber \\
	= &\int_{\OmegaLv} \gchi \left(\bar{a}_{i,j} \frac{\partial^2}{\partial v_i \partial v_j} \left(\textrm{sgn}(g)(\boldsymbol{v}) \langle \boldsymbol{v} \rangle^{k} \right) + 2 \bar{b}_{i} \frac{\partial}{\partial v_i} \left(\textrm{sgn}(g)(\boldsymbol{v}) \langle \boldsymbol{v} \rangle^{k} \right) \right) ~\dv. 
	\end{align*}
	In order to handle the derivative of the sgn function, it will be approximated by a second order regularisation of the jump.  In particular, for any $\delta_s> 0$, define the monotone increasing function $H_{\delta}: \mathbb{R} \to [-1, 1]$ by
	\begin{equation}
	H_{\delta}(y) := \begin{cases}
	-1, &\textrm{when }~ y \leq -\delta_s, \\
	\frac{1}{{\delta_s}^2} y^2 - \frac{2}{\delta_s} y, &\textrm{when }~ -\delta_s < y \leq 0, \\
	-\frac{1}{{\delta_s}^2} y^2 + \frac{2}{\delta_s} y, &\textrm{when }~ 0 < y < \delta_s, \\
	1, &\textrm{when }~ y \geq \delta_s.
	\end{cases} \label{H_delta}
	\end{equation}
	
	Then, after using $\textrm{sgn}(g) = \lim_{\delta_s \to 0} H_{\delta}$, differentiating with the product rule and noting that $\bar{a}_{i,j}$ is symmetric so that the cross-terms in the second order derivative double up,
	\begin{align*}
	&\bar{a}_{i,j} \frac{\partial^2}{\partial v_i \partial v_j} \left(\textrm{sgn}(g)(\boldsymbol{v}) \langle \boldsymbol{v} \rangle^{k} \right) + 2 \bar{b}_{i} \frac{\partial}{\partial v_i} \left(\textrm{sgn}(g)(\boldsymbol{v}) \langle \boldsymbol{v} \rangle^{k} \right) \\
	=& \lim_{\delta_s \to 0} \Biggl(\bar{a}_{i,j} \left(\frac{\partial^2 H_{\delta}}{\partial v_i \partial v_j} \langle \boldsymbol{v} \rangle^{k} + 2\frac{\partial H_{\delta}}{\partial v_i} \frac{\partial }{\partial v_j} \left(\langle \boldsymbol{v} \rangle^{k} \right) + H_{\delta} \frac{\partial^2}{\partial v_i \partial v_j} \left(\langle \boldsymbol{v} \rangle^{k} \right) \right) \\
	&~~~~~~~~ + 2 \bar{b}_{i} \left(\frac{\partial H_{\delta}}{\partial v_i} \langle \boldsymbol{v} \rangle^{k} + H_{\delta} \frac{\partial}{\partial v_i} \left(\langle \boldsymbol{v} \rangle^{k} \right) \right) \Biggr).
	\end{align*}
	So, assuming that Lebesgue's dominated convergence theorem can be used,
	\begin{align}
	& \int_{\OmegaLv} Q(\gchi, \gchi) \textrm{sgn}(g)(\boldsymbol{v}) \langle \boldsymbol{v} \rangle^{k} ~\dv \nonumber \\
	= & \lim_{\delta_s \to 0} \int_{\OmegaLv} \gchi \left(\bar{a}_{i,j} \frac{\partial^2 H_{\delta}}{\partial v_i \partial v_j} \langle \boldsymbol{v} \rangle^{k} + 2\bar{a}_{i,j} \frac{\partial H_{\delta}}{\partial v_i} \frac{\partial }{\partial v_j} \left(\langle \boldsymbol{v} \rangle^{k} \right) + 2 \bar{b}_{i} \frac{\partial H_{\delta}}{\partial v_i} \langle \boldsymbol{v} \rangle^{k} \right) \dv \nonumber \\
	&~ + \int_{\OmegaLv} \gchi \left(\bar{a}_{i,j} \frac{\partial^2}{\partial v_i \partial v_j} \left(\langle \boldsymbol{v} \rangle^{k} \right) + 2 \bar{b}_{i} \frac{\partial}{\partial v_i} \left(\langle \boldsymbol{v} \rangle^{k} \right) \right) \lim_{\delta_s \to 0} H_{\delta} ~\dv. \label{moment_sgn_limit}
	\end{align}
	\begin{rem}
		The parameter $\delta_s$ will be chosen sufficiently small to obtain estimates on the derivatives of the moments $m_k$.  In particular, taking the limit in the second integral in (\ref{moment_sgn_limit}) recovers $\textrm{sgn}(g)$ and estimates from \cite{D&V_1} can be used.  This will be seen after demonstrating that the first integral in (\ref{moment_sgn_limit}) converges to zero as $\delta_s \to 0$.
	\end{rem}
	
	To analyse first integral in (\ref{moment_sgn_limit}), first reduce the maximum order of the derivatives on $H_{\delta}$ by the divergence theorem.  In particular, if the vector $\bar{\boldsymbol{a}}_i$ is defined as the $i$th row of the matrix $\bar{a}$ so that $\bar{\boldsymbol{a}}_i = (\bar{a}_{i,1}, \ldots, \bar{a}_{i,d})$, for $i = 1, \ldots, d$,
	\begin{align*}
	\int_{\OmegaLv} \gchi \bar{a}_{i,j} \frac{\partial^2 H_{\delta}}{\partial v_i \partial v_j} \langle \boldsymbol{v} \rangle^{k} \dv = &\int_{\OmegaLv} \left(\gchi \langle \boldsymbol{v} \rangle^{k} \bar{\boldsymbol{a}}_{i} \right) \cdot \nabla \left(\frac{\partial  H_{\delta}}{\partial v_i} \right) \dv \\
	= &-\int_{\OmegaLv} \nabla \cdot \left(\gchi \langle \boldsymbol{v} \rangle^{k} \bar{\boldsymbol{a}}_{i} \right) \frac{\partial  H_{\delta}}{\partial v_i} \dv \\
	= &-\int_{\OmegaLv} \biggl(\frac{\partial}{\partial v_j} \left(\gchi \right) \langle \boldsymbol{v} \rangle^{k} \bar{a}_{i,j} \\
	&~~~~~~~~ + \gchi \frac{\partial}{\partial v_j} \left(\langle \boldsymbol{v} \rangle^{k} \right) \bar{a}_{i,j} + \gchi \langle \boldsymbol{v} \rangle^{k} \bar{b}_{i} \biggr) \frac{\partial  H_{\delta}}{\partial v_i} \dv.
	\end{align*}
	
	Then, by noting that some of the terms which appeared by the product rule in the last line here are the same as those in the first integral in expression (\ref{moment_sgn_limit}), it can then be written as
	\begin{align}
	&\int_{\OmegaLv} \gchi \left(\bar{a}_{i,j} \frac{\partial^2 H_{\delta}}{\partial v_i \partial v_j} \langle \boldsymbol{v} \rangle^{k} + 2\bar{a}_{i,j} \frac{\partial H_{\delta}}{\partial v_i} \frac{\partial }{\partial v_j} \left(\langle \boldsymbol{v} \rangle^{k} \right) + 2 \bar{b}_{i} \frac{\partial H_{\delta}}{\partial v_i} \langle \boldsymbol{v} \rangle^{k} \right) \dv \nonumber \\
	=& -\int_{\OmegaLv} \frac{\partial}{\partial v_j} \left(\gchi \right) \langle \boldsymbol{v} \rangle^{k} \bar{a}_{i,j} \frac{\partial  H_{\delta}}{\partial v_i} \dv \nonumber \\
	&~~ + \int_{\OmegaLv} \gchi \left(\bar{a}_{i,j} \frac{\partial H_{\delta}}{\partial v_i} \frac{\partial }{\partial v_j} \left(\langle \boldsymbol{v} \rangle^{k} \right) + \bar{b}_{i} \frac{\partial H_{\delta}}{\partial v_i} \langle \boldsymbol{v} \rangle^{k} \right) \dv \nonumber \\
	=& -\int_{\OmegaLv} \frac{\partial}{\partial v_j} \left(\gchi \right) \langle \boldsymbol{v} \rangle^{k} \bar{a}_{i,j} H'_{\delta}(g) \frac{\partial g}{\partial v_i} \dv \nonumber \\
	&~~ + \int_{\OmegaLv} \gchi \left(\bar{a}_{i,j} H'_{\delta}(g) \frac{\partial g}{\partial v_i} \frac{\partial }{\partial v_j} \left(\langle \boldsymbol{v} \rangle^{k} \right) + \bar{b}_{i} H'_{\delta}(g) \frac{\partial g}{\partial v_i} \langle \boldsymbol{v} \rangle^{k} \right) \dv. \label{moment_sgn_integral1}
	\end{align}
	Here, by using the product rule one more time,
	\begin{align*}
	-\int_{\OmegaLv} \frac{\partial}{\partial v_j} \left(\gchi \right) \langle \boldsymbol{v} \rangle^{k} \bar{a}_{i,j} H'_{\delta}(g) \frac{\partial g}{\partial v_i} \dv &= - \int_{\OmegaLv} \frac{\partial \Chi }{\partial v_j} g \langle \boldsymbol{v} \rangle^{k} \bar{a}_{i,j} H'_{\delta}(g) \frac{\partial g}{\partial v_i} \dv \\
	&~~~~~ - \int_{\OmegaLv} \Chi \frac{\partial g}{\partial v_j} \langle \boldsymbol{v} \rangle^{k} \bar{a}_{i,j} H'_{\delta}(g) \frac{\partial g}{\partial v_i} \dv \\
	&\leq -\int_{\Omega^C_{(1-\delchi)\Lv}} \frac{\partial \Chi }{\partial v_j} g \langle \boldsymbol{v} \rangle^{k} \bar{a}_{i,j} H'_{\delta}(g) \frac{\partial g}{\partial v_i} \dv,
	\end{align*}
	because $\frac{\partial \Chi }{\partial v_j} = 0$ when $\boldsymbol{v} \in \Omega_{(1-\delchi)\Lv}$ and the ellipticity of $\bar{a}$ in Proposition \ref{ellipticity_prop}, along with \mbox{\Large$\chi$}$ \geq 0$ and $H'_{\delta}(g) \geq 0$, gives
	\begin{equation*}
	-\int_{\OmegaLv} \Chi \frac{\partial g}{\partial v_j} \langle \boldsymbol{v} \rangle^{k} \bar{a}_{i,j} H'_{\delta}(g) \frac{\partial g}{\partial v_i} \dv \leq 0.\\
	\end{equation*}
	
	Now, it is hoped that the integral (\ref{moment_sgn_integral1}) goes to zero as $\delta_s \to 0$, so it can be coarsely bounded by its absolute value.  This means
	\begin{align}
	&\int_{\OmegaLv} \gchi \left(\bar{a}_{i,j} \frac{\partial^2 H_{\delta}}{\partial v_i \partial v_j} \langle \boldsymbol{v} \rangle^{k} + 2\bar{a}_{i,j} \frac{\partial H_{\delta}}{\partial v_i} \frac{\partial }{\partial v_j} \left(\langle \boldsymbol{v} \rangle^{k} \right) + 2 \bar{b}_{i} \frac{\partial H_{\delta}}{\partial v_i} \langle \boldsymbol{v} \rangle^{k} \right) \dv \nonumber \\
	\leq& \int_{\Omega^C_{(1-\delchi)\Lv}} \left|\frac{\partial \Chi }{\partial v_j} \right\|g| \langle \boldsymbol{v} \rangle^{k} \left|\bar{a}_{i,j} \right| \left|H'_{\delta}(g) \right| \left|\frac{\partial g}{\partial v_i} \right| \dv \nonumber \\
	&~~ + \int_{\OmegaLv} \left|\gchi \right| \left(\left|\bar{a}_{i,j} \right| \left|H'_{\delta}(g) \right| \left|\frac{\partial g}{\partial v_i} \right| \left|\frac{\partial }{\partial v_j} \left(\langle \boldsymbol{v} \rangle^{k} \right) \right| + \left|\bar{b}_{i} \right| \left|H'_{\delta}(g) \right| \left|\frac{\partial g}{\partial v_i} \right| \langle \boldsymbol{v} \rangle^{k} \right) \dv \nonumber \\
	\leq& 2 \Biggl(\int_{\{|g| \leq \delta_s\} \bigcap \Omega^C_{(1-\delchi)\Lv}} \left|\frac{\partial \Chi }{\partial v_j} \right|\langle \boldsymbol{v} \rangle^{k} \left|\bar{a}_{i,j} \right| \left|\frac{\partial g}{\partial v_i} \right| \dv \nonumber \\
	&~~~~~ + \int_{\{|g| \leq \delta_s\}} \left(\left|\bar{a}_{i,j} \right| \left|\frac{\partial g}{\partial v_i} \right| \left|\frac{\partial }{\partial v_j} \left(\langle \boldsymbol{v} \rangle^{k} \right) \right| + \left|\bar{b}_{i} \right| \left|\frac{\partial g}{\partial v_i} \right| \langle \boldsymbol{v} \rangle^{k} \right) \dv \Biggr), \label{moment_sgn_integral2}
	\end{align}
	where it's been used that the derivative $H'_{\delta}(g) = 0$ when $|g| > \delta_s$ to reduce the domain of integration.  Also, on this new domain, $|\gchi|, |g| \leq \delta_s$ and $|H'_{\delta}(g)| \leq \frac{2}{\delta_s}$, by definition of the assumed form of $H_{\delta}$ in (\ref{H_delta}).
	
	Here, by noting that $\left|a_{i,j} (\boldsymbol{v} - \boldsymbol{v}_*) \right| = |\boldsymbol{v} - \boldsymbol{v}_*|^{\lambda}\bigl\|\boldsymbol{v} - \boldsymbol{v}_*|^2 - (v - v_*)_i (v - v_*)_j \bigr|$ $\leq 2|\boldsymbol{v} - \boldsymbol{v}_*|^{\lambda + 2} \leq 2 \left(\langle \boldsymbol{v} \rangle^{\lambda + 2} + \langle \boldsymbol{v}_* \rangle^{\lambda + 2} \right)$, for any arbitrary set $\widetilde{\Omega} \subset \Rd$, 
	\begin{align}
	&\int_{\widetilde{\Omega}} \left|\frac{\partial \Chi }{\partial v_j} \right|\langle \boldsymbol{v} \rangle^{k} \left|\bar{a}_{i,j} \right| \left|\frac{\partial g}{\partial v_i} \right| \dv \nonumber \\
	\leq& \int_{\widetilde{\Omega}} \int_{\OmegaLv} \left|\frac{\partial \Chi }{\partial v_j} \right|\langle \boldsymbol{v} \rangle^{k} \left|\bar{a}_{i,j} (\boldsymbol{v} - \boldsymbol{v}_*) \right\|g(\boldsymbol{v}_*)| \left|\frac{\partial g}{\partial v_i} \right| ~\dv_* \dv \nonumber \\
	\leq&~ 2 \sum_{i, j = 1}^{d} \Biggl(\biggl(\int_{\widetilde{\Omega}} \left|\frac{\partial \Chi }{\partial v_j} \right|\left|\frac{\partial g}{\partial v_i} \right| \langle \boldsymbol{v} \rangle^{k + \lambda + 2} \dv \biggr) \biggl(\int_{\OmegaLv} |g(\boldsymbol{v}_*)| \dv_* \biggr) \nonumber \\
	&~~~~~~~~~~~ + \biggl(\int_{\widetilde{\Omega}} \left|\frac{\partial \Chi }{\partial v_j} \right|\left|\frac{\partial g}{\partial v_i} \right| \langle \boldsymbol{v} \rangle^{k} \dv \biggr) \biggl(\int_{\OmegaLv} |g(\boldsymbol{v}_*)| \langle \boldsymbol{v}_* \rangle^{\lambda + 2} \dv_* \biggr) \Biggr) \nonumber \\
	\leq&~ 2 d^2 \mathcal{O} \left(\frac{1}{\delchi} \right) \left(\|\left|\nabla g \right\||_{L^1_{k + \lambda + 2}(\widetilde{\Omega})} m_0(g) + \|\left|\nabla g \right\||_{L^1_{k}(\widetilde{\Omega})} m_{\lambda + 2}(g) \right), \label{moment_sgn_integral2a}
	\end{align}
	where the $\mathcal{O} \left(\frac{1}{\delchi} \right)$ is a result of the fact that \mbox{\Large$\chi$} changes smoothly from a value of $1$ to $0$ in a space of $\mathcal{O} \left(\delchi \right)$.
	
	Similarly, since $\left|\frac{\partial }{\partial v_j} \left(\langle \boldsymbol{v} \rangle^{k} \right) \right| = \left|k v_j \langle \boldsymbol{v} \rangle^{k-2} \right| \leq k \langle \boldsymbol{v} \rangle^{k-1}$,
	\begin{align}
	&\int_{\widetilde{\Omega}} \left|\bar{a}_{i,j} \right| \left|\frac{\partial g}{\partial v_i} \right| \left|\frac{\partial }{\partial v_j} \left(\langle \boldsymbol{v} \rangle^{k} \right) \right| \dv \nonumber \\
	\leq&~ 2k d^2 \left(\|\left|\nabla g \right\||_{L^1_{k + \lambda + 1}(\widetilde{\Omega})} m_0(g) + \|\left|\nabla g \right\||_{L^1_{k - 1}(\widetilde{\Omega})} m_{\lambda + 2}(g) \right). \label{moment_sgn_integral2b}
	\end{align}
	
	Finally, by noting that $\left|b_i (\boldsymbol{v} - \boldsymbol{v}_*) \right| = |\boldsymbol{v} - \boldsymbol{v}_*|^{\lambda}\left|(v - v_*)_i \right| \leq |\boldsymbol{v} - \boldsymbol{v}_*|^{\lambda + 1}$ $\leq \langle \boldsymbol{v} \rangle^{\lambda + 1} + \langle \boldsymbol{v}_* \rangle^{\lambda + 1}$,
	\begin{align}
	&\int_{\widetilde{\Omega}} \left|\bar{b}_{i} \right| \left|\frac{\partial g}{\partial v_i} \right| \langle \boldsymbol{v} \rangle^{k} \dv \nonumber \\
	\leq&~ d^2 \left(\|\left|\nabla g \right\||_{L^1_{k + \lambda + 1}(\widetilde{\Omega})} m_0(g) + \|\left|\nabla g \right\||_{L^1_{k}(\widetilde{\Omega})} m_{\lambda + 1}(g) \right), \label{moment_sgn_integral2c}
	\end{align}
	
	So, by using the bounds (\ref{moment_sgn_integral2a}-\ref{moment_sgn_integral2c}) with the appropriate domains replacing $\widetilde{\Omega}$ in (\ref{moment_sgn_integral2}),
	\begin{align}
	&\int_{\OmegaLv} \gchi \left(\bar{a}_{i,j} \frac{\partial^2 H_{\delta}}{\partial v_i \partial v_j} \langle \boldsymbol{v} \rangle^{k} + 2\bar{a}_{i,j} \frac{\partial H_{\delta}}{\partial v_i} \frac{\partial }{\partial v_j} \left(\langle \boldsymbol{v} \rangle^{k} \right) + 2 \bar{b}_{i} \frac{\partial H_{\delta}}{\partial v_i} \langle \boldsymbol{v} \rangle^{k} \right) \dv \nonumber \\
	\leq& 2 \Biggl(2 d^2 \mathcal{O} \left(\frac{1}{\delchi} \right) \Bigl(\|\left|\nabla g \right\||_{L^1_{k + \lambda + 2}\left(\{|g| \leq \delta_s\} \bigcap \Omega^C_{(1-\delchi)\Lv} \right)} m_0(g) \nonumber \\
	&~~~~~~~~~~~~~~~~~~~~~ + \|\left|\nabla g \right\||_{L^1_{k}\left(\{|g| \leq \delta_s\} \bigcap \Omega^C_{(1-\delchi)\Lv} \right)} m_{\lambda + 2}(g) \Bigr) \nonumber \\
	& + (2k + 1) d^2 \left(\|\left|\nabla g \right\||_{L^1_{k + \lambda + 1}(\{|g| \leq \delta_s\})} m_0(g) + \|\left|\nabla g \right\||_{L^1_{k - 1}(\{|g| \leq \delta_s\})} m_{\lambda + 2}(g) \right) \Biggr). \label{moment_sgn_integral3}
	\end{align}

	This means that, since it is also assumed that the gradient of $g$ is bounded inside the domain, the integral in (\ref{moment_sgn_integral3}) is an $\mathcal{O} (\delta_s)$ term, because $\delchi$ will remain fixed throughout and the domain of the $L^1$-norms of the gradients will shrink to sets of zero of measure.  
	\begin{rem}
		This assumption on the gradient was not required in \cite{BoltzmannConvergence} when devising error estimates for the Boltzmann equation scheme since there are no derivatives involved there.  It seems that for the Landau equation method, however, this is an additional assumption to be added to the previously stated upper Maxwellian bound (\ref{Gaussian_bound}).
	\end{rem}  
	
	Now, taking the limit as $\delta_s \to 0$ in (\ref{moment_sgn_limit}) and using an identity from the proof of Theorem 3 in \cite{D&V_1} on the non-negligible integral gives 
	\begin{align*}
	& \int_{\OmegaLv} Q(\gchi, \gchi) \textrm{sgn}(g)(\boldsymbol{v}) \langle \boldsymbol{v} \rangle^{k} ~\dv \nonumber \\
	= &~k \int_{\OmegaLv} \int_{\Rd} \gchi(\boldsymbol{v}) \gchi(\boldsymbol{v}_*) |\boldsymbol{v} - \boldsymbol{v}_*|^{\lambda} \langle \boldsymbol{v} \rangle^{k - 2} \biggl(-2|\boldsymbol{v}|^{2} + 2|\boldsymbol{v}_*|^{2} \nonumber \\
	&~~~~~~~~~~~~~~~~~~~~ + (k - 2) \frac{|\boldsymbol{v}|^{2} |\boldsymbol{v}_*|^{2} - (\boldsymbol{v} \cdot \boldsymbol{v}_*)^2}{1 + |\boldsymbol{v}|^{2}} \biggr) \textrm{sgn}(g)(\boldsymbol{v}) ~\dv_* \dv. 
	\end{align*}
	Here, in order to start bounding the quantity in brackets, some care must be taken to ensure all other terms are positive.  This is done by substituting $\gchi(\boldsymbol{v})\textrm{sgn}(g)(\boldsymbol{v}) = |\gchi(\boldsymbol{v})|$ and $\gchi(\boldsymbol{v}_*) = |\gchi(\boldsymbol{v}_*)| - 2 \gchi^-(\boldsymbol{v}_*)$ to give
	\begin{align}
	& \int_{\OmegaLv} Q(\gchi, \gchi) \textrm{sgn}(g)(\boldsymbol{v}) \langle \boldsymbol{v} \rangle^{k} ~\dv \nonumber \\
	=&~k \Biggl(\int_{\OmegaLv} \int_{\Rd} |\gchi(\boldsymbol{v})\|\gchi(\boldsymbol{v}_*)\|\boldsymbol{v} - \boldsymbol{v}_*|^{\lambda} \langle \boldsymbol{v} \rangle^{k - 2} \biggl(-2|\boldsymbol{v}|^{2} + 2|\boldsymbol{v}_*|^{2} \nonumber \\
	&~~~~~~~~~~~~~~~~~~~~ + (k - 2) \frac{|\boldsymbol{v}|^{2} |\boldsymbol{v}_*|^{2} - (\boldsymbol{v} \cdot \boldsymbol{v}_*)^2}{1 + |\boldsymbol{v}|^{2}} \biggr)  ~\dv_* \dv \nonumber \\
	&~~~~+ 2 \int_{\OmegaLv} \int_{\Rd} |\gchi(\boldsymbol{v})| \gchi^-(\boldsymbol{v}_*) |\boldsymbol{v} - \boldsymbol{v}_*|^{\lambda} \langle \boldsymbol{v} \rangle^{k - 2} \biggl(2|\boldsymbol{v}|^{2} - 2|\boldsymbol{v}_*|^{2} \nonumber \\
	&~~~~~~~~~~~~~~~~~~~~ - (k - 2) \frac{|\boldsymbol{v}|^{2} |\boldsymbol{v}_*|^{2} - (\boldsymbol{v} \cdot \boldsymbol{v}_*)^2}{1 + |\boldsymbol{v}|^{2}} \biggr) ~\dv_* \dv \Biggr). \label{Q_sgn_bound1}
	\end{align}
	
	Now, by discarding the negative terms in the second integral in (\ref{Q_sgn_bound1}),
	\begin{align}
	& 2 \int_{\OmegaLv} \int_{\Rd} |\gchi(\boldsymbol{v})| \gchi^-(\boldsymbol{v}_*) |\boldsymbol{v} - \boldsymbol{v}_*|^{\lambda} \langle \boldsymbol{v} \rangle^{k - 2} \biggl(2|\boldsymbol{v}|^{2} - 2|\boldsymbol{v}_*|^{2} \nonumber \\
	&~~~~~~~~~~~~~~~~~~~~ - (k - 2) \frac{|\boldsymbol{v}|^{2} |\boldsymbol{v}_*|^{2} - (\boldsymbol{v} \cdot \boldsymbol{v}_*)^2}{1 + |\boldsymbol{v}|^{2}} \biggr) ~\dv_* \dv \nonumber \\
	\leq & ~2 \int_{\OmegaLv} \int_{\Rd} |\gchi(\boldsymbol{v})| \gchi^-(\boldsymbol{v}_*) |\boldsymbol{v} - \boldsymbol{v}_*|^{\lambda} \langle \boldsymbol{v} \rangle^{k - 2} \left(2|\boldsymbol{v}|^{2} \right) ~\dv_* \dv \nonumber \\
	\leq & ~4 \int_{\OmegaLv} \int_{\Rd} |\gchi(\boldsymbol{v})\|\gchi(\boldsymbol{v}_*)\|\boldsymbol{v} - \boldsymbol{v}_*|^{\lambda} \langle \boldsymbol{v} \rangle^{k} ~\dv_* \dv \nonumber \\
	\leq& ~4 \Biggl(\biggl(\int_{\OmegaLv} |\gchi(\boldsymbol{v})| \langle \boldsymbol{v} \rangle^{k + \lambda} ~\dv \biggr) \biggl(\int_{\Rd} \left| \gchi(\boldsymbol{v}_*) \right| ~\dv_* \biggr) \nonumber \\
	&~~~~~~~~~~~~~~ + \biggl(\int_{\OmegaLv} |\gchi(\boldsymbol{v})| \langle \boldsymbol{v} \rangle^{k} ~\dv \biggr) \biggl(\int_{\Rd} \left| \gchi(\boldsymbol{v}_*) \right| \langle \boldsymbol{v}_* \rangle^{\lambda} ~\dv_* \biggr) \Biggr) \nonumber \\
	\leq& ~4 \Biggl(m_{k + \lambda}(g) \int_{\Rd} \left| \gchi(\boldsymbol{v}_*) \right| ~\dv_* + m_{k}(g) \int_{\Rd} \left| \gchi(\boldsymbol{v}_*) \right| \langle \boldsymbol{v}_* \rangle^{\lambda} ~\dv_* \Biggr), \label{Q_neg_bound}
	\end{align}
	where the identity $|\boldsymbol{v} - \boldsymbol{v}_*|^{\lambda} \leq |\boldsymbol{v}|^{\lambda} + |\boldsymbol{v}_*|^{\lambda} \leq \langle \boldsymbol{v} \rangle^{\lambda} + \langle \boldsymbol{v}_* \rangle^{\lambda}$ has been used.  In order to obtain the definitions of $m_k$ in (\ref{mk_def}), it was also used that $\left| \gchi(\boldsymbol{v}) \right| \leq \left| g(\boldsymbol{v}) \right|$ when $\boldsymbol{v} \in \OmegaLv$ and the domain of integration expanded to all of $\Rd$.
	
	Then, since $\left| \gchi(\boldsymbol{v}_*) \right| = 0$ when $\boldsymbol{v}_* \notin \OmegaLv$; $\left| \gchi(\boldsymbol{v}_*) \right| \leq \left| g(\boldsymbol{v}_*) \right|$ when $\boldsymbol{v}_* \in \OmegaLv$; and noting that $\lambda \leq 1$ implies $\langle \boldsymbol{v} \rangle^{\lambda} \leq \langle \boldsymbol{v} \rangle^{2}$,
	\begin{align*}
	\int_{\Rd} \left| \gchi(\boldsymbol{v}_*) \right| \langle \boldsymbol{v}_* \rangle^{\lambda} ~\dv_* \leq \int_{\OmegaLv} \left| g(\boldsymbol{v}_*) \right| \langle \boldsymbol{v}_* \rangle^{2} ~\dv_* &\leq \frac{1}{1 - 2\varepsilon} \int_{\OmegaLv} g_0(\boldsymbol{v})\langle\boldsymbol{v}\rangle^2 ~\dv \\
	&\leq \frac{1}{1 - 2\varepsilon} \|g_0\|_{L^1_2(\OmegaLv)},
	\end{align*}
	by using identity (\ref{stability_consequence}) resulting from the stability condition (\ref{stability}).  So, since $1 \leq \langle \boldsymbol{v} \rangle^{2}$, the same argument bounds the first integral in expression (\ref{Q_neg_bound}) and this means
	\begin{align}
	& 2 \int_{\OmegaLv} \int_{\Rd} |\gchi(\boldsymbol{v})| \gchi^-(\boldsymbol{v}_*) |\boldsymbol{v} - \boldsymbol{v}_*|^{\lambda} \langle \boldsymbol{v} \rangle^{k - 2} \biggl(2|\boldsymbol{v}|^{2} - 2|\boldsymbol{v}_*|^{2} \nonumber \\
	&~~~~~~~~~~~~~~~~~~~~ - (k - 2) \frac{|\boldsymbol{v}|^{2} |\boldsymbol{v}_*|^{2} - (\boldsymbol{v} \cdot \boldsymbol{v}_*)^2}{1 + |\boldsymbol{v}|^{2}} \biggr) ~\dv_* \dv \nonumber \\
	\leq & ~\frac{4}{1 - 2\varepsilon} \left(m_{k + \lambda}(g) + m_{k}(g)\right) \|g_0\|_{L^1_2(\OmegaLv)}. \label{Q_sgn_integral2_bound}
	\end{align}
	
	Also, by using further details of the proof of Theorem 3 in \cite{D&V_1}, since $k > 2$, there exist constants $K_k, C^1_k > 0$, each depending on $k$, such that the first integral in (\ref{Q_sgn_bound1}) can be bounded as
	\begin{align}
	& \int_{\OmegaLv} \int_{\Rd} |\gchi(\boldsymbol{v})\|\gchi(\boldsymbol{v}_*)\|\boldsymbol{v} - \boldsymbol{v}_*|^{\lambda} \langle \boldsymbol{v} \rangle^{k - 2} \biggl(-2|\boldsymbol{v}|^{2} + 2|\boldsymbol{v}_*|^{2} \nonumber \\
	&~~~~~~~~~~~~~~~~~~~~ + (k - 2) \frac{|\boldsymbol{v}|^{2} |\boldsymbol{v}_*|^{2} - (\boldsymbol{v} \cdot \boldsymbol{v}_*)^2}{1 + |\boldsymbol{v}|^{2}} \biggr)  ~\dv_* \dv \nonumber \\
	\leq & - K_k \int_{\OmegaLv} \int_{\Rd} |\gchi(\boldsymbol{v})\|\gchi(\boldsymbol{v}_*)\|\boldsymbol{v} - \boldsymbol{v}_*|^{\lambda} \langle \boldsymbol{v} \rangle^{k} ~\dv_* \dv \nonumber \\
	& + C^1_k \int_{\OmegaLv} \int_{\Rd} |\gchi(\boldsymbol{v})\|\gchi(\boldsymbol{v}_*)\|\boldsymbol{v} - \boldsymbol{v}_*|^{\lambda} \left(\langle \boldsymbol{v}_* \rangle \langle \boldsymbol{v} \rangle^{k - 1} + \langle \boldsymbol{v} \rangle \langle \boldsymbol{v}_* \rangle ^{k - 1} \right) ~\dv_* \dv. \label{Q_sgn_integral1_bound}
	\end{align}
	\begin{rem}
		The results used from \cite{D&V_1} here include the use of their Lemma 1, which they state replaces the Povzner lemma associated to the Boltzmann equation.
	\end{rem}
	Here, by again using $|\boldsymbol{v} - \boldsymbol{v}_*|^{\lambda} \leq \langle \boldsymbol{v} \rangle^{\lambda} + \langle \boldsymbol{v}_* \rangle^{\lambda}$ in the second integral in (\ref{Q_sgn_integral1_bound}),
	\begin{align}
	& \int_{\OmegaLv} \int_{\Rd} |\gchi(\boldsymbol{v})\|\gchi(\boldsymbol{v}_*)\|\boldsymbol{v} - \boldsymbol{v}_*|^{\lambda} \left(\langle \boldsymbol{v}_* \rangle \langle \boldsymbol{v} \rangle^{k - 1} + \langle \boldsymbol{v} \rangle \langle \boldsymbol{v}_* \rangle ^{k - 1} \right) ~\dv_* \dv \nonumber \\
	\leq & \int_{\OmegaLv} \int_{\Rd} |\gchi(\boldsymbol{v})\|\gchi(\boldsymbol{v}_*)| (\langle \boldsymbol{v} \rangle^{k + \lambda - 1} \langle \boldsymbol{v}_* \rangle + \langle \boldsymbol{v} \rangle^{\lambda + 1} \langle \boldsymbol{v}_* \rangle^{k - 1} \nonumber\\
	&~~~~~~~~~~~~~~~~~~~~~~~~~~~~~~~~~~~ + \langle \boldsymbol{v} \rangle^{k - 1} \langle \boldsymbol{v}_* \rangle^{\lambda + 1} + \langle \boldsymbol{v} \rangle \langle \boldsymbol{v}_* \rangle^{k + \lambda - 1}) ~\dv_* \dv \nonumber \\
	\leq & ~m_{k + \lambda - 1}(g)m_{1}(g) + m_{\lambda + 1}(g)m_{k - 1}(g) + m_{k - 1}(g)m_{\lambda + 1}(g) + m_{1}(g)m_{k + \lambda - 1}(g) \nonumber \\
	= & ~2 \bigl(m_{k + \lambda - 1}(g)m_{1}(g) + m_{k - 1}(g)m_{\lambda + 1}(g) \bigr), \label{Q_sgn_integral1b_bound}
	\end{align}
	where the last inequality comes from increasing the domain in the outer integral to all of $\Rd$ to match the definition of $m_k$ in (\ref{mk_def}) and also bounding the extension operator \mbox{\Large$\chi$} by 1.
	
	Finally, for the first integral in (\ref{Q_sgn_integral1_bound}) note that, for the constant $c_{\lambda} := \min(1, 2^{\lambda - 1}) = 2^{\lambda - 1} > 0$ (since $\lambda \leq 1$, so that $2^{\lambda - 1} \leq 1$),
	\begin{equation*}
	|\boldsymbol{v} - \boldsymbol{v}_*|^{\lambda} \geq c_{\lambda}(1 + |\boldsymbol{v}|^2)^{\frac{\lambda}{2}} - (1 + |\boldsymbol{v}_*|^2)^{\frac{\lambda}{2}} \geq c_{\lambda} \langle \boldsymbol{v} \rangle^{\lambda} - \langle \boldsymbol{v}_* \rangle^2,
	\end{equation*}
	because $\lambda \leq 1$ implies $(1 + |\boldsymbol{v}_*|^2)^{\frac{\lambda}{2}} \leq 1 + |\boldsymbol{v}_*|^2$.
	
	So, the first integral in (\ref{Q_sgn_integral1_bound}) can be bounded by
	\begin{align*}
	& - \int_{\OmegaLv} \int_{\Rd} |\gchi(\boldsymbol{v})\|\gchi(\boldsymbol{v}_*)\|\boldsymbol{v} - \boldsymbol{v}_*|^{\lambda} \langle \boldsymbol{v} \rangle^{k} ~\dv_* \dv \nonumber \\
	\leq & \int_{\OmegaLv} \int_{\Rd} |\gchi(\boldsymbol{v})\|\gchi(\boldsymbol{v}_*)| \langle \boldsymbol{v}_* \rangle^{2} \langle \boldsymbol{v} \rangle^{k} ~\dv_* \dv \nonumber \\
	&~~~~~~~~~~~~~~~~ - c_{\lambda} \int_{\OmegaLv} \int_{\Rd} |\gchi(\boldsymbol{v})\|\gchi(\boldsymbol{v}_*)| \langle \boldsymbol{v} \rangle^{\lambda} \langle \boldsymbol{v} \rangle^{k} ~\dv_* \dv \nonumber \\
	= &  \left(\int_{\Rd} |\gchi(\boldsymbol{v})| \langle \boldsymbol{v} \rangle^{k} ~\dv\right) \left(\int_{\OmegaLv}|\gchi(\boldsymbol{v}_*)| \langle \boldsymbol{v_*} \rangle^{2} ~\dv_*\right) \nonumber \\
	&~~~~~~~~~~~~~~~~ - c_{\lambda} \left(\int_{\Rd} |\gchi(\boldsymbol{v})| \langle \boldsymbol{v} \rangle^{k + \lambda} ~\dv\right) \left(\int_{\Rd} |\gchi(\boldsymbol{v}_*)| ~\dv_*\right), \nonumber 
	\end{align*}
	by taking advantage of the fact that $\gchi(\boldsymbol{v}) = 0$ when $\boldsymbol{v} \notin \OmegaLv$ and so the integral domains can be switched between $\OmegaLv$ and $\Rd$.  Then, by using identity (\ref{stability_consequence}) from the stability condition again on the first integral with respect to $\boldsymbol{v}_*$ here, and also by bounding the extension operators in that integral by 1,
	\begin{align}
	& - \int_{\OmegaLv} \int_{\Rd} |\gchi(\boldsymbol{v})\|\gchi(\boldsymbol{v}_*)\|\boldsymbol{v} - \boldsymbol{v}_*|^{\lambda} \langle \boldsymbol{v} \rangle^{k} ~\dv_* \dv \nonumber \\
	\leq & ~\frac{1}{1 - 2\varepsilon} \left(\int_{\Rd} |g(\boldsymbol{v})| \langle \boldsymbol{v} \rangle^{k} ~\dv\right) \left(\int_{\OmegaLv} g_0(\boldsymbol{v}_*) \langle \boldsymbol{v}_* \rangle^{2} ~\dv_*\right) \nonumber \\
	&~~~~~~~~~~~~~~~ - c_{\lambda} \left(\int_{\Rd} |\gchi(\boldsymbol{v})| \langle \boldsymbol{v} \rangle^{k + \lambda} ~\dv\right) \left(\int_{\Rd} |\gchi(\boldsymbol{v}_*)| ~\dv_*\right) \nonumber \\
	\leq & ~\frac{1}{1 - 2\varepsilon} m_{k}(g) \|g_0\|_{L^1_2(\OmegaLv)} - c_{\lambda} m_{0}\left(\gchi \right) m_{k + \lambda}\left(\gchi \right). \label{Q_sgn_integral1a_bound}
	\end{align}
	
	This means, by retracing steps and first using the bounds (\ref{Q_sgn_integral1a_bound}) and (\ref{Q_sgn_integral1b_bound}) in (\ref{Q_sgn_integral1_bound}),
	\begin{align*}
	& \int_{\OmegaLv} \int_{\Rd} |\gchi(\boldsymbol{v})\|\gchi(\boldsymbol{v}_*)\|\boldsymbol{v} - \boldsymbol{v}_*|^{\lambda} \langle \boldsymbol{v} \rangle^{k - 2} \biggl(-2|\boldsymbol{v}|^{2} + 2|\boldsymbol{v}_*|^{2} \nonumber \\
	&~~~~~~~~~~~~~~~~~~~~ + (k - 2) \frac{|\boldsymbol{v}|^{2} |\boldsymbol{v}_*|^{2} - (\boldsymbol{v} \cdot \boldsymbol{v}_*)^2}{1 + |\boldsymbol{v}|^{2}} \biggr)  ~\dv_* \dv \nonumber \\
	\leq & ~\frac{K_k}{1 - 2\varepsilon} m_{k}(g) \|g_0\|_{L^1_2(\OmegaLv)} - c_{\lambda} K_k m_{0}\left(\gchi \right) m_{k + \lambda}\left(\gchi \right) \nonumber \\
	& + 2C^1_k \bigl(m_{k + \lambda - 1}(g)m_{1}(g) + m_{k - 1}(g)m_{\lambda + 1}(g) \bigr);
	\end{align*}
	then using this and (\ref{Q_sgn_integral2_bound}) in (\ref{Q_sgn_bound1}) gives the bound on the remaining integral with the sgn$(g)$ term in (\ref{moment_derivative_bound1}) as
	\begin{align*}
	& \int_{\OmegaLv} Q(\gchi, \gchi) \textrm{sgn}(g)(\boldsymbol{v}) \langle \boldsymbol{v} \rangle^{k} ~\dv \nonumber \\
	\leq & k \Biggl(~\frac{K_k}{1 - 2\varepsilon} m_{k}(g) \|g_0\|_{L^1_2(\OmegaLv)} - c_{\lambda} K_k m_{0}\left(\gchi \right) m_{k + \lambda}\left(\gchi \right) \nonumber \\
	&~~~ + 2C^1_k \bigl(m_{k + \lambda - 1}(g)m_{1}(g) + m_{k - 1}(g)m_{\lambda + 1}(g) \bigr) \\
	&~~~ + \frac{4}{1 - 2\varepsilon} \bigl(m_{k + \lambda}(g) + m_{k}(g) \bigr) \|g_0\|_{L^1_2(\OmegaLv)} \Biggr) \\
	= & k \Biggl(- c_{\lambda} K_k m_{0}\left(\gchi \right) m_{k + \lambda}\left(\gchi \right) + 2C^1_k \bigl(m_{k + \lambda - 1}(g)m_{1}(g) + m_{k - 1}(g)m_{\lambda + 1}(g) \bigr) \\
	&~~~ + \frac{4}{1 - 2\varepsilon} \biggl(m_{k + \lambda}(g) + \Bigl(1 + \frac{1}{4} K_k \Bigr)m_{k}(g) \biggr) \|g_0\|_{L^1_2(\OmegaLv)} \Biggr) \\
	\leq &  k \Biggl( - c_{\lambda} K_k m_{0}\left(\gchi \right) m_{k + \lambda}\left(\gchi \right) + 2C^1_k \bigl(m_{k + \lambda - 1}(g)m_{1}(g) + m_{k - 1}(g)m_{\lambda + 1}(g) \bigr) \\
	&~~~ + \frac{4(2 + \frac{1}{4} K_k)}{1 - 2\varepsilon} m_{k + \lambda}(g) \|g_0\|_{L^1_2(\OmegaLv)} \Biggr),
	\end{align*}
	because $m_k(g) \leq m_{k + \lambda}$.  
	
	Next, for some $\varepsilon_{\chi} \in (0, 1)$ that can be chosen as small as necessary, choose $\varepsilon$ to satisfy
	\begin{equation*}
	\frac{1}{1 - 2\varepsilon} \leq \frac{(1 - 2\varepsilon_{\chi}) c_{\lambda} K_k m_0(g_0)}{4(2 + \frac{1}{4} K_k)\|g_0\|_{L^1_2(\OmegaLv)}},
	\end{equation*}
	which is equivalent to taking $\varepsilon \leq \varepsilon_0$ with $\varepsilon_0$ chosen as
	\begin{equation}
	\varepsilon_0 := \frac{1}{2} \left(1 - \frac{4(2 + \frac{1}{4} K_k)\|g_0\|_{L^1_2(\OmegaLv)}}{(1 - 2\varepsilon_{\chi}) c_{\lambda} K_k m_0(g_0)} \right). \label{epsilon_0}
	\end{equation}
	If this is true then, by using results (\ref{moment_cutoff_bound_0}) and (\ref{moment_cutoff_bound_k}) from the appendix to bound $- m_{0}\left(\gchi \right) m_{k + \lambda}\left(\gchi \right)$ from above,
	\begin{align} \label{moment_derivative_bound_term1}
	& \int_{\OmegaLv} Q(\gchi, \gchi) \textrm{sgn}(g)(\boldsymbol{v}) \langle \boldsymbol{v} \rangle^{k} ~\dv \nonumber \\
	\leq &~ - (1 - \varepsilon_{\chi})^2 k c_{\lambda} K_k m_{0}(g_0) m_{k + \lambda}(g) + (1 - 2\varepsilon_{\chi}) k c_{\lambda} K_k m_0(g_0) m_{k + \lambda}(g) \nonumber \\
	&~~~+ k 2C^1_k \bigl(m_{k + \lambda - 1}(g)m_{1}(g) + m_{k - 1}(g)m_{\lambda + 1}(g) \bigr) \\
	= & ~ - {\varepsilon_{\chi}}^2 k c_{\lambda} K_k m_{0}(g_0) m_{k + \lambda}(g) + 2k C^1_k\bigl(m_{k + \lambda - 1}(g)m_{1}(g) + m_{k - 1}(g)m_{\lambda + 1}(g) \bigr). \nonumber 
		\end{align}
	
	Now, returning to the bound (\ref{moment_derivative_bound1}) on the derivatives of the moments of $g$, by the Cauchy-Schwarz inequality,
	\begin{align*}
	\left\|(Q_c(g,g) - Q_u(g,g)) \langle \boldsymbol{v} \rangle^{k} \right\|_{L^1(\OmegaLv)} 
	&\leq \left\|(Q_c(g,g) - Q_u(g,g)) \langle \boldsymbol{v} \rangle^{k} \right\|_{L^2(\OmegaLv)} \left\|1 \right\|_{L^2(\OmegaLv)} \\
	&=(2 \Lv)^{\frac{d}{2}} \left\|(Q_c(g,g) - Q_u(g,g)) \langle \boldsymbol{v} \rangle^{k} \right\|_{L^2(\OmegaLv)}.
	\end{align*}
	Then, using the result from Theorem \ref{Q_moment_theorem} gives that, for any $k' > 2$,
	\begin{align*}
	&\left\|(Q_c(g,g) - Q_u(g,g)) \langle \boldsymbol{v} \rangle^{k} \right\|_{L^1(\OmegaLv)} \\
	\leq &~ \frac{C_d (2 \Lv)^{\frac{d}{2}}}{\sqrt{2k + 1}} \biggl(O_{-k} \bigl| \bigl| \left(\Pi^N_{2\Lv} - 1 \right) Q(\gchi, \gchi) \bigr| \bigr|_{L^2(\OmegaLv)} \\
	&~~~~~~~~~~~~~~~~~ + O_{\frac{d}{2} + k' - k} \bigl(m_{0}(g) m_{k' + \lambda}(g) + m_{\lambda}(g) m_{k'}(g) \nonumber \\
	&~~~~~~~~~~~~~~~~~~~~~~~~~~~~~~~ + m_{2}(g) m_{k' + \lambda - 2}(g) + m_{\lambda + 2}(g) m_{k' - 2}(g)\bigr)\biggr).
	\end{align*}
	So, by choosing $k' = k > 2$, which implies $C_d \left(2 \Lv\right)^{\frac{d}{2}}O_{\frac{d}{2} + k' - k} = C^2_d$ for some new constant $C^2_d$,
	\begin{align}
	&\left\|(Q_c(g,g) - Q_u(g,g)) \langle \boldsymbol{v} \rangle^{k} \right\|_{L^1(\OmegaLv)} \nonumber \\
	\leq &~\frac{1}{\sqrt{2k + 1}} \biggl(2^{\frac{d}{2}} C_{d} \mathcal{O} \left({\Lv}^{k + \frac{d}{2}} \right) \bigl| \bigl| \left(\Pi^N_{2\Lv} - 1 \right) Q(\gchi, \gchi)  \bigr| \bigr|_{L^2(\OmegaLv)} \nonumber \\
	&~~~~~~~~~~~~~~~ + C^2_d \bigl(m_{0}(g) m_{k + \lambda}(g) + m_{\lambda}(g) m_{k}(g) \nonumber \\
	&~~~~~~~~~~~~~~~~~~~~~~~~~~ + m_{2}(g) m_{k + \lambda - 2}(g) + m_{\lambda + 2}(g) m_{k - 2}(g) \bigr)\biggr). \label{moment_derivative_bound_term2_0}
	\end{align}
	
	Also, by noting that $\lambda \leq 1$ implies, $m_{\lambda}(g) \leq m_{1}(g)$, $m_{k + \lambda - 2}(g) \leq m_{k - 1}(g)$ and $m_{\lambda + 2}(g) \leq m_3(g)$ and taking $k \geq 3$, note that
	\begin{align}
	m_{\lambda}(g) m_{k}(g) &+ m_{2}(g) m_{k + \lambda - 2}(g) + m_{\lambda + 2}(g) m_{k - 2}(g)  \nonumber \\
&\leq \sum_{j = 0}^{2} m_{j+1}(g)m_{k-j}(g)	\ \leq \ \sum_{j = 0}^{k - 1} {k \choose j} m_{j+1}(g)m_{k-j}(g) \nonumber \\
	&\qquad \ \ \leq C_k \bigl(m_0(g) + m_k(g) \bigr), \label{Zk_bound}
	\end{align}
	for some constant $C_k$ which depends only on $k$.  Then, using this in the bound (\ref{moment_derivative_bound_term2_0}) gives
	\begin{align}
	&\left\|(Q_c(g,g) - Q_u(g,g)) \langle \boldsymbol{v} \rangle^{k} \right\|_{L^1(\OmegaLv)} \nonumber \\
	&\qquad\ \leq \frac{1}{\sqrt{2k + 1}} \biggl(2^{\frac{d}{2}} C_{d} \mathcal{O} \left({\Lv}^{k + \frac{d}{2}} \right) \bigl| \bigl| \left(\Pi^N_{2\Lv} - 1 \right) Q(\gchi, \gchi)  \bigr| \bigr|_{L^2(\OmegaLv)} \nonumber \\
	&~~~~~~~~~~~~~~~ + C^2_d \Bigl(m_{0}(g) m_{k + \lambda}(g) + C_k \bigl(m_0(g) + m_k(g) \bigr) \Bigr)\biggr). \label{moment_derivative_bound_term2}
	\end{align}
	
	Next, by again using the Cauchy-Schwarz inequality on the remaining term in (\ref{moment_derivative_bound1}) as well as the bound on $\left\| \langle \boldsymbol{v} \rangle^{k} \right\|_{L^2(\OmegaLv)}$ in (\ref{v^k_L2_bound}),
	\begin{align}
	&\left\|\left(1\! -\! \Pi^N_{2\Lv}\right) \!Q(\gchi, \gchi) \langle \boldsymbol{v} \rangle^{k}\! \right\|_{L^1(\OmegaLv)} 
	\!\leq \! \left\|\left(1 \!-\! \Pi^N_{2\Lv}\right) Q(\gchi, \gchi) \right\|_{L^2(\OmegaLv)} \left\| \langle \boldsymbol{v} \rangle^{k}\! \right\|_{L^2(\OmegaLv)} \nonumber \\
	&\qquad\ \ \leq  \left(\frac{\omega_{d-1} d^{\frac{d}{2}}}{2k + 1} \sum_{j = 0}^{2k} {2k + 1 \choose j + 1} {\Lv}^{j}\right)^{\frac{1}{2}} {\Lv}^{\frac{d}{2}} \left\|\left(1 - \Pi^N_{2\Lv}\right) Q(\gchi, \gchi) \right\|_{L^2(\OmegaLv)} \nonumber \\
	= &~ \frac{2d^{-1}C_d}{\sqrt{2k + 1}} \mathcal{O} \left({\Lv}^{k + \frac{d}{2}} \right) \left\|\left(1 - \Pi^N_{2\Lv}\right) Q(\gchi, \gchi) \right\|_{L^2(\OmegaLv)}. \label{moment_derivative_bound_term3}
	\end{align}
	
	This means, by using the bounds (\ref{moment_derivative_bound_term1}), (\ref{moment_derivative_bound_term2}) and (\ref{moment_derivative_bound_term3}) in the overall bound (\ref{moment_derivative_bound1}) for the derivative of the moments of $g$, and noting that $\lambda \leq 1$ implies $m_{k + \lambda - 1}(g)m_{1}(g) + m_{k - 1}(g)m_{\lambda + 1}(g) \leq C_k \bigl(m_0(g) + m_k(g) \bigr)$ by the same argument that led to (\ref{Zk_bound}),
	\begin{align}
	\frac{\textrm{d}}{\textrm{d} t} \bigl(m_k(g) \bigr) \leq& - {\varepsilon_{\chi}}^2 K_{\lambda, k} m_{0}(g_0) m_{k + \lambda}(g) + \frac{C^2_d}{\sqrt{2k + 1}} m_{0}(g) m_{k + \lambda}(g) \nonumber \\
	&~ + C^1_{d,k} \bigl(m_0(g) + m_k(g) \bigr) \nonumber \\
	&~~ + \frac{C^1_d}{\sqrt{2k + 1}} \mathcal{O} \left({\Lv}^{k + \frac{d}{2}} \right) \bigl| \bigl| \left(\Pi^N_{2\Lv} - 1 \right) Q(\gchi, \gchi)  \bigr| \bigr|_{L^2(\OmegaLv)}, \label{moment_derivative_bound2}
	\end{align}
	where $K_{\lambda, k} := k c_{\lambda} K_k$, $C^1_d := 2^{\frac{d}{2}} C_{d} + 2d^{-1}C_d$ and $C^1_{d,k} := \left(2k C^1_k + \frac{C^2_d}{\sqrt{2k + 1}}\right)C_k$.
	
	Here it should be noted that, when the solution $g$ is bounded under the Gaussian (\ref{Gaussian_bound}), $m_0(g) \leq C r^{\frac{d}{2}} m_0(g_0)$.  So, if it is further assumed that $k \geq k_0$ with $k_0$ chosen as
	\begin{equation}
	k_0 := \frac{1}{2} \left(\left(\frac{2 C^2_d C r^{\frac{d}{2}}}{{\varepsilon_{\chi}}^2 K_{\lambda, k}} \right)^2 - 1 \right) \label{k_0}
	\end{equation}
	then
	\begin{align}
	\frac{C^2_d}{\sqrt{2k + 1}} m_{0}(g) m_{k + \lambda}(g) \leq&~ \frac{{\varepsilon_{\chi}}^2 K_{\lambda, k}}{2 C r^{\frac{d}{2}}} \left(C r^{\frac{d}{2}} m_0(g_0) \right) m_{k + \lambda}(g) \nonumber \\
	=&~ \frac{1}{2} {\varepsilon_{\chi}}^2 K_{\lambda, k} m_{0}(g_0) m_{k + \lambda}(g) \label{highest moment bound}
	\end{align}
	and this term can be combined with the negative contribution to give
	\begin{align}
	\frac{\textrm{d}}{\textrm{d} t} \bigl(m_k(g) \bigr) \leq& - \frac{1}{2} {\varepsilon_{\chi}}^2 K_{\lambda, k} m_{0}(g_0) m_{k + \lambda}(g) + C^1_{d,k} \bigl(m_0(g) + m_k(g) \bigr) \nonumber \\
	&~ + \frac{C^1_d}{\sqrt{2k + 1}} \mathcal{O} \left({\Lv}^{k + \frac{d}{2}} \right) \bigl| \bigl| \left(\Pi^N_{2\Lv} - 1 \right) Q(\gchi, \gchi)  \bigr| \bigr|_{L^2(\OmegaLv)}. \label{moment_derivative_bound3}
	\end{align}
	
	Finally, for the term involving the partial Fourier series of $Q$, as is shown in \cite{BoltzmannConvergence} by multiple uses of Parseval's theorem, for any $s > 0$,
	\begin{equation}
	\bigl| \bigl| \left(\Pi^N_{2\Lv} - 1 \right) Q(\gchi, \gchi)  \bigr| \bigr|_{L^2(\Omega_{2\Lv})} \leq \frac{1}{N^s} \bigl| \bigl| Q(\gchi, \gchi)  \bigr| \bigr|_{\dot{H}^s(\Rd)}. \label{Fourier_projection_tail1}
	\end{equation}
	Then, if it is also true that for any $f \in L^2_{\lambda + 1}(\Rd)$ and some constant $C_Q$ that will depend on the Landau operator $Q$, 
	\begin{equation*}
	\bigl| \bigl| Q(\gchi, \gchi)  \bigr| \bigr|_{\dot{H}^{\frac{d - 1}{2}}(\Rd)} \leq C_Q \bigl| \bigl| \gchi  \bigr| \bigr|^2_{L^2_{\lambda + 1}(\Rd)},
	\end{equation*}
	by choosing $s = \frac{d - 1}{2}$ in identity (\ref{Fourier_projection_tail1}),
	\begin{align}
	\bigl| \bigl| \left(\Pi^N_{2\Lv} - 1 \right) Q(\gchi, \gchi)  \bigr| \bigr|_{L^2(\OmegaLv)} &\leq \bigl| \bigl| \left(\Pi^N_{2\Lv} - 1 \right) Q(\gchi, \gchi)  \bigr| \bigr|_{L^2(\Omega_{2\Lv})} \nonumber \\
	&\leq \frac{C_Q}{N^{\frac{d - 1}{2}}} \bigl| \bigl| \gchi \bigr| \bigr|^2_{L^2_{\lambda + 1}(\Rd)}. \nonumber \\
	&= \frac{C_Q}{N^{\frac{d - 1}{2}}} \bigl| \bigl| \gchi \bigr| \bigr|^2_{L^2_{\lambda + 1}(\OmegaLv)}. \label{Fourier_projection_tail2}
	\end{align}
	Therefore, by using this in (\ref{moment_derivative_bound3}) and defining the constant $C^2_{d,k} := \frac{C^1_d C_Q}{\sqrt{2k + 1}}$,
	\begin{multline*}
	\frac{\textrm{d}}{\textrm{d} t} \bigl(m_k(g) \bigr) \leq - \frac{1}{2} {\varepsilon_{\chi}}^2 K_{\lambda, k} m_{0}(g_0) m_{k + \lambda}(g) + C^1_{d,k} \bigl(m_0(g) + m_k(g) \bigr) \nonumber \\
	+ C^2_{d,k} \frac{\mathcal{O} \left({\Lv}^{k + \frac{d}{2}} \right)}{N^{\frac{d - 1}{2}}} \bigl| \bigl| \gchi \bigr| \bigr|^2_{L^2_{\lambda + 1}(\OmegaLv)},
	\end{multline*}
	which is the required result (\ref{moment_derivative_bound}) in Lemma \ref{moment_derivative_lemma}.
\end{proof}

\subsection{Estimates on the Time Derivative of the $L^2$-norm}
\begin{lemma} \label{L2_norm_derivative_lemma}
	For a solution $g$ of the semi-discrete problem (\ref{semi-discrete2}) which satisfies the stability condition (\ref{stability}) with $\varepsilon \leq \frac{1}{4}$,
	\begin{align}
	&\frac{d}{d t} \left(\left\| \gchi \right\|_{L^2(\OmegaLv)} \right) \nonumber \\
	\leq& - \frac{K^S_{d, \lambda}(g_0)}{{\Lv}^{2}} \left\| \gchi \right\|_{L^2(\OmegaLv)} + K_{d, \lambda}(g_0) {\Lv}^{\frac{d}{2}} + 2 C_{\lambda} \left(2 \Lv \right)^{\frac{d}{2}} \left(1 + {\Lv}^2 \right) \|g_0\|_{L^1_{2}(\OmegaLv)} \nonumber \\
	&~ + C_d O_{\frac{d}{2} + 2 - \lambda}\|g_0\|^2_{L^1_2(\OmegaLv)} + \frac{C^3_d}{N^{\frac{d - 1}{2}}} \bigl|\bigl| \gchi \bigr|\bigr|^{2}_{L^2_{\lambda + 1}(\OmegaLv)}, \label{L2_norm_derivative_bound}
	\end{align}
	where the moment operator $m_k$ and $L^2_k$-norm are defined by (\ref{mk_def}) and (\ref{L2k_def}), respectively; $O_r$ denotes a constant that is $\mathcal{O}({\Lv}^{-r})$; $g_0$ is the initial condition and the constants $K_{d, \lambda}(g_0)$, $K^S_{d, \lambda}(g_0)$, $C_{\lambda}$, $C_{d}$, $C^3_{d} > 0$ with $C_{d}$ and $C^3_{d}$ depending on the dimension $d$; $C_{\lambda}$ depending on the potential $\lambda$; and $K_{d, \lambda}(g_0)$ and $K^S_{d, \lambda}(g_0)$ depending on $d$ and $\lambda$, as well as $g_0$.
\end{lemma}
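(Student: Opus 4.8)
The plan is to test the semi-discrete equation (\ref{semi-discrete2}) against $\Chi^{2}g$ and to show that, after using the decomposition (\ref{Q_expansion}), the coercivity of the true Landau operator dominates every error contribution. Since $\Chi$ is time-independent, $\frac{d}{dt}\|\gchi\|_{L^2(\OmegaLv)}^{2}=2\int_{\OmegaLv}\Chi^{2}g\,\partial_{t}g\,\dv=2\int_{\OmegaLv}\Chi^{2}g\,Q_{c}(g,g)\,\dv$, so, assuming $\gchi\not\equiv 0$ (the other case being trivial),
\[
\frac{d}{dt}\|\gchi\|_{L^2(\OmegaLv)}=\frac{1}{\|\gchi\|_{L^2(\OmegaLv)}}\int_{\OmegaLv}\Chi^{2}g\,Q_{c}(g,g)\,\dv .
\]
Inserting (\ref{Q_expansion}) splits the right-hand side (each piece divided by $\|\gchi\|_{L^2(\OmegaLv)}$) into a \emph{dissipative term} $\int_{\OmegaLv}\Chi^{2}g\,Q(\gchi,\gchi)\,\dv$, a \emph{conservation-defect term} $\int_{\OmegaLv}\Chi^{2}g\,(Q_{c}-Q_{u})(g,g)\,\dv$, and a \emph{spectral-tail term} $-\int_{\OmegaLv}\Chi^{2}g\,(1-\Pi^{N}_{2\Lv})Q(\gchi,\gchi)\,\dv$. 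A genuine cut-off (not $\Chi\equiv1$) must be kept, since the dissipative estimate below needs $\gchi$ to vanish near $\partial\OmegaLv$; the computation is a formal a priori estimate, to be justified by the regularisation used in the existence proof.

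For the dissipative term I would first replace the test function $\Chi^{2}g=\Chi\,\gchi$ by $\gchi$: the discrepancy $\int Q(\gchi,\gchi)\,\Chi(\Chi-1)g\,\dv$ is supported in the thin shell $\{0<\Chi<1\}$, and, running the weak form (\ref{Q^a_weak}) there together with the bounds $|a(\boldsymbol z)|\lesssim\langle\boldsymbol v\rangle^{\lambda+2}+\langle\boldsymbol v_{*}\rangle^{\lambda+2}$, $|b(\boldsymbol z)|\lesssim\langle\boldsymbol v\rangle^{\lambda+1}+\langle\boldsymbol v_{*}\rangle^{\lambda+1}$, the crude size $\nabla\Chi=\mathcal O\!\left(\frac{1}{\Lv\delchi}\right)$ and the $L^{1}$ control (\ref{stability_consequence}), it is absorbed into a contribution of order $K_{d,\lambda}(g_{0})\Lv^{d/2}$. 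For the leading piece $\int Q(\gchi,\gchi)\,\gchi\,\dv$ I invoke Proposition \ref{Halpha_estimate_prop}(b)(i) with $k=0$, giving $\int Q(\gchi,\gchi)\gchi\,\dv\le -K_{\lambda}(g_{0})\|\gchi\|_{\dot H^{1}_{\lambda/2}(\Rd)}^{2}+C_{\lambda}\|\gchi\|_{L^{1}_{5\lambda/4}(\Rd)}^{2}$; since $\gchi$ is compactly supported strictly inside $\OmegaLv$, Poincaré's inequality on the box yields $\|\gchi\|_{\dot H^{1}_{\lambda/2}}^{2}\ge\|\nabla\gchi\|_{L^{2}}^{2}\ge c_{d}\Lv^{-2}\|\gchi\|_{L^{2}}^{2}$, which after dividing by $\|\gchi\|_{L^{2}}$ produces the leading term $-K^{S}_{d,\lambda}(g_{0})\Lv^{-2}\|\gchi\|_{L^2(\OmegaLv)}$, with the ellipticity constant $K_{\lambda}(g_{0})$ (hence $K^{S}_{d,\lambda}(g_0)$) controlled by the initial mass via Proposition \ref{ellipticity_prop} and (\ref{stability_consequence}). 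The Povzner-type remainder $C_{\lambda}\|\gchi\|_{L^{1}_{5\lambda/4}}^{2}/\|\gchi\|_{L^{2}}$ becomes the term $2C_{\lambda}(2\Lv)^{d/2}(1+\Lv^{2})\|g_{0}\|_{L^{1}_{2}(\OmegaLv)}$ by estimating $\|\gchi\|_{L^{1}_{5\lambda/4}}\le(2\Lv)^{d/2}\langle\sqrt d\,\Lv\rangle^{5\lambda/4}\|\gchi\|_{L^{2}}$ with Cauchy--Schwarz (using $5\lambda/4\le2$ and $\langle\boldsymbol v\rangle^{5\lambda/4}\lesssim1+\Lv^{2}$ on $\OmegaLv$), cancelling one power of $\|\gchi\|_{L^{2}}$, and then $\|\gchi\|_{L^{1}_{2}}\le\|g\|_{L^{1}_{2}(\OmegaLv)}\le2\|g_{0}\|_{L^{1}_{2}(\OmegaLv)}$ via (\ref{stability_consequence}).

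The two error terms I would treat by Cauchy--Schwarz together with $\Chi\le1$. For the conservation defect, $\big|\!\int\Chi^{2}g\,(Q_{c}-Q_{u})(g,g)\,\dv\big|\le\|\gchi\|_{L^{2}}\|(Q_{c}-Q_{u})(g,g)\|_{L^{2}(\OmegaLv)}$, so after dividing by $\|\gchi\|_{L^{2}}$ one is left with $\|(Q_{c}-Q_{u})(g,g)\|_{L^{2}(\OmegaLv)}$, which Theorem \ref{Q_moment_theorem} (estimate (\ref{Q_moment_error_0})) bounds by $C_{d}\big(\|(1-\Pi^{N}_{2\Lv})Q(\gchi,\gchi)\|_{L^{2}(\OmegaLv)}+O_{d/2+2}\big(m_{0}(g)m_{2+\lambda}(g)+m_{\lambda}(g)m_{2}(g)\big)\big)$; since $g$ is supported on $\OmegaLv$ every moment obeys $m_{j}(g)\le\langle\sqrt d\,\Lv\rangle^{j}m_{0}(g)$ and $m_{0}(g)\le m_{2}(g)\le2\|g_{0}\|_{L^{1}_{2}(\OmegaLv)}$, which collapses the moment product into $\mathcal O(\Lv^{\lambda})\|g_{0}\|_{L^{1}_{2}}^{2}$ and hence this contribution into $C_{d}\,O_{d/2+2-\lambda}\|g_{0}\|_{L^{1}_{2}(\OmegaLv)}^{2}$. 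Likewise $\big|\!\int\Chi^{2}g\,(1-\Pi^{N}_{2\Lv})Q(\gchi,\gchi)\,\dv\big|\le\|\gchi\|_{L^{2}}\|(1-\Pi^{N}_{2\Lv})Q(\gchi,\gchi)\|_{L^{2}(\OmegaLv)}$, and the residuals $\|(1-\Pi^{N}_{2\Lv})Q(\gchi,\gchi)\|_{L^{2}(\OmegaLv)}$ arising here and in the previous step are both controlled by the Parseval estimate (\ref{Fourier_projection_tail2}) with $s=\tfrac{d-1}{2}$, contributing $\frac{C^{3}_{d}}{N^{(d-1)/2}}\|\gchi\|_{L^{2}_{\lambda+1}(\OmegaLv)}^{2}$. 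Collecting the four pieces gives (\ref{L2_norm_derivative_bound}).

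The main obstacle is the dissipative term: one must transplant the Desvillettes--Villani coercivity estimate, which is built for the full-space operator tested against $f\langle\boldsymbol v\rangle^{2k}$, to the truncated-and-cut-off setting --- i.e.\ show that swapping $\Chi^{2}g$ for $\gchi$ as a test function costs only the harmless boundary-shell term $K_{d,\lambda}(g_{0})\Lv^{d/2}$, and that the $\Lv^{-2}$ gain can be extracted by Poincaré without destroying the $\langle\boldsymbol v\rangle^{\lambda/2}$ weight in the $\dot H^{1}$ semi-norm. The error terms, by contrast, are essentially bookkeeping on top of Theorem \ref{Q_moment_theorem} and the projection estimate (\ref{Fourier_projection_tail2}).
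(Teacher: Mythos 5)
Your decomposition, the treatment of the conservation-defect and spectral-tail terms (Cauchy--Schwarz, Theorem \ref{Q_moment_theorem} with $k=0$, $k'=2$, the stability bound (\ref{stability_consequence}), and the projection estimate (\ref{Fourier_projection_tail2})), and the handling of the Povzner remainder $C_{\lambda}\|\gchi\|^2_{L^1_{5\lambda/4}}$ all coincide with the paper's proof. Where you genuinely diverge is in extracting the coercive term: the paper does \emph{not} use Poincar\'e. Instead it expands the weighted semi-norm $\|\gchi\|^2_{\dot H^1_{\lambda/2}}$ by the product rule, integrates by parts, discards the truly negative piece, and then applies the Sobolev embedding of Lemma \ref{Sobolev_result} together with nesting of $L^p$ norms on the bounded box to obtain the $\Lv^{-2}$ gain; this weight-commuting manipulation is precisely what generates the extra positive term $K_{d,\lambda}(g_0)\Lv^{\frac{d}{2}}$ in (\ref{L2_norm_derivative_bound}) (see the definition (\ref{K_dlambda_def})). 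Your route --- drop the weight using $\lambda\geq 0$ and apply Poincar\'e on $\OmegaLv$ to $\gchi$, which vanishes near $\partial\OmegaLv$ --- is simpler, gives the same $\Lv^{-2}$ scaling, and avoids that bookkeeping entirely; you then spend the $K_{d,\lambda}(g_0)\Lv^{\frac{d}{2}}$ budget on the test-function swap $\Chi^2 g\leftrightarrow\gchi$ instead. The one soft spot is exactly that swap: bounding $\int Q(\gchi,\gchi)\,\Chi(\Chi-1)g\,\dv$ via the weak form (\ref{Q^a_weak}) puts derivatives of $g$ (not just of $\Chi$) in the shell $\{0<\Chi<1\}$, so absorbing it into a constant of size $\mathcal{O}(\Lv^{\frac{d}{2}})$ needs either a bounded-gradient hypothesis (as in Lemma \ref{moment_derivative_lemma}) or the regularisation you invoke; this is no worse than the paper, which buries the same discrepancy in the ``$\approx$'' when identifying $\int_{\OmegaLv}\partial_t g\,\gchi\,\dv$ with $\|\gchi\|_{L^2(\OmegaLv)}\frac{d}{dt}\|\gchi\|_{L^2(\OmegaLv)}$, but you should state the assumption explicitly rather than leave it implicit.
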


\begin{proof}
	First, multiplying the expansion (\ref{Q_expansion}) by \mbox{\Large$\chi$}$g(\boldsymbol{v})$ and integrating with respect to $\boldsymbol{v}$ over $\OmegaLv$ gives
	\begin{align}
	&\int_{\OmegaLv} \frac{\partial g}{\partial t} \gchi ~\dv \nonumber \\
	=& \int_{\OmegaLv} Q(\gchi, \gchi) \gchi ~\dv \nonumber \\
	&+ \int_{\OmegaLv} \Bigl((Q_c(g,g) - Q_u(g,g)) - \left(1 - \Pi^N_{2\Lv}\right) Q(\gchi, \gchi) \Bigr) \gchi ~\dv. \label{L2_derivative_bound1}
	\end{align}
	
	
	Here it should be noted that, since the extension operator \mbox{\Large$\chi$} is independent of $t$, by two applications of the chain rule,
	\begin{align*}
	\int_{\OmegaLv} \frac{\partial g}{\partial t} \gchi ~\dv = \frac{1}{2} \int_{\OmegaLv}  \frac{\partial}{\partial t} \left(\gchi^2 \right) ~\dv \approx& \frac{1}{2} \frac{d}{d t} \left(\left\| \gchi \right\|^{2}_{L^2(\OmegaLv)} \right) \\
	=& \left\| \gchi \right\|_{L^2(\OmegaLv)} \frac{d}{d t} \left(\left\| \gchi \right\|_{L^2(\OmegaLv)} \right),
	\end{align*}
	where the $\approx$ is due to the requirement of an extra multiplication by \mbox{\Large$\chi$} at that stage and that $\mbox{\Large$\chi$}^2 \neq$ \mbox{\Large$\chi$} only on a very small subset of $\OmegaLv$.  As a result, (\ref{L2_derivative_bound1}) can be written as
	\begin{align}
	&\left\| \gchi \right\|_{L^2(\OmegaLv)} \frac{d}{d t} \left(\left\| \gchi \right\|_{L^2(\OmegaLv)} \right) \nonumber \\
	\leq& \int_{\OmegaLv} Q(\gchi, \gchi) \gchi(\boldsymbol{v}) ~\dv \nonumber \\
	&+ \int_{\OmegaLv} \Bigl((Q_c(g,g) - Q_u(g,g)) - \left(1 - \Pi^N_{2\Lv}\right) Q(\gchi, \gchi) \Bigr) \gchi(\boldsymbol{v}) ~\dv. \label{L2_derivative_bound2}
	\end{align}
	
	Now, by the result of Proposition \ref{Halpha_estimate_prop}(b)$(i)$,
	\begin{multline}
	\int_{\OmegaLv} Q(\gchi, \gchi) \gchi(\boldsymbol{v}) \langle \boldsymbol{v} \rangle^{2 k} ~\dv \leq -K_{\lambda}(g_0) \|\gchi\|^2_{\dot{H}^1_{k + \frac{\lambda}{2}}(\OmegaLv)} \\
	+ C_{\lambda} \|\gchi\|^{2}_{L^1_{\frac{5}{2}\left(k + \frac{\lambda}{2}\right)}(\OmegaLv)}. \label{D&V_L2_bound}
	\end{multline}
	It should be noted that, by definition, the $\dot{H}^s$-norm of a smooth function $h$ with $s = 1$ is simply
	\begin{align*}
	\|h\|^2_{\dot{H}^1(\OmegaLv)} = \int_{\Omega_{L_{\boldsymbol{\xi}}}} |\boldsymbol{\xi}|^2 \left(\widehat{h} (\boldsymbol{\xi}) \right)^2 ~\textrm{d}\boldsymbol{\xi} &= \int_{\Omega_{L_{\boldsymbol{\xi}}}} \left|\widehat{\nabla h} (\boldsymbol{\xi}) \right|^2 ~\textrm{d}\boldsymbol{\xi} \\
	&= \int_{\OmegaLv} \left|\nabla h (\boldsymbol{v}) \right|^2 ~\dv = \|\left|\nabla h \right\||^2_{L^2(\OmegaLv)}
	\end{align*}
	and, by the same argument, the weighted $\dot{H}^1_{k + \frac{\lambda}{2}}$-norm is
	\begin{equation*}
	\|h\|^2_{\dot{H}^1_{k + \frac{\lambda}{2}}(\OmegaLv)} = \|\left|\nabla h \right\||^2_{L^2_{k + \frac{\lambda}{2}}(\OmegaLv)},
	\end{equation*}
	\begin{flalign}
	\textrm{so } && \|\gchi\|^2_{\dot{H}^1_{k + \frac{\lambda}{2}}(\OmegaLv)} = \int_{\OmegaLv} \left|\nabla \left(\gchi\right) (\boldsymbol{v}) \left(1 + |\boldsymbol{v}|^2 \right)^{\frac{k}{2} + \frac{\lambda}{4}} \right|^2 ~\dv. && \label{H1k_1}
	\end{flalign}
	
	Here, by the product rule,
	\begin{multline*}
	\nabla \left(\gchi\right) (\boldsymbol{v}) \left(1 + |\boldsymbol{v}|^2 \right)^{\frac{k}{2} + \frac{\lambda}{4}} = \nabla \left(\gchi (\boldsymbol{v}) \left(1 + |\boldsymbol{v}|^2 \right)^{\frac{k}{2} + \frac{\lambda}{4}} \right) \\
	- \left(k + \frac{\lambda}{2} \right) \boldsymbol{v} \gchi (\boldsymbol{v}) \left(1 + |\boldsymbol{v}|^2 \right)^{\frac{k}{2} + \frac{\lambda}{4} - 1}
	\end{multline*}
	\begin{flalign}
	\textrm{and so} && &\left| \nabla \left(\gchi\right) (\boldsymbol{v}) \left(1 + |\boldsymbol{v}|^2 \right)^{\frac{k}{2} + \frac{\lambda}{4}} \right|^2 && \nonumber \\
	&& =& \left| \nabla \left(\gchi (\boldsymbol{v}) \left(1 + |\boldsymbol{v}|^2 \right)^{\frac{k}{2} + \frac{\lambda}{4}} \right) \right|^2 && \nonumber \\
	&& &- 2 \left(k + \frac{\lambda}{2} \right) \boldsymbol{v} \cdot \nabla \left(\gchi (\boldsymbol{v}) \left(1 + |\boldsymbol{v}|^2 \right)^{\frac{k}{2} + \frac{\lambda}{4}} \right) \left(1 + |\boldsymbol{v}|^2 \right)^{\frac{k}{2} + \frac{\lambda}{4} - 1} && \nonumber \\
	&& &+ \left(k + \frac{\lambda}{2} \right)^2 |\boldsymbol{v}|^2 \left(\gchi (\boldsymbol{v}) \right)^2 \left(1 + |\boldsymbol{v}|^2 \right)^{k + \frac{\lambda}{2} - 2}. && \label{H1k_2}
	\end{flalign}
	This means, by using expression (\ref{H1k_2}) in (\ref{H1k_1}), the negative term in (\ref{D&V_L2_bound}) can be written as
	\begin{align*}
	&-K_{\lambda}(g_0) \|\gchi\|^2_{\dot{H}^1_{k + \frac{\lambda}{2}}(\OmegaLv)} \\
	=& K_{\lambda}(g_0) \Biggl( - \int_{\OmegaLv} \left| \nabla \left(\gchi (\boldsymbol{v}) \left(1 + |\boldsymbol{v}|^2 \right)^{\frac{k}{2} + \frac{\lambda}{4}} \right) \right|^2 ~\dv \\
	&~~~~~~+ 2\left(k + \frac{\lambda}{2} \right) \int_{\OmegaLv} \boldsymbol{v} \cdot \nabla \left(\gchi (\boldsymbol{v}) \left(1 + |\boldsymbol{v}|^2 \right)^{\frac{k}{2} + \frac{\lambda}{4}} \right) \left(1 + |\boldsymbol{v}|^2 \right)^{\frac{k}{2} + \frac{\lambda}{4} - 1} ~\dv \\
	&~~~~~~- \left(k + \frac{\lambda}{2} \right)^2 \int_{\OmegaLv} |\boldsymbol{v}|^2 \left(\gchi (\boldsymbol{v}) \right)^2 \left(1 + |\boldsymbol{v}|^2 \right)^{k + \frac{\lambda}{2} - 2} ~\dv \Biggr).
	\end{align*}
	
	Here, by the divergence theorem on the second integral and noting that the boundary terms go to zero due to the extension operator \mbox{\Large$\chi$},
	\begin{align*}
	&\int_{\OmegaLv} \boldsymbol{v} \cdot \nabla \left(\gchi (\boldsymbol{v}) \left(1 + |\boldsymbol{v}|^2 \right)^{\frac{k}{2} + \frac{\lambda}{4}} \right) \left(1 + |\boldsymbol{v}|^2 \right)^{\frac{k}{2} + \frac{\lambda}{4} - 1} ~\dv \\
	=& - \int_{\OmegaLv} \nabla \cdot \left(\left(1 + |\boldsymbol{v}|^2 \right)^{\frac{k}{2} + \frac{\lambda}{4} - 1} \boldsymbol{v} \right) \gchi (\boldsymbol{v}) \left(1 + |\boldsymbol{v}|^2 \right)^{\frac{k}{2} + \frac{\lambda}{4}} ~\dv.
	\end{align*}
	Then, by noting that $\nabla \cdot \left(\left(1 + |\boldsymbol{v}|^2 \right)^{\frac{k}{2} + \frac{\lambda}{4} - 1} \boldsymbol{v} \right) = \left(k + \frac{\lambda}{2} - 2 \right)\left(1 + |\boldsymbol{v}|^2 \right)^{\frac{k}{2} + \frac{\lambda}{4} - 2} |\boldsymbol{v} |^2$ $+ d \left(1 + |\boldsymbol{v}|^2 \right)^{\frac{k}{2} + \frac{\lambda}{4} - 1}$,
	\begin{align*}
	&\int_{\OmegaLv} \boldsymbol{v} \cdot \nabla \left(\gchi (\boldsymbol{v}) \left(1 + |\boldsymbol{v}|^2 \right)^{\frac{k}{2} + \frac{\lambda}{4}} \right) \left(1 + |\boldsymbol{v}|^2 \right)^{\frac{k}{2} + \frac{\lambda}{4} - 1} ~\dv \\
	=& \left(2 - \left(k + \frac{\lambda}{2} \right)\right) \int_{\OmegaLv} |\boldsymbol{v}|^2 \gchi (\boldsymbol{v}) \left(1 + |\boldsymbol{v}|^2 \right)^{k + \frac{\lambda}{2} - 2} ~\dv \\
	& - d \int_{\OmegaLv} \gchi (\boldsymbol{v}) \left(1 + |\boldsymbol{v}|^2 \right)^{k + \frac{\lambda}{2} - 1} ~\dv
	\end{align*}
	and this means
	\begin{align*}
	-K_{\lambda}(g_0) \|\gchi\|^2_{\dot{H}^1_{k + \frac{\lambda}{2}}(\OmegaLv)} =& K_{\lambda}(g_0) \Biggl( - \int_{\OmegaLv} \left| \nabla \left(\gchi (\boldsymbol{v}) \left(1 + |\boldsymbol{v}|^2 \right)^{\frac{k}{2} + \frac{\lambda}{4}} \right) \right|^2 ~\dv \\
	&~~+ 4 \left(k + \frac{\lambda}{2} \right) \int_{\OmegaLv} |\boldsymbol{v}|^2 \gchi (\boldsymbol{v}) \left(1 + |\boldsymbol{v}|^2 \right)^{k + \frac{\lambda}{2} - 2} ~\dv \\
	&~~- 2 \left(k + \frac{\lambda}{2} \right)^2 \int_{\OmegaLv} |\boldsymbol{v}|^2 \gchi (\boldsymbol{v}) \left(1 + |\boldsymbol{v}|^2 \right)^{k + \frac{\lambda}{2} - 2} ~\dv \\
	&~~ - 2 d \left(k + \frac{\lambda}{2} \right) \int_{\OmegaLv} \gchi (\boldsymbol{v}) \left(1 + |\boldsymbol{v}|^2 \right)^{k + \frac{\lambda}{2} - 1} ~\dv \\
	&- \left(k + \frac{\lambda}{2} \right)^2 \int_{\OmegaLv} |\boldsymbol{v}|^2 \left(\gchi (\boldsymbol{v}) \right)^2 \left(1 + |\boldsymbol{v}|^2 \right)^{k + \frac{\lambda}{2} - 2} ~\dv \Biggr).
	\end{align*}
	
	Now, setting $k = 0$ like in (\ref{L2_derivative_bound2}), as well as dropping the last term which is truly negative, gives
	\begin{align*}
	-K_{\lambda}(g_0) \|\gchi\|^2_{\dot{H}^1_{\frac{\lambda}{2}}(\OmegaLv)} =& K_{\lambda}(g_0) \Biggl( - \int_{\OmegaLv} \left| \nabla \left(\gchi (\boldsymbol{v}) \left(1 + |\boldsymbol{v}|^2 \right)^{\frac{\lambda}{4}} \right) \right|^2 ~\dv \\
	&~~~~~~+ 4 \left(\frac{\lambda}{2} \right) \int_{\OmegaLv} |\boldsymbol{v}|^2 \gchi (\boldsymbol{v}) \left(1 + |\boldsymbol{v}|^2 \right)^{\frac{\lambda}{2} - 2} ~\dv \\
	&~~~~~~ - 2 \left(\frac{\lambda}{2} \right)^2 \int_{\OmegaLv} |\boldsymbol{v}|^2 \gchi (\boldsymbol{v}) \left(1 + |\boldsymbol{v}|^2 \right)^{\frac{\lambda}{2} - 2} ~\dv \\
	&~~~~~~ - 2 d \left(\frac{\lambda}{2} \right) \int_{\OmegaLv} \gchi (\boldsymbol{v}) \left(1 + |\boldsymbol{v}|^2 \right)^{\frac{\lambda}{2} - 1} ~\dv \Biggr).
	\end{align*}
	Then, by coarsely bounding by the positive version of the negative integrals and noting that $0 \leq \lambda \leq 1$ so that $\left(\frac{\lambda}{2} \right)^2 \leq \frac{\lambda}{2}$, this means
	\begin{align}
	-K_{\lambda}(g_0) \|\gchi\|^2_{\dot{H}^1_{\frac{\lambda}{2}}(\OmegaLv)} =& K_{\lambda}(g_0) \Biggl( - \int_{\OmegaLv} \left| \nabla \left(\gchi (\boldsymbol{v}) \left(1 + |\boldsymbol{v}|^2 \right)^{\frac{\lambda}{4}} \right) \right|^2 ~\dv \nonumber \\
	&~~~~~~+ 4 \frac{\lambda}{2} \int_{\OmegaLv} |\boldsymbol{v}|^2 \left| \gchi (\boldsymbol{v}) \right| \left(1 + |\boldsymbol{v}|^2 \right)^{\frac{\lambda}{2} - 2} ~\dv \nonumber \\
	&~~~~~~ + 2 \frac{\lambda}{2} \int_{\OmegaLv} |\boldsymbol{v}|^2 \left| \gchi (\boldsymbol{v}) \right| \left(1 + |\boldsymbol{v}|^2 \right)^{\frac{\lambda}{2} - 2} ~\dv \nonumber \\
	&~~~~~~ + 2 d \frac{\lambda}{2} \int_{\OmegaLv} \left| \gchi (\boldsymbol{v}) \right| \left(1 + |\boldsymbol{v}|^2 \right)^{\frac{\lambda}{2} - 1} ~\dv \Biggr) \nonumber \\
	\leq& K_{\lambda}(g_0) \Biggl( - \int_{\OmegaLv} \left| \nabla \left(\gchi (\boldsymbol{v}) \left(1 + |\boldsymbol{v}|^2 \right)^{\frac{\lambda}{4}} \right) \right|^2 ~\dv \nonumber \\
	&~~~~~~+ (3 + d) \lambda \int_{\OmegaLv} \left| \gchi (\boldsymbol{v}) \right| \left(1 + |\boldsymbol{v}|^2 \right)^{\frac{\lambda}{2} - 1} ~\dv \Biggr). \label{D&V_L2_bound_neg_term}
	\end{align}
	Here, by the Cauchy-Schwarz inequality and noting that $\left\| 1 \right\|_{L^2(\OmegaLv)} = (2 \Lv)^{\frac{d}{2}}$,
	\begin{equation*}
	\int_{\OmegaLv} \left| \gchi (\boldsymbol{v}) \right| \left(1 + |\boldsymbol{v}|^2 \right)^{\frac{\lambda}{2} - 1} ~\dv \leq (2 \Lv)^{\frac{d}{2}} \left\| \gchi \right\|_{L^2_{\frac{\lambda}{2} - 1}(\OmegaLv)} \leq (2 \Lv)^{\frac{d}{2}} \left\| \gchi \right\|_{L^2(\OmegaLv)},
	\end{equation*}
	since $\lambda \leq 1$ implies that $\langle \boldsymbol{v} \rangle^{\frac{\lambda}{2} - 1} \leq 1$ and so the weight can be removed from the $L^2$-norm.  
	
	Also, by using $h = \gchi (\boldsymbol{v}) \left(1 + |\boldsymbol{v}|^2 \right)^{\frac{\lambda}{4}}$ and $p = 2$ in the Sobolev inequality from Lemma \ref{Sobolev_result} in the appendix, for $C^S_d := C^S_{2, d}$,
	\begin{align*}
	\int_{\OmegaLv} \left| \nabla \left(\gchi (\boldsymbol{v}) \left(1 + |\boldsymbol{v}|^2 \right)^{\frac{\lambda}{4}} \right) \right|^2 ~\dv &= \left\| \gchi (\boldsymbol{v}) \left(1 + |\boldsymbol{v}|^2 \right)^{\frac{\lambda}{4}} \right\|^{2}_{\dot{H}^1(\OmegaLv)} \\
	&\geq \frac{1}{\left(C^S_d \right)^2} \left\| \gchi (\boldsymbol{v}) \left(1 + |\boldsymbol{v}|^2 \right)^{\frac{\lambda}{4}} \right\|^{2}_{L^{\frac{2d}{d - 2}}(\OmegaLv)} \\
	&\geq \frac{1}{\left(C^S_d \right)^2} \left\| \gchi \right\|^{2}_{L^{\frac{2d}{d - 2}}(\OmegaLv)} \\
	&\geq \frac{1}{\left(2 C^S_d \Lv\right)^2} \left\| \gchi \right\|^{2}_{L^{2}(\OmegaLv)},
	\end{align*}
	by considering the identity on the nesting of $L^p$ norms over bounded spaces (or a calculation involving an application of H\"{o}lder's inequality).  
	
	So, using these last two inequalities in (\ref{D&V_L2_bound_neg_term}) means that identity (\ref{D&V_L2_bound}) with $k = 0$ leads to
	\begin{align}
	\int_{\OmegaLv} Q(\gchi, \gchi) \gchi(\boldsymbol{v}) ~\dv \leq& - \frac{K^S_{d, \lambda}(g_0)}{{\Lv}^{2}} \left\| \gchi \right\|^{2}_{L^2(\OmegaLv)} \nonumber \\
	&+ K_{d, \lambda}(g_0) {\Lv}^{\frac{d}{2}} \left\| \gchi \right\|_{L^2(\OmegaLv)} + C_{\lambda} \|\gchi\|^{2}_{L^1_{\frac{5 \lambda}{4}}(\OmegaLv)}, \label{D&V_L2_bound2}
	\end{align}
	\begin{flalign}
	\textrm{for } && K^S_{d, \lambda}(g_0) := \frac{K_{\lambda}(g_0)}{\left(2 C^S_d \right)^2} ~~~\textrm{and}~~~ K_{d, \lambda}(g_0) := 2^{\frac{d}{2}} K_{\lambda}(g_0) (3 + d) \lambda. && \label{K_dlambda_def}
	\end{flalign}
	
	Finally for this term, note that by using the Cauchy-Schwarz inequality on one of the $\|\gchi\|_{L^1_{\frac{5 \lambda}{4}}(\OmegaLv)}$ terms and then pulling out the maximum value of the weight,
	\begin{align*}
	\|\gchi\|^{2}_{L^1_{\frac{5 \lambda}{4}}(\OmegaLv)} &\leq \left(2 \Lv \right)^{\frac{d}{2}} \|\gchi\|_{L^1_{\frac{5 \lambda}{4}}(\OmegaLv)} \|\gchi\|_{L^2_{\frac{5 \lambda}{4}}(\OmegaLv)} \\
	&\leq \left(2 \Lv \right)^{\frac{d}{2}} \left(1 + {\Lv}^2 \right) \|\gchi\|_{L^1_{2}(\OmegaLv)} \|\gchi\|_{L^2(\OmegaLv)} \\
	&\leq 2 \left(2 \Lv \right)^{\frac{d}{2}} \left(1 + {\Lv}^2 \right) \|g_0\|_{L^1_{2}(\OmegaLv)} \|\gchi\|_{L^2(\OmegaLv)},
	\end{align*}
	after applying the bound (\ref{stability_consequence}) resulting from the stability condition (\ref{stability}).  This means that (\ref{D&V_L2_bound2}) becomes
	\begin{align}
	\int_{\OmegaLv} Q(\gchi, \gchi) \gchi(\boldsymbol{v}) ~\dv &\leq \biggl(- \frac{K^S_{d, \lambda}(g_0)}{{\Lv}^{2}} \left\| \gchi \right\|_{L^2(\OmegaLv)} + K_{d, \lambda}(g_0) {\Lv}^{\frac{d}{2}} \nonumber \\
	&~~~ + 2 C_{\lambda} \left(2 \Lv \right)^{\frac{d}{2}} \left(1 + {\Lv}^2 \right) \|g_0\|_{L^1_{2}(\OmegaLv)} \biggr) \|\gchi\|_{L^2(\OmegaLv)}. \label{D&V_L2_bound3}
	\end{align}
	
	Also, by using the Cauchy-Schwarz inequality on the second integral in (\ref{L2_derivative_bound2}) and the triangle inequality,
	\begin{align*}
	&\int_{\OmegaLv} \Bigl((Q_c(g,g) - Q_u(g,g)) - \left(1 - \Pi^N_{2\Lv}\right) Q(\gchi, \gchi) \Bigr) \gchi(\boldsymbol{v}) ~\dv \\
	\leq& \int_{\OmegaLv} \left| \Bigl((Q_c(g,g) - Q_u(g,g)) - \left(1 - \Pi^N_{2\Lv}\right) Q(\gchi, \gchi) \Bigr) \gchi(\boldsymbol{v}) \right| ~\dv \\
	\leq& \left\|\Bigl((Q_c(g,g) - Q_u(g,g)) - \left(1 - \Pi^N_{2\Lv}\right) Q(\gchi, \gchi) \Bigr) \right\|_{L^2(\OmegaLv)} \|\gchi\|_{L^2(\OmegaLv)} \\
	\leq& \Bigl(\left\|(Q_c(g,g) - Q_u(g,g)) \right\|_{L^2(\OmegaLv)} \\
	&~~~~~~~~~~~~~~~~~~~~~~~~~~~~~~ + \left\|\left(1 - \Pi^N_{2\Lv}\right) Q(\gchi, \gchi) \right\|_{L^2(\OmegaLv)} \Bigr) \|\gchi\|_{L^2(\OmegaLv)}.	
	\end{align*}
	Then, by using the result from Lemma \ref{Q_moment_theorem} on the first term with $k = 0$ and $k' = 2$, and leaving in the extension operators in the moment terms before they were bounded by 1 in the last line of that proof,
	\begin{align*}
	&\int_{\OmegaLv} \Bigl((Q_c(g,g) - Q_u(g,g)) - \left(1 - \Pi^N_{2\Lv}\right) Q(\gchi, \gchi) \Bigr) \gchi(\boldsymbol{v}) ~\dv \\
	\leq& \Biggl(C_d \biggl(\bigl| \bigl| \left(\Pi^N_{2\Lv} - 1 \right) Q(\gchi, \gchi) \bigr| \bigr|_{L^2(\OmegaLv)} \\
	&~~~~~~~~~ + O_{\frac{d}{2} + 2} \Bigl(m_{0}(\gchi) m_{2 + \lambda}(\gchi) + m_{\lambda}(\gchi) m_{2}(\gchi) \Bigr)\biggr) \\
	&~~~~~~~~~~~~~~~~~~~~~~~~~~~~~ + \left\|\left(1 - \Pi^N_{2\Lv}\right) Q(\gchi, \gchi) \right\|_{L^2(\OmegaLv)} \Biggr) \|\gchi\|_{L^2(\OmegaLv)}.
	\end{align*}
	
	So, by collecting the $\left\|\left(1 - \Pi^N_{2\Lv}\right) Q(\gchi, \gchi) \right\|_{L^2(\OmegaLv)}$ terms and then using assumption (\ref{Fourier_projection_tail2}) to bound them,
	\begin{align}
	&\int_{\OmegaLv} \Bigl((Q_c(g,g) - Q_u(g,g)) - \left(1 - \Pi^N_{2\Lv}\right) Q(\gchi, \gchi) \Bigr) \gchi(\boldsymbol{v}) ~\dv \nonumber \\
	\leq& \Biggl(C_d O_{\frac{d}{2} + 2} \Bigl(m_{0}(\gchi) m_{2 + \lambda}(\gchi) + m_{\lambda}(\gchi) m_{2}(\gchi) \Bigr) \nonumber \\
	&~~~~~~~~~~~~~~~~~~~~~~~~~~~ + \frac{C_Q(C_d + 1)}{N^{\frac{d - 1}{2}}} \bigl| \bigl| \gchi \bigr| \bigr|^2_{L^2_{\lambda + 1}(\OmegaLv)} \Biggr) \|\gchi\|_{L^2(\OmegaLv)}. \label{L2_derivative_estimate_remainder}
	\end{align}
	Here, since $\lambda \leq 1$ and \mbox{\Large$\chi$}$(\boldsymbol{v}) \leq 1$ for $\boldsymbol{v} \in \OmegaLv$ and 0 otherwise, 
	\begin{equation*}
	m_{\lambda}(\gchi) \leq \|g\|_{L^1_2(\OmegaLv)} \leq 2\|g_0\|_{L^1_2(\OmegaLv)},
	\end{equation*}
	by using the bound (\ref{stability_consequence}) resulting from the stability condition (\ref{stability}).  The upper bound is also true for $m_0(\gchi)$ which means, by applying the same estimates for the extension operator \mbox{\Large$\chi$} in the other moment terms,
	\begin{align*}
	& m_{0}(\gchi) m_{2 + \lambda}(\gchi) + m_{\lambda}(\gchi) m_{2}(\gchi) \\
	\leq& 2\|g_0\|_{L^1_2(\OmegaLv)} \left(\|g\|_{L^1_{2 + \lambda}(\OmegaLv)} + \|g\|_{L^1_2(\OmegaLv)}\right) \\
	\leq& 2\|g_0\|_{L^1_2(\OmegaLv)} \left(\left(1 + {\Lv}^{2}\right)^{\frac{\lambda}{2}}\|g\|_{L^1_{2}(\OmegaLv)} + \|g\|_{L^1_2(\OmegaLv)}\right) \\
	\leq& 4\left(1 + \left(1 + {\Lv}^{2}\right)^{\frac{\lambda}{2}}\right)\|g_0\|^2_{L^1_2(\OmegaLv)},
	\end{align*}
	after pulling out the maximum value of an order $\lambda$ portion of the weight of the $L^2_{2 + \lambda}$-norm and then using the stability result (\ref{stability_consequence}) again.  So, by noting that the coefficient of $\|g_0\|^2_{L^1_2(\OmegaLv)}$ is an $O_{-\lambda}$ term, (\ref{L2_derivative_estimate_remainder}) becomes
	\begin{align*}
	&\int_{\OmegaLv} \Bigl((Q_c(g,g) - Q_u(g,g)) - \left(1 - \Pi^N_{2\Lv}\right) Q(\gchi, \gchi) \Bigr) \gchi(\boldsymbol{v}) ~\dv \\
	\leq& \Biggl(C_d O_{\frac{d}{2} + 2 - \lambda}\|g_0\|^2_{L^1_2(\OmegaLv)} + \frac{C_Q(C_d + 1)}{N^{\frac{d - 1}{2}}} \bigl| \bigl| \gchi \bigr| \bigr|^2_{L^2_{\lambda + 1}(\OmegaLv)} \Biggr) \|\gchi\|_{L^2(\OmegaLv)}.	
	\end{align*}
	
	Therefore, by labeling $C^3_d := C_Q(C_d + 1)$ and then using this bound in (\ref{L2_derivative_bound2}) along with (\ref{D&V_L2_bound3}),
	\begin{align*}
	&\left\| \gchi \right\|_{L^2(\OmegaLv)} \frac{d}{d t} \left(\left\| \gchi \right\|_{L^2(\OmegaLv)} \right) \nonumber \\
	\leq& \biggl(- \frac{K^S_{d, \lambda}(g_0)}{{\Lv}^{2}} \left\| \gchi \right\|_{L^2(\OmegaLv)} + K_{d, \lambda}(g_0) {\Lv}^{\frac{d}{2}} + 2 C_{\lambda} \left(2 \Lv \right)^{\frac{d}{2}} \left(1 + {\Lv}^2 \right) \|g_0\|_{L^1_{2}(\OmegaLv)} \\
	&~~~~~~~ + C_d O_{\frac{d}{2} + 2 - \lambda}\|g_0\|^2_{L^1_2(\OmegaLv)} + \frac{C^3_d}{N^{\frac{d - 1}{2}}} \bigl|\bigl| \gchi \bigr|\bigr|^{2}_{L^2_{\lambda + 1}(\OmegaLv)} \Biggr) \|\gchi\|_{L^2(\OmegaLv)},
	\end{align*}
	and so dividing both sides by $\|\gchi\|_{L^2(\OmegaLv)}$ gives
	\begin{align*}
	&\frac{d}{d t} \left(\left\| \gchi \right\|_{L^2(\OmegaLv)} \right) \nonumber \\
	\leq& - \frac{K^S_{d, \lambda}(g_0)}{{\Lv}^{2}} \left\| \gchi \right\|_{L^2(\OmegaLv)} + K_{d, \lambda}(g_0) {\Lv}^{\frac{d}{2}} + 2 C_{\lambda} \left(2 \Lv \right)^{\frac{d}{2}} \left(1 + {\Lv}^2 \right) \|g_0\|_{L^1_{2}(\OmegaLv)} \\
	&~ + C_d O_{\frac{d}{2} + 2 - \lambda}\|g_0\|^2_{L^1_2(\OmegaLv)} + \frac{C^3_d}{N^{\frac{d - 1}{2}}} \bigl|\bigl| \gchi \bigr|\bigr|^{2}_{L^2_{\lambda + 1}(\OmegaLv)},
	\end{align*}
	which is the required result (\ref{L2_norm_derivative_bound}) in Lemma \ref{L2_norm_derivative_lemma}.
\end{proof}

\subsection{Control of the Negative Part of the Solution}
All the work so far has hinged on the stability assumption (\ref{stability}) on the solution $g(t)$ to the semi-discrete problem (\ref{semi-discrete2}) for all $t > 0$.  This can only be assumed on the initial condition $g_0$ (i.e. assumption (\ref{stability_init})) but it can be shown that this condition does indeed propagate throughout all time.  
\begin{rem}
	The actual proof that the negative part of the solution is bounded is written later in Section \ref{ExistenceAndRegularity} on existence and regularity but it begins with finding an estimate on the time derivative of the $L^2$-norm of the negative part of the solution, which is included here.
\end{rem}

\begin{lemma} \label{g-_L2_derivative_lemma}
	For a solution $g$ of the semi-discrete problem (\ref{semi-discrete2}), the negative part  $g^-$ (where $g = g^+ - g^-$, for $g^+, g^- \geq 0$) satisfies
	\begin{align}
	&\frac{d}{d t} \left(\left\| \gchi^- \right\|_{L^2(\OmegaLv)} \right) \nonumber \\
	\leq&~ C_{d,\lambda} \left\| g_0 \right\|_{L^1_2(\OmegaLv)} \left\| \gchi^- \right\|_{L^2(\OmegaLv)} \nonumber \\
	& + C_d O_{\frac{d}{2} + 2 - \lambda} m_{0}(g_0) m_{2}(g) + C_Q(C_d + 1) \frac{(1 + {\Lv}^2)^2}{N^{\frac{d - 1}{2}}} \bigl| \bigl| \gchi \bigr| \bigr|^2_{L^2(\OmegaLv)}, \label{g-_L2_derivative_bound}
	\end{align}
	where the moment operator $m_k$ and $L^2_k$-norm are defined by (\ref{mk_def}) and (\ref{L2k_def}), respectively; $O_r$ denotes a constant that is $\mathcal{O}({\Lv}^{-r})$; $g_0$ is the initial condition and $C_Q, C_d, C_{d,\lambda} > 0$ are constants with $C_{d, \lambda}$ depending on $d$ and the potential $\lambda$; $C_d$ defined in Lemma \ref{Q_moment_theorem}; and $C_Q$ defined in the proof of Lemma \ref{moment_derivative_lemma}.
\end{lemma}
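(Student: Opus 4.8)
The plan is to reproduce the template of the proof of Lemma~\ref{L2_norm_derivative_lemma}, but to test the semi-discrete equation~(\ref{semi-discrete2}) against $-\Chi^2 g^-$ instead of against $\gchi$, and to do so \emph{without} using the stability condition~(\ref{stability}) (which is exactly the statement this estimate will later be used to establish). Writing $g=g^+-g^-$, noting that $(\gchi)^-=\Chi g^-$ since $\Chi\geq0$, that $\partial_t g^-=-\mathbf{1}_{\{g<0\}}\,\partial_t g$, and that $\Chi$ does not depend on $t$, multiplying~(\ref{semi-discrete2}) by $-\Chi^2 g^-$ and integrating over $\OmegaLv$ gives the exact identity
\begin{equation*}
\left\| \gchi^- \right\|_{L^2(\OmegaLv)}\frac{d}{dt}\left(\left\| \gchi^- \right\|_{L^2(\OmegaLv)}\right)=\tfrac12\frac{d}{dt}\left(\left\| \gchi^- \right\|^2_{L^2(\OmegaLv)}\right)=-\int_{\OmegaLv}\Chi^2 g^-\,Q_c(g,g)\,\dv .
\end{equation*}
As in the proof of Lemma~\ref{moment_derivative_lemma}, the manipulation of $\partial_t g^-$ would be justified rigorously by replacing $\mathbf{1}_{\{g<0\}}$ with the derivative of a smooth second-order regularisation of the negative-part function and passing to the limit, using $g\in\mathcal C([0,T];L^2(\OmegaLv))$ together with the boundedness of $\nabla g$ in force in Section~\ref{Propagation}.

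Next I would substitute the decomposition~(\ref{Q_expansion}) of $Q_c(g,g)$ into $Q(\gchi,\gchi)$, the conservation correction $Q_c(g,g)-Q_u(g,g)$, and the spectral tail $-\left(1-\Pi^N_{2\Lv}\right)Q(\gchi,\gchi)$, and treat the three pieces separately. For the last two I would use Cauchy--Schwarz together with $\left|\Chi^2 g^-\right|\leq\left|\gchi^-\right|$ to factor out one copy of $\left\|\gchi^-\right\|_{L^2(\OmegaLv)}$, then bound $\left\|Q_c(g,g)-Q_u(g,g)\right\|_{L^2(\OmegaLv)}$ by Theorem~\ref{Q_moment_theorem} with $k=0$, $k'=2$ and the $\left(1-\Pi^N_{2\Lv}\right)Q(\gchi,\gchi)$ terms by~(\ref{Fourier_projection_tail2}); bounding the moments $m_j(\gchi)$ that occur by the unconditional quantities $m_0(g_0)$ and $m_2(g)$ (using $\Chi\leq1$, mass conservation for $g$, and $\langle\boldsymbol v\rangle^{2+\lambda}\leq(1+{\Lv}^2)^{\lambda/2}\langle\boldsymbol v\rangle^{2}$, which produces the $O_{\frac d2+2-\lambda}$ gain) and $\left\|\gchi\right\|^2_{L^2_{\lambda+1}(\OmegaLv)}\leq(1+{\Lv}^2)^2\left\|\gchi\right\|^2_{L^2(\OmegaLv)}$ reproduces exactly the second and third terms on the right of~(\ref{g-_L2_derivative_bound}).

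The substantive work is the main term $-\int_{\OmegaLv}\Chi^2 g^-\,Q(\gchi,\gchi)\,\dv$. Writing $Q(\gchi,\gchi)=\bar a_{i,j}\frac{\partial^2(\gchi)}{\partial v_i\partial v_j}-\bar c\,(\gchi)$ with the notation of~(\ref{a_def})--(\ref{c_def}), setting $u:=\gchi$ so that $\Chi^2 g^-=\Chi u^-$ and $u=-u^-$ on $\{g<0\}$, and integrating by parts once in $v_j$ (the boundary terms vanish because $\Chi\equiv0$ near $\partial\OmegaLv$, and $\partial_{v_j}\bar a_{i,j}=\bar b_i$), one obtains a leading term $-\int_{\OmegaLv}\Chi\,\bar a_{i,j}\,\partial_i u^-\,\partial_j u^-\,\dv\leq0$ by the ellipticity of $\bar a$ (Proposition~\ref{ellipticity_prop}), which is discarded, plus lower-order remainders: a boundary-layer term carrying $\nabla\Chi$ supported on $\OmegaLv\setminus\Omega_{(1-\delchi)\Lv}$, and, after a further integration by parts and recombination with the $\bar c\,(\gchi)$ part of $Q$, a net term $-\tfrac12\int_{\OmegaLv}\Chi\,\bar c\,(u^-)^2\,\dv$ together with a second $\nabla\Chi$ boundary-layer term. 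Using the pointwise bounds $\left|a_{i,j}(\boldsymbol z)\right|\leq2\left|\boldsymbol z\right|^{\lambda+2}$, $\left|b_i(\boldsymbol z)\right|\leq\left|\boldsymbol z\right|^{\lambda+1}$, $\left|c(\boldsymbol z)\right|\leq2(\lambda+3)\left|\boldsymbol z\right|^{\lambda}$ and the splitting $\left|\boldsymbol z\right|^{s}\leq\langle\boldsymbol v\rangle^{s}+\langle\boldsymbol v_*\rangle^{s}$ exactly as in~(\ref{moment_sgn_integral2a})--(\ref{moment_sgn_integral2c}), together with $\Chi\leq1$ and conservation of the signed mass and energy of $g$ (so signed moments of $g$ are controlled by $\left\|g_0\right\|_{L^1_2(\OmegaLv)}$), each remainder reduces to $C_{d,\lambda}\left\|g_0\right\|_{L^1_2(\OmegaLv)}\left\|\gchi^-\right\|^2_{L^2(\OmegaLv)}$; dividing through by $\left\|\gchi^-\right\|_{L^2(\OmegaLv)}$ then yields~(\ref{g-_L2_derivative_bound}). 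The hard part will be precisely this final accounting: controlling the boundary-layer term, which a priori still couples to $\nabla u^-$ and must be absorbed into the discarded dissipation (or handled via the $\mathcal O(1/\delchi)$ smoothing of $\Chi$ used elsewhere in the paper), and absorbing the $\langle\boldsymbol v\rangle$-weights generated by $\bar a$, $\bar b$ and $\bar c$ into the constant $C_{d,\lambda}$ without spoiling its claimed independence of $\Lv$.
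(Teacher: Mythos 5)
Your overall route coincides with the paper's: you use the decomposition (\ref{Q_expansion}), bound the conservation-correction and spectral-tail contributions via Theorem \ref{Q_moment_theorem} (with $k=0$, $k'=2$) and (\ref{Fourier_projection_tail2}) after factoring out one copy of $\left\| \gchi^- \right\|_{L^2(\OmegaLv)}$, and for the main term you discard the $\bar a$-dissipation by ellipticity, integrate by parts so that only the $\bar c$-term survives, and remove the regularisation and boundary-layer remainders using the bounded gradient. Testing against $-\Chi^2 g^-$ rather than $\gchi\,\mathbbm{1}_{\{g<0\}}$ is a harmless (indeed slightly cleaner) variant, since it makes the chain-rule identity for $\tfrac{d}{dt}\|\gchi^-\|^2_{L^2(\OmegaLv)}$ exact where the paper writes an approximation. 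Concerning the step you flag as the hard part: in the paper's arrangement the full quadratic form $-\int(\bar a\nabla(\gchi))\cdot\nabla(\gchi)H^-_{\delta}\,\dv$ is nonpositive as it stands, and the only terms carrying $\nabla\Chi$ or $\nabla H^-_{\delta}$ live on the shrinking set $\{-2\delta_c<g<0\}$ and are $\mathcal{O}(\delta_c)$ by the bounded-gradient assumption, so nothing needs to be absorbed back into the dissipation.

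The one genuine gap is your justification for converting moments of $g$ into data-dependent quantities. The quantities that actually occur are \emph{absolute} moments: $m_0(\gchi)$, $m_{\lambda}(\gchi)$, $m_{2+\lambda}(\gchi)$ in the correction bound, and $\int_{\OmegaLv}|\gchi(\boldsymbol{v}_*)|\,|\boldsymbol{v}-\boldsymbol{v}_*|^{\lambda}\,\dv_*$ arising from $\bar c$. The conservation built into $Q_c$ only fixes the \emph{signed} integrals $\int g$, $\int g\boldsymbol{v}$, $\int g|\boldsymbol{v}|^2$, and since $g$ changes sign (the very situation this lemma addresses) one has $\int|g|\langle\boldsymbol{v}\rangle^{j}\,\dv$ exceeding the conserved value by $2\int g^-\langle\boldsymbol{v}\rangle^{j}\,\dv$, which is exactly what you are trying to control; so ``mass conservation for $g$'' or ``conservation of the signed mass and energy'' cannot close this step, and, as you yourself note, invoking the stability condition (\ref{stability}) here would be circular. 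The paper closes it with the standing pointwise Maxwellian envelope (\ref{Gaussian_bound}): $m_0(g)\leq Cr^{-\frac{d}{2}}m_0(g_0)$ and $\|g\|_{L^1_{\frac{\lambda}{2}}(\OmegaLv)}\leq\|g\|_{L^1_{2}(\OmegaLv)}\leq Cr^{-\left(\frac{d}{2}+1\right)}\|g_0\|_{L^1_{2}(\OmegaLv)}$, and it is precisely this that produces $C_{d,\lambda}=Cr^{-\left(\frac{d}{2}+1\right)}(\lambda+3)$ in (\ref{g-_L2_derivative_bound}) and keeps it independent of $\Lv$. Replace your conservation argument with this envelope bound (note also that $m_2(g)$ needs no conversion at all, since it appears as such in the statement) and the remainder of your plan goes through as written.
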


\begin{proof}
	Similar to the proof of Lemma \ref{L2_norm_derivative_lemma} on the estimate for the time-derivative of the $L^2$-norm, begin by multiplying the expansion (\ref{Q_expansion}) by \linebreak \mbox{\Large$\chi$}$g \mathbbm{1}_{\{g<0 \}} = - g^-$ and integrating with respect to $\boldsymbol{v}$ over $\OmegaLv$ to give
	\begin{align}
	&\int_{\OmegaLv} \frac{\partial g}{\partial t} \gchi \mathbbm{1}_{\{g<0 \}} ~\dv \nonumber \\
	=& \int_{\OmegaLv} Q(\gchi, \gchi) \gchi \mathbbm{1}_{\{g<0 \}} ~\dv \nonumber \\
	&+ \int_{\OmegaLv} \Bigl((Q_c(g,g) - Q_u(g,g)) - \left(1 - \Pi^N_{2\Lv}\right) Q(\gchi, \gchi) \Bigr) \gchi \mathbbm{1}_{\{g<0 \}} ~\dv. \label{g-_derivative_bound1}
	\end{align}
	Here, by the same reasoning as in the proof of Lemma \ref{L2_norm_derivative_lemma},
	\begin{align}
	\int_{\OmegaLv} \frac{\partial g}{\partial t} \gchi \mathbbm{1}_{\{g<0 \}} ~\dv \approx& \left\| \gchi \mathbbm{1}_{\{g<0 \}} \right\|_{L^2(\OmegaLv)} \frac{d}{d t} \left(\left\| \gchi \mathbbm{1}_{\{g<0 \}} \right\|_{L^2(\OmegaLv)} \right) \nonumber \\
	=& \left\| \gchi^- \right\|_{L^2(\OmegaLv)} \frac{d}{d t} \left(\left\| \gchi^- \right\|_{L^2(\OmegaLv)} \right). \label{g-_derivative_approx}
	\end{align}
	
	Next, replacing \mbox{\Large$\chi$}$g$ by $\gchi \mathbbm{1}_{\{g<0 \}} = -\gchi^-$ in identity (\ref{L2_derivative_estimate_remainder}),
	\begin{align}
	&\int_{\OmegaLv} \Bigl((Q_c(g,g) - Q_u(g,g)) - \left(1 - \Pi^N_{2\Lv}\right) Q(\gchi, \gchi) \Bigr) \gchi \mathbbm{1}_{\{g<0 \}} ~\dv \nonumber \\
	\leq& \Biggl(C_d O_{\frac{d}{2} + 2} \Bigl(m_{0}(\gchi) m_{2 + \lambda}(\gchi) + m_{\lambda}(\gchi) m_{2}(\gchi) \Bigr) \nonumber \\
	&~~~~~~~~~~~~~~~~~~~~~~~~~~~ + \frac{C_Q(C_d + 1)}{N^{\frac{d - 1}{2}}} \bigl| \bigl| \gchi \bigr| \bigr|^2_{L^2_{\lambda + 1}(\OmegaLv)} \Biggr) \|\gchi^-\|_{L^2(\OmegaLv)}. \label{g-_derivative_estimate_remainder1}
	\end{align}
	Then, by pulling out the maximum value of the weight $\langle \Lv \rangle^{\lambda}$ from each of the moment terms (after noting that \mbox{\Large$\chi$}$g(\boldsymbol{v}) = 0$ when $\boldsymbol{v} \notin \OmegaLv$) and bounding the extension operator \mbox{\Large$\chi$} by 1,
	\begin{equation*}
	m_{0}(\gchi) m_{2 + \lambda}(\gchi) + m_{\lambda}(\gchi) m_{2}(\gchi) \leq O_{-\lambda} m_{0}(g) m_{2}(g) \leq O_{-\lambda} m_{0}(g_0) m_{2}(g),
	\end{equation*}
	since $m_{0}(g) \leq Cr^{-\frac{d}{2}} m_{0}(g_0)$ from assumption (\ref{Gaussian_bound}).  Also,
	\begin{equation*}
	\bigl| \bigl| \gchi \bigr| \bigr|^2_{L^2_{\lambda + 1}(\OmegaLv)} \leq \bigl| \bigl| \gchi \bigr| \bigr|^2_{L^2_2(\OmegaLv)} \leq (1 + {\Lv}^2)^2 \bigl| \bigl| \gchi \bigr| \bigr|^2_{L^2(\OmegaLv)}
	\end{equation*}
	which means that (\ref{g-_derivative_estimate_remainder1}) becomes
	\begin{align}
	&\int_{\OmegaLv} \Bigl((Q_c(g,g) - Q_u(g,g)) - \left(1 - \Pi^N_{2\Lv}\right) Q(\gchi, \gchi) \Bigr) \gchi \mathbbm{1}_{\{g<0 \}} ~\dv \nonumber \\
	\leq& \Biggl(C_d O_{\frac{d}{2} + 2 - \lambda} m_{0}(g_0) m_{2}(g) \nonumber \\
	&~~~~~~~~~~~~~ + \frac{C_Q(C_d + 1)(1 + {\Lv}^2)^2}{N^{\frac{d - 1}{2}}} \bigl| \bigl| \gchi \bigr| \bigr|^2_{L^2_{\lambda + 1}(\OmegaLv)} \Biggr) \|\gchi^-\|_{L^2(\OmegaLv)}. \label{g-_derivative_estimate_remainder2}
	\end{align}
	
	
	Now, for the first integral on the right-hand side of identity (\ref{g-_derivative_bound1}), a little more care must be taken with the characteristic functions $\mathbbm{1}_{\{g<0 \}}$.  This is done by introducing a second order smoothing approximation to $\mathbbm{1}_{\{g<0 \}}$, similar to the function $H_{\delta}$ defined by (\ref{H_delta}) in the proof of Lemma \ref{moment_derivative_lemma}.  In particular, for any $\delta_c> 0$, define the monotone decreasing function $H^-_{\delta}: \mathbb{R} \to [0, 1]$ by
	\begin{equation*}
	H^-_{\delta}(y) := \begin{cases}
	1, &\textrm{when }~ y \leq -2\delta_c, \\
	-\frac{1}{2 {\delta_c}^2} y^2 - \frac{2}{\delta_c} y - 1, &\textrm{when }~ -2\delta_c < y \leq -\delta_c, \\
	\frac{1}{2 {\delta_c}^2} y^2, &\textrm{when }~ -\delta_c < y \leq 0, \\
	0, &\textrm{when }~ y > 0.
	\end{cases}
	\end{equation*}
	\begin{rem}
		This approximation to the jump is made when $y \leq 0$ to allow $H^-_{\delta}(y) = 0$ when $y > 0$.  This ensures that $(H^-_{\delta})'(y) = 0$ when $y > 0$ which will be important later.
	\end{rem}
	
	This means, after using the weak form, inserting the approximation $\mathbbm{1}_{\{g<0 \}} = \lim_{\delta_c \to 0} H^-_{\delta}$ and then differentiating with the product rule,
	\begin{align*}
	&\int_{\OmegaLv} Q(\gchi, \gchi) \gchi \mathbbm{1}_{\{g<0 \}} ~\dv \\
	=& -\int_{\OmegaLv} \left(\bar{a} \nabla \left(\gchi \right) - \bar{b} \gchi \right) \cdot \nabla \left(\gchi \mathbbm{1}_{\{g<0 \}} \right) ~\dv \nonumber \\
	=& -\int_{\OmegaLv} \left(\bar{a} \nabla \left(\gchi \right) - \bar{b} \gchi \right) \cdot \nabla \left(\lim_{\delta_c \to 0} \gchi H^-_{\delta} \right) ~\dv \nonumber \\
	=& \lim_{\delta_c \to 0} \Biggl(-\int_{\{g < 0\}} \left(\bar{a} \nabla \left(\gchi \right) \right) \cdot \nabla \left(\gchi \right)H^-_{\delta} ~\dv - \int_{\{g < 0\}} \gchi \bar{a}  \nabla \left(\gchi \right)\cdot \nabla \left(H^-_{\delta} \right) ~\dv \nonumber \\
	&~~~~~~~ + \int_{\OmegaLv} \gchi \bar{b} \cdot \nabla \left(\gchi \right) H^-_{\delta} ~\dv + \int_{\{g < 0\}} \left(\gchi \right)^2 \bar{b} \cdot \nabla \left(H^-_{\delta} \right) ~\dv \Biggr), \nonumber
	\end{align*}
	assuming that Lebesgue's dominated convergence theorem can be used.  Next, note that by the chain rule and then the divergence theorem and product rule, since $\gchi = 0$ on the boundary of $\OmegaLv$,
	\begin{align*}
	\int_{\OmegaLv} \gchi \bar{b} \cdot \nabla \left(\gchi \right) H^-_{\delta} ~\dv =& \frac{1}{2} \int_{\OmegaLv} H^-_{\delta} \bar{b} \cdot \nabla \left(\left(\gchi \right)^2 \right) ~\dv \\
	=& - \frac{1}{2} \int_{\{g < 0\}} \left(\gchi \right)^2 \bar{b} \cdot \nabla \left(H^-_{\delta} \right) ~\dv \\
	&~ - \frac{1}{2} \int_{\{g < 0\}} \left(\gchi \right)^2 \left(\nabla \cdot \bar{b} \right) H^-_{\delta} ~\dv.
	\end{align*}
	So, by using $\bar{c} = \nabla \cdot \bar{b}$,
	\begin{align}
	&\int_{\OmegaLv} Q(\gchi, \gchi) \gchi \mathbbm{1}_{\{g<0 \}} ~\dv \nonumber \\
	=& \lim_{\delta_c \to 0} \Biggl(-\int_{\{g < 0\}} \left(\bar{a} \nabla \left(\gchi \right) \right) \cdot \nabla \left(\gchi \right)H^-_{\delta} ~\dv - \int_{\{g < 0\}} \gchi \bar{a}  \nabla \left(\gchi \right)\cdot \nabla \left(H^-_{\delta} \right) ~\dv \nonumber \\
	&~~~~~~~ - \frac{1}{2} \int_{\{g < 0\}} \left(\gchi \right)^2 \bar{c} H^-_{\delta} ~\dv + \frac{1}{2} \int_{\{g < 0\}} \left(\gchi \right)^2 \bar{b} \cdot \nabla \left(H^-_{\delta} \right) ~\dv \Biggr), \label{g_neg_Qbound}
	\end{align}
	
	Then, by similar calculations to those in the proof of Lemma \ref{moment_derivative_lemma},
	\begin{multline}
	- \int_{\OmegaLv} \gchi \bar{a} \nabla \left(\gchi \right) \cdot \nabla \left(H^-_{\delta} \right) ~\dv \leq \frac{2 d^2 {\delta_c}^2}{\mathcal{O} \left(\delchi \right)} \Bigl(\|\left|\nabla g \right\||_{L^1_{\lambda + 2}(\{-2\delta_c < g < 0\})} m_0(g) \\
	+ \|\left|\nabla g \right\||_{L^1(\{-2\delta_c < g < 0\})} m_{\lambda + 2}(g) \Bigr) \label{g_neg_bound1}
	\end{multline}
	and
	\begin{multline}
	\int_{\OmegaLv} \left(\gchi \right)^2 \bar{b} \cdot \nabla \left(H^-_{\delta} \right) ~\dv \leq \delta_c d^2 \Bigl(\|\left|\nabla g \right\||_{L^1_{\lambda + 1}(\{-2\delta_c < g < 0\})} m_0(g) \\
	+ \|\left|\nabla g \right\||_{L^1(\{-2\delta_c < g < 0\})} m_{\lambda + 1}(g) \Bigr), \label{g_neg_bound2}
	\end{multline}
	which are both $\mathcal{O} (\delta_c)$ terms if the gradient $\nabla g$ remains bounded as previously assumed.  This means both the terms (\ref{g_neg_bound1}) and (\ref{g_neg_bound2}) approach zero as $\delta_c \to 0$ and so can be dropped when the limit is taken.
	\begin{rem}
		In order to reach (\ref{g_neg_bound1}), the product rule was used on $\nabla \left(\gchi \right)$ and the ellipticity of $\bar{a}$ in Proposition \ref{ellipticity_prop} used to drop the part involving $\nabla g$.  This is where it helped that $(H^-)'(y) = 0$ when $y \geq 0$ to ensure that \mbox{\Large$\chi$}$g (H^-)'(g)(\boldsymbol{v}) \geq 0$ for all $\boldsymbol{v} \in \OmegaLv$.
	\end{rem}  
	
	Furthermore, for the first integral on the right-hand side of (\ref{g_neg_Qbound}), by again using the ellipticity of $\bar{a}$ in Proposition \ref{ellipticity_prop},
	\begin{align*}
	&-\int_{\{g<0 \}} \left(\bar{a} \nabla \left(\gchi \right) \right) \cdot \nabla \left(\gchi \right) H^-_{\delta} ~\dv \\
	\leq& -K_{\lambda}(g_0) \int_{\{g<0 \}} H^-_{\delta} \left| \nabla \left(\gchi \right) \right|^2 (1 + |\boldsymbol{v}|^{\lambda}) ~\dv \leq 0.
	\end{align*}
	This means, after dropping this term in (\ref{g_neg_Qbound}), as well as taking the limit as $\delta_c \to 0$ to remove the $\mathcal{O} (\delta_c)$ terms resulting from (\ref{g_neg_bound1}) and (\ref{g_neg_bound2}), then using Lebesgue's dominated convergence theorem to reintroduce the characteristic function,
	\begin{align*}
	&\int_{\OmegaLv} Q(\gchi, \gchi) \gchi \mathbbm{1}_{\{g<0 \}} ~\dv \nonumber \\
	=& - \frac{1}{2} \int_{\{g < 0\}} \left(\gchi \right)^2 \bar{c} ~\dv \nonumber \\
	=&~ (\lambda + 3) \int_{\{g<0 \}} \left(\int_{\OmegaLv} \gchi_* |\boldsymbol{v} - \boldsymbol{v}_*|^{\lambda} ~\dv_* \right) \left(\gchi \right)^2 ~\dv
	\end{align*}
	after writing out the definition of $\bar{c}$ from expression (\ref{c_def}).  Then, since $|\boldsymbol{v} - \boldsymbol{v}_*| \leq \langle \boldsymbol{v} \rangle \langle \boldsymbol{v}_* \rangle$ and applying the positive exponent $\lambda$ is a monotone operation,
	\begin{align*}
	&\int_{\OmegaLv} Q(\gchi, \gchi) \gchi \mathbbm{1}_{\{g<0 \}} ~\dv \nonumber \\
	\leq&~ (\lambda + 3) \left(\int_{\OmegaLv} \left|\gchi_* \right| \langle \boldsymbol{v}_* \rangle^{\lambda} ~\dv_* \right) \left(\int_{\{g<0 \}} \left(\gchi \right)^2 \langle \boldsymbol{v} \rangle^{\lambda} ~\dv \right) \\
	\leq&~ (\lambda + 3) \left\| g \right\|_{L^1_{\frac{\lambda}{2}}(\OmegaLv)} \left\| \gchi^- \right\|^2_{L^2(\OmegaLv)} \\
	\leq&~ Cr^{-\left(\frac{d}{2} + 1 \right)}(\lambda + 3) \left\| g_0 \right\|_{L^1_2(\OmegaLv)} \left\| \gchi^- \right\|^2_{L^2(\OmegaLv)},
	\end{align*}
	because $\left\| g \right\|_{L^1_{\frac{\lambda}{2}}(\OmegaLv)} \leq \left\| g \right\|_{L^1_2(\OmegaLv)} \leq Cr^{-\left(\frac{d}{2} + 1 \right)} \left\| g_0 \right\|_{L^1_2(\OmegaLv)}$ from assumption (\ref{Gaussian_bound}).
	
	So, by using this bound, along with (\ref{g-_derivative_approx}) and (\ref{g-_derivative_estimate_remainder2}) in the estimate (\ref{g-_derivative_bound1}) gives
	\begin{align*}
	&\left\| \gchi^- \right\|_{L^2(\OmegaLv)} \frac{d}{d t} \left(\left\| \gchi^- \right\|_{L^2(\OmegaLv)} \right) \nonumber \\
	\leq& Cr^{-\left(\frac{d}{2} + 1 \right)}(\lambda + 3) \left\| g_0 \right\|_{L^1_2(\OmegaLv)} \left\| \gchi^- \right\|^2_{L^2(\OmegaLv)} \nonumber \\
	&+ \Biggl(C_d O_{\frac{d}{2} + 2 - \lambda} m_{0}(g_0) m_{2}(g) \\
	&~~~~~~~~~~~~~~~ + \frac{C_Q(C_d + 1)(1 + {\Lv}^2)^2}{N^{\frac{d - 1}{2}}} \bigl| \bigl| \gchi \bigr| \bigr|^2_{L^2(\OmegaLv)} \Biggr) \left\| \gchi^- \right\|_{L^2(\OmegaLv)}.
	\end{align*}
	Equivalently, by dividing through by $\left\| \gchi^- \right\|_{L^2(\OmegaLv)}$ and defining the constant $C_{d,\lambda} := Cr^{-\left(\frac{d}{2} + 1 \right)}(\lambda + 3)$,
	\begin{align*}
	&\frac{d}{d t} \left(\left\| \gchi^- \right\|_{L^2(\OmegaLv)} \right) \\
	\leq&~ C_{d,\lambda} \left\| g_0 \right\|_{L^1_2(\OmegaLv)} \left\| \gchi^- \right\|_{L^2(\OmegaLv)} \nonumber \\
	& + C_d O_{\frac{d}{2} + 2 - \lambda} m_{0}(g_0) m_{2}(g) + C_Q(C_d + 1) \frac{(1 + {\Lv}^2)^2}{N^{\frac{d - 1}{2}}} \bigl| \bigl| \gchi \bigr| \bigr|^2_{L^2(\OmegaLv)},
	\end{align*}
	which is the required result (\ref{g-_L2_derivative_bound}) in Lemma \ref{g-_L2_derivative_lemma}.
\end{proof}

\subsection{Propagation of Moments and $L^2$-norm}
The final step in this section is to show that the moments and $L^2$-norm of the solution $g$ of the semi-discrete problem remain bounded by using the estimates on the time derivatives of those quantities.  In order to do this, the results from Lemmas \ref{moment_derivative_lemma} and \ref{L2_norm_derivative_lemma} are added and an estimate found on the time derivative of the sum $m_k(g) + \left\| \gchi \right\|_{L^2(\OmegaLv)}$.  Before this can happen, however, the results of these lemmas must be adjusted slightly.

First, for the result on the time derivative of the moments from Lemma \ref{moment_derivative_lemma}, after pulling out the maximum value of $(1 + {\Lv^2})^{\lambda + 1}$ given by the weights in the $L^2_{\lambda + 1}$-norm and noting that $m_{k + \lambda}(g) \geq m_{k}(g)$, this gives
\begin{multline*}
\frac{\textrm{d}}{\textrm{d} t} \bigl(m_k(g) \bigr) \leq - \frac{1}{2} {\varepsilon_{\chi}}^2 K_{\lambda, k} m_{0}(g_0) m_{k}(g) + C^1_{d,k} \bigl(m_0(g) + m_k(g) \bigr) \nonumber \\
+ C^2_{d,k} \frac{\mathcal{O} \left({\Lv}^{k + \frac{d}{2} + 2\lambda + 2} \right)}{N^{\frac{d - 1}{2}}} \bigl| \bigl| \gchi \bigr| \bigr|^2_{L^2(\OmegaLv)}.
\end{multline*}
The $C^1_{d,k} \bigl(m_0(g) + m_k(g) \bigr)$ term here can also be bounded by the initial data by noting that the these moments first appeared as functions of \mbox{\Large$\chi$}$g$ in Lemma \ref{moment_derivative_lemma} (or by considering that $g$ can be assumed to be zero outside of $\OmegaLv$) so they can be replaced by $L^1_k$-norms.  Then, by again pulling out the maximum value of the weight and using the bound (\ref{stability_consequence}) resulting from the stability condition (\ref{stability}),
\begin{align*}
C^1_{d,k} \bigl(m_0(g) + m_k(g) \bigr) &\leq C^1_{d,k} \bigl(\|g\|_{L^1_2(\OmegaLv)} + (1 + {\Lv}^2)^{\frac{k}{2} - 1} \|g\|_{L^1_2(\OmegaLv)} \bigr) \\
&\leq 2 C^1_{d,k} \bigl(1 + (1 + {\Lv}^2)^{\frac{k}{2} - 1} \bigr) \|g_0\|_{L^1_2(\OmegaLv)}.
\end{align*}
So, the estimate on the time derivative of the moments becomes
\begin{multline}
\frac{\textrm{d}}{\textrm{d} t} \bigl(m_k(g) \bigr) \leq - \frac{1}{2} {\varepsilon_{\chi}}^2 K_{\lambda, k} m_{0}(g_0) m_{k}(g) + 2 C^1_{d,k} \bigl(1 + (1 + {\Lv}^2)^{\frac{k}{2} - 1} \bigr) \|g_0\|_{L^1_2(\OmegaLv)} \\
+ C^2_{d,k} \frac{\mathcal{O} \left({\Lv}^{k + \frac{d}{2} + 2\lambda + 2} \right)}{N^{\frac{d - 1}{2}}} \bigl| \bigl| \gchi \bigr| \bigr|^2_{L^2(\OmegaLv)}. \label{moment_derivative_bound_final}
\end{multline}

Similarly, by pulling out the maximum value of $(1 + {\Lv^2})^{\frac{1}{2}(\lambda + 1)}$ given by the weights in the $L^2_{\lambda + 1}$-norm in the result on the time derivative of the $L^2$-norm from Lemma \ref{L2_norm_derivative_lemma},
\begin{align}
&\frac{d}{d t} \left(\left\| \gchi \right\|_{L^2(\OmegaLv)} \right) \nonumber \\
\leq& - \frac{K^S_{d, \lambda}(g_0)}{{\Lv}^{2}} \left\| \gchi \right\|_{L^2(\OmegaLv)} + K_{d, \lambda}(g_0) {\Lv}^{\frac{d}{2}} + 2 C_{\lambda} \left(2 \Lv \right)^{\frac{d}{2}} \left(1 + {\Lv}^2 \right) \|g_0\|_{L^1_{2}(\OmegaLv)} \nonumber \\
&~ + C_d O_{\frac{d}{2} + 2 - \lambda}\|g_0\|^2_{L^1_2(\OmegaLv)} + C^3_d \frac{\mathcal{O} \left({\Lv}^{2\lambda + 2} \right)}{N^{\frac{d - 1}{2}}} \bigl|\bigl| \gchi \bigr|\bigr|^{2}_{L^2(\OmegaLv)}. \label{L2_derivative_bound_final}
\end{align}

Here, by defining the variable $X(t) := m_k(g) + \left\| \gchi \right\|_{L^2(\OmegaLv)}$ and adding the estimates (\ref{moment_derivative_bound_final}) and (\ref{L2_derivative_bound_final}), this gives the ordinary differential inequality (ODI)
\begin{equation}
\frac{d X}{d t} \leq A^0_{d,k}(g_0) - A^1_{d,k}(g_0) X + A^2_{d,k} \frac{\mathcal{O} \left({\Lv}^{\kappa} \right)}{N^{\frac{d - 1}{2}}} X^2, \label{moment_L2_ODI}
\end{equation}
\begin{flalign*}
\textrm{for } && \kappa := \begin{cases}
k + \frac{d}{2} + 2\lambda + 2, &~~~\textrm{if } \Lv \geq 1, \\
2\lambda + 2, &~~~\textrm{if } \Lv < 1,
\end{cases} && &&
\end{flalign*}
\begin{flalign}
&\textrm{where } & A^0_{d,k}(g_0) &:= 2 C^1_{d,k} \bigl(1 + (1 + {\Lv}^2)^{\frac{k}{2} - 1} \bigr) \|g_0\|_{L^1_2(\OmegaLv)} + K_{d, \lambda}(g_0) {\Lv}^{\frac{d}{2}} \nonumber \\
&& &~~~~ + 2 C_{\lambda} \left(2 \Lv \right)^{\frac{d}{2}} \left(1 + {\Lv}^2 \right) \|g_0\|_{L^1_{2}(\OmegaLv)} + C_d O_{\frac{d}{2} + 2 - \lambda}\|g_0\|^2_{L^1_2(\OmegaLv)}, \nonumber \\
&& A^1_{d,k}(g_0) &:= \min \left(\frac{1}{2} {\varepsilon_{\chi}}^2 K_{\lambda, k} m_{0}(g_0), \frac{K^S_{d, \lambda}(g_0)}{{\Lv}^{2}} \right) \label{ODI_coefficients} \\
& \textrm{and} & A^2_{d,k} &:= \max \left(C^2_{d,k}, C^3_d\right). \nonumber
\end{flalign}
\begin{rem}
	The coefficient $A^0_{d,k}(g_0)$ will always be large, no matter the size of $\Lv$.  If $\Lv > 1$ then the powers of $\Lv$ with positive exponents will dominate.  On the other hand, if $\Lv < 1$ then the $O_{\frac{d}{2} + 2 - \lambda} = \mathcal{O} \left({\Lv}^{-\frac{d}{2} - 2 + \lambda} \right)$ terms dominate as this exponent is always negative when $\lambda \leq 1$.
\end{rem}

\begin{theorem} \label{propagation_theorem}
	For a solution $g$ of the semi-discrete problem (\ref{semi-discrete2}) which satisfies the stability condition (\ref{stability}) with $\varepsilon \leq \min \left(\frac{1}{4}, \varepsilon_0 \right)$, given any initial condition $g_0$ satisfying the stability condition (\ref{stability_init}), if the number of Fourier modes $N \geq N_0$, for some base number of modes $N_0 > 0$ to be defined in the proof, the numerical moments $m_k(g)$ and $L^2$-norm $\left\| \gchi \right\|_{L^2(\OmegaLv)}$ defined by (\ref{mk_def}) and (\ref{L2k_def}), respectively, satisfy
	\begin{equation}
	\sup_{t \geq 0} m_k(g) = \sup_{t \geq 0} \left\| \gchi \right\|_{L^2(\OmegaLv)} \leq \zeta(g_0), ~~~~\textrm{for any } k \geq \max (3, k_0), \label{propagation_bound}
	\end{equation}
	\begin{flalign}
	\textrm{where } && \zeta(g_0) := \max \left(m_k(g_0) + \left\| g_0 \right\|_{L^2(\OmegaLv)}, 2 \frac{A^0_{d,k}(g_0)}{A^1_{d,k}(g_0)} \right), && \label{zeta_def}
	\end{flalign}
	for $A^0_{d,k}(g_0)$ and $A^1_{d,k}(g_0)$ defined as the coefficients in (\ref{ODI_coefficients}); and $\varepsilon_0$ and $k_0$ are constants defined in the proof of Lemma \ref{moment_derivative_lemma} in expressions (\ref{epsilon_0}) and (\ref{k_0}), respectively.
	
	Furthermore, $\sup_{t \geq 0} m_k(g) = \sup_{t \geq 0} \left\| \gchi \right\|_{L^2(\OmegaLv)} \leq \widetilde{\zeta}(g_0)$, for any $k \geq \max (3, k_0)$, with $\widetilde{\zeta}(g_0)$ given by
	\begin{equation}
	\widetilde{\zeta}(g_0) := \max \left(m_k(g_0) + \left\| g_0 \right\|_{L^2(\OmegaLv)}, 2 \frac{A^1_{d,k}(g_0) N^{\frac{d - 1}{2}}}{A^2_{d,k} \mathcal{O} \left({\Lv}^{\kappa} \right)} \right), \label{zeta_def2}
	\end{equation}
	for $A_2$ defined as the coefficient in (\ref{ODI_coefficients}).
\end{theorem}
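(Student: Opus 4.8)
The entire argument is a comparison (barrier) argument applied to the scalar ordinary differential inequality (\ref{moment_L2_ODI}) for $X(t):=m_k(g)(t)+\|\gchi(t)\|_{L^2(\OmegaLv)}$, which is legitimate once $k\ge\max(3,k_0)$ and $\varepsilon\le\min(\tfrac14,\varepsilon_0)$ (so that Lemmas \ref{moment_derivative_lemma} and \ref{L2_norm_derivative_lemma}, whose sum produced (\ref{moment_L2_ODI}), both apply), with $\varepsilon_0,k_0$ as in (\ref{epsilon_0}), (\ref{k_0}). Since the solution is assumed to satisfy $g\in\mathcal{C}([0,\infty);L^2(\OmegaLv))$ and $\Chi$ does not depend on $t$, the function $X$ is continuous, with $X(0)=m_k(g_0)+\|\Chi g_0\|_{L^2(\OmegaLv)}\le m_k(g_0)+\|g_0\|_{L^2(\OmegaLv)}$, i.e.\ the first argument of $\zeta(g_0)$ in (\ref{zeta_def}).

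Write the right-hand side of (\ref{moment_L2_ODI}) as the upward parabola $p(X)=A^0_{d,k}(g_0)-A^1_{d,k}(g_0)\,X+\varepsilon_N X^2$ with $\varepsilon_N:=A^2_{d,k}\,\mathcal{O}(\Lv^{\kappa})\,N^{-(d-1)/2}$, so that $\varepsilon_N\to0$ as $N\to\infty$. For $\varepsilon_N$ small the discriminant $(A^1_{d,k})^2-4\varepsilon_N A^0_{d,k}$ is positive, $p$ has two positive roots $0<X_-<X_+$, $p<0$ precisely on $(X_-,X_+)$, and a short computation with the quadratic formula gives the elementary bounds $A^0_{d,k}/A^1_{d,k}\le X_-\le 2A^0_{d,k}/A^1_{d,k}$ and $A^1_{d,k}/(2\varepsilon_N)\le X_+\le A^1_{d,k}/\varepsilon_N$. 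The plan is to take $N_0$ to be the least integer forcing, for all $N\ge N_0$, the two inequalities $\varepsilon_N\le (A^1_{d,k})^2/(4A^0_{d,k})$ and $\varepsilon_N< A^1_{d,k}/(2X(0))$: the first is equivalent to $p\!\left(2A^0_{d,k}/A^1_{d,k}\right)\le0$, i.e.\ $X_-\le 2A^0_{d,k}/A^1_{d,k}$, and the second guarantees $X(0)<X_+$, so that the initial datum lies strictly inside the basin $[0,X_+)$ on which the quadratic term cannot trigger blow-up. This $N_0$ depends on $g_0$ only through $A^0_{d,k}(g_0)$, $A^1_{d,k}(g_0)$ (see (\ref{ODI_coefficients})) and $X(0)$, as permitted.

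With $\zeta(g_0)=\max\!\left(X(0),\,2A^0_{d,k}/A^1_{d,k}\right)$ one has $X(0)\le\zeta(g_0)$ and $\max(X(0),X_-)\le\zeta(g_0)<X_+$ (the right-hand strict inequality using both requirements in the choice of $N_0$). Now run the barrier argument: if $X(t^{\ast})>\zeta(g_0)$ for some $t^{\ast}>0$, then by continuity there is a last time $t_0\in[0,t^{\ast})$ with $X(t_0)=\zeta(g_0)$ and $X(t)>\zeta(g_0)$ on $(t_0,t^{\ast}]$; but on the set where $X\in(\zeta(g_0),X_+)$ one has $X\in(X_-,X_+)$, hence (\ref{moment_L2_ODI}) forces $X'\le p(X)<0$, so $X$ is strictly decreasing there and can neither stay above $\zeta(g_0)$ nor reach $X_+$, a contradiction. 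Therefore $\sup_{t\ge0}X(t)\le\zeta(g_0)$, and since $0\le m_k(g)\le X$ and $0\le\|\gchi\|_{L^2(\OmegaLv)}\le X$ this is exactly (\ref{propagation_bound}). For the final assertion, the same barrier argument barriered at the upper estimate $A^1_{d,k}/\varepsilon_N$ for the larger root $X_+$ (or, more directly, the observation that $N\ge N_0$ makes $2A^0_{d,k}/A^1_{d,k}\le 2A^1_{d,k}N^{(d-1)/2}/(A^2_{d,k}\mathcal{O}(\Lv^{\kappa}))$, whence $\zeta(g_0)\le\widetilde{\zeta}(g_0)$) yields $\sup_{t\ge0}X(t)\le\widetilde{\zeta}(g_0)$ with $\widetilde{\zeta}$ as in (\ref{zeta_def2}).

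The only genuine obstacle is the construction of $N_0$: one must check that $\varepsilon_N$ can really be pushed below \emph{both} thresholds simultaneously and, in particular, that the initial datum lies in the basin $[0,X_+)$ of the small root — this is the single point at which the hypothesis $N\ge N_0$, with $N_0$ allowed to depend on $g_0$, is used. Everything else is routine: verifying the elementary root estimates for $p$, justifying the barrier argument for the (only absolutely continuous, a.e.\ valid) differential inequality, and carrying along the powers of $\Lv$ collected in the exponent $\kappa$ together with the constant dependencies recorded in (\ref{ODI_coefficients}), none of which affects the structure of the proof.
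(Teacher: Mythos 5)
Your proposal is correct and follows essentially the same route as the paper: both add the estimates of Lemmas \ref{moment_derivative_lemma} and \ref{L2_norm_derivative_lemma} to get the quadratic ODI (\ref{moment_L2_ODI}) for $X(t)=m_k(g)+\|\gchi\|_{L^2(\OmegaLv)}$, take $N\geq N_0$ so that the parabola has two positive roots with $X(0)$ below the larger one, bound the smaller root by $2A^0_{d,k}/A^1_{d,k}$ (your condition $\varepsilon_N\leq (A^1_{d,k})^2/(4A^0_{d,k})$ is equivalent to the paper's square-root estimate in (\ref{X-_bound})), and conclude $X(t)\leq\max\bigl(X(0),X^-\bigr)$ by a barrier/trapping argument, with the $\widetilde{\zeta}$ bound obtained by the cruder estimate on the lower root. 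Your explicit "last crossing time" formulation of the barrier step is if anything slightly tighter than the paper's informal discussion, but the substance is identical.
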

\begin{rem}
	Whereas the bound using $\zeta$ defined in (\ref{zeta_def}) may be cleaner, the bound with $\widetilde{\zeta}$ defined in (\ref{zeta_def2}) will be more useful later as it is easier to see that $\widetilde{\zeta}(h_0) \to 0$ as $\|h_0\|_{L^2(\OmegaLv)} \to 0$, for fixed $\Lv$ and $N$.  This is because $h_0$ only appears in the numerator of $\widetilde{\zeta}(h_0)$, but is in both the numerator and denominator of $\zeta(h_0)$.
\end{rem}

\begin{proof}
	First, the second order polynomial on the right-hand side of the ODI (\ref{moment_L2_ODI}) has roots
	\begin{equation*}
	X^{\pm} = \frac{A^1_{d,k}(g_0) \pm \sqrt{\left(A^1_{d,k}(g_0) \right)^2 - 4 A^0_{d,k}(g_0) A^2_{d,k} \frac{\mathcal{O} \left({\Lv}^{\kappa} \right)}{N^{\frac{d - 1}{2}}}}}{2 A^2_{d,k} \frac{\mathcal{O} \left({\Lv}^{\kappa} \right)}{N^{\frac{d - 1}{2}}}}.
	\end{equation*}
	Here, since all coefficients $A^0_{d,k}(g_0)$, $A^1_{d,k}(g_0)$ and $A^2_{d,k}(g_0)$ are positive, the polynomial is guaranteed to have two distinct real (and positive) roots by taking the number of Fourier modes $N$ large enough.  In addition, as $N \to \infty$, $X^+ \nearrow \infty$ and $X^- \to \frac{A^0_{d,k}(g_0)}{A^1_{d,k}(g_0)}$ by an application of L'H\^{o}pital's rule.  
	
	Furthermore, the lower root $X^-$ is in fact a decreasing function of $N$ for large enough $\Lv$.  To see this, by denoting $\widetilde{N} := N^{\frac{d - 1}{2}}$ and dropping the dependence on $g_0$ for convenience, note that the derivative of $X^-$ with respect to $\widetilde{N}$ is
	\begin{align*}
	\frac{\partial X^-}{\partial \widetilde{N}} &= \frac{A^1_{d,k}}{2 A^2_{d,k} \mathcal{O} \left({\Lv}^{\kappa} \right)} - \frac{\left(A^1_{d,k} \right)^2 \widetilde{N} - 2 A^0_{d,k} A^2_{d,k} \mathcal{O} \left({\Lv}^{\kappa} \right)}{\widetilde{N} \sqrt{\left(A^1_{d,k} \right)^2 - 4 A^0_{d,k} A^2_{d,k} \frac{\mathcal{O} \left({\Lv}^{\kappa} \right)}{\widetilde{N}}}} \\
	&\leq \frac{A^1_{d,k}}{2 A^2_{d,k} \mathcal{O} \left({\Lv}^{\kappa} \right)} - \frac{\left(A^1_{d,k} \right)^2 \widetilde{N} - 2 A^0_{d,k} A^2_{d,k} \mathcal{O} \left({\Lv}^{\kappa} \right)}{A^1_{d,k} \widetilde{N}} \\
	&= \frac{\left(A^1_{d,k} \right)^2 \widetilde{N} - 2\left(A^1_{d,k} \right)^2 A^2_{d,k} \mathcal{O} \left({\Lv}^{\kappa} \right) \widetilde{N} + 4 A^0_{d,k} \left(A^2_{d,k} \mathcal{O} \left({\Lv}^{\kappa} \right) \right)^2}{2A^1_{d,k} A^2_{d,k} \mathcal{O} \left({\Lv}^{\kappa} \right) \widetilde{N}},
	\end{align*}
	since $N$ has been chosen large enough that the quantity under the square root is positive.  Another consequence of this fact is that $4 A^0_{d,k} A^2_{d,k} \mathcal{O} \left({\Lv}^{\kappa} \right) < \left(A^1_{d,k} \right)^2 \widetilde{N}$ and so
	\begin{align*}
	\frac{\partial X^-}{\partial \widetilde{N}} &< \frac{\left(A^1_{d,k} \right)^2 \widetilde{N} - 2\left(A^1_{d,k} \right)^2 A^2_{d,k} \mathcal{O} \left({\Lv}^{\kappa} \right) \widetilde{N} + \left(A^1_{d,k} \right)^2 A^2_{d,k} \mathcal{O} \left({\Lv}^{\kappa} \right) \widetilde{N}}{2A^1_{d,k} A^2_{d,k} \mathcal{O} \left({\Lv}^{\kappa} \right) \widetilde{N}} \\
	&= \frac{\left(1 - A^2_{d,k} \mathcal{O} \left({\Lv}^{\kappa} \right) \right)\left(A^1_{d,k} \right)^2 \widetilde{N}}{2A^1_{d,k} A^2_{d,k} \mathcal{O} \left({\Lv}^{\kappa} \right) \widetilde{N}}.
	\end{align*}
	The denominator here is always positive and so, for large enough $\Lv$ to cause $A^2_{d,k} \mathcal{O} \left({\Lv}^{\kappa} \right) \geq 1$, this means that $\frac{\partial X^-}{\partial \widetilde{N}} < 0$ and so the lower root $X^-$ decreases in $\widetilde{N} = N^{\frac{d - 1}{2}}$, or equivalently as the number of Fourier modes $N$ increases.  This means that $X^- \searrow \frac{A^0_{d,k}(g_0)}{A^1_{d,k}(g_0)}$ as $N \to \infty$.
	
	Then, since $X^+ \nearrow \infty$ as $N \to \infty$, there exists some $N_0 > 0$ such that, for $N \geq N_0$, the initial sum of $k$-th moment and $L^2$-norm $X(0) = m_k(g_0) + \left\| \gchi_0 \right\|_{L^2(\OmegaLv)}$ satisfies $X(0) \leq X^+$.  If, in addition, $X(0) \geq X^-$ the derivative $X'(t)$ must be negative, since it is bounded above by the polynomial which is negative between the roots, and so $X(t)$ is decreasing.  If, for some $T > 0$, the solution drops to $X(T) < X^-$ then it could be that $X'(t) \leq 0$ or $X'(t) > 0$ for $t \geq T$.  If $X'(t) \leq 0$ then the solution will remain bounded but will also never drop below zero as, by definition of the moments and $L^2$-norm in (\ref{mk_def}) and (\ref{L2k_def}), respectively, $X(t) = m_k(g) + \left\| \gchi \right\|_{L^2(\OmegaLv)} \geq 0$.  If $X'(t) > 0$, however, as soon as the solution reaches $X^-$ it must decrease again as once more the derivative is forced to be negative.  This means that when $X(t)$ drops below the lower root $X^-$ it must remain bounded above by $X^-$.  In particular, this means that if $X(0) \leq X^-$ then $X(t)$ may increase above $X(0)$ but $X(t) \leq X^-$ and so
	\begin{equation}
	X(t) \leq \max \left(X(0), X^-\right) \label{X_bound}.
	\end{equation}
	
	A sketch of this argument is shown in Fig.\ \ref{ODIDiagram} where, given the initial condition $X(0) \leq X^+$, the hatched region shows the possible values for the derivative $X'(t)$. In addition, by some simple factorisation, note that when $N \geq N_0$ and so the discriminant of the polynomial on the right-hand side of (\ref{moment_L2_ODI}) is positive,
	\begin{align}
	X^{-} &= \frac{A^1_{d,k}(g_0) - \sqrt{\left(A^1_{d,k}(g_0) \right)^2 - 4 A^0_{d,k}(g_0) A^2_{d,k} \frac{\mathcal{O} \left({\Lv}^{\kappa} \right)}{N^{\frac{d - 1}{2}}}}}{2 A^2_{d,k} \frac{\mathcal{O} \left({\Lv}^{\kappa} \right)}{N^{\frac{d - 1}{2}}}} \label{X-_bound} \\
	&= \frac{A^1_{d,k}(g_0) \left(1 - \sqrt{1 - 4 \frac{A^0_{d,k}(g_0) A^2_{d,k}}{\left(A^1_{d,k}(g_0) \right)^2} \frac{\mathcal{O} \left({\Lv}^{\kappa} \right)}{N^{\frac{d - 1}{2}}}} \right)}{2 A^2_{d,k} \frac{\mathcal{O} \left({\Lv}^{\kappa} \right)}{N^{\frac{d - 1}{2}}}} \nonumber \\
	&\leq \frac{A^1_{d,k}(g_0) \left(1 - 1 + 4 \frac{A^0_{d,k}(g_0) A^2_{d,k}}{\left(A^1_{d,k}(g_0) \right)^2} \frac{\mathcal{O} \left({\Lv}^{\kappa} \right)}{N^{\frac{d - 1}{2}}} \right)}{2 A^2_{d,k} \frac{\mathcal{O} \left({\Lv}^{\kappa} \right)}{N^{\frac{d - 1}{2}}}} = 2 \frac{A^0_{d,k}(g_0)}{A^1_{d,k}(g_0)}, \nonumber
	\end{align}
	since a positive discriminant is only possible when $1 - 4 \frac{A^0_{d,k}(g_0) A^2_{d,k}}{\left(A^1_{d,k}(g_0) \right)^2} \frac{\mathcal{O} \left({\Lv}^{\kappa} \right)}{N^{\frac{d - 1}{2}}} \leq 1$ and so the square root of this quantity is larger.  So, using this bound on $X^-$ in identity (\ref{X_bound}), as well as the fact that $X(0) = m_k(g_0) + \left\| \gchi_0 \right\|_{L^2(\OmegaLv)} \leq m_k(g_0) + \left\| g_0 \right\|_{L^2(\OmegaLv)}$, gives
	\begin{equation*}
	X(t) \leq \max \left(m_k(g_0) + \left\| g_0 \right\|_{L^2(\OmegaLv)}, 2 \frac{A^0_{d,k}(g_0)}{A^1_{d,k}(g_0)} \right) \label{X_bound2}.
	\end{equation*}
	
	In conclusion, since both $m_k(g)$ and $\left\| \gchi \right\|_{L^2(\OmegaLv)}$ are positive, they each have the same bound as $X(t)$, which gives the required result (\ref{propagation_bound}) coupled with (\ref{zeta_def}) in Theorem \ref{propagation_theorem}. 
	\begin{figure}[!hbt]
		\centering
		\includegraphics[width=0.9\linewidth]{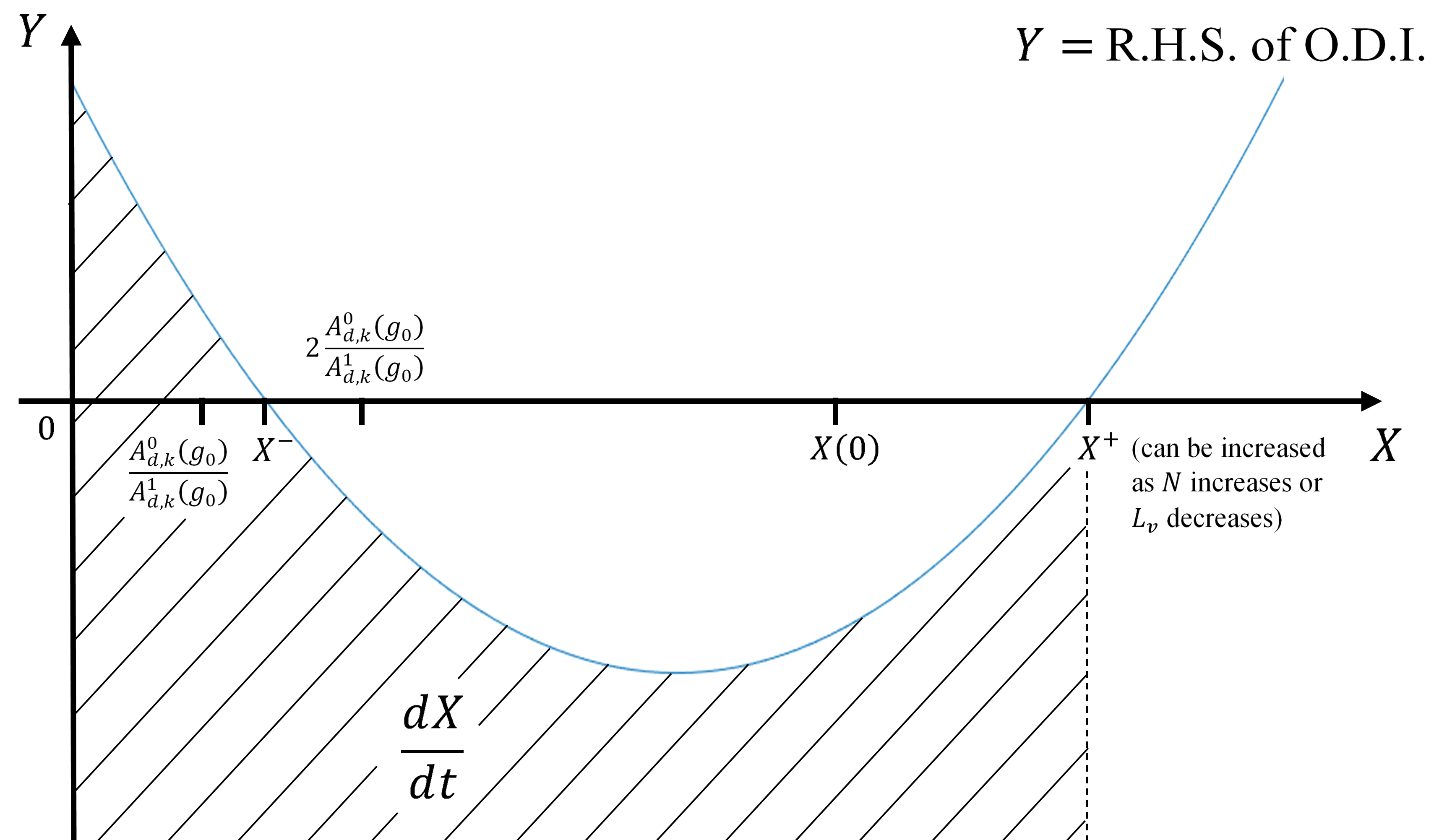} 
		\caption{A sketch of the parabola which is the right-hand side of the ODI (\ref{moment_L2_ODI}) for large enough number of Fourier modes $N$ to give two distinct real roots.  Since the derivative of $X(t) = m_k(g) + \left\| \gchi \right\|_{L^2(\OmegaLv)}$ is bounded above by this curve, if the initial condition satisfies $X(0) < X^+$, the derivative remains in the hatched region.}
		\label{ODIDiagram}
	\end{figure}
	
	Additionally, the result involving $\tilde{\zeta}$ defined in (\ref{zeta_def2}) is true by using the expression (\ref{X_bound}) and trivially bounding the lower root $X^-$ by removing the square root in expression (\ref{X-_bound})
\end{proof}


\section{Existence and Uniqueness for the Semi-discrete Problem} \label{ExistenceAndRegularity}
The results on the propagation of moments and $L^2$-norm from Theorem \ref{propagation_theorem} can now be used to show that the semi-discrete problem (\ref{semi-discrete2}) has a unique solution, for some initial condition $f_0 \in L^2(\Rd)$ which has support contained in $\OmegaLv$.  The proof of this is an application of the contraction mapping theorem, which first requires the definition of a Banach space in which the solution will live.  For this purpose, first let $g_0 = \Pi^N_{\Lv}f_0$ and then define the Banach space $\mathcal{B}_k \subset C([0,T]; L^2(\OmegaLv))$, for any required $k \geq \max (3, k_0)$ with $k_0$ defined by (\ref{k_0}), as
\begin{align*}
\mathcal{B}_k := \Bigl\{f \in C([0,T]; L^2(\OmegaLv)): &\sup_{t \in [0, T]} m_k(f(t)) \leq 2\zeta(g_0), \\
&\sup_{t \in [0, T]} \left\| \Chi f(t) \right\|_{L^2(\OmegaLv)} \leq 2\zeta(g_0) \Bigr\},
\end{align*}
where $\zeta(g_0)$ is defined by (\ref{zeta_def}) in Theorem \ref{propagation_theorem}.

To introduce an operator to be used in the contraction mapping theorem, also notice that the semi-discrete problem (\ref{semi-discrete2}) is equivalent to stating that its solution $g$ satisfies $g(t) = g_0 + \int_{0}^{t} Q_c(g,g)(s) ~\textrm{d}s$.  As a result, define the operator $\mathcal{T}: \mathcal{B}_k \to C([0,T]; L^2(\OmegaLv))$ by
\begin{equation}
\mathcal{T}(g)(t) := g_0 + \int_{0}^{t} Q_c(g,g)(s) ~\textrm{d}s. \label{T_map}
\end{equation}

\begin{theorem} \label{existence_theorem}
	For an initial condition $g_0 = \Pi^N_{2\Lv} f_0 \in L^1_k \bigcap L^2(\OmegaLv)$, where $k \geq \max (3, k_0)$, which satisfies the stability condition (\ref{stability_init}) with $\varepsilon \leq \min \left(\frac{1}{4}, \varepsilon_0 \right)$, if the number of Fourier modes $N \geq N_0$ and $\Lv \geq L_0$, there is a unique solution $g \in C([0,T]; L^2(\OmegaLv))$ to the semi-discrete problem (\ref{semi-discrete2}), for any $T > 0$, which satisfies
	\begin{flalign*}
	&& m_k(g) \leq \zeta(g_0) ~~\textrm{and} ~~\left\| \gchi \right\|_{L^2(\OmegaLv)} \leq \zeta(g_0), && \textrm{for all } t \leq T,
	\end{flalign*}
	with $\varepsilon_0$ and $k_0$ are defined in the proof of Lemma \ref{moment_derivative_lemma} in expressions (\ref{epsilon_0}) and (\ref{k_0}), respectively; $\zeta(g_0)$ defined by expression (\ref{zeta_def}) and $N_0$ described in Theorem \ref{propagation_theorem}; and $L_0 > 0$ large enough which will be described in the current proof.
\end{theorem}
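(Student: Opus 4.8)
The plan is to construct the solution by a Picard-type fixed-point argument for the integral operator $\mathcal{T}$ of (\ref{T_map}), then to globalise it in time using the a priori bounds of Theorem \ref{propagation_theorem}, and finally to close a bootstrap showing that the stability condition (\ref{stability}) is genuinely propagated and not merely assumed. The key preliminary observation is that $Q_c(g,g)$ depends on $g$ only through the bilinear operator $Q$, the bounded linear maps $g\mapsto\chi g$ and $\Pi^N_{2\Lv}$, and the bounded linear moment functionals $M_\phi$ of Lemma \ref{Isoperimetric_Lemma}; hence $g\mapsto Q_c(g,g)$ is a genuine quadratic expression in $g$, supported in $\Omega_{2\Lv}$. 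Since $\Pi^N_{2\Lv}g$ is band-limited, all of its $L^p_k(\Omega_{2\Lv})$ norms are comparable with constants depending only on $N$, $\Lv$ and $k$ (Bernstein-type inverse inequalities), while the smoothness of the cut-off enters through (\ref{extension_bound}); combined with the $L^2$ estimate (\ref{Q_L2_estimate}) this yields, on every ball of $L^2(\OmegaLv)$, bounds $\|Q_c(g,g)\|_{L^2(\OmegaLv)}\le C(N,\Lv)\,\|\chi g\|^2_{L^2(\OmegaLv)}$ and, by bilinearity,
\[
\|Q_c(g,g)-Q_c(h,h)\|_{L^2(\OmegaLv)}\le C(N,\Lv)\big(\|\chi g\|_{L^2(\OmegaLv)}+\|\chi h\|_{L^2(\OmegaLv)}\big)\,\|\chi(g-h)\|_{L^2(\OmegaLv)}.
\]

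With these estimates in hand, $\mathcal{T}$ maps $\mathcal{B}_k$ into itself on a short interval $[0,\tau]$: for $g\in\mathcal{B}_k$, using $m_k(\cdot)\le C(\Lv,k)\|\cdot\|_{L^2(\OmegaLv)}$ on the bounded support $\Omega_{2\Lv}$, one obtains $m_k(\mathcal{T}(g)(t))\le m_k(g_0)+\tau\,C(N,\Lv,k,\zeta(g_0))$ and $\|\chi\mathcal{T}(g)(t)\|_{L^2(\OmegaLv)}\le\|\chi g_0\|_{L^2(\OmegaLv)}+\tau\,C(N,\Lv)\big(2\zeta(g_0)\big)^2$, both $\le 2\zeta(g_0)$ once $\tau=\tau(N,\Lv,k,\zeta(g_0))$ is small; the displayed Lipschitz bound then makes $\mathcal{T}$ a contraction on $C([0,\tau];L^2(\OmegaLv))$ after possibly shrinking $\tau$. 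Banach's theorem produces a unique fixed point, which is the unique local solution of (\ref{semi-discrete2}) on $[0,\tau]$, and crucially $\tau$ depends only on the data, on $N$ and on $\Lv$, not on $T$.

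I would then bootstrap. The local solution, being an actual solution, satisfies every hypothesis of the lemmas of Section \ref{Propagation} except, a priori, the stability condition (\ref{stability}); so suppose (\ref{stability}) holds on a maximal subinterval $[0,T_\varepsilon]$ (it holds at $t=0$, being (\ref{stability_init})). On $[0,T_\varepsilon]$ Theorem \ref{propagation_theorem} gives $m_k(g),\|\chi g\|_{L^2(\OmegaLv)}\le\zeta(g_0)$; feeding these into Lemma \ref{g-_L2_derivative_lemma} and applying Gr\"onwall's inequality, $\|\chi g^-(t)\|_{L^2(\OmegaLv)}$ is controlled by $\|\chi g_0^-\|_{L^2(\OmegaLv)}$ plus a source term carrying the factor $N^{-(d-1)/2}$ and powers of $\Lv$. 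Choosing $N\ge N_0$ and $\Lv\ge L_0$ large enough (with $L_0$ depending on $T$) forces this bound to be so small that
\[
\int_{\{g(t,\cdot)<0\}}|g(t,\boldsymbol{v})|\langle\boldsymbol{v}\rangle^2~\dv\le(1+d{\Lv}^2)(2\Lv)^{\frac{d}{2}}\|\chi g^-(t)\|_{L^2(\OmegaLv)}<\varepsilon\int_{\{g(t,\cdot)\ge0\}}g(t,\boldsymbol{v})\langle\boldsymbol{v}\rangle^2~\dv,
\]
the right-hand side being bounded below by a fixed positive multiple of the conserved mass plus energy of $g_0$ (by mass and energy conservation together with (\ref{stability_consequence})). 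The strict inequality is incompatible with maximality of $T_\varepsilon$ unless $T_\varepsilon$ exhausts the existence interval of the local solution; iterating the local construction from $t=\tau,2\tau,\dots$, the step $\tau$ staying fixed because the propagation bound $\zeta(g_0)$ does not deteriorate, extends the solution to $[0,T]$ for every $T>0$ with $m_k(g),\|\chi g\|_{L^2(\OmegaLv)}\le\zeta(g_0)$ throughout. Uniqueness on $[0,T]$ follows from the displayed Lipschitz estimate and Gr\"onwall applied to $\frac{d}{dt}\|\chi(g_1-g_2)\|_{L^2(\OmegaLv)}^2$.

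The fixed-point step is routine once the finite-dimensionality of the semi-discrete problem and the associated inverse inequalities are recorded; the genuine obstacle is closing the bootstrap for (\ref{stability}), i.e.\ ruling out that the negative mass and energy created by the conservation correction and the Fourier truncation ever overwhelm the conserved (hence bounded below) positive part. This is exactly where the hypotheses $\varepsilon\le\min(\frac{1}{4},\varepsilon_0)$, $N\ge N_0$ and $\Lv\ge L_0$ enter: they make the Gr\"onwall output of Lemma \ref{g-_L2_derivative_lemma} over $[0,T]$ strictly smaller than $\varepsilon$ times the positive contribution, so the assumed stability is recovered with slack and propagates. A secondary technical point is that (\ref{Q_L2_estimate}) involves an $H^2_{\lambda+2}$ norm of the second argument, which is unavailable in $C([0,T];L^2(\OmegaLv))$; this is absorbed using the band-limitedness of $\Pi^N_{2\Lv}g$ and the smoothness of the cut-off via (\ref{extension_bound}).
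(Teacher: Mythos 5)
Your overall architecture coincides with the paper's: a contraction argument for the operator $\mathcal{T}$ of (\ref{T_map}) on (a ball of) $\mathcal{B}_k$, followed by propagation of the stability condition (\ref{stability}) via Lemma \ref{g-_L2_derivative_lemma} and Gr\"onwall, a time-iteration to reach arbitrary $T$, and uniqueness by Gr\"onwall on the difference of two solutions. The genuine divergence, and the place where your write-up has a gap, is the claimed bound $\|Q_c(g,g)\|_{L^2(\OmegaLv)}\le C(N,\Lv)\|\gchi\|^2_{L^2(\OmegaLv)}$ and its Lipschitz analogue, which you justify ``since $\Pi^N_{2\Lv}g$ is band-limited''. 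But the scheme never forms $\Pi^N_{2\Lv}g$: by (\ref{Q_u}) the projection is applied to the \emph{output} $Q(\gchi,\gchi)$, while the arguments of $Q$ are $\gchi$ with $g$ an arbitrary element of $\mathcal{B}_k\subset C([0,T];L^2(\OmegaLv))$. He's estimate (\ref{Q_L2_estimate}) then requires $\|\gchi\|_{H^2_{\lambda+2}(\OmegaLv)}$, which is not finite (let alone uniformly bounded) for general elements of that space, and a Bernstein inverse inequality cannot be applied to $g$ itself. This is exactly the difficulty the paper resolves differently, by importing the uniform $H^2$ bound of Theorem \ref{regularity_theorem} for the functions in $\mathcal{B}_k$.

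The gap is repairable along the lines you hint at (``finite-dimensionality of the semi-discrete problem''), but it must be made explicit: run the fixed point not on all of $\mathcal{B}_k$ but on its intersection with the finite-dimensional subspace $V_N$ spanned by the retained Fourier modes of $\Omega_{2\Lv}$ together with $1,v_1,\ldots,v_d,|\boldsymbol{v}|^2$ (the Lagrange correction in (\ref{Conserved_Q}) is a quadratic polynomial, not a trigonometric one). Since $g_0=\Pi^N_{2\Lv}f_0\in V_N$ and $Q_c(g,g)\in V_N$ for every admissible $g$, the map $\mathcal{T}$ sends this smaller invariant set into itself, and on $V_N$ the norms you need ($H^2_{\lambda+2}$, $L^1_k$, $L^2$) are all equivalent with constants depending only on $N$, $\Lv$ and $\delta_{\chi}$; note you also need such an equivalence to pass from the $\|\gchi\|_{L^2(\OmegaLv)}$ bound defining $\mathcal{B}_k$ to a bound on $g$, using that an element of $V_N$ vanishing on $\Omega_{(1-\delta_{\chi})\Lv}$ vanishes identically. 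Alternatively, bypass He's estimate entirely by bounding the finitely many Fourier coefficients of $Q(\gchi,\gchi)$ through the weak form (\ref{Q^a_weak}), which requires only moments of $\gchi$ and, by bilinearity and Cauchy--Schwarz on the bounded domain, is Lipschitz in $\|f-\widetilde f\|_{L^2(\OmegaLv)}$ on bounded-moment sets; the Lagrange-multiplier part is then handled exactly as in the paper. One further caveat: you allow $L_0$ to depend on $T$ in the stability bootstrap, which is weaker than the statement being proved; the argument should instead restart the Gr\"onwall estimate on successive intervals of fixed length so that $\Lv$ and $N$ are chosen once, independently of $T$.
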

\begin{proof}
	First it will be shown that the map $\mathcal{T}$ defined in (\ref{T_map}) is a contraction.  To see this, for $f$, $\widetilde{f} \in \mathcal{B}_k$, note that
	\begin{align}
	&\mathcal{T}(f) - \mathcal{T}(\widetilde{f}) \nonumber \\
	&\qquad= \int_{0}^{t} \left(Q_c(f,f)(s) - Q_c(\widetilde{f},\widetilde{f})(s) \right) ~\textrm{d}s \label{T_diff} \\
	&\qquad\ \ = \int_{0}^{t} \Biggl(\Pi^N_{2\Lv} \left(Q(\Chi f, \Chi f) - Q(\Chi \widetilde{f}, \Chi \widetilde{f}) \right) \nonumber \\
	&~~~~~~~~~~~~~~ - \frac{1}{2}\biggl(\left(\gamma_1 - \widetilde{\gamma}_1 \right) + \sum_{j = 1}^{d} \left(\gamma_{j+1} - \widetilde{\gamma}_{j+1} \right) v_j + \left(\gamma_{d+2} - \widetilde{\gamma}_{d+2} \right) |\boldsymbol{v}|^2 \biggr) \Biggr) ~\textrm{d}s \nonumber,
	\end{align}
	by using expression (\ref{Conserved_Q}) for $Q_c$.  So, by pulling out the supremum from the integral with respect to $s$, the triangle inequality and noting that the Lagrange multipliers $\gamma_j$ are constants, for $j = 1, 2, \ldots, d+2$,
	\begin{align*}
	&\left\|\mathcal{T}(f) - \mathcal{T}(\widetilde{f}) \right\|_{L^2(\OmegaLv)} \\
	\leq&~ t \sup_{s \in [0,t]} \Biggl(\left\| \Pi^N_{2\Lv} \left(Q(\Chi f, \Chi f) - Q(\Chi \widetilde{f}, \Chi \widetilde{f}) \right) \right\|_{L^2(\OmegaLv)} \\
	&~~~~~~~~~~~~ + \frac{1}{2}\biggl(\left|\gamma_1 - \widetilde{\gamma}_1 \right| \left\| 1 \right\|_{L^2(\OmegaLv)} + \sum_{j = 1}^{d} \left|\gamma_{j+1} - \widetilde{\gamma}_{j+1} \right| \left\| v_j \right\|_{L^2(\OmegaLv)} \\
	&~~~~~~~~~~~~~~~~~~~~~~~~~~~~~~~~~~~~~~~~~~~~~~+ \left|\gamma_{d+2} - \widetilde{\gamma}_{d+2} \right| \left\| |\boldsymbol{v}|^2 \right\|_{L^2(\OmegaLv)} \biggr) \Biggr). 
	\end{align*}
	
	Now, by using the definition of $\gamma_1$ in (\ref{Lagrange_multipliers}), and explicitly writing out the moments $M_{\phi(\boldsymbol{v})}$, after using the triangle inequality,
	\begin{align*}
	\left|\gamma_1 - \widetilde{\gamma}_1 \right| \leq& O_d \int_{\OmegaLv} \left|\Pi^N_{2\Lv} \left(Q(\Chi f, \Chi f) - Q(\Chi \widetilde{f}, \Chi \widetilde{f}) \right) \right| ~\dv \\
	&~~ + O_{d+2} \int_{\OmegaLv} \left|\Pi^N_{2\Lv} \left(Q(\Chi f, \Chi f) - Q(\Chi \widetilde{f}, \Chi \widetilde{f}) \right) \right|  |\boldsymbol{v}|^2 ~\dv \\
	&~~~~~\leq O_d \left\| \Pi^N_{2\Lv} \left(Q(\Chi f, \Chi f) - Q(\Chi \widetilde{f}, \Chi \widetilde{f}) \right) \right\|_{L^2(\OmegaLv)}\! \left\| 1 \right\|_{L^2(\OmegaLv)} \\
	& ~~~~~~~+ O_{d+2} \left\| \Pi^N_{2\Lv} \left(Q(\Chi f, \Chi f) - Q(\Chi \widetilde{f}, \Chi \widetilde{f}) \right) \right\|_{L^2(\OmegaLv)} \left\| |\boldsymbol{v}|^2 \right\|_{L^2(\OmegaLv)},
	\end{align*}
	by the Cauchy-Schwarz inequality.  Similarly, by using the definitions in (\ref{Lagrange_multipliers}) for the remaining Lagrange multipliers,
	\begin{equation*}
	\left|\gamma_{j+1} - \widetilde{\gamma}_{j+1} \right| \leq O_{d+2} \left\| \Pi^N_{2\Lv} \left(Q(\Chi f, \Chi f) - Q(\Chi \widetilde{f}, \Chi \widetilde{f}) \right) \right\|_{L^2(\OmegaLv)} \left\| v_j \right\|_{L^2(\OmegaLv)},
	\end{equation*}
	for $j = 1, 2, \ldots, d$, and
	\begin{align*}
	\left|\gamma_{d+2} - \widetilde{\gamma}_{d+2} \right| &\leq O_{d+2} \left\| \Pi^N_{2\Lv} \left(Q(\Chi f, \Chi f) - Q(\Chi \widetilde{f}, \Chi \widetilde{f}) \right) \right\|_{L^2(\OmegaLv)} \left\| 1 \right\|_{L^2(\OmegaLv)} \\
	+& O_{d+4} \left\| \Pi^N_{2\Lv} \left(Q(\Chi f, \Chi f) - Q(\Chi \widetilde{f}, \Chi \widetilde{f}) \right) \right\|_{L^2(\OmegaLv)} \left\| |\boldsymbol{v}|^2 \right\|_{L^2(\OmegaLv)}.
	\end{align*}
	
	Next, by considering that the expressions (\ref{collision_invariant_L2}) show $\| 1 \|_{L^2(\OmegaLv)} = O_{-\frac{d}{2}}$, $\| v_j \|_{L^2(\OmegaLv)} = O_{-(\frac{d}{2} + 1 )}$, for $j = 1, 2, \ldots, d$, and $\| |\boldsymbol{v}|^2 \|_{L^2(\OmegaLv)} = O_{\left(-\frac{d}{2} + 2 \right)}$, all of the $O_d$ terms cancel with the $O_{-d}$ terms from the $L^2$-norms of the collision invariants and this means that there exists some constant $\widetilde{C}_1 > 0$ such that
	\begin{align}
	\left\|\mathcal{T}(f) - \mathcal{T}(\widetilde{f}) \right\|_{L^2(\OmegaLv)} &\leq \widetilde{C}_1 t \sup_{s \in [0,t]} \left\| \Pi^N_{2\Lv} \left(Q(\Chi f, \Chi f) - Q(\Chi \widetilde{f}, \Chi \widetilde{f}) \right) \right\|_{L^2(\OmegaLv)} \nonumber \\
	&\leq \widetilde{C}_1 t \sup_{s \in [0,t]} \left\| Q(\Chi f, \Chi f) - Q(\Chi \widetilde{f}, \Chi \widetilde{f}) \right\|_{L^2(\OmegaLv)}, \label{Contraction_Bound}
	\end{align}
	by Parseval's theorem, and so proving that $\mathcal{T}$ is a contractive map reduces to showing that $Q$ is.  To do this, notice that by the bi-linearity of $Q$,
	\begin{equation*}
	Q(\Chi f, \Chi f) - Q(\Chi \widetilde{f}, \Chi \widetilde{f}) = Q(\Chi f - \Chi \widetilde{f}, \Chi f + \Chi \widetilde{f})
	\end{equation*}
	and so an estimate is required on $\|Q(F, G)\|_{L^2(\OmegaLv)}$, where $F = \Chi f - \Chi \widetilde{f} = \Chi (f - \widetilde{f})$ and $G = \Chi f + \Chi \widetilde{f} = \Chi (f + \widetilde{f})$.  Well, by using these functions in the $L^2$-estimate (\ref{Q_L2_estimate}) resulting from Proposition \ref{LBH_prop},
	\begin{align*}
	&\|Q(\Chi (f - \widetilde{f}), \Chi (f + \widetilde{f}))\|_{L^2(\OmegaLv)} \\
	&\qquad\leq ~ C_H \left(\|\Chi (f - \widetilde{f})\|_{L^1_{\lambda + 2}} + \|\Chi (f - \widetilde{f})\|_{L^2}\right) \|\Chi (f + \widetilde{f})\|_{H^2_{\lambda + 2}}.
	\end{align*}
	
	Then, using this estimate for the $L^2$-norm in (\ref{Contraction_Bound}) and the Cauchy-Schwarz inequality, a bound for the maximum value of the weights and the triangle inequality on the $H^2$-norms gives
	\begin{align*}
	&\left\|\mathcal{T}(f) - \mathcal{T}(\widetilde{f}) \right\|_{L^2(\OmegaLv)} \\
	&
\ \leq \widetilde{C}_1 C_H t \sup_{s \in [0,t]} \left(\|\Chi (f - \widetilde{f})\|_{L^1_{\lambda + 2}} + \|\Chi (f - \widetilde{f})\|_{L^2}\right) \|\Chi (f + \widetilde{f})\|_{H^2_{\lambda + 2}} \nonumber \\
	&\ \ \leq \widetilde{C}_1 C_H t \left((2\Lv)^{\frac{d}{2}}(1 + \left(\Lv)^2 \right)^{\lambda + 2} + 1\right)\sup_{s \in [0,t]} \|\Chi (f - \widetilde{f})\|_{L^2} \|\Chi (f + \widetilde{f})\|_{H^2} \\
	&\ \ \ \leq \widetilde{C}_1 C_H C_{\chi} t \left((2\Lv)^{\frac{d}{2}}(1 + \left(\Lv)^2 \right)^{\lambda + 2} + 1\right) \!\sup_{s \in [0,t]} \!\left(\|f\|_{H^2} + \|\widetilde{f}\|_{H^2}\right) \|f - \widetilde{f}\|_{L^2},
	\end{align*}
	by bounding the extension operators by 1 in the $L^2$-norm and using the bound (\ref{extension_bound}) in the $H^2$-norms.  This means, if it can also be assumed that the $H^2$-norms of the functions $f, \widetilde{f} \in \mathcal{B}_k$ are uniformly bounded up to some time $T_{\mathcal{T}}$ (which will be proven in the next section and does not rely on this theorem directly), for a constant $C_{\lambda, \Lv} > 0$ depending on $\lambda$ and $\Lv$,
	\begin{equation}
	\sup_{t \in [0,T_{\mathcal{T}}]} \left\|\mathcal{T}(f) - \mathcal{T}(\widetilde{f}) \right\|_{L^2(\OmegaLv)} \leq T_{\mathcal{T}} C_{\lambda, \Lv}\sup_{t \in [0,T_{\mathcal{T}}]} \|f - \widetilde{f}\|_{L^2}. \label{contraction_statement}
	\end{equation}
	So, if $T_{\mathcal{T}}$ is chosen small enough so that $T_{\mathcal{T}} C_{\lambda, \Lv} < 1$ then the operator $\mathcal{T}$ is indeed a contraction.
	
	Furthermore, setting $\widetilde{f} = 0$ in the above estimates which prove that $Q$ is a contraction,
	\begin{align*}
	&\sup_{t \in [0,T_{\mathcal{T}}]} \left\|\mathcal{T}(f) \right\|_{L^2(\OmegaLv)} \leq \left\|g_0 \right\|_{L^2(\OmegaLv)} + \sup_{t \in [0,T_{\mathcal{T}}]}  \left\| \int_{0}^{t} Q_c(f,f)(s) ~\textrm{d}s  \right\|_{L^2(\OmegaLv)} \\
	&\qquad\ \ \leq \left\|g_0 \right\|_{L^2(\OmegaLv)} + T_{\mathcal{T}} C_{\lambda, \Lv} \sup_{t \in [0,T_{\mathcal{T}}]} \|f\|_{L^2}  \leq \left\|g_0 \right\|_{L^2(\OmegaLv)} + T_{\mathcal{T}} C_{\lambda, \Lv} \zeta(g_0),
	\end{align*}
	because $f \in \mathcal{B}_k$.  That means, since $\left\|g_0 \right\|_{L^2(\OmegaLv)} \leq \zeta(g)$ (by definition of $\zeta$ in (\ref{zeta_def})) and $T_{\mathcal{T}}$ has already been chosen such that $T_{\mathcal{T}} C_{\lambda, \Lv} \leq 1$ in the contraction statement (\ref{contraction_statement}), $\sup_{t \in [0,T_{\mathcal{T}}]} \left\|\mathcal{T}(f) \right\|_{L^2(\OmegaLv)} \leq 2 \zeta(g_0)$.

	Also, notice that
	\begin{align*}
	m_k(\mathcal{T}(g)(t)) &= \int_{\Rd} \left|g_0 + \int_{0}^{t} Q_c(f,f)(s) ~\textrm{d}s \right| \langle \boldsymbol{v} \rangle^{k} ~\dv \\
	&\leq m_k(g_0) + t \sup_{s \in [0,t]} \int_{\Rd} \left|Q_c(f,f)(s) \right| \langle \boldsymbol{v} \rangle^{k} ~\dv.
	\end{align*}
	Then, by using the expanded identity (\ref{Q_expansion}) for $Q_c$, as well as the triangle inequality and noting that $f$ will only be defined inside $\OmegaLv$ so that $Q$ has support in $\Omega_{2\Lv}$,
	\begin{align}
	&\int_{\Rd} \left|Q_c(f,f)(s) \right| \langle \boldsymbol{v} \rangle^{k} ~\dv \nonumber \\
	&\qquad\leq \int_{\Rd} \left|Q(\Chi f, \Chi f)(s) \right| \langle \boldsymbol{v} \rangle^{k} ~\dv + \left\|(Q_c(f,f) - Q_u(f,f)) \langle \boldsymbol{v} \rangle^{k} \right\|_{L^1(\Omega_{2\Lv})} \nonumber \\
	&\qquad\ \ + \left\|\left(1 - \Pi^N_{2\Lv}\right) Q(\Chi f, \Chi f) \langle \boldsymbol{v} \rangle^{k} \right\|_{L^1(\Omega_{2\Lv})}. \label{T_moment_bound0}
	\end{align}
	Here, by the arguments in the proof of Lemma \ref{moment_derivative_lemma}, the $L^1$-norm terms in (\ref{T_moment_bound0}) are bounded as
	\begin{align}
	&\left\|(Q_c(f,f) - Q_u(f,f)) \langle \boldsymbol{v} \rangle^{k} \right\|_{L^1(\Omega_{2\Lv})} + \left\|\left(1 - \Pi^N_{2\Lv}\right) Q(\Chi f, \Chi f) \langle \boldsymbol{v} \rangle^{k} \right\|_{L^1(\Omega_{2\Lv})} \nonumber \\
	&\ \ \leq  \frac{C^2_d C r^{\frac{d}{2}} m_0(g_0)}{\sqrt{2k + 1}} m_{k+\lambda}(f) + \frac{C_k C_d}{\sqrt{2k + 1}}\bigl(m_0(f) + m_{k+\lambda}(f) \bigr) \nonumber \\
	&~~~~~+ \frac{C^1_d C_Q}{N^{\frac{d-1}{2}} \sqrt{2k + 1}} \mathcal{O} \left({\Lv}^{k + \frac{d}{2} + \lambda + 2} \right) \bigl| \bigl| \Chi f \bigr| \bigr|^2_{L^2(\OmegaLv)} \nonumber \\
	&\qquad\ \  \leq  \frac{C^2_d C r^{\frac{d}{2}} (1 \!+\! {\Lv}^2)^{\lambda} m_0(g_0)}{\sqrt{2k \!+\! 1}} m_{k}(f) + \frac{C_k C_d}{\sqrt{2k \!+ \!1}}\bigl(m_k(f) +(1 + {\Lv}^2)^{\lambda} m_{k}(f) \bigr) \nonumber \\
	&~~~~~~~~~~~~+ \frac{C^1_d C_Q}{N^{\frac{d-1}{2}} \sqrt{2k \!+\! 1}} \mathcal{O} \left({\Lv}^{k + \frac{d}{2} + \lambda + 2} \right) \bigl| \bigl| \Chi f \bigr| \bigr|^2_{L^2(\OmegaLv)}. \label{T_moment_bound2}
	\end{align}

	Now, if it can also be assumed that the moments of the collision operator, namely $\int_{\Rd} \left|Q(\Chi f, \Chi f)(s) \right| \langle \boldsymbol{v} \rangle^{k} ~\dv$, can be bounded by moments of $f$ then, by combining that with the estimates in (\ref{T_moment_bound2}), since $f \in \mathcal{B}_k$ and every term will involve $m_k(f)$ or $\bigl| \bigl| \Chi f \bigr| \bigr|_{L^2(\OmegaLv)}$, this means that the supremum of these quantities are bounded by some function of $\zeta(g_0)$, which also depends on $k$, $\lambda$, $\Lv$ and $N$, say $\mathcal{C}_{k, \lambda, \Lv, N}(\zeta(g_0))$.  More precisely, this means
	\begin{flalign*}
	&& \sup_{s \in [0,t]} \int_{\Rd} \left|Q_c(f,f)(s) \right| \langle \boldsymbol{v} \rangle^{k} ~\dv \leq \mathcal{C}_{k, \lambda, \Lv, N}(\zeta(g_0)),&& \textrm{for } t \leq T_{\mathcal{T}},
	\end{flalign*}
	and so
	\begin{align*}
	\sup_{t \in [0,T_{\mathcal{T}}]} m_k(\mathcal{T}(f)(t)) &\leq \sup_{t \in [0,T_{\mathcal{T}}]} \left(m_k(g_0) + t \sup_{s \in [0,t]} \int_{\Rd} \left|Q_c(f,f)(s) \right| \langle \boldsymbol{v} \rangle^{k} ~\dv \right) \\
	&\leq m_k(g_0) + T_{\mathcal{T}} \mathcal{C}_{k, \lambda, \Lv, N}(\zeta(g_0)).
	\end{align*}
	Here, since $m_k(g_0) \leq \zeta(g_0)$ by definition of $\zeta$ in (\ref{zeta_def}), if $T_{\mathcal{T}}$ is additionally chosen small enough so that $T_{\mathcal{T}} \mathcal{C}_{k, \lambda, \Lv, N}(\zeta(g_0)) \leq \zeta(g_0)$ then it is also true that $\sup_{t \in [0,T_{\mathcal{T}}]} m_k(\mathcal{T}(f)(t)) \leq 2 \zeta(g_0)$.  
	
	This means that $\mathcal{T}(f) \in \mathcal{B}_k$ because $\sup_{t \in [0,T_{\mathcal{T}}]} m_k(\mathcal{T}(f)(t)) \leq 2 \zeta(g_0)$ and $\sup_{t \in [0,T_{\mathcal{T}}]} \left\|\mathcal{T}(f)(t) \right\|_{L^2(\OmegaLv)} \leq 2 \zeta(g_0)$.  Therefore, since $\mathcal{T}: \mathcal{B}_k \to \mathcal{B}_k$ is a contraction, there is a unique solution to the semi-discrete problem (\ref{semi-discrete2}) for $0 < t \leq T_{\mathcal{T}}$ by the contraction mapping theorem.
	
	The next step is to show that there is a unique solution for all time $t > 0$.  This is achieved by first proving that the stability assumption (\ref{stability}) is true for all $t > 0$.  In particular, by using the result of Lemma \ref{g-_L2_derivative_lemma} in the case where $g \in \mathcal{B}_k$,
	\begin{align*}
	&\frac{d}{d t} \left(\left\| \gchi^- \right\|_{L^2(\OmegaLv)} \right) \leq C_{d,\lambda} \left\| g_0 \right\|_{L^1_2(\OmegaLv)} \left\| \gchi^- \right\|_{L^2(\OmegaLv)} \nonumber \\
	&\qquad + 2C_d O_{\frac{d}{2} + 2 - \lambda} m_{0}(g_0) \zeta(g_0) + 4C_Q(C_d + 1) \frac{(1 + {\Lv}^2)^2}{N^{\frac{d - 1}{2}}} \left(\zeta(g_0) \right)^2.
	\end{align*}
	So, by Gr\"{o}nwall's inequality,
	\begin{equation*}
	\left\| \gchi^-(t) \right\|_{L^2(\OmegaLv)} \leq \epsilon(t,\Lv,N),
	\end{equation*}
	where
	\begin{flalign*}
	\epsilon(t,\Lv,N) &= e^{C_{d,\lambda} \left\| g_0 \right\|_{L^1_2(\OmegaLv)} t} \Biggl(\left\| \gchi_0^- \right\|_{L^2(\OmegaLv)} + 2C_d O_{\frac{d}{2} + 2 - \lambda} m_{0}(g_0) \zeta(g_0) \\
	&~~~~~~~~~~~~~~~~~~~~~~~~~~~ + 4C_Q(C_d + 1) \frac{(1 + {\Lv}^2)^2}{N^{\frac{d - 1}{2}}} \left(\zeta(g_0) \right)^2 \Biggr). 
	\end{flalign*}
	
	Now, since $\epsilon(t,\Lv,N)$ is an increasing function of $t$, $\epsilon(t,\Lv,N) \leq \epsilon(T_0,\Lv,N)$ for some $T_0 > 0$ and the negative part of the solution satisfies
	\begin{equation*}
	\int_{\{g(t,\boldsymbol{v}) < 0\}} |g(t, \boldsymbol{v})|\langle\boldsymbol{v}\rangle^2 ~\dv \leq (1 + {\Lv}^2) \left\| \gchi^-(t) \right\|_{L^2(\OmegaLv)} \leq (1 + {\Lv}^2) \epsilon(T_0,\Lv,N).
	\end{equation*}
	This means
	\begin{align}
	\frac{\int_{\{g(t,\boldsymbol{v}) < 0\}} |g(t, \boldsymbol{v})|\langle\boldsymbol{v}\rangle^2 ~\dv}{\int_{\{g(t,\boldsymbol{v}) \geq 0\}} g(t, \boldsymbol{v})\langle\boldsymbol{v}\rangle^2 ~\dv} &= \frac{\int_{\{g(t,\boldsymbol{v}) < 0\}} |g(t, \boldsymbol{v})|\langle\boldsymbol{v}\rangle^2 ~\dv}{\int_{\OmegaLv} g(t, \boldsymbol{v})\langle\boldsymbol{v}\rangle^2 ~\dv - \int_{\{g(t,\boldsymbol{v}) < 0\}} g(t, \boldsymbol{v})\langle\boldsymbol{v}\rangle^2 ~\dv} \nonumber \\
	&\leq \frac{(1 + {\Lv}^2) \epsilon(T_0,\Lv,N)}{\int_{\OmegaLv} g(t, \boldsymbol{v})\langle\boldsymbol{v}\rangle^2 ~\dv - (1 + {\Lv}^2) \epsilon(T_0,\Lv,N)}. \label{neg_ratio}
	\end{align}
	Then, for fixed $T_0 > 0$ small enough, $\epsilon(T_0,\Lv,N)$ can be made smaller by increasing $\Lv$ and $N$.  This means that $T_0$, $\Lv$ and $N$ can be chosen so that the right-hand side of (\ref{neg_ratio}) is less than $\varepsilon$ in the assumption (\ref{stability_init}).  This is only true for times $0 < t \leq T_0$, however, but the argument can be repeated from time $t = T_0$ to give the same result for times $T_0 < t \leq 2T_0$ and so on.  This then gives the result for all time $t > 0$.  Equivalently, this means that the stability condition (\ref{stability}) is true for all $t > 0$.  
	
	Therefore, the only assumption of Theorem \ref{propagation_theorem} is true and so, for any $k \geq \max (3, k_0)$, it is in fact true that
	\begin{flalign*}
	&& m_k(g) \leq \zeta(g_0) ~~\textrm{and} ~~\left\| \gchi \right\|_{L^2(\OmegaLv)} \leq \zeta(g_0), && \textrm{for all } t \leq T_{\mathcal{T}}.
	\end{flalign*}
	This means that the set $\frac{1}{2} \mathcal{B}_k$ is in fact stable and the argument can be repeated, starting at time $T_{\mathcal{T}}$, to give a unique solution by the contraction mapping theorem up to time $2T_{\mathcal{T}}$.  Therefore, by repeating this argument, there is global existence and uniqueness to the semi-discrete problem (\ref{semi-discrete2}), as required by the statement of Theorem \ref{existence_theorem}.
\end{proof}

\begin{corollary} \label{existence_corollary}
	For the unique solution $g^N = g$ from Theorem \ref{existence_theorem} to the semi-discrete problem (\ref{semi-discrete2}), as the number of Fourier modes $N \to \infty$, $g^N \to \bar{g}$ strongly in $C(0, T, L^2(\OmegaLv))$ where $\bar{g}$ is the solution to
	\begin{equation*}
	\frac{\partial \bar{g}}{\partial t} = Q(\Chi \bar{g}, \Chi \bar{g})(\boldsymbol{v}) - \frac{1}{2}\left(\bar{\gamma}_1 + \sum_{j = 1}^{d} \bar{\gamma}_{j+1} v_j + \bar{\gamma}_{d+2} |\boldsymbol{v}|^2 \right)
	\end{equation*}
	with initial condition $g_0 = f_0$ and the Lagrange multipliers $\bar{\gamma}_j$ are defined by evaluating the expressions (\ref{Lagrange_multipliers}) at $Q(\Chi \bar{g}, \Chi \bar{g})(\boldsymbol{v})$ (instead of $Q_u$), for $j = 1, 2, \ldots, d+2$.
\end{corollary}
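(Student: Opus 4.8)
The plan is to pass to the limit $N\to\infty$ in the integral (mild) formulation of the semi-discrete problem. First I would record the a priori control that is uniform in $N$: by Theorem \ref{existence_theorem}, together with the $H^s_k$ propagation results of Section \ref{Regularity}, the family $\{g^N\}$ satisfies $\sup_{t\in[0,T]}\big(m_k(g^N)+\|\Chi g^N\|_{H^2_{\lambda+2}(\OmegaLv)}\big)\le C$ with $C$ independent of $N$ (note that the bound $\zeta(g_0)$ of Theorem \ref{propagation_theorem} does not depend on $N$). Next I would establish existence and uniqueness of the limit object $\bar g$ by running the contraction-mapping argument of Theorem \ref{existence_theorem} essentially verbatim, but with the untruncated operator $Q(\Chi\,\cdot\,,\Chi\,\cdot\,)$ in place of $\Pi^N_{2\Lv}Q(\Chi\,\cdot\,,\Chi\,\cdot\,)$; the only analytic input there is the $L^2$ bound (\ref{Q_L2_estimate}) coming from Proposition \ref{LBH_prop}, which is insensitive to the projection, so the same space $\mathcal B_k$ works, produces a unique $\bar g\in C([0,T];L^2(\OmegaLv))$ solving the stated equation with the Lagrange multipliers $\bar\gamma_j$ as in the statement, and yields the same uniform bounds for $\bar g$.

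For the convergence itself, write $g^N(t)=\Pi^N_{2\Lv}f_0+\int_0^tQ_c(g^N,g^N)(s)\,ds$ and $\bar g(t)=f_0+\int_0^t\bar Q_c(\bar g,\bar g)(s)\,ds$, where $\bar Q_c(\bar g,\bar g)=Q(\Chi\bar g,\Chi\bar g)-\tfrac12\big(\bar\gamma_1+\sum_j\bar\gamma_{j+1}v_j+\bar\gamma_{d+2}|\boldsymbol v|^2\big)$, and split
\[
Q_c(g^N,g^N)-\bar Q_c(\bar g,\bar g)=\big(Q_c(g^N,g^N)-Q_c(\bar g,\bar g)\big)+\big(Q_c(\bar g,\bar g)-\bar Q_c(\bar g,\bar g)\big)=:(\mathrm I)+(\mathrm{II}).
\]
Term $(\mathrm I)$ is handled exactly as the contraction estimate in Theorem \ref{existence_theorem}: using the expression (\ref{Conserved_Q}) for $Q_c$, the Cauchy--Schwarz bounds on the Lagrange multipliers, Parseval (so that $\|\Pi^N_{2\Lv}\,\cdot\,\|_{L^2}\le\|\cdot\|_{L^2}$, which is where $N$-uniformity enters), bi-linearity $Q(\Chi g^N,\Chi g^N)-Q(\Chi\bar g,\Chi\bar g)=Q(\Chi(g^N-\bar g),\Chi(g^N+\bar g))$, and the estimate (\ref{Q_L2_estimate}) together with the uniform $H^2_{\lambda+2}$ bounds, one obtains $\|(\mathrm I)(s)\|_{L^2(\OmegaLv)}\le C_{\lambda,\Lv}\,\|g^N(s)-\bar g(s)\|_{L^2(\OmegaLv)}$ with $C_{\lambda,\Lv}$ independent of $N$.

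Term $(\mathrm{II})$ equals $(\Pi^N_{2\Lv}-1)Q(\Chi\bar g,\Chi\bar g)-\tfrac12\sum_j(\gamma_j^N-\bar\gamma_j)\phi_j$. The first piece goes to $0$ in $L^2(\OmegaLv)$ by the spectral-tail estimate (\ref{Fourier_projection_tail1}) applied to the fixed-in-$N$ function $Q(\Chi\bar g,\Chi\bar g)$, which lies in $\dot H^{s}(\Rd)$ (uniformly in $t\in[0,T]$) by the regularity of $\bar g$ via (\ref{Fourier_projection_tail2}); the second piece goes to $0$ because, by (\ref{Lagrange_multipliers}), each $\gamma_j^N$ is a fixed linear combination of the moments $M_{\phi(\boldsymbol v)}$ of $\Pi^N_{2\Lv}Q(\Chi\bar g,\Chi\bar g)$, and those moments converge (Cauchy--Schwarz plus (\ref{Fourier_projection_tail1})) to the moments of $Q(\Chi\bar g,\Chi\bar g)$ defining $\bar\gamma_j$. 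Hence $\delta_N:=\|(\Pi^N_{2\Lv}-1)f_0\|_{L^2(\OmegaLv)}+T\sup_{s\in[0,T]}\|(\mathrm{II})(s)\|_{L^2(\OmegaLv)}\to0$. Writing $\eta^N(t):=\sup_{s\le t}\|g^N(s)-\bar g(s)\|_{L^2(\OmegaLv)}$ and combining both bounds gives $\eta^N(t)\le\delta_N+C_{\lambda,\Lv}\int_0^t\eta^N(s)\,ds$, so Gr\"onwall yields $\eta^N(T)\le\delta_N e^{C_{\lambda,\Lv}T}\to0$, i.e. $g^N\to\bar g$ strongly in $C([0,T];L^2(\OmegaLv))$; uniqueness of $\bar g$ was already shown, and it clearly satisfies the stated PDE since it is the limit of the integral identities.

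I expect the main obstacle to be securing the uniform-in-$N$ higher-regularity bounds on $\Chi g^N$ (and on $\bar g$) needed to make the bilinear $L^2$ estimate (\ref{Q_L2_estimate}) applicable and to control the spectral tail of $Q(\Chi\bar g,\Chi\bar g)$ uniformly on $[0,T]$; this rests on the $H^s_k$ propagation of Section \ref{Regularity}, which must be invoked with constants that do not degenerate as $N\to\infty$. A secondary but genuine point is that the convergence $\Pi^N_{2\Lv}Q(\Chi\bar g,\Chi\bar g)\to Q(\Chi\bar g,\Chi\bar g)$ must be uniform in $t$, which reduces to a uniform-in-$t$ bound on $\|Q(\Chi\bar g,\Chi\bar g)\|_{\dot H^s(\Rd)}$ over $[0,T]$.
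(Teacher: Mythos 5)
Your proposal is correct, but it follows a genuinely different route from the paper's. The paper proves the corollary by a Cauchy-sequence argument: it compares two approximations $g^N$ and $g^M$ through the identity (\ref{T_diff}) with $f = g^N$, $\widetilde{f} = g^M$, reuses the contraction estimate (\ref{contraction_statement}) without the supremum in $s$, applies Gr\"{o}nwall to get $\|g^N(t)-g^M(t)\|_{L^2(\OmegaLv)} \leq \|g^N_0-g^M_0\|_{L^2(\OmegaLv)} e^{C_{\lambda,\Lv}t}$, and then asserts that the Cauchy limit $\bar{g}$ solves the limiting equation. You instead construct $\bar{g}$ first, by running the fixed-point argument of Theorem \ref{existence_theorem} for the untruncated conserved operator, and then compare $g^N$ directly with $\bar{g}$ via a stability-plus-consistency splitting: the stability piece (I) is the same Lipschitz estimate built on (\ref{Conserved_Q}), Parseval and (\ref{Q_L2_estimate}), while the consistency piece (II) is the projection tail $(\Pi^N_{2\Lv}-1)Q(\Chi\bar{g},\Chi\bar{g})$ plus the difference of Lagrange multipliers, both sent to zero through (\ref{Fourier_projection_tail1})--(\ref{Fourier_projection_tail2}) and the regularity of $\bar{g}$, before Gr\"{o}nwall closes the argument. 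What your route buys is that it makes explicit exactly the points the paper leaves implicit: when comparing solutions at two different truncation levels, the mismatch $(\Pi^N_{2\Lv}-\Pi^M_{2\Lv})Q(\Chi g^M,\Chi g^M)$ is not controlled by $\|g^N-g^M\|_{L^2}$ alone, and the identification of the limit equation (including convergence of the $\gamma_j$ to $\bar{\gamma}_j$) requires precisely the spectral-tail and moment-convergence estimates you supply; the paper's route is shorter because it reuses (\ref{contraction_statement}) verbatim but absorbs these consistency terms silently. The caveats you flag are the right ones and are shared with the paper: the bilinear estimate needs the weighted $H^2_{\lambda+2}$ control, which on the bounded domain $\OmegaLv$ follows from the unweighted bound of Theorem \ref{regularity_theorem} by extracting the maximal weight, and the uniform-in-$t$ tail bound rests on the assumed inequality preceding (\ref{Fourier_projection_tail2}). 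One small additional point worth stating if you write this up: the initial data of $g^N$ is $g^N_0=\Pi^N_{2\Lv}f_0$, so $\zeta(g^N_0)$ and $\eta(g^N_0)$ formally depend on $N$; they are nevertheless uniformly bounded since $\|\Pi^N_{2\Lv}f_0\|_{L^2}\leq\|f_0\|_{L^2}$ and the moments of $\Pi^N_{2\Lv}f_0$ converge to those of $f_0$, which is what makes your "uniform in $N$" a priori control legitimate.
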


\begin{proof}
	First, by explicitly writing the dependence of $g$ on the solution with $N$ Fourier modes as $g^N$, $g^N(t) = g^N_0 + \int_{0}^{t} Q_c(g^N,g^N)(s) ~\textrm{d}s$, this means that, for $N, M \in \mathbb{N}$,
	\begin{equation*}
	g^N(t) - g^M(t) = g^N_0 - g^M_0 + \int_{0}^{t} \left(Q_c(g^N,g^N)(s) - Q_c(g^M,g^M)(s) \right) ~\textrm{d}s
	\end{equation*}
	and so, by using identity (\ref{T_diff}) with $f = g^N$ and $\widetilde{f} = g^M$ and then following through the contraction argument to get to result (\ref{contraction_statement}) without taking the supremum in $s$,
	\begin{align*}
	\left\|g^N(t) - g^M(t) \right\|_{L^2(\OmegaLv)} &\leq \left\|g^N_0 - g^M_0 \right\|_{L^2(\OmegaLv)} \\
	&~~ + \left\| \int_{0}^{t} \left(Q_c(g^N,g^N)(s) - Q_c(g^M,g^M)(s) \right) ~\textrm{d}s \right\|_{L^2(\OmegaLv)} \\
	&\leq \left\|g^N_0 - g^M_0 \right\|_{L^2(\OmegaLv)} + C_{\lambda, \Lv} \int_{0}^{t} \|g^N(s) - g^M(s)\|_{L^2} ~\textrm{d}s.
	\end{align*}
	
	So, by using Gr\"{o}nwall's inequality again, 
	\begin{equation*}
	\left\|g^N(t) - g^M(t) \right\|_{L^2(\OmegaLv)} \leq \left\|g^N_0 - g^M_0 \right\|_{L^2(\OmegaLv)} e^{C_{\lambda, \Lv} t}.
	\end{equation*}
	This means, since $g^N_0 \to g_0$ as $N \to \infty$, $\left\|g^N_0 - g^M_0 \right\|_{L^2(\OmegaLv)} \to 0$ as $N \to \infty$ which means $\left\|g^N(t) - g^M(t) \right\|_{L^2(\OmegaLv)} \to 0$ as $N \to \infty$, because $e^{C_{\lambda, \Lv} t} < \infty$ is constant for fixed $t > 0$.  Therefore the sequence $\{g^N\}$ is a Cauchy sequence which lives in the Banach space $\mathcal{B}_k$ and converges to some function $\bar{g}$ which must solve the semi-discrete problem (\ref{semi-discrete2}) in the limit as $N \to \infty$, as required by the statement of Corollary \ref{existence_corollary}.
\end{proof}

\section{Regularity of the Approximate Solution} \label{Regularity}
Here an estimate will be obtained on the $H^s$-norm of any solution $g$ to the semi-discrete problem (\ref{semi-discrete2}) which satisfies the bounds on the moments and $L^2$-norm given by Theorem \ref{propagation_theorem}, for any $s > 0$.  
\begin{rem}
	The result of the following theorem was actually required to prove the existence of a unique solution in Theorem \ref{existence_theorem}.  For that reason, the following result does not reference Theorem \ref{existence_theorem} directly, but it will indeed apply to the unique solution found there.
\end{rem}
\begin{theorem} \label{regularity_theorem}
	For a solution $g$ of the semi-discrete problem (\ref{semi-discrete2}), with initial condition $g_0$, which satisfies the result of the moment and $L^2$-norm propagation theorem \ref{propagation_theorem}, if the number of Fourier modes $N \geq \widetilde{N}_0$ and domain length $\Lv > \widetilde{L}_0$, for some base number of modes $\widetilde{N}_0 > 0$ and length $\widetilde{L}_0 > 0$ to be defined in the proof, the $H^s$-norm $\left\| g \right\|_{H^s(\OmegaLv)}$ satisfies
	\begin{flalign}
	&&\left\|g \right\|_{H^s(\OmegaLv)} \leq \eta(g_0), ~~~~~~~~~~\textrm{for any } s \geq 0, t \geq 0, \label{regularity_bound}
	\end{flalign}
	\begin{flalign}
	&\textrm{where } & \eta(g_0) &:= \max \left(\left\|g_0 \right\|_{H^s(\OmegaLv)}, 2\frac{C^0_{\lambda,d,\Lv,N}(g_0)}{K_{\lambda, s, \nu}(g_0)}\right), \label{eta_def}\\
	&\textrm{for } & C^0_{\lambda,d,\Lv,N}(g_0) &:= \mathcal{O} \left({\Lv}^{2\lambda + 4} \right) + C_d \left(C_Q \frac{\mathcal{O} \left({\Lv}^{2\lambda + 2} \right)}{N^{\frac{d - 1}{2}}} + O_{\frac{d}{2} + 2} \right)\left(\zeta(g_0) \right)^2 \nonumber \\
	& \textrm{and} & K_{\lambda, s, \nu}(g_0) &:= (s+1)^{1 - \nu} \min_{j \leq s}\{K_{\lambda, j}(g_0) \}, \nonumber
	\end{flalign}
	where the $H^s$-norm is defined by (\ref{Hsk_def}); $O_r$ denotes a constant that is $\mathcal{O}({\Lv}^{-r})$; $\nu > 1$ and $K_{\lambda, j}(g_0)$ are from Proposition \ref{Halpha_estimate_prop}(b)$(ii)$;  $C_d$ is defined in the proof of Theorem \ref{Q_moment_theorem}; $C_Q$ is defined in the proof of Lemma \ref{moment_derivative_lemma}; and $\zeta(g_0)$ is the bound on the moments and $L^2$-norm defined by expression (\ref{zeta_def}) in Theorem \ref{propagation_theorem}.
\end{theorem}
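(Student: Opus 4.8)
The plan is to derive an ordinary differential inequality for the quantity $Y(t):=\|\gchi(t,\cdot)\|_{H^s(\OmegaLv)}^2$ and then close it with a barrier argument identical in spirit to the one in Theorem~\ref{propagation_theorem}. Since $\chi\equiv 1$ outside a shell of width $\mathcal{O}(\delchi\Lv)$, this $Y$ is comparable to $\|g\|_{H^s(\OmegaLv)}^2$ up to the constant $C_\chi$ and powers of $\Lv$, so a uniform bound on $Y$ yields (\ref{regularity_bound}). I would argue by induction on the integer $s$: the case $s=0$ is contained in Theorem~\ref{propagation_theorem}, the lower-order norms $\|g\|_{H^{s-1}}$ are bounded by the inductive hypothesis, and the general real exponent then follows from the embedding $H^s\subset H^{\lceil s\rceil}$ exactly as $k$ was replaced by $\lceil k\rceil$ in Theorem~\ref{Q_moment_theorem}. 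Differentiating the expansion (\ref{Q_expansion}) in $t$, applying $D^\alpha$ for each $|\alpha|\le s$, testing against $D^\alpha(\gchi)$, integrating over $\OmegaLv$ and summing, one bounds $\tfrac12\frac{d}{dt}Y$ by three contributions: the dissipative term $\sum_{|\alpha|\le s}\int D^\alpha Q(\gchi,\gchi)\,D^\alpha(\gchi)\,\dv$, the correction term from $Q_c-Q_u$, and the spectral-projection error from $(1-\Pi^N_{2\Lv})Q(\gchi,\gchi)$; as in Lemma~\ref{L2_norm_derivative_lemma}, replacing $\chi Q_c$ by $Q_c$ under the integral costs only a negligible amount because $\chi\ne1$ only on a thin shell.

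For the dissipative term I would invoke Proposition~\ref{Halpha_estimate_prop}(b)$(ii)$ with $f=\gchi$ and $k=0$ for each $j\le s$, namely
\begin{equation*}
\sum_{|\alpha|=j}\int_{\OmegaLv} D^\alpha Q(\gchi,\gchi)\,D^\alpha(\gchi)\,\dv \;\le\; -K_{\lambda,j}(g_0)\bigl(\|\gchi\|_{\dot H^j(\OmegaLv)}^2\bigr)^\nu + C_{\lambda,j}\,\|\gchi\|_{\dot H^j}\,\|\gchi\|_{H^{j-1}_{\lambda/2+1}(\OmegaLv)}^2 .
\end{equation*}
The last factor is controlled by the inductive hypothesis after extracting the maximal value of the weight, so the second term is bounded by $C_{\lambda,\Lv}\|\gchi\|_{\dot H^j}$; by Young's inequality a fixed fraction of this can be absorbed into $\tfrac12K_{\lambda,j}(\|\gchi\|_{\dot H^j}^2)^\nu$ once $Y$ is large, the residual being an additive $\mathcal{O}({\Lv}^{2\lambda+4})$ constant. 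Summing over $j\le s$ and using the convexity estimate $\sum_{j\le s}K_{\lambda,j}a_j^\nu \ge (s+1)^{1-\nu}\bigl(\min_{j\le s}K_{\lambda,j}\bigr)\bigl(\sum_{j\le s}a_j\bigr)^\nu$ with $a_j=\|\gchi\|_{\dot H^j}^2$ produces exactly the dissipation coefficient $K_{\lambda,s,\nu}(g_0)=(s+1)^{1-\nu}\min_{j\le s}\{K_{\lambda,j}(g_0)\}$ acting on $Y^\nu$.

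For the correction term I would use that, by Lemma~\ref{Isoperimetric_Lemma}, $Q_c-Q_u$ is a quadratic polynomial in $\boldsymbol v$, so $D^\alpha(Q_c-Q_u)\equiv0$ once $|\alpha|\ge3$ and only the finitely many $|\alpha|\le2$ contribute; each such term is estimated by Cauchy--Schwarz and the weighted bound (\ref{Q_moment_error_k}) of Theorem~\ref{Q_moment_theorem} (equivalently, the Lagrange-multiplier estimates), which expresses it through $\|(1-\Pi^N_{2\Lv})Q(\gchi,\gchi)\|_{L^2}$ and moments of $g$, hence through $\zeta(g_0)^2$ as in Lemma~\ref{L2_norm_derivative_lemma}. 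For the projection error I would generalise (\ref{Fourier_projection_tail1}) to $\|(1-\Pi^N_{2\Lv})Q(\gchi,\gchi)\|_{\dot H^j(\Omega_{2\Lv})}\le N^{-r}\|Q(\gchi,\gchi)\|_{\dot H^{j+r}(\Rd)}$ and bound the right-hand side by a product estimate of the type underlying (\ref{Fourier_projection_tail2}) — an iterated, weighted version of He's inequality (Proposition~\ref{LBH_prop}) — to obtain $C_Q\|\gchi\|^2$ in a suitable weighted $H^{m'}$, controlled by $\zeta(g_0)^2$ via Theorem~\ref{propagation_theorem} and the inductive hypothesis; collecting the powers of $\Lv$ and $N$ yields the constant $C^0_{\lambda,d,\Lv,N}(g_0)$ of the statement. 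Assembling the three pieces gives $\frac{d}{dt}Y \le C^0_{\lambda,d,\Lv,N}(g_0) - K_{\lambda,s,\nu}(g_0)\,Y^\nu$, and the parabola/barrier reasoning of Theorem~\ref{propagation_theorem} — now for a super-linear ($\nu>1$) dissipation, whose relevant positive root lies below $2C^0_{\lambda,d,\Lv,N}(g_0)/K_{\lambda,s,\nu}(g_0)$ for $N\ge\widetilde N_0$, $\Lv>\widetilde L_0$ — gives $\sup_{t\ge0}Y(t)\le\max(Y(0),\,2C^0/K)$, i.e.\ (\ref{regularity_bound}) with $\eta(g_0)$ as in (\ref{eta_def}). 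I expect the main obstacle to be the presence of the cut-off $\chi$ at high Sobolev order: unlike the $L^2$ estimate, one must control the commutators $[D^\alpha,\chi]$ and the mismatch between $\chi Q(\gchi,\gchi)$ and $Q(\gchi,\gchi)$ — which is precisely what the smoothing property (\ref{extension_bound}) is designed for — together with establishing the weighted higher-order product bound for $\|Q(\gchi,\gchi)\|_{H^m}$ needed for the projection error, while keeping the $\Lv$- and $N$-dependence explicit throughout the induction on $s$.
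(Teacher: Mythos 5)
Your overall architecture (differentiate the expansion (\ref{Q_expansion}), test with $D^\alpha$ of the solution, split into dissipative term, conservation-correction term and spectral-projection term, invoke Proposition \ref{Halpha_estimate_prop}(b)$(ii)$ plus the convexity inequality to get the $(s+1)^{1-\nu}\min_j K_{\lambda,j}$ coefficient, then run a barrier argument as in Theorem \ref{propagation_theorem}) matches the paper's proof. But there is a genuine gap in how you dispose of the projection error. You propose to gain the factor $N^{-r}$ through $\|(1-\Pi^N_{2\Lv})Q(\gchi,\gchi)\|_{\dot H^j}\le N^{-r}\|Q(\gchi,\gchi)\|_{\dot H^{j+r}}$ and then to bound the right-hand side by a He-type product estimate ``controlled by $\zeta(g_0)^2$ via Theorem \ref{propagation_theorem} and the inductive hypothesis,'' folding everything into the constant $C^0_{\lambda,d,\Lv,N}(g_0)$. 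This cannot close: at the top order $j=s$, any product estimate for $\|Q(\gchi,\gchi)\|_{\dot H^{s+r}}$ costs Sobolev norms of $g$ of order at least $s+r$ (indeed higher once the second-order structure of $Q$ is accounted for, as in Proposition \ref{LBH_prop}), and neither Theorem \ref{propagation_theorem} (which controls only moments and $L^2$) nor the inductive hypothesis (which stops at $H^{s-1}$) reaches them. The paper avoids this trap by commuting $D^\alpha$ with $\Pi^N_{2\Lv}$ via (\ref{derivative_commuting}) and applying the assumed bound (\ref{Fourier_projection_tail2}) at the level of $D^\alpha Q(\gchi,\gchi)$, which keeps the estimate at order $s$ but leaves a term proportional to $\|g\|^2_{H^s(\OmegaLv)}$ rather than a constant. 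Consequently the paper's ODI for $Y=\|g\|_{H^s(\OmegaLv)}$ is $Y'\le C^0 - K_{\lambda,s,\nu}Y^{2\nu-1}+\widetilde C_\chi\,\mathcal{O}({\Lv}^{2\lambda+2})N^{-\frac{d-1}{2}}\,Y^2$, with a genuine positive superlinear term; it is precisely this term that forces the case split $1<\nu\le\tfrac32$ versus $\nu>\tfrac32$ and the thresholds $\widetilde N_0$, $\widetilde L_0$ in the statement. Your simplified inequality $Y'\le C^0-KY^\nu$ erases the very obstruction those hypotheses exist to handle, so the barrier step as you state it does not prove the theorem as written.

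A secondary issue: you run the energy estimate on $Y=\|\gchi\|^2_{H^s(\OmegaLv)}$ and assert comparability with $\|g\|_{H^s(\OmegaLv)}$ ``up to $C_\chi$ and powers of $\Lv$.'' The inequality (\ref{extension_bound}) only gives $\|\gchi\|_{H^s}\le C_\chi\|g\|_{H^s}$; the converse direction you need fails in general because $\Chi$ vanishes on the boundary shell where $g$ need not be small in $H^s$. The paper sidesteps this by testing against $D^\alpha g$ itself, so the differential inequality is directly for $\|g\|_{H^s(\OmegaLv)}$ and no reverse cut-off comparison is required. If you keep your choice of unknown, you must either justify that comparison or restructure the energy identity as the paper does.
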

\begin{rem} \label{Hsk_remark}
	It is believed that a similar bound should hold for the weighted $H^s_k$-norm, for $k \geq \lambda + 2$, especially since such a result exists for the true solution to the Fokker-Planck-Landau type equation associated to hard potentials, as given in Proposition \ref{Halpha_estimate_prop}(a)$(ii)$.  In particular, there will exist some $\eta_k(g_0)$ depending on $\lambda$, $d$, $s$ $k$, $\Lv$ and $N$ such that $\left\|g \right\|_{H^s_k(\OmegaLv)} \leq \eta_k(g_0)$.
\end{rem}

\begin{proof}
	For some multi-index $\alpha \in \Nd$, first differentiate the expansion (\ref{Q_expansion}) with respect to $\boldsymbol{v}$ of order $\alpha$ to give
	\begin{multline*}
	\frac{\partial (D^{\alpha} g)}{\partial t} = D^{\alpha} \bigl(Q_c(g,g) - Q_u(g,g) \bigr) + D^{\alpha} Q(\gchi, \gchi) \\
	- D^{\alpha} \Bigr(\left(1 - \Pi^N_{2\Lv}\right) Q(\gchi, \gchi) \Bigr). 
	\end{multline*}
	Then, multiplying this by $D^{\alpha} g$ and integrating with respect to $\boldsymbol{v}$ over $\OmegaLv$ and noting that $D^{\alpha} g \frac{\partial }{\partial t}(D^{\alpha} g) = \frac{1}{2}\frac{\partial }{\partial t}\bigl((D^{\alpha} g)^2 \bigr)$ by the chain rule gives
	\begin{align}
	&\frac{1}{2} \frac{d}{d t} \left(\left\|D^{\alpha} g \right\|^2_{L^2(\OmegaLv)} \right) \ =\  \int_{\OmegaLv} D^{\alpha} \bigl(Q_c(g,g) - Q_u(g,g) \bigr) D^{\alpha} g ~\dv \nonumber\\
	&\quad+ \int_{\OmegaLv} D^{\alpha} Q(\gchi, \gchi) D^{\alpha} g ~\dv + \int_{\OmegaLv} D^{\alpha} \Bigl(\left(1 - \Pi^N_{2\Lv}\right) Q(\gchi, \gchi) \Bigr) D^{\alpha} g ~\dv \nonumber \\
	&\quad\ \leq \int_{\OmegaLv} D^{\alpha} Q(\gchi, \gchi) D^{\alpha} g ~\dv + \biggl(\left\|D^{\alpha} \bigl(Q_u(g,g) - Q_c(g,g) \bigr)\right\|_{L^2(\OmegaLv)} \nonumber \\
	&\qquad \qquad\qquad + \left\|D^{\alpha} \!\Bigl(\left(1\! -\! \Pi^N_{2\Lv} \right) Q(\gchi, \gchi) \Bigr) \right\|_{L^2(\OmegaLv)} \biggr) \!\left\|D^{\alpha} g \right\|_{L^2(\OmegaLv)}, \label{Halpha_estimate1}
	\end{align}
	by the Cauchy-Schwarz inequality.
	
	Now, by the result of Lemma \ref{Isoperimetric_Lemma}, $Q_u(g,g) - Q_c(g,g)$ is simply a second order polynomial in $\boldsymbol{v}$ and so taking derivatives would only reduce the order of the moments involved in the $L^2$-norm bound of the $D^{\alpha}$ derivatives this term.  This means that the result of Theorem \ref{Q_moment_theorem} with $k = 0$ and $k' = 2$ can be still be applied here to give
	\begin{align}
	&\left\|D^{\alpha} \bigl(Q_u(g,g) - Q_c(g,g) \bigr)\right\|_{L^2(\OmegaLv)}   \leq C_d \biggl(\bigl| \bigl| \left(\Pi^N_{2\Lv} - 1 \right) Q(\Chi f, \Chi f) \bigr| \bigr|_{L^2(\OmegaLv)} \nonumber \\
	&~~~~~~~~~~~~~~~~~~~~~~~~~~~~~~~~~~~~~ + O_{\frac{d}{2} + 2} \Bigl(m_{0}(f) m_{2 + \lambda}(f) + m_{\lambda}(f) m_{2}(f) \Bigr)\biggr) \nonumber \\
	&~~~~~\leq C_d \left(C_Q \frac{\mathcal{O} \left({\Lv}^{2\lambda + 2} \right)}{N^{\frac{d - 1}{2}}} \bigl| \bigl| \gchi \bigr| \bigr|^2_{L^2(\OmegaLv)} + O_{\frac{d}{2} + 2} \Bigl(m_{0}(g) m_{2 + \lambda}(g) + m_{\lambda}(g) m_{2}(g) \Bigr) \right)\nonumber \\
		&~~~~~~~~~~~~~~~~~\leq C_d \left(C_Q \frac{\mathcal{O} \left({\Lv}^{2\lambda + 2} \right)}{N^{\frac{d - 1}{2}}} + O_{\frac{d}{2} + 2} \right)\left(\zeta(g_0) \right)^2, \label{Halpha_estimate1a}
	\end{align}
	after using assumption (\ref{Fourier_projection_tail2}); pulling out the maximum value of $(1 + {\Lv^2})^{\lambda + 1}$ given by the weights in the $L^2_{\lambda + 1}$-norm; and using $m_k(g) \leq \zeta(g_0)$ and \linebreak $\left\| \gchi \right\|_{L^2(\OmegaLv)} \leq \zeta(g_0)$ since $g$ satisfies the result of Theorem \ref{propagation_theorem}.
	
	Next, since derivatives $D^{\alpha}$ commute with the mode projection operator $\Pi^N_{2\Lv}$ by the result (\ref{derivative_commuting}), then distributing the derivatives over the arguments of $Q$ using the Leibniz formula,
	\begin{align}
	&\left\|D^{\alpha} \Bigl(\left(1 - \Pi^N_{2\Lv} \right) Q(\gchi, \gchi) \Bigr) \right\|_{L^2(\OmegaLv)} \ =\  \left\|\left(1 - \Pi^N_{2\Lv} \right) D^{\alpha} Q(\gchi, \gchi) \right\|_{L^2(\OmegaLv)} \nonumber \\
	&~~~~~\leq ~C_Q \frac{\mathcal{O} \left({\Lv}^{2\lambda + 2} \right)}{N^{\frac{d - 1}{2}}} \bigl| \bigl|D^{\alpha} \left(\gchi \right)\bigr| \bigr|^2_{L^2(\OmegaLv)} \ \leq \ C_Q \frac{\mathcal{O} \left({\Lv}^{2\lambda + 2} \right)}{N^{\frac{d - 1}{2}}} \bigl| \bigl|\gchi \bigr| \bigr|^2_{H^{|\alpha|}(\OmegaLv)} \nonumber \\
	&~~~~~~~~~~\leq~ C_Q C_{\chi} \frac{\mathcal{O} \left({\Lv}^{2\lambda + 2} \right)}{N^{\frac{d - 1}{2}}} \bigl| \bigl| g \bigr| \bigr|^2_{H^{|\alpha|}(\OmegaLv)}, \label{Halpha_estimate1b}
	\end{align}
	after using the assumption (\ref{Fourier_projection_tail2}) and also (\ref{extension_bound}) to pull the extension operator out of the $H^{|\alpha|}$-norm.
	
	Finally, by Proposition \ref{Halpha_estimate_prop}(b)$(ii)$ with $k = 0$, for some $\nu > 1$,
	\begin{align}
	&\sum_{\substack{\alpha \in \Nd: \\ |\alpha| = s}} \int_{\OmegaLv} D^{\alpha} Q(\gchi, \gchi) D^{\alpha} g ~\dv \nonumber \\
	&~~~~~\leq -K_{\lambda, s}(g_0) \left(\left\| g \right\|^2_{\dot{H}^{s}(\OmegaLv)} \right)^{\nu} + C_{\lambda, s} \left\| g \right\|_{\dot{H}^{s}} \left\| g \right\|^2_{H^{s-1}_{\frac{\lambda}{2} + 1}(\OmegaLv)} \nonumber \\
	&~~~~~~~~~~~\leq -K_{\lambda, s}(g_0) \left(\left\| g \right\|^2_{\dot{H}^{s}(\OmegaLv)} \right)^{\nu} + C_{\lambda, s} \mathcal{O} \left({\Lv}^{\lambda + 2} \right) \left\| g \right\|_{\dot{H}^{s}}, \label{Halpha_estimate1c}
	\end{align}
	after pulling the maximum value of $(1 + {\Lv^2})^{\frac{\lambda}{2} + 1}$ given by the weights in the $H^{s-1}_{\frac{\lambda}{2} + 1}$-norm and using an inductive argument to bound $\left\| g \right\|^2_{H^{|\alpha|-1}}$. 
	\begin{rem}
		Proposition \ref{Halpha_estimate_prop} is in fact an estimate on the integral of $D^{\alpha} Q(g, g) D^{\alpha} g$, without the extension operators $\Chi $.  They can be added without loss of generality, however, because an application of Leibniz' formula and definition of $\Chi $, including the property (\ref{extension_bound}), would lead to the same result.
	\end{rem}
	
	So, by summing up (\ref{Halpha_estimate1}) over all multi-indices $\alpha \in \Nd$ with $|\alpha| = s$ and using linearity of the time derivative, 
	\begin{align*}
	\frac{1}{2} \frac{d}{d t} \left(\left\|g \right\|^2_{\dot{H}^s(\OmegaLv)} \right) =& \sum_{\substack{\alpha \in \Nd: \\ |\alpha| = s}} \frac{1}{2} \frac{d}{d t} \left(\left\|D^{\alpha} g \right\|^2_{L^2(\OmegaLv)} \right)\nonumber \\
	\leq& -K_{\lambda, s}(g_0) \left(\left\| g \right\|^2_{\dot{H}^{s}(\OmegaLv)} \right)^{\nu} + C_{\lambda, s} \mathcal{O} \left({\Lv}^{\lambda + 2} \right) \left\| g \right\|_{\dot{H}^{s}} \nonumber \\
	&+ ~d^s\Biggl(C_d \left(C_Q \frac{\mathcal{O} \left({\Lv}^{2\lambda + 2} \right)}{N^{\frac{d - 1}{2}}} + O_{\frac{d}{2} + 2} \right)\left(\zeta(g_0) \right)^2 \nonumber \\
	&~~~~~~~~~~~~~~~~+ C_Q C_{\chi} \frac{\mathcal{O} \left({\Lv}^{2\lambda + 2} \right)}{N^{\frac{d - 1}{2}}} \bigl| \bigl| g \bigr| \bigr|^2_{H^{s}(\OmegaLv)} \Biggr) \left\| g \right\|_{\dot{H}^{s}}, 
	\end{align*}
	after using the bounds (\ref{Halpha_estimate1a}-\ref{Halpha_estimate1c}) and then noting that $\left\|D^{\alpha} g \right\|_{L^2(\OmegaLv)} \leq \left\|g \right\|_{\dot{H}^s(\OmegaLv)}$ when $|\alpha| = s$, so that $d^s$ many of the same term are added together.  Similarly, after summing up all of the order $j$ derivative bounds, for $j \leq s$, this gives
	\begin{align}
	\frac{1}{2} \frac{d}{d t} \left(\left\|g \right\|^2_{H^s(\OmegaLv)} \right) =& \sum_{j = 0}^s \frac{1}{2} \frac{d}{d t} \left(\left\|g \right\|^2_{\dot{H}^j(\OmegaLv)} \right) \nonumber \\
	\leq& - K_{\lambda, s, \nu}(g_0) \left(\left\| g \right\|^2_{H^s(\OmegaLv)} \right)^{\nu} + \mathcal{O} \left({\Lv}^{2\lambda + 4} \right) \left\|g \right\|_{H^s(\OmegaLv)} \nonumber \\
	&+ ~\Biggl(C_d \left(C_Q \frac{\mathcal{O} \left({\Lv}^{2\lambda + 2} \right)}{N^{\frac{d - 1}{2}}} + O_{\frac{d}{2} + 2} \right)\left(\zeta(g_0) \right)^2 \nonumber \\
	&~~~~+ C_Q C_{\chi} \frac{\mathcal{O} \left({\Lv}^{2\lambda + 2} \right)}{N^{\frac{d - 1}{2}}} \bigl| \bigl| g \bigr| \bigr|^2_{H^s(\OmegaLv)} \Biggr) \left\|g \right\|_{H^s(\OmegaLv)}, \label{Halpha_estimate2}
	\end{align}
	where $K_{\lambda, s, \nu}(g_0) := (s+1)^{1 - \nu} \min_{j \leq s}\{K_{\lambda, j}(g_0) \}$.  Note that the appearance of this constant is a consequence of the fact that, for any $p > 1$ and vector $(z_0, z_1, \ldots z_s) \in \mathbb{R}^{s+1}$,
	\begin{equation*}
	\bigg|\sum_{j = 0}^{s} z_j \bigg|^p \leq (s+1)^{p - 1} \sum_{j = 0}^{s} \left|z_j \right|^p ~~~~~\textrm{and so } ~~~~~ -\sum_{j = 0}^{s} \left|z_j \right|^p \leq - (s+1)^{1 - p}\bigg|\sum_{j = 0}^{s} z_j \bigg|^p.
	\end{equation*}
	This can easily proven using H\"{o}lder's inequality for sums and is then applied, after the minimum value of $K_{\lambda, j}(g_0)$ is pulled out of the sum as a lower bound, with $z_j = \left\|g \right\|^2_{\dot{H}^j(\OmegaLv)}$ and $p = \nu$.
	
	Equivalently, since $\frac{1}{2} \frac{d}{d t} \left(\left\|g \right\|^2_{H^s(\OmegaLv)} \right) = \left\|g \right\|_{H^s(\OmegaLv)} \frac{d}{d t} \left(\left\|g \right\|_{H^s(\OmegaLv)} \right)$ by the chain rule, dividing both sides of the bound (\ref{Halpha_estimate2}) by $\left\|g \right\|_{H^s(\OmegaLv)}$ gives
	\begin{align}
	\frac{d}{d t} \left(\left\|g \right\|_{H^s(\OmegaLv)} \right) \leq& - K_{\lambda, s, \nu}(g_0) \left\| g \right\|^{2\nu - 1}_{H^s(\OmegaLv)} + \mathcal{O} \left({\Lv}^{2\lambda + 4} \right) \nonumber \\
	&+ ~C_d \left(C_Q \frac{\mathcal{O} \left({\Lv}^{2\lambda + 2} \right)}{N^{\frac{d - 1}{2}}} + O_{\frac{d}{2} + 2} \right)\left(\zeta(g_0) \right)^2 \nonumber \\
	&~~~~+ C_Q C_{\chi} \frac{\mathcal{O} \left({\Lv}^{2\lambda + 2} \right)}{N^{\frac{d - 1}{2}}} \bigl| \bigl| g \bigr| \bigr|^2_{H^s(\OmegaLv)}. \label{Halpha_estimate3}
	\end{align}
	
	Now, by defining the variable $Y(t) := \left\|g \right\|_{H^s(\OmegaLv)}$, the bound (\ref{Halpha_estimate3}) can be simply written as the ODI
	\begin{equation}
	\frac{dY}{dt} \leq C^0_{\lambda,d,\Lv,N}(g_0) - K_{\lambda, s, \nu}(g_0) Y^{2\nu - 1} + \widetilde{C}_{\chi} \frac{\mathcal{O} \left({\Lv}^{2\lambda + 2} \right)}{N^{\frac{d - 1}{2}}} Y^2, \label{Halpha_ODI}
	\end{equation}
	\begin{flalign*}
	&\textrm{where } & C^0_{\lambda,d,\Lv,N}(g_0) &:= \mathcal{O} \left({\Lv}^{2\lambda + 4} \right) + C_d \left(C_Q \frac{\mathcal{O} \left({\Lv}^{2\lambda + 2} \right)}{N^{\frac{d - 1}{2}}} + O_{\frac{d}{2} + 2} \right)\left(\zeta(g_0) \right)^2 \\
	& \textrm{and} & \widetilde{C}_{\chi} &:= C_Q C_{\chi}.
	\end{flalign*}
	The ODI (\ref{Halpha_ODI}) can be handled in a similar way to the ODI (\ref{moment_L2_ODI}) on $X(t) = m_k(g) + \left\| \gchi \right\|_{L^2(\OmegaLv)}$ because it takes a similar form but with an extra power $2\nu - 1$ on the negative term.  This only helps to control the derivative further, however, because $2\nu - 1 > 1$ here and so only restricts $Y$ from increasing too much.  First, if $\nu > \frac{3}{2}$ so that $2\nu - 1 > 2$ then, no matter what size the coefficients are on the right-hand side of (\ref{Halpha_ODI}), eventually the negative term will dominate and force $\frac{dY}{dt} < 0$.  This means that, if the initial data $Y(0)$ is below the root of the polynomial on the right-hand side of (\ref{Halpha_ODI}), say $Y^-$, then $Y$ remains bounded by $Y^-$.  If $Y(0) > Y^-$, however, then $\frac{dY}{dt} < 0$ from the start and so $Y$ cannot increase past $Y(0)$.  This means that $\left\|g \right\|_{H^s(\OmegaLv)} = Y(t) \leq \max \left(Y(0), Y^-\right)$ when $\nu > \frac{3}{2}$.
	
	On the other hand, for any $\nu > 1$, it is always true that
	\begin{flalign}
	~~~&& \frac{dY}{dt} \leq&~ C^0_{\lambda,d,\Lv,N}(g_0) - K_{\lambda, s, \nu}(g_0) Y^{2\nu - 1} + \widetilde{C}_{\chi} \frac{\mathcal{O} \left({\Lv}^{2\lambda + 2} \right)}{N^{\frac{d - 1}{2}}} Y^2 && \nonumber \\
	&& \leq&~ C^0_{\lambda,d,\Lv,N}(g_0) - K_{\lambda, s, \nu}(g_0) Y + \widetilde{C}_{\chi} \frac{\mathcal{O} \left({\Lv}^{2\lambda + 2} \right)}{N^{\frac{d - 1}{2}}} Y^2, &&\textrm{when } Y \geq 1. \label{Halpha_ODI_variant}
	\end{flalign}
	This means that, for $1 < \nu < \frac{3}{2}$, the same argument can be made as for the ODI (\ref{moment_L2_ODI}) on $X$.  In particular, $N$ can be chosen large enough for this quadratic polynomial to have two distinct and real roots.  Then, if $\Lv$ is large enough to cause $\widetilde{C}_{\chi} \mathcal{O} \left({\Lv}^{2\lambda + 2} \right) \geq 1$ then the lower root of the quadratic polynomial on the right-hand side of (\ref{Halpha_ODI_variant}), say $\widetilde{Y}^-$, satisfies $\frac{C^0_{\lambda,d,\Lv,N}(g_0)}{K_{\lambda, s, \nu}(g_0)} < \widetilde{Y}^- < 2\frac{C^0_{\lambda,d,\Lv,N}(g_0)}{K_{\lambda, s, \nu}(g_0)}$.  Given some initial condition $Y(0) = \left\|g_0 \right\|_{H^s(\OmegaLv)}$, $N$ can also be chosen larger than some base number of modes $\widetilde{N}_0$ so that the larger root of the quadratic polynomial on the right-hand side of (\ref{Halpha_ODI_variant}), say $\widetilde{Y}^+$, satisfies $\tilde{Y}^+ > Y(0)$.  
	
	Further, assuming  $\Lv > \widetilde{L}_0$, for $\widetilde{L}_0$ large enough to have $\frac{C^0_{\lambda,d,\Lv,N}(g_0)}{K_{\lambda, s, \nu}(g_0)} \geq 1$, then this means that the roots of the polynomial on the right-hand side of (\ref{Halpha_ODI}), say $Y^-$ and $Y^+$ satisfy $Y^- < \widetilde{Y}^- < 2\frac{C^0_{\lambda,d,\Lv,N}(g_0)}{K_{\lambda, s, \nu}(g_0)}$ and $Y^+ > \widetilde{Y}^+ > Y(0)$, by the second inequality in (\ref{Halpha_ODI_variant}).  This means that the exact same conclusion can be made as for the ODI (\ref{moment_L2_ODI}) and once again $\|g \|_{H^s(\OmegaLv)} = Y(t) \leq \max\big\{Y(0), 2\frac{C^0_{\lambda,d,\Lv,N}(g_0)}{K_{\lambda, s, \nu}(g_0)}\big\}$, when $1 < \nu < \frac{3}{2}$.  A sketch of this argument is shown in Fig.\ \ref{ODIDiagram_Halpha}(a) where, given the initial condition $Y(0) \leq Y^+$, the hatched region shows the possible values for the derivative $Y'(t)$.
	
	Note that for the case when $\nu > \frac{3}{2}$, and so $2\nu-1 > 2$, if it is still assumed that $N > \widetilde{N}_0$ and $\Lv > \widetilde{L}_0$ then the only root of the polynomial on the right-hand side of (\ref{Halpha_ODI}) once again satisfies $Y^- < \widetilde{Y}^- < 2\frac{C^0_{\lambda,d,\Lv,N}(g_0)}{K_{\lambda, s, \nu}(g_0)}$.  In this case, the number of Fourier modes $N$ does not need to be chosen quite as high because there is no second root and so the initial condition can be much larger than $Y^-$ and it will always be true that $Y'(t) < 0$.  A sketch of this can be seen in Fig.\ \ref{ODIDiagram_Halpha}(b) where the hatched region shows the possible values for the derivative $Y'(t)$ given any initial $Y(0)$.
	
	Therefore, no matter the size of $\nu$, if $\Lv > \widetilde{L}_0$ and $N > \widetilde{N}_0$ then, since $Y(0) = \left\|g_0 \right\|_{H^s(\OmegaLv)}$,
	\begin{equation*}
	\left\|g \right\|_{H^s(\OmegaLv)} = Y(t) \leq \max \left(\left\|g_0 \right\|_{H^s(\OmegaLv)}, 2\frac{C^0_{\lambda,d,\Lv,N}(g_0)}{K_{\lambda, s, \nu}(g_0)}\right),
	\end{equation*}
	which is the required result (\ref{regularity_bound}) coupled to (\ref{eta_def}) in Theorem \ref{regularity_theorem}.
	\begin{figure}[!hbt]
		\centering
		\begin{tabular}{lcr}
			\vspace*{-0.5cm}
			(a) && \\
			&\includegraphics[width=0.85\linewidth]{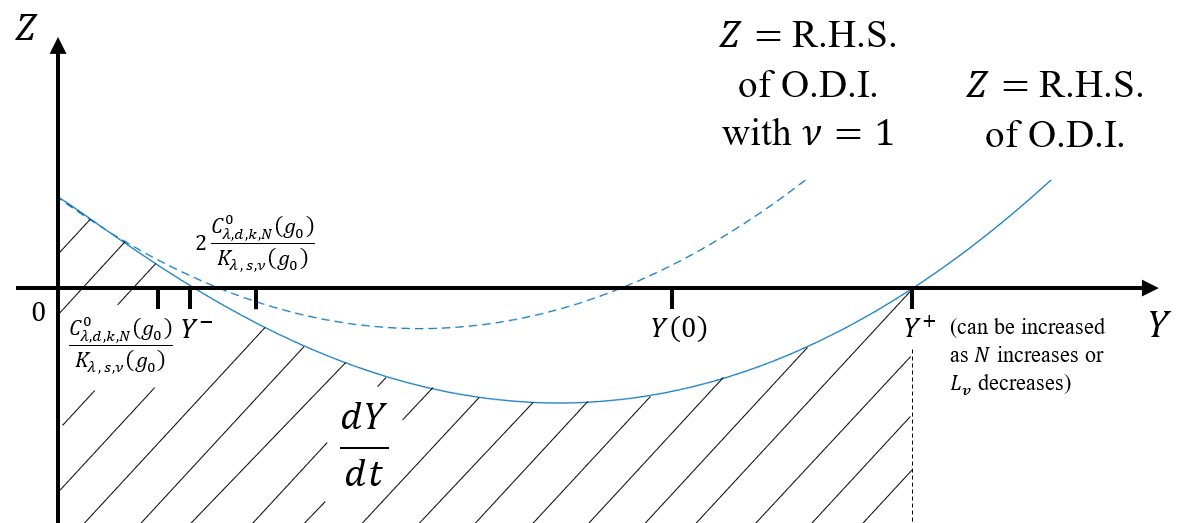}& \\ [6pt]
			\vspace*{-0.5cm}
			(b) && \\
			&\includegraphics[width=0.85\linewidth]{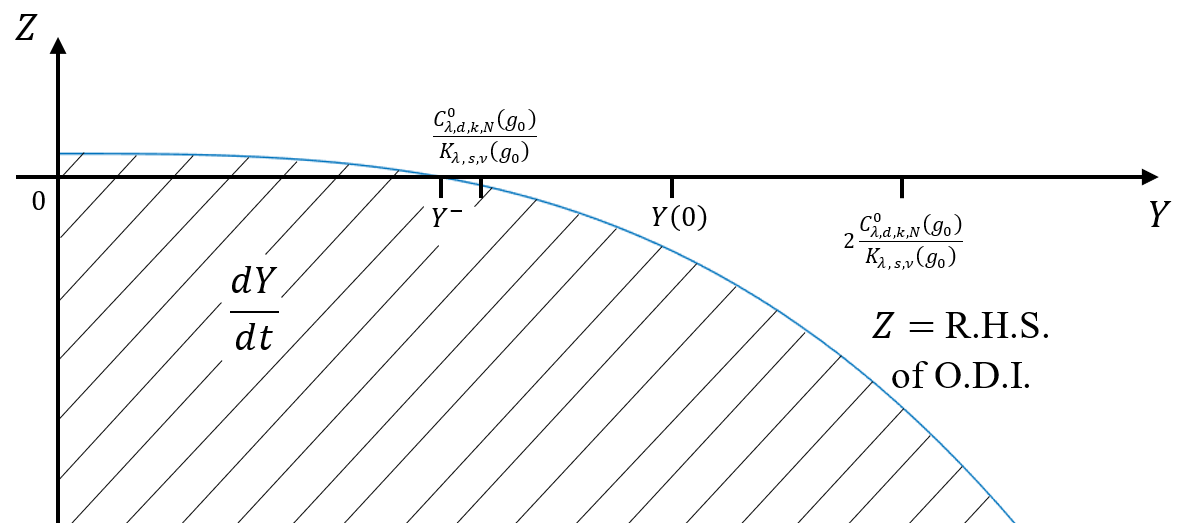}& \\
		\end{tabular}
		\caption{Sketches of the polynomial on the right-hand side of the ODI (\ref{Halpha_ODI}) for to the two possible cases (a) $1 < \nu \leq \frac{3}{2}$ and (b) $\nu > 2$, with a large enough number of Fourier modes $N$ to give two distinct real roots in case (a).  Since the derivative of $Y(t) = \left\|g \right\|_{H^s(\OmegaLv)}$ is bounded above by this curve, the derivative remains in the hatched region.  For this to be true in case (a) the initial condition must satisfy $Y(0) < Y^+$.}
		\label{ODIDiagram_Halpha}
	\end{figure}
\end{proof}

\section{The $L^2$-norm Error Estimate} \label{L2_Estimate}
It will now be shown that the unique solution $g$ from Theorem \ref{existence_theorem} does indeed converge to the true solution $f$ of the space-homogeneous Fokker-Planck-Landau type equation (\ref{Landau_homo_Kn1}) by devising an error estimate comparing $g$ and $f$ in $L^2$-norm, up to some fixed time $T > 0$.  
\begin{theorem} \label{L2_error_estimate_theorem}
	Given some initial condition $f_0 \in L^1_k \bigcap L^2(\OmegaLv)$, where $k \geq \max (3, k_0)$, if the number of Fourier modes $N \geq N_0$ and $\Lv \geq L_0$, the difference in $L^2$-norm between the solution $f$ to the space-homogeneous Fokker-Planck-Landau type equation (\ref{Landau_homo_Kn1}) associated to hard potentials with $f(0) = f_0$ and the solution $g$ to the semi-discrete problem (\ref{semi-discrete2}) with initial condition $g_0 = \Pi^N_{2\Lv} f_0$ which is assumed to satisfy the stability condition (\ref{stability_init}) with $\varepsilon \leq \min \left(\frac{1}{4}, \varepsilon_0 \right)$, each solved for $0 < t \leq T$, satisfies
	\begin{multline}
	\sup_{t \in [0, T]} \left\|f - g \right\|_{L^2(\OmegaLv)} \leq e^{C_H \bigl(C^f_{\lambda,2,\lambda + 2}(f_0) + \eta(g_0) \bigr)T} \Biggl(\left\|f_0 - g_0 \right\|_{L^2(\OmegaLv)} + O_{\frac{d}{2} + 2} \\
	+ \frac{\mathcal{O} \left({\Lv}^{2\lambda + 2} \right)}{N^{\frac{d - 1}{2}}} + \frac{1}{2} \widetilde{\zeta}(f_0 - g_0) \Biggr), \label{L2_error_estimate}
	\end{multline}
	where $C_H$ is the constant from Proposition \ref{LBH_prop}; $C^f_{\lambda,2,\lambda+2}(f_0)$ is the constant $C^f_{\lambda,s,k}(f_0)$ from Proposition \ref{Halpha_estimate_prop}(a)$(ii)$ with $s = 2$ and $k = \lambda + 2$; $\eta(g_0)$ is defined by expression (\ref{eta_def}) in Theorem \ref{regularity_theorem}; $\widetilde{\zeta}(f_0 - g_0)$ is defined by expression (\ref{zeta_def2}) in Theorem \ref{propagation_theorem}; $\varepsilon_0$ and $k_0$ are defined in the proof of Lemma \ref{moment_derivative_lemma} in expressions (\ref{epsilon_0}) and (\ref{k_0}), respectively; $\zeta(g_0)$ defined by expression (\ref{zeta_def}) and $N_0$ described in Theorem \ref{propagation_theorem}; and $L_0 > 0$ described in the current proof of Theorem \ref{existence_theorem}.
\end{theorem}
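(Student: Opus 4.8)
The plan is to derive an $L^2(\OmegaLv)$ differential inequality for $e:=f-g$ and close it with Gr\"onwall. Since $f$ solves $\partial_t f = Q(f,f)$ on $\Rd$ and $g$ solves $\partial_t g = Q_c(g,g)$ on $\OmegaLv$, subtracting on $\OmegaLv$ and using the expansion (\ref{Q_expansion}) of $Q_c(g,g)$ after inserting $Q(\Chi f,\Chi f)$ gives
\[
\partial_t e = \bigl(Q(\Chi f,\Chi f) - Q(\gchi,\gchi)\bigr) + \bigl(Q(f,f) - Q(\Chi f,\Chi f)\bigr) - \bigl(Q_c(g,g) - Q_u(g,g)\bigr) + \bigl(1-\Pi^N_{2\Lv}\bigr)Q(\gchi,\gchi).
\]
Multiplying by $e$, integrating over $\OmegaLv$, and using $\tfrac12\tfrac{d}{dt}\|e\|_{L^2}^2 = \|e\|_{L^2}\tfrac{d}{dt}\|e\|_{L^2}$ reduces the proof to estimating four integrals of these brackets against $e$.

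For the nonlinear term I would write $Q(\Chi f,\Chi f) - Q(\gchi,\gchi) = Q(\Chi e,\Chi f) + Q(\gchi,\Chi e)$ by bilinearity and apply the $L^2$ bound (\ref{Q_L2_estimate}) from Proposition \ref{LBH_prop} together with Cauchy--Schwarz. The norm of whichever factor sits in the low-regularity slot is controlled uniformly in time by moment propagation --- Propositions \ref{moment_estimate_prop} and \ref{Halpha_estimate_prop}(a)$(i)$ for $\Chi f$, Theorem \ref{propagation_theorem} for $\gchi$ --- while the $H^2_{\lambda+2}$ factor is controlled by $\|\Chi f\|_{H^2_{\lambda+2}}\le C_\chi C^f_{\lambda,2,\lambda+2}(f_0)$ from Proposition \ref{Halpha_estimate_prop}(a)$(ii)$ and $\|\gchi\|_{H^2_{\lambda+2}}\le C_\chi\eta(g_0)$ from Theorem \ref{regularity_theorem} (in the weighted form of Remark \ref{Hsk_remark}), both via the extension bound (\ref{extension_bound}), together with $\|\Chi e\|_{H^2_{\lambda+2}}\le\|\Chi f\|_{H^2_{\lambda+2}}+\|\gchi\|_{H^2_{\lambda+2}}$. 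After dividing by $\|e\|_{L^2}$ this term contributes the coefficient $C_H\bigl(C^f_{\lambda,2,\lambda+2}(f_0)+\eta(g_0)\bigr)$ of $\|e\|_{L^2}$ in the inequality.

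The remaining three brackets are treated as forcing terms by Cauchy--Schwarz against $e$ followed by an $L^2(\OmegaLv)$ bound on the bracket. The cutoff difference $Q(f,f)-Q(\Chi f,\Chi f) = Q(f-\Chi f,f)+Q(\Chi f,f-\Chi f)$ is small because $f-\Chi f$ is supported on $\OmegaLv^c$, where $f$ has Gaussian tails by (\ref{Gaussian_bound}), so Proposition \ref{LBH_prop} (or Lemma \ref{Q_decay}) bounds it by an $O_{\frac{d}{2}+2}$ term; the conservation correction $Q_c(g,g)-Q_u(g,g)$ is bounded by Theorem \ref{Q_moment_theorem} with $k=0,\,k'=2$, whose right-hand side splits into the spectral tail $\|(\Pi^N_{2\Lv}-1)Q(\gchi,\gchi)\|_{L^2}$ and an $O_{\frac{d}{2}+2}$ moment term (using $m_k(g)\le\zeta(g_0)$), the first of which is absorbed into the last bracket; and $\|(1-\Pi^N_{2\Lv})Q(\gchi,\gchi)\|_{L^2}$ is bounded by (\ref{Fourier_projection_tail2}) together with the $L^2_{\lambda+1}$ bound on $\gchi$, giving a $\tfrac{\mathcal O(\Lv^{2\lambda+2})}{N^{(d-1)/2}}$ term. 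A careful accounting of the Lagrange-multiplier prefactor from Lemma \ref{Isoperimetric_Lemma} and of the residual moment errors carried by $f_0-g_0$ produces the remaining $\tfrac12\widetilde\zeta(f_0-g_0)$ contribution of (\ref{zeta_def2}).

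Assembling everything yields
\[
\frac{d}{dt}\|e\|_{L^2(\OmegaLv)} \le C_H\bigl(C^f_{\lambda,2,\lambda+2}(f_0)+\eta(g_0)\bigr)\|e\|_{L^2(\OmegaLv)} + O_{\frac{d}{2}+2} + \frac{\mathcal O(\Lv^{2\lambda+2})}{N^{(d-1)/2}} + \tfrac12\widetilde\zeta(f_0-g_0),
\]
and Gr\"onwall's inequality with $\|e(0)\|_{L^2}=\|f_0-g_0\|_{L^2(\OmegaLv)}$ gives (\ref{L2_error_estimate}). I expect the genuine difficulty to be the nonlinear term: forcing $\|e\|_{L^2}$ to satisfy a \emph{linear} ODI with a time-independent coefficient requires knowing that both $\|f\|_{H^2_{\lambda+2}}$ and $\|g\|_{H^2_{\lambda+2}}$ stay bounded uniformly in time --- the former is classical (Proposition \ref{Halpha_estimate_prop}), but the latter is exactly Theorem \ref{regularity_theorem} (with Remark \ref{Hsk_remark} for the weight), so the whole chain of earlier results is used here. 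A secondary point is keeping every forcing term attached to a strictly negative power of $\Lv$ or a factor $N^{-(d-1)/2}$, so that the right-hand side of (\ref{L2_error_estimate}) genuinely vanishes as $\Lv,N\to\infty$.
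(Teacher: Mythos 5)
There is a genuine gap in your treatment of the nonlinear term, and it is exactly the step on which the whole estimate hinges. You split $Q(\Chi f,\Chi f)-Q(\gchi,\gchi)=Q(\Chi e,\Chi f)+Q(\gchi,\Chi e)$ and then, for the second piece, bound the smoothness slot by $\|\Chi e\|_{H^2_{\lambda+2}}\le\|\Chi f\|_{H^2_{\lambda+2}}+\|\gchi\|_{H^2_{\lambda+2}}$. With the bound (\ref{Q_L2_estimate}) this gives $\bigl|\int_{\OmegaLv} e\,Q(\gchi,\Chi e)~\dv\bigr|\leq C_H\bigl(\|\gchi\|_{L^1_{\lambda+2}}+\|\gchi\|_{L^2}\bigr)\bigl(C^f_{\lambda,2,\lambda+2}(f_0)+\eta(g_0)\bigr)\|e\|_{L^2}$, so after dividing by $\|e\|_{L^2}$ this piece is \emph{not} a multiple of $\|e\|_{L^2}$ at all: it is an $\mathcal{O}(1)$ forcing constant, of size roughly $C_H\,\zeta(g_0)\bigl(C^f+\eta\bigr)$, which does not shrink as $N,\Lv\to\infty$ or as $g_0\to f_0$. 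Fed into Gr\"onwall, it leaves a non-vanishing term inside the parenthesis of (\ref{L2_error_estimate}), so your argument as written cannot recover the stated estimate; your claim that the nonlinear term "contributes the coefficient $C_H(C^f+\eta)$ of $\|e\|_{L^2}$" only holds for the first piece $Q(\Chi e,\Chi f)$, and even there only for the $\|\Chi e\|_{L^2}$ part of the low-regularity slot — the $\|\Chi e\|_{L^1_{\lambda+2}}$ part still has to be accounted for separately.

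The paper's way around this is to never let the error enter the high-regularity slot: using bilinearity and symmetry it writes $I_3=\int_{\OmegaLv}(f-g)\,Q(f-g,f+g)~\dv$, so the difference always sits in the first argument of (\ref{LBH_bound}); its $L^2$-norm produces the Gr\"onwall coefficient $C_H\bigl(C^f_{\lambda,2,\lambda+2}(f_0)+\eta(g_0)\bigr)$, while its $L^1_{\lambda+2}$-norm is bounded by $\|f-g\|_{L^1_3}\leq\widetilde\zeta(f_0-g_0)$ — and this, not the Lagrange-multiplier prefactor of Lemma \ref{Isoperimetric_Lemma} as you guessed, is precisely where the $\tfrac12\widetilde\zeta(f_0-g_0)$ in (\ref{L2_error_estimate}) comes from (the factor $\tfrac12$ appears as the ratio $B/A$ in the nonlinear Gr\"onwall Lemma \ref{Gronwall_lemma}, applied to $Z=\|f-g\|^2_{L^2}$). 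The remaining ingredients of your outline — Theorem \ref{Q_moment_theorem} with $k=0,k'=2$, the spectral tail via (\ref{Fourier_projection_tail2}), Proposition \ref{Halpha_estimate_prop} and Theorem \ref{regularity_theorem} for the uniform $H^2_{\lambda+2}$ bounds, and a Gr\"onwall closure — do match the paper (your placing of the cutoff error on the $f$-side via $Q(f,f)-Q(\Chi f,\Chi f)$ instead of the paper's $E(g,g)$ on the $g$-side is a harmless variant), so the proposal can be repaired by symmetrising the nonlinear difference as above, or otherwise by an estimate that places the derivatives on the first argument when the error occupies the second slot.
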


Before proving this theorem, it should be noted that the term in parenthesis in the error estimate (\ref{L2_error_estimate}) can be made as small as necessary by choosing large enough $N$ and $\Lv$, as well as $f_0$ and $g_0$ sufficiently close.  This means that, since the exponential in front is a constant for fixed $T > 0$, the approximation from the semi-discrete problem (\ref{semi-discrete2}) does indeed converge to the true solution.  

More precisely, since $g_0 = \Pi^N_{\Lv}f_0$, if $f_0$ is sufficiently smooth then choosing $N$ large enough ensures that $\|f_0 - g_0\|_{L^2(\Lv)}$ is small by the Fourier approximation result given in Lemma \ref{Fourier_approx_lemma} in the appendix.  Consequently, since $\|f_0 - g_0\|_{L^1(\Lv)} \leq (2\Lv)^{\frac{d}{2}}\|f_0 - g_0\|_{L^2(\Lv)}$ by the Cauchy-Schwarz inequality, this means that $g_0$ also converges to $f_0$ in $L^1$-norm as $N \to \infty$.

Then, it is also true that $\widetilde{\zeta}(h) \to 0$ as its argument $h = f_0 - g_0$ converges to zero in both $L^1$ and $L^2$-norm.  Clearly this is true for the first term inside the maximum function in the definition (\ref{zeta_def2}).  Then, for the second term inside the maximum function, by using the definition of $A^1_{d,k}(h)$ from the ODI (\ref{moment_L2_ODI}),
\begin{align*}
2 \frac{A^1_{d,k}(h) N^{\frac{d - 1}{2}}}{A^2_{d,k} \mathcal{O} \left({\Lv}^{\kappa} \right)} &= 2 \frac{\min \left(\frac{1}{2} {\varepsilon_{\chi}}^2 K_{\lambda, k} m_{0}(h), \frac{K^S_{d, \lambda}(h)}{{\Lv}^{2}} \right) N^{\frac{d - 1}{2}}}{A^2_{d,k} \mathcal{O} \left({\Lv}^{\kappa} \right)} \\
& = 2 \frac{\min \left(\frac{1}{2} {\varepsilon_{\chi}}^2 K_{\lambda, k}, \frac{\widetilde{K_{\lambda}}}{\left(2 C^S_d \Lv \right)^{2}} \right) N^{\frac{d - 1}{2}}}{A^2_{d,k} \mathcal{O} \left({\Lv}^{\kappa} \right)} m_0(h),
\end{align*}
after using expression (\ref{K_dlambda_def}) for $K^S_{d, \lambda}(h)$ and factoring out the $m_0(h)$ term which appears in both arguments of the minimum.  This means that this term also decreases as $m_0(f_0 - g_0)$ does, or equivalently $\|f_0 - g_0\|_{L^1(\Lv)}$ since $f_0$ and $g_0$ are assumed to have compact support.  Note that the Fourier approximation lemma \ref{Fourier_approx_lemma} also takes care of the $N$ appearing in the numerator here if it is assumed that $f_0 \in H^s(\OmegaLv)$ for $s > \frac{d - 1}{2}$.  In particular, this means that there is some constant $C_{\zeta} > 0$ such that
\begin{equation}
\widetilde{\zeta}(h) \leq C_{\zeta} \|h\|_{L^2(\OmegaLv)}. \label{C_zeta_def}
\end{equation}

\begin{proof}
	First note that the assumptions on $k$, $\varepsilon$, $\Lv$ and $N$ ensure the existence of a unique solution $g$ to the semi-discrete problem (\ref{semi-discrete2}) which satisfies the regularity properties from Theorem \ref{regularity_theorem}.  For such a function $g$, the main trick here is to write the collision operator $Q$ as 
	\begin{flalign}
	&& Q(g,g) &= Q(\gchi + (1 - \Chi )g, \gchi + (1 - \Chi )g) = Q(\gchi, \gchi) + E(g,g), && \nonumber \\
	\textrm{for} && E(g,g) &:= Q(\gchi, (1 - \Chi )g) + Q((1 - \Chi )g, \gchi) + Q((1 - \Chi )g, (1 - \Chi )g). && \label{E_Q}
	\end{flalign}
	
	Then, by definition (\ref{Q_u}) of the unconserved operator $Q_u$ in (\ref{Q_u}) and since $Q(\gchi, \gchi) = Q(g,g) - E(g,g)$,
	\begin{align}
	Q_u(g,g) &= \Pi^N_{2\Lv} Q(\gchi, \gchi). \nonumber \\
	&= Q(\gchi, \gchi) - \left(1 - \Pi^N_{2\Lv} \right) Q(\gchi, \gchi) \nonumber \\
	&= Q(g, g) - E(g,g) - \left(1 - \Pi^N_{2\Lv} \right) Q(\gchi, \gchi) \label{Q_u_expansion}. 
	\end{align}
	
	Now, by subtracting the semi-discrete equation (\ref{semi-discrete2}) (for the approximation $g$) from the homogeneous Fokker-Planck-Landau equation (\ref{Landau_homo_Kn1}) (for the true solution $f$), then adding and subtracting the unconserved operator $Q_u$,
	\begin{align*}
	\frac{\partial }{\partial t} (f - g) = Q(f,f) - Q_c(g,g) &= Q(f,f) - Q_u(g,g) + Q_u(g,g) - Q_c(g,g) \\
	& = Q(f,f) - Q(g, g) + Q_u(g,g) - Q_c(g,g) \\
	&~~~~~~~~~~~+ E(g,g) + \left(1 - \Pi^N_{2\Lv} \right) Q(\gchi, \gchi), 
	\end{align*}
	after using identity (\ref{Q_u_expansion}) to replace the first instance of $Q_u$ here.  Then, multiplying this by $(f - g)$, integrating with respect to $\boldsymbol{v}$ over $\OmegaLv$, and noting that the chain rule implies $(f - g)\frac{\partial }{\partial t}(f - g) = \frac{1}{2}\frac{\partial }{\partial t}((f - g)^2)$, gives
	\begin{equation}
	\frac{1}{2} \frac{d}{d t} \left(\left\|f - g \right\|^2_{L^2(\OmegaLv)} \right) = I_1 + I_2 + I_3, \label{L2_estimate1}
	\end{equation}
	\begin{flalign*}
	\textrm{where } && I_1 &:= \int_{\OmegaLv} (f - g) \left(Q_u(g,g) - Q_c(g,g) + \left(1 - \Pi^N_{2\Lv} \right) Q(\gchi, \gchi) \right)~\dv, && \\
	&& I_2 &:= \int_{\OmegaLv} (f - g)E(g,g) ~\dv && \\
	\textrm{and } && I_3 &:= \int_{\OmegaLv} (f - g)\left(Q(f,f) - Q(g, g) \right)  ~\dv. &&
	\end{flalign*}
	
	First, by the Cauchy-Schwarz inequality,
	\begin{multline*}
	I_1 \leq \biggl(\left\|Q_u(g,g) - Q_c(g,g) \right\|_{L^2(\OmegaLv)} \\
	+ \left\|\left(1 - \Pi^N_{2\Lv} \right) Q(\gchi, \gchi) \right\|_{L^2(\OmegaLv)} \biggr) \left\|f - g \right\|_{L^2(\OmegaLv)},
	\end{multline*}
	where, by the result from Theorem \ref{Q_moment_theorem} with $k = 0$ and $k' = 2$,
	\begin{align}
	&\left\|Q_u(g,g) - Q_c(g,g) \right\|_{L^2(\OmegaLv)} + \left\|\left(1 - \Pi^N_{2\Lv} \right) Q(\gchi, \gchi) \right\|_{L^2(\OmegaLv)} \nonumber \\
	\leq&~ O_{\frac{d}{2} + 2} \Bigl(m_{0}(g) m_{2 + \lambda}(g) + m_{\lambda}(g) m_{2}(g) \Bigr) \nonumber \\
	&~~~~~~~~~~~~~~~~+ (C_d + 1)\left\|\left(1 - \Pi^N_{2\Lv} \right) Q(\gchi, \gchi) \right\|_{L^2(\OmegaLv)} \nonumber \\
	\leq&~ O_{\frac{d}{2} + 2} \Bigl(m_{0}(g) m_{2 + \lambda}(g) + m_{\lambda}(g) m_{2}(g) \Bigr) + C_Q (C_d + 1)\frac{\mathcal{O} \left({\Lv}^{2\lambda + 2} \right)}{N^{\frac{d - 1}{2}}} \bigl| \bigl| \gchi \bigr| \bigr|^2_{L^2(\OmegaLv)} \nonumber \\
	\leq& \left(O_{\frac{d}{2} + 2} + (C_d + 1)\frac{C_Q \mathcal{O} \left({\Lv}^{2\lambda + 2} \right)}{N^{\frac{d - 1}{2}}} \right)\left(\zeta(g_0) \right)^2, \label{I1_bound_a}
	\end{align}
	after using assumption (\ref{Fourier_projection_tail2}); pulling out the largest value of the weight $\langle \boldsymbol{v} \rangle^{2(\lambda + 1)}$ from the $L^2_{\lambda + 1}$-norm; and using $m_k(g) \leq \zeta(g_0)$ and $\left\| \gchi \right\|_{L^2(\OmegaLv)} \leq \zeta(g_0)$, for $\zeta(g_0)$ defined by (\ref{zeta_def}), since $g$ is the unique solution from Theorem \ref{existence_theorem}.  This means
	\begin{equation}
	I_1 \leq \left(O_{\frac{d}{2} + 2} + (C_d + 1)\frac{C_Q \mathcal{O} \left({\Lv}^{2\lambda + 2} \right)}{N^{\frac{d - 1}{2}}} \right)\left(\zeta(g_0) \right)^2 \left\|f - g \right\|_{L^2(\OmegaLv)}. \label{I1_bound}
	\end{equation}
	
	Next, for a bound on $I_2$, note that $E(g,g)$ contains evaluations of $Q$ of the form $Q(\widetilde{\Chi }_1 g, \widetilde{\Chi }_2 g)$, where $\widetilde{\Chi }_i$ could be $\Chi $ or $1 - \Chi $, for $i = 1,2$.  Then, by the result (\ref{LBH_bound}) in Proposition \ref{LBH_prop} with $F = \widetilde{\Chi }_1 g$, $G = \widetilde{\Chi }_2 g$ and $H = f - g$,
	\begin{align*}
	&\left|\int_{\OmegaLv} (f - g) Q(\widetilde{\Chi }_1 g, \widetilde{\Chi }_2 g) ~\dv \right| \\
	\leq&~ C_H \Bigl(\|\widetilde{\Chi }_1 g\|_{L^1_{\lambda + 2}(\OmegaLv)} + \|\widetilde{\Chi }_1 g\|_{L^2(\OmegaLv)} \Bigr) \|\widetilde{\Chi }_2 g\|_{H^2_{\lambda + 2}(\OmegaLv)} \|f - g\|_{L^2(\OmegaLv)}.
	\end{align*}
	Now, in expression (\ref{E_Q}) for $E(g,g)$, each term has one of $\widetilde{\Chi }_1$ or $\widetilde{\Chi }_2$ replaced by $1 - \Chi $.  Since $\left(1 - \Chi \right)(\boldsymbol{v}) = 0$ when $\boldsymbol{v} \in \Omega_{(1 - \delchi)\Lv}$, this means that one of the factors here involving norms of $\widetilde{\Chi }_i g$, for $i = 1,2$, is an $\mathcal{O} \left(\delchi \right)$ term, while the other remains bounded.  More precisely, after pulling out the maximum value of $(1 + {\Lv^2})^{\frac{\lambda}{2} + 1}$ given by the weights and denoting the norms of $(1 - \Chi )g$ by an $\mathcal{O} \left(\delchi \right)$ term,
	\begin{equation}
	I_2 \leq C_H \max(\zeta(g_0), \eta(g_0)) \left((1 + {\Lv^2})^{\lambda + 2} + (1 + {\Lv^2})^{\frac{\lambda}{2} + 1} \right) \mathcal{O} \left(\delchi \right) \|f - g\|_{L^2(\OmegaLv)}, \label{I2_bound}
	\end{equation}
	where the quantities $\zeta(g_0)$ defined by (\ref{zeta_def}) and $\eta(g_0)$ defined by (\ref{eta_def}) have been used to bound the $L^1_k$- and $H^s$-norms of $\gchi$, respectively, after using (\ref{extension_bound}) to remove the extension operator \mbox{\Large$\chi$}.

	Finally, for the integral $I_3$ in (\ref{L2_estimate1}), first note that by the bi-linearity and symmetry of $Q$,
	\begin{equation*}
	I_3 = \int_{\OmegaLv} (f - g) Q(f - g, f + g) ~\dv.
	\end{equation*}
	This means that the result (\ref{LBH_bound}) in Proposition \ref{LBH_prop} can once again be used, this time with the choices of $F = f - g$, $G = f + g$ and $H = f - g$, to give
	\begin{align}
	I_3 \leq&~ C_H \left(\|f - g\|_{L^1_{\lambda + 2}(\OmegaLv)} + \|f - g\|_{L^2(\OmegaLv)}\right)  \|f + g\|_{H^2_{\lambda + 2}(\OmegaLv)} \|f - g\|_{L^2(\OmegaLv)} \nonumber \\
	\leq&~ C_H (C^f_{\lambda,2,\lambda + 2}(f_0) + \eta(g_0)) \widetilde{\zeta}(f_0 - g_0) \|f - g\|_{L^2(\OmegaLv)} \nonumber \\
	&~~~~~ + C_H \bigl(C^f_{\lambda,2,\lambda + 2}(f_0) + \eta(g_0) \bigr) \|f - g\|^2_{L^2(\OmegaLv)}, \label{I3_bound}
	\end{align}
	after using the triangle inequality on the $H^2_{\lambda + 2}$-norm; $C^f_{\lambda,s,k}(f_0)$ from Proposition \ref{Halpha_estimate_prop} with $s = 2$ and $k = \lambda + 2$ to bound $\|f\|_{H^2_{\lambda + 2}(\OmegaLv)}$; $\eta(g_0)$ defined by (\ref{eta_def}) in Theorem \ref{regularity_theorem} to bound $\|g\|_{H^2_{\lambda + 2}}$; and $\|f - g\|_{L^1_{\lambda + 2}} \leq \|f - g\|_{L^1_3} \leq \widetilde{\zeta}(f_0 - g_0)$, for $\widetilde{\zeta}$ defined by expression (\ref{zeta_def2}) in Theorem \ref{propagation_theorem}.  Note that here the weight is bounded as $\lambda + 2 \leq 3$, since $\lambda \leq 1$, to allow Theorem \ref{propagation_theorem} to be used because it requires $k \geq 3$.
	\begin{rem}
		To reach the bound (\ref{I3_bound}), an estimate was required on the weighted norm $\|g\|_{H^2_{\lambda + 2}}$.  Whereas this result hasn't explicitly been proven in the current work, it is expected to be true, as mentioned in Remark \ref{Hsk_remark}.  
	\end{rem}
	\begin{rem}
		$\widetilde{\zeta}$ is used here, instead of the bound with $\zeta$ defined by (\ref{zeta_def}), because it is easier to show that $\widetilde{\zeta}(h_0)$ approaches zero as the argument $h_0 := f_0 - g_0$ decreases in $L^1$-norm.  This will be explained at the end of the current proof.
	\end{rem} 
	
	Then, using the bounds (\ref{I1_bound}-\ref{I3_bound}) in (\ref{L2_estimate1}) gives
	\begin{align}
	&\frac{1}{2} \frac{d}{d t} \left(\left\|f - g \right\|^2_{L^2(\OmegaLv)} \right)\nonumber \\
	\leq&~ C_H \bigl(C^f_{\lambda,2,\lambda + 2}(f_0) + \eta(g_0) \bigr) \|f - g\|^2_{L^2} \nonumber \\
	&+ C_H \max(\zeta(g_0), \eta(g_0)) \left((1 + {\Lv^2})^{\lambda + 2} + (1 + {\Lv^2})^{\frac{\lambda}{2} + 1} \right) \mathcal{O} \left(\delchi \right) \|f - g\|_{L^2} \nonumber \\
	&+ \Biggl(C_H \bigl(C^f_{\lambda,2,\lambda + 2}(f_0) + \eta(g_0) \bigr) \widetilde{\zeta}(f_0 - g_0) \nonumber \\
	&~~~~~~~~~~~~ + \left(O_{\frac{d}{2} + 2} + (C_d + 1)\frac{C_Q \mathcal{O} \left({\Lv}^{2\lambda + 2} \right)}{N^{\frac{d - 1}{2}}} \right)\left(\zeta(g_0) \right)^2 \Biggr) \left\|f - g \right\|_{L^2(\OmegaLv)}. \label{L2_estimate2}
	\end{align}
	At this stage it should be noted that there are no more $\gchi$ terms and so the limit $\delchi \to 0$ can be taken, independently of $\Lv$, to remove any smoothing of the extension operator.  Taking this limit in the bound (\ref{L2_estimate2}) and denoting $Z := \left\|f - g \right\|^2_{L^2(\OmegaLv)}$ gives
	\begin{align}
	\frac{d Z}{d t} \leq&~ 2 C_H \bigl(C^f_{\lambda,2,\lambda + 2}(f_0) + \eta(g_0) \bigr) Z \nonumber \\
	&+ \Biggl(C_H \bigl(C^f_{\lambda,2,\lambda + 2}(f_0) + \eta(g_0) \bigr) \widetilde{\zeta}(f_0 - g_0) \nonumber \\
	&~~~~~ + \left(O_{\frac{d}{2} + 2} + (C_d + 1)\frac{C_Q \mathcal{O} \left({\Lv}^{2\lambda + 2} \right)}{N^{\frac{d - 1}{2}}} \right)\left(\zeta(g_0) \right)^2 \Biggr) \sqrt{Z}. \label{ODI_L2estimate}
	\end{align}
	To recover a bound on $Z$, using the Gronwall type result from Lemma \ref{Gronwall_lemma} in the appendix with the choices $u = Z, \alpha = \frac{1}{2}$, and the appropriate coefficients from the ODI (\ref{ODI_L2estimate}), gives
	\begin{multline*}
	Z(t) \leq e^{2 C_H \bigl(C^f_{\lambda,2,\lambda + 2}(f_0) + \eta(g_0) \bigr)t} \Biggl(\bigl(Z(0) \bigr)^{\frac{1}{2}} + O_{\frac{d}{2} + 2} + \frac{\mathcal{O} \left({\Lv}^{2\lambda + 2} \right)}{N^{\frac{d - 1}{2}}} \\
	+ \frac{1}{2} \widetilde{\zeta}(f_0 - g_0) \Biggr)^{2},
	\end{multline*}
	where any constants multiplying or dividing the $O_{\frac{d}{2} + 2}$ or $\mathcal{O} \left({\Lv}^{2\lambda + 2} \right)$ terms have been absorbed and the division of the coefficient of $t$ in the exponent cancels nicely with the factor of $\widetilde{\zeta}$. 
	
	Therefore, since $Z = \left\|f - g \right\|^2_{L^2(\OmegaLv)}$, after taking a square root, this means that
	\begin{multline*}
	\sup_{t \in [0, T]} \left\|f - g \right\|_{L^2(\OmegaLv)} \leq e^{C_H \bigl(C^f_{\lambda,2,\lambda + 2}(f_0) + \eta(g_0) \bigr)T} \Biggl(\left\|f_0 - g_0 \right\|_{L^2(\OmegaLv)} + O_{\frac{d}{2} + 2} \\
	+ \frac{\mathcal{O} \left({\Lv}^{2\lambda + 2} \right)}{N^{\frac{d - 1}{2}}} + \frac{1}{2} \widetilde{\zeta}(f_0 - g_0) \Biggr),
	\end{multline*}
	which is the required error estimate (\ref{L2_error_estimate}) stated in Theorem \ref{L2_error_estimate_theorem}.
\end{proof}

\section{Long Time Behaviour} \label{Long_Time_Behaviour}
Now that the approximation $g$ from the semi-discrete problem (\ref{semi-discrete2}) has been shown to converge to the true solution of the Fokker-Planck-Landau type equation (\ref{Landau_homo_Kn1}), the final step is to show that $g$ converges to the equilibrium Maxwellian $\Meq$ defined by expression (\ref{M_eq_hom}).  This will be achieved by setting $g = \Meq + h$, for the perturbation $h(t, \boldsymbol{v}) := g(t, \boldsymbol{v}) - \Meq(\boldsymbol{v})$.  The aim is then to show that the $L^2$-norm of $h$ can be made as small as needed, provided that the initial perturbation $h_0(\boldsymbol{v}) := h(0, \boldsymbol{v}) = g_0(\boldsymbol{v}) - \Meq(\boldsymbol{v})$ is small enough.  This is made more precise in the following result.

\begin{lemma} \label{long_time_lemma}
	For $k \geq \max (3, k_0)$, let $g_0 = \Pi^N_{2\Lv} f_0 \in L^1_k \bigcap L^2(\OmegaLv)$ be an initial condition which satisfies the stability condition (\ref{stability_init}) with $\varepsilon \leq \min \left(\frac{1}{4}, \varepsilon_0 \right)$ and $\Meq$ be the equilibrium Maxwellian associated to $g_0$, where $\varepsilon_0$ and $k_0$ are defined in the proof of Lemma \ref{moment_derivative_lemma} in expressions (\ref{epsilon_0}) and (\ref{k_0}), respectively.  Given any $\delta > 0$, if
	\begin{flalign}
	&& &\|g_0 - \Meq\|_{L^2(\OmegaLv)} \leq \min \left(\frac{1}{2 C_{\zeta}} \delta, \frac{1}{8}\delta \right) && \label{long_time_assumption1} \\
	\textrm{and } && &~~~~~~~\|g_0 - \Meq\|_{H^2_{\lambda + 2}(\OmegaLv)} \leq \delta_{\eta} \label{long_time_assumption2}, &&
	\end{flalign}
	where $\delta_{\eta} > 0$ is chosen such that $C_H C_{\chi} \eta(g_0 - \Meq) \leq \frac{1}{2} \lambda_0$, for the spectral gap $\lambda_0$, and the number of Fourier modes $N$ and domain size $\Lv$ are sufficiently large, then
	\begin{equation*}
	\|g(t) - \Meq\|_{L^2(\OmegaLv)} \leq \delta, ~~~~~\textrm{for all } t>0,
	\end{equation*}
	where $C_H$ is the constant from Proposition \ref{LBH_prop}; $C_{\chi}$ is the constant associated to the extension operator from the bound (\ref{extension_bound}); $\eta$ is defined by expression (\ref{eta_def}) in Theorem \ref{regularity_theorem}; and $C_{\zeta}$ is a constant defined in (\ref{C_zeta_def}), associated to $\zeta$ given by (\ref{zeta_def}) in Theorem \ref{propagation_theorem}.
\end{lemma}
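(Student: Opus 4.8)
The plan is to linearise the semi-discrete evolution about $\Meq$ and close a Gr\"onwall argument on $Z(t):=\|g(t)-\Meq\|^2_{L^2(\OmegaLv)}$, in which the spectral gap of the linearised Landau operator supplies the dissipation. Write $h:=g-\Meq$, which (up to the Gaussian tail of $\Meq$ on $\Rd\setminus\OmegaLv$) is supported in $\OmegaLv$ and, since $\Meq$ is stationary, satisfies $\partial_t h=Q_c(g,g)$. Expanding $Q_c$ exactly as in (\ref{Q_expansion}), then as in the proof of Theorem \ref{L2_error_estimate_theorem} using (\ref{E_Q}) to replace $Q(\gchi,\gchi)$ by $Q(g,g)-E(g,g)$, and finally using $Q(\Meq,\Meq)=0$ together with the bilinearity of $Q$ to write $Q(g,g)=\bigl(Q(\Meq,h)+Q(h,\Meq)\bigr)+Q(h,h)$, I would multiply by $h$ and integrate over $\OmegaLv$ (where $\int_{\OmegaLv}\partial_t h\,h\,\dv=\tfrac12\tfrac{d}{dt}Z$ exactly) to obtain
\begin{equation*}
\tfrac12\tfrac{d}{dt}Z=\int_{\OmegaLv}\bigl(Q(\Meq,h)+Q(h,\Meq)\bigr)h\,\dv+\int_{\OmegaLv}Q(h,h)h\,\dv+R,
\end{equation*}
where $R$ collects the conservation correction $Q_u-Q_c$, the Fourier truncation $(1-\Pi^N_{2\Lv})Q(\gchi,\gchi)$, the cut-off defect $E(g,g)$, and the tail of the linear term over $\Rd\setminus\OmegaLv$.

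For the linear term I would invoke the explicit spectral-gap estimate for the linearised Fokker--Planck--Landau operator associated to hard potentials, due to Desvillettes--Villani \cite{D&V_2} (improving Degond--Lemou \cite{Degond&Lemou}): there is $\lambda_0>0$ with $\int_{\Rd}\bigl(Q(\Meq,h)+Q(h,\Meq)\bigr)h\,\dv\le-\lambda_0\|h\|^2_{L^2(\Rd)}$ for every $h$ orthogonal to the collision invariants $\{1,v_1,\dots,v_d,|\boldsymbol v|^2\}$. Here $h=g-\Meq$ does lie in that subspace, because $Q_c$ enforces conservation of mass, momentum and energy while $\Meq$ is defined to carry exactly the moments of $g_0$; the passage from $\int_{\Rd}$ to $\int_{\OmegaLv}$ costs only a term controlled by the Gaussian decay of $\Meq$, which is moved into $R$. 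For the quadratic term I would apply Proposition \ref{LBH_prop} in the case $b=w_2=0$ with $F=G=H=h$, giving
\begin{equation*}
\Bigl|\int_{\OmegaLv}Q(h,h)h\,\dv\Bigr|\le C_H\bigl(\|h\|_{L^1_{\lambda+2}(\OmegaLv)}+\|h\|_{L^2(\OmegaLv)}\bigr)\|h\|_{H^2_{\lambda+2}(\OmegaLv)}\|h\|_{L^2(\OmegaLv)},
\end{equation*}
then bound $\|h\|_{L^1_{\lambda+2}(\OmegaLv)}\le\|h\|_{L^1_3(\OmegaLv)}\le\widetilde{\zeta}(h_0)$ by Theorem \ref{propagation_theorem} applied to the perturbation (using $\lambda\le1$) and $\|h\|_{H^2_{\lambda+2}(\OmegaLv)}\le C_\chi\eta(h_0)$ by a regularity bound of the type of Theorem \ref{regularity_theorem} for $h$ (the weighted $H^2_{\lambda+2}$ version being the content of Remark \ref{Hsk_remark}). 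Finally $R$ is estimated exactly as the terms $I_1,I_2$ in the proof of Theorem \ref{L2_error_estimate_theorem}: Theorem \ref{Q_moment_theorem} with $k=0$, $k'=2$ together with assumption (\ref{Fourier_projection_tail2}) gives $\|(Q_u-Q_c)(g,g)\|_{L^2(\OmegaLv)}+\|(1-\Pi^N_{2\Lv})Q(\gchi,\gchi)\|_{L^2(\OmegaLv)}\le\bigl(O_{\frac d2+2}+(C_d+1)\tfrac{C_Q\mathcal O({\Lv}^{2\lambda+2})}{N^{\frac{d-1}{2}}}\bigr)(\zeta(g_0))^2$, while the $E(g,g)$ contribution is an $\mathcal O(\delchi)$ term; after the limit $\delchi\to0$ (legitimate since no $\Chi$-dependent quantity survives at leading order) this leaves $R\le\mathcal E\,\|h\|_{L^2(\OmegaLv)}$ with $\mathcal E:=C_HC_\chi\widetilde{\zeta}(h_0)\eta(h_0)+\bigl(O_{\frac d2+2}+\tfrac{\mathcal O({\Lv}^{2\lambda+2})}{N^{\frac{d-1}{2}}}\bigr)(\zeta(g_0))^2$.

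Assembling these and using $-\lambda_0\|h\|^2_{L^2(\Rd)}\le-\lambda_0 Z$ (the weighted dissipation norm dominating $L^2$ since $\lambda\ge0$), I obtain the ODI $\tfrac{dZ}{dt}\le-2\lambda_0 Z+2C_HC_\chi\eta(h_0)\,Z+2\mathcal E\sqrt Z$. Assumption (\ref{long_time_assumption2}) and the choice of $\delta_\eta$ give $C_HC_\chi\eta(h_0)\le\tfrac12\lambda_0$, so the middle term is absorbed by half the gap, leaving $\tfrac{dZ}{dt}\le-\lambda_0 Z+2\mathcal E\sqrt Z$; putting $u=\sqrt Z$ and applying Lemma \ref{Gronwall_lemma} (or integrating directly) yields $\|g(t)-\Meq\|_{L^2(\OmegaLv)}\le\|h_0\|_{L^2(\OmegaLv)}+\tfrac{2\mathcal E}{\lambda_0}$ for all $t>0$. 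The proof then closes by three smallness choices: $\|h_0\|_{L^2(\OmegaLv)}\le\tfrac18\delta$ directly from (\ref{long_time_assumption1}); $\widetilde{\zeta}(h_0)\le C_\zeta\|h_0\|_{L^2(\OmegaLv)}\le\tfrac12\delta$ by (\ref{C_zeta_def}) and (\ref{long_time_assumption1}), which with $C_HC_\chi\eta(h_0)\le\tfrac12\lambda_0$ keeps the $\eta\widetilde{\zeta}$ part of $\tfrac{2\mathcal E}{\lambda_0}$ at most $\tfrac12\delta$; and the conservation/Fourier part of $\tfrac{2\mathcal E}{\lambda_0}$ made at most $\tfrac38\delta$ by taking $N$ and $\Lv$ large. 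The three contributions sum to at most $\delta$.

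The main obstacle is the interplay between the linear and quadratic terms. On the linear side one needs the spectral-gap inequality in exactly the unsymmetrised, unweighted form used above (it is usually stated for $\Meq^{-1/2}[Q(\Meq,\Meq^{1/2}\cdot)+Q(\Meq^{1/2}\cdot,\Meq)]$ on $L^2(\Meq^{-1})$), so a translation is required, and one must check that truncating to $\OmegaLv$, inserting the cut-off, and replacing $Q_c$-enforced conservation by the exact zero-moment condition spoil neither the orthogonality of $h$ to the collision invariants nor the sign of the dissipation --- essentially re-running the boundary/tail arguments of Lemmas \ref{moment_derivative_lemma} and \ref{L2_norm_derivative_lemma} in the linearised setting, and controlling the difference of the linearised operators by the Gaussian decay of $\Meq$. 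On the quadratic side, the delicate point is that $h=g-\Meq$ does not itself solve the semi-discrete equation, so to make $C_HC_\chi\eta(h_0)\le\tfrac12\lambda_0$ genuinely meaningful one must re-derive the $H^2_{\lambda+2}$ regularity estimate for the perturbation --- now with the extra coercive linear term available --- so that the bound really shrinks with $\|h_0\|_{H^2_{\lambda+2}}$; this is the same kind of weighted higher-order regularity statement flagged as not fully proven in Remark \ref{Hsk_remark} and in the remark following (\ref{I3_bound}).
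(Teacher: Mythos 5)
Your overall architecture (set $h=g-\Meq$, split off the conservation correction, the Fourier truncation and the cut-off defect exactly as in the proofs of Theorem \ref{L2_error_estimate_theorem}, bound $Q(h,h)$ via Proposition \ref{LBH_prop} with $\widetilde{\zeta}(h_0)$ and the weighted regularity of Remark \ref{Hsk_remark}, then finish with a Gronwall argument and the $\tfrac{1}{2C_{\zeta}}\delta$, $\tfrac18\delta$ bookkeeping) matches the paper closely. Where you diverge is the treatment of the linearised term, and that is where there is a genuine gap. You run an energy method on $Z=\|h\|^2_{L^2}$ and assert the Dirichlet-form coercivity $\int_{\Rd}\bigl(Q(\Meq,h)+Q(h,\Meq)\bigr)h\,\dv\le-\lambda_0\|h\|^2_{L^2(\Rd)}$ in \emph{unweighted} $L^2$ for $h$ orthogonal to the collision invariants, citing \cite{D&V_2} and \cite{Degond&Lemou}. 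Those spectral-gap results are for the symmetrised operator $\mu^{-1/2}\bigl[Q(\mu,\mu^{1/2}\cdot)+Q(\mu^{1/2}\cdot,\mu)\bigr]$ on the Gaussian-weighted space $L^2(\mu^{-1/2})$, and the coercivity does not transfer to the unweighted pairing by any routine "translation": pairing $Q(h,\Meq)$ with $h$ in plain $L^2$ produces terms such as $\int(a*h):\nabla^2\Meq\,h\,\dv$ and $-\int(c*h)\Meq\,h\,\dv$ with no sign, and the zero-order part of $Q(\Meq,h)$ contributes $+\int|\bar c_{\Meq}|\,h^2\langle\boldsymbol{v}\rangle^{\lambda}$-type terms that the $\dot H^1$ dissipation cannot absorb uniformly without the Gaussian weight. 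Decay of the linearised dynamics in the larger, unweighted (or polynomially weighted) $L^2$ space is known only at the \emph{semigroup} level, via the enlargement/factorisation theory, which is precisely what the paper uses: it writes the Duhamel representation $\hchi(t)$ in terms of $e^{\mathcal{L}(t-s)}$, projects off the null space with the operator $\pi$, and invokes $\|e^{\mathcal{L}t}\|_{L^2}\le e^{-\lambda_0 t}$ from \cite{Carrapatoso_hard}; the assumption $C_HC_{\chi}\eta(h_0)\le\tfrac12\lambda_0$ is then used to beat the quadratic feedback inside the Duhamel integral, not to absorb a linear term in a differential inequality for $Z$.

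So to repair your argument you would either have to prove the unweighted $L^2$ coercivity of the linearised Landau form yourself (which is not in the cited literature and is doubtful as stated), or switch, as the paper does, to the mild/Duhamel formulation and quote the $L^2$ semigroup decay, in which case your remaining estimates ($Q(h,h)$ via Proposition \ref{LBH_prop}, the remainder via Theorem \ref{Q_moment_theorem} and (\ref{Fourier_projection_tail2}), the $\mathcal{O}(\delchi)$ cut-off terms, and the final smallness choices) slot in essentially unchanged. Your closing caveats about the orthogonality of $h$ to the collision invariants and about needing a weighted $H^2_{\lambda+2}$ bound for the perturbation are shared with the paper (the latter is exactly the unproven point of Remark \ref{Hsk_remark}), so they are not additional defects of your route; the unweighted spectral-gap step is.
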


\begin{proof}
	First, as was the case for the semi-discrete scheme associated to the Boltzmann equation in \cite{BoltzmannConvergence}, the conserved operator $Q_c$ associated to the semi-discrete scheme \ref{semi-discrete2} satisfies
	\begin{equation}
	Q_{c}(\Meq + h, \Meq + h) = Q_{c}(\Meq,\Meq) + Q_{c}(\Meq,h) + Q_{c}(h,\Meq) + Q_{c}(h,h). \label{Q_c_linearised}
	\end{equation}
	Next, define the linearised operators
	\begin{flalign}
	&& \mathcal{L}_{c}(h) &:= Q_{c}(\Meq,h) + Q_{c}(h,\Meq) \label{L_c_def}&& \\
	\textrm{and } && \mathcal{L}(\hchi) &:= Q(\Meq,\hchi) + Q(\hchi,\Meq). \label{L_def} &&
	\end{flalign}
	
	Then, by noting that $\frac{\textrm{d}}{\textrm{d}t}(\Meq) = 0$, \mbox{\Large$\chi$} is independent of $t$, and $\Meq + h = g$ is the solution to the semi-discrete problem (\ref{semi-discrete2}),
	\begin{equation*}
	\frac{\textrm{d}}{\textrm{d}t} \left(\hchi \right) = \Chi \frac{\textrm{d}}{\textrm{d}t} \left(\Meq + h \right) = \Chi Q_{c}(\Meq + h, \Meq + h). 
	\end{equation*}
	So, by inserting the expansion (\ref{Q_c_linearised}) with the notation (\ref{L_c_def}),
	\begin{align}
	\frac{\textrm{d}}{\textrm{d}t} \left(\hchi \right) &= \Chi Q_{c}(\Meq,\Meq) + \Chi \mathcal{L}_{c}(h) + \Chi Q_{c}(h,h) \nonumber \\
	&= \mathcal{L}(\hchi) + Q(\hchi, \hchi) + \mathcal{R}(h), \label{linear_Landau_eq}
	\end{align}
	\begin{flalign}
	\textrm{where } && \mathcal{R}(h) := \Chi Q_{c}(\Meq,\Meq) + \Chi \mathcal{L}_{c}(h) - \mathcal{L}(\hchi) + \Chi Q_{c}(h,h) - Q(\hchi, \hchi), \label{Remainder}
	\end{flalign}
	which is obtained by adding and subtracting $\mathcal{L}(\hchi) + Q(\hchi, \hchi)$, for $\mathcal{L}$ defined by (\ref{L_def}).
	
	Next, by adding and subtracting $Q_{u}(\Meq,\Meq)$,
	\begin{align*}
	Q_{c}(\Meq,\Meq) &= Q_{c}(\Meq,\Meq) - Q_{u}(\Meq,\Meq) + Q_{u}(\Meq,\Meq) \\
	&= Q_{c}(\Meq,\Meq) - Q_{u}(\Meq,\Meq) \\
	&~~~ - \left(1 - \Pi^N_{2\Lv} \right) Q(\Chi \Meq, \Chi \Meq) - E(\Meq, \Meq),
	\end{align*}
	after using the expansion for $Q_u$ given in expression (\ref{Q_u_expansion}), with $E$ defined by (\ref{E_Q}) and noticing that $Q(\Meq,\Meq) = 0$.  So, by the triangle inequality,
	\begin{align*}
	\left\| Q_{c}(\Meq,\Meq) \right\|_{L^2(\OmegaLv)} &\leq \left\| Q_{c}(\Meq,\Meq) - Q_{u}(\Meq,\Meq) \right\|_{L^2(\OmegaLv)} \\
	&~~~ + \left\| \left(1 - \Pi^N_{2\Lv} \right) Q(\Chi \Meq, \Chi \Meq) \right\|_{L^2(\OmegaLv)} \\
	&~~~~ + \left\| E(\Meq, \Meq) \right\|_{L^2(\OmegaLv)}.
	\end{align*}
	Here, the first two terms can be bounded by (\ref{I1_bound_a}), after noting that the moments and $L^2$-norm of $\Meq$ are bounded.  Additionally, the term involving $E$ can be bounded by expression (\ref{Q_L2_estimate}) for the $L^2$-norm of $Q$, along with the calculations leading to (\ref{I2_bound}).  Combining these gives
	\begin{equation*}
	\left\| Q_{c}(\Meq,\Meq) \right\|_{L^2(\OmegaLv)} \leq O_{\frac{d}{2} + 2} + \frac{\mathcal{O} \left({\Lv}^{2\lambda + 2} \right)}{N^{\frac{d - 1}{2}}}
	\end{equation*}
	so that, by bounding the extension operator $\Chi$ by 1, 
	\begin{equation}
	\left\| \Chi Q_{c}(\Meq,\Meq) \right\|_{L^2(\OmegaLv)} \leq \left\| Q_{c}(\Meq,\Meq) \right\|_{L^2(\OmegaLv)} \leq O_{\frac{d}{2} + 2} + \frac{\mathcal{O} \left({\Lv}^{2\lambda + 2} \right)}{N^{\frac{d - 1}{2}}}. \label{Q_c_Meq_bound}
	\end{equation}
	
	The above arguments which led to (\ref{Q_c_Meq_bound}) can then applied to $Q_{c}(h,h)$.  Here, however, $Q(h,h) \neq Q(\hchi, \hchi) \neq 0$ and so the full expansion of $Q_u$ given by (\ref{Q_u_expansion}) appears.  Then, by using expression (\ref{Q_L2_estimate}) for the $L^2$-norm of $Q$ and following through the calculations that lead to (\ref{I3_bound}), this gives
	\begin{align}
	\left\|Q(h,h) - Q(\hchi, \hchi) \right\|_{L^2(\OmegaLv)} \leq& C_H (1 + C_{\chi})\eta(h_0) \widetilde{\zeta}(h_0 - \hchi_0) \nonumber \\
	&+ C_H (1 + C_{\chi})\eta(h_0) \|h - \hchi\|_{L^2(\OmegaLv)}. \label{Q_h_error_bound}
	\end{align}
	Note that this can be made sufficiently small for $\Chi$ chosen close enough to $\mathbbm{1}_{\OmegaLv}$ because then $\|h - \hchi\|_{L^2(\OmegaLv)}$ will be almost zero.  This means that the right-hand side of (\ref{Q_h_error_bound}) would contribute an $\mathcal{O}(\delchi)$ term, which is negligible and can be dropped for clarity.  In the end, this leads to
	\begin{equation}
	\left\| \Chi Q_{c}(h,h) - Q(\hchi, \hchi) \right\|_{L^2(\OmegaLv)} \leq O_{\frac{d}{2} + 2} + \frac{\mathcal{O} \left({\Lv}^{2\lambda + 2} \right)}{N^{\frac{d - 1}{2}}}. \label{Q_c-Q_bound}
	\end{equation}
	Similarly,
	\begin{equation}
	\left\| \Chi \mathcal{L}_{c}(h) - \mathcal{L}(\hchi) \right\|_{L^2(\OmegaLv)} \leq O_{\frac{d}{2} + 2} + \frac{\mathcal{O} \left({\Lv}^{2\lambda + 2} \right)}{N^{\frac{d - 1}{2}}}. \label{L_c-L_bound}
	\end{equation}
	
	So, by using the bounds (\ref{Q_c_Meq_bound}), (\ref{Q_c-Q_bound}) and (\ref{L_c-L_bound}), the remainder term $\mathcal{R}$ defined by (\ref{Remainder}) also satisfies
	\begin{equation}
	\left\| \mathcal{R}(h) \right\|_{L^2(\OmegaLv)} \leq O_{\frac{d}{2} + 2} + \frac{\mathcal{O} \left({\Lv}^{2\lambda + 2} \right)}{N^{\frac{d - 1}{2}}}. \label{Remainder_bound}
	\end{equation}
	In addition, by using expression (\ref{Q_L2_estimate}) for the $L^2$-norm of $Q$, 
	\begin{align}
	\|Q(\hchi, \hchi)\|_{L^2(\OmegaLv)} &\leq C_H \left(\|\hchi\|_{L^1_{\lambda + 2}(\OmegaLv)} + \|\hchi\|_{L^2(\OmegaLv)}\right) \|\hchi\|_{H^2_{\lambda + 2}(\OmegaLv)} \nonumber \\
	&\leq C_H C_{\chi} \eta(h_0) \left(\widetilde{\zeta}(h_0) + \|h\|_{L^2(\OmegaLv)} \right), \label{Q_chih_estimate}
	\end{align}
	for $\eta$, the bound on $\|h\|_{H^2_{\lambda + 2}(\OmegaLv)}$, defined by (\ref{eta_def}) in Theorem \ref{regularity_theorem} and $\widetilde{\zeta}$, the bound on $\|h\|_{L^1_{\lambda + 2}(\OmegaLv)} \leq m_{\lambda + 2}(h)$, defined by (\ref{zeta_def2}) in Theorem \ref{propagation_theorem}, because $h = g - \Meq$ solves the semi-discrete problem (\ref{semi-discrete2}) since $g$ does and $\Meq$ is the equilibrium solution.  The most important thing to notice here is that both $\eta(h_0) \to 0$ and $\widetilde{\zeta}(h_0) \to 0$ as $\|h_0\|_{H^2_{\lambda + 2}(\OmegaLv)} \to 0$, which can be seen in their definitions.
	\begin{rem}
		Once again, an estimate is actually required on the weighted norm $\|h\|_{H^2_{\lambda + 2}}$, which is expected to be true, as mentioned in Remark \ref{Hsk_remark}.  
	\end{rem}
	
	Now, following the notation used for the Boltzmann equation in \cite{BoltzmannConvergence}, let $e^{\mathcal{L}(t)}$ denote the semi-group associated to the linearised operator $\mathcal{L}$.  Then, the solution to the linearised equation (\ref{linear_Landau_eq}) can be written as
	\begin{equation}
	\hchi(t) = \hchi_0 + \int_{0}^{t} e^{\mathcal{L}(t-s)} Q(\hchi, \hchi)(s) ~\textrm{d}s + \int_{0}^{t} e^{\mathcal{L}(t-s)} \mathcal{R}(h)(s) ~\textrm{d}s, \label{semigroup_solution}
	\end{equation}
	for $h_0 := g_0 - \Meq$.
	
	Next, since the remainder $\mathcal{R}$ may not have zero mass, momentum and energy, it is necessary to introduce the projection operator $\pi$ onto the null-space of the Fokker-Planck-Landau equation.  As is stated in \cite{Carrapatoso_hard}, where the form of linearised operator defined by (\ref{L_def}) is used, the null-space $\mathcal{N(L)}$ is given by 
	\begin{equation*}
	\mathcal{N(L)} = \left\{\mu(\boldsymbol{v}), v_1 \mu(\boldsymbol{v}), \ldots, v_d \mu(\boldsymbol{v}), |\boldsymbol{v}|^2 \mu(\boldsymbol{v}) \right\},
	\end{equation*}
	where $\mu$ denotes the normalised Maxwellian with mass 1, zero bulk velocity and temperature $d$.  Then, for the set of collision invariants $\mathfrak{I} := \left\{1, v_1, \ldots, v_d, |\boldsymbol{v}|^2 \right\}$, the projection operator $\pi$ is defined by
	\begin{equation*}
	\pi h := \sum_{\phi \in \mathfrak{I}} \left(\int_{\Rd} h(\widetilde{\boldsymbol{v}}) \phi(\widetilde{\boldsymbol{v}}) ~\textrm{d}\widetilde{\boldsymbol{v}} \right)\phi(\boldsymbol{v}) \mu(\boldsymbol{v}).
	\end{equation*}
	Note that this projection operator satisfies
	\begin{align}
	\| \pi h \|_{L^2(\OmegaLv)} &\leq \sum_{\phi \in \mathfrak{I}} \left(\int_{\Rd} |h(\widetilde{\boldsymbol{v}}) \phi(\widetilde{\boldsymbol{v}})| ~\textrm{d}\widetilde{\boldsymbol{v}} \right) \left\| \phi \mu \right\|_{L^2(\OmegaLv)} \leq C^4_d \left\| h \right\|_{L^1_2(\Rd)}, \label{pi_bound}
	\end{align}
	for $C^4_d := (d+2) \left\| \mu \right\|_{L^2_2(\Rd)}$, where the weights on the norms come from the fact that $\phi(\boldsymbol{v}) \leq \langle \boldsymbol{v} \rangle^2$ for each $\phi$ in the sum.
	
	Then, since the semigroup $e^{\mathcal{L}(t)}$ and null-space projection operator commute, applying $(1 - \pi)$ to both sides of equation (\ref{semigroup_solution}) gives
	\begin{multline}
	(1 - \pi)\hchi(t) = (1 - \pi)\hchi_0 + \int_{0}^{t} e^{\mathcal{L}(t-s)} Q(\hchi, \hchi)(s) ~\textrm{d}s \\
	+ \int_{0}^{t} e^{\mathcal{L}(t-s)} (1 - \pi)\mathcal{R}(h)(s) ~\textrm{d}s, \label{semigroup_nullspace_identity}
	\end{multline}
	since $Q(\hchi, \hchi)$ is already conservative and so $(1 - \pi) Q(\hchi, \hchi) = Q(\hchi, \hchi)$.  Also, by properties of the semigroup, which are detailed in \cite{Carrapatoso_hard}, 
	\begin{equation*}
	\left\|e^{\mathcal{L}(t)} \right\|_{L^2(\OmegaLv)} \leq e^{-\lambda_0 t},
	\end{equation*}
	where $\lambda_0$ is the spectral gap that was first estimated for the Fokker-Planck-Landau type equation associated to hard potentials in \cite{Degond&Lemou}.  
	
	So, by applying the $L^2$-norm and using the triangle inequality, as well as this bound on the semigroup $e^{\mathcal{L}(t)}$,
	\begin{align}
	&\left\|(1 - \pi)\hchi(t) \right\|_{L^2(\OmegaLv)} \nonumber \\
	\leq& \left\|(1 - \pi)\hchi_0 \right\|_{L^2(\OmegaLv)} + \int_{0}^{t} e^{-\lambda_0 (t-s)} \left\|Q(\hchi, \hchi)(s) \right\|_{L^2(\OmegaLv)} ~\textrm{d}s \nonumber \\
	&+ \int_{0}^{t} e^{-\lambda_0 (t-s)} \left\|(1 - \pi)\mathcal{R}(h)(s) \right\|_{L^2(\OmegaLv)} ~\textrm{d}s \nonumber \\
	\leq& \left\|(1 - \pi)\hchi_0 \right\|_{L^2(\OmegaLv)} + \frac{1}{\lambda_0} \left(O_{\frac{d}{2} + 2} + \frac{\mathcal{O} \left({\Lv}^{2\lambda + 2} \right)}{N^{\frac{d - 1}{2}}} \right) \nonumber \\
	&+ \int_{0}^{t} e^{-\lambda_0 (t-s)} C_H C_{\chi} \eta(h_0) \left(\widetilde{\zeta}(h_0) + \|h(s)\|_{L^2(\OmegaLv)} \right) ~\textrm{d}s, \label{projected_h_bound1}
	\end{align}
	where the bounds (\ref{Q_chih_estimate}) and (\ref{Remainder_bound}) have been used and the anti-derivative of the exponential term used to introduce the reciprocal of the spectral gap.  Furthermore, since the conservation routine ensures that $\pi h(t) = 0$ for all $t \geq 0$,
	\begin{equation}
	\left\|\pi \hchi(t) \right\|_{L^2(\OmegaLv)} = \left\|\pi (1 - \Chi) h(t) \right\|_{L^2(\Rd)} \leq C^4_d \left\|(1 - \Chi) h(t) \right\|_{L^1_2(\OmegaLv)} = \mathcal{O}(\delchi), \label{pi_chi_h_bound}
	\end{equation}
	by using the bound (\ref{pi_bound}) and then noting that this final norm is an $\mathcal{O}(\delchi)$ term by definition of the extension operator in (\ref{chi_def}).
	
	As a result of the bound (\ref{pi_chi_h_bound}),
	\begin{equation}
	\left\|(1 - \pi) \Chi h(t) \right\|_{L^2(\OmegaLv)} = \left\|\hchi(t) \right\|_{L^2(\OmegaLv)} + \mathcal{O}(\delchi) = \left\|h(t) \right\|_{L^2(\OmegaLv)} + \mathcal{O}(\delchi). \label{projected_h_bound2}
	\end{equation}
	So, rearranging equation (\ref{projected_h_bound2}) and then using the bound (\ref{projected_h_bound1}) gives
	\begin{equation}
	\left\|h(t) \right\|_{L^2(\OmegaLv)} = \left\|(1 - \pi) \Chi h(t) \right\|_{L^2(\OmegaLv)} + \mathcal{O}(\delchi) \leq W(t) \label{h_W_bound}
	\end{equation}
	\begin{flalign}
	\textrm{for } && W(t) :=& \left\|h_0 \right\|_{L^2(\OmegaLv)} + \mathcal{O}(\delchi) + \frac{1}{\lambda_0} \left(O_{\frac{d}{2} + 2} + \frac{\mathcal{O} \left({\Lv}^{2\lambda + 2} \right)}{N^{\frac{d - 1}{2}}} \right) && \nonumber \\
	&& &~+ C_H C_{\chi} \eta(h_0) \int_{0}^{t} e^{-\lambda_0 (t-s)} \left(\widetilde{\zeta}(h_0) + \|h(s)\|_{L^2(\OmegaLv)} \right) ~\textrm{d}s, && \label{W_def}
	\end{flalign}
	where expression (\ref{projected_h_bound2}) has again been used on the $h_0$ term.
	
	Here, by using the product rule,
	\begin{multline}
	W'(t) = C_H C_{\chi} \eta(h_0) \biggl(\widetilde{\zeta}(h_0) + \|h(t)\|_{L^2(\OmegaLv)} \\
	- \lambda_0 \int_{0}^{t} e^{-\lambda_0 (t-s)} \left(\widetilde{\zeta}(h_0) + \|h(s)\|_{L^2(\OmegaLv)} \right) ~\textrm{d}s \biggr)
	\end{multline}
	and so
	\begin{align*}
	W'(t) + \lambda_0 W(t) =& ~C_H C_{\chi} \eta(h_0) \|h(t)\|_{L^2(\OmegaLv)} + C_H C_{\chi} \eta(h_0) \widetilde{\zeta}(h_0) \\
	&~+ \lambda_0 \left\|h_0 \right\|_{L^2(\OmegaLv)} + \lambda_0 \mathcal{O}(\delchi) + O_{\frac{d}{2} + 2} + \frac{\mathcal{O} \left({\Lv}^{2\lambda + 2} \right)}{N^{\frac{d - 1}{2}}} \\
	=& ~C_H C_{\chi} \eta(h_0) \|h(t)\|_{L^2(\OmegaLv)} + C_H C_{\chi} \eta(h_0) \widetilde{\zeta}(h_0) + W(0),
	\end{align*}
	by noting that the definition of $W$ in (\ref{W_def}) gives
	\begin{equation}
	W(0) = \left\|h_0 \right\|_{L^2(\OmegaLv)} + \mathcal{O}(\delchi) + \frac{1}{\lambda_0} \left(O_{\frac{d}{2} + 2} + \frac{\mathcal{O} \left({\Lv}^{2\lambda + 2} \right)}{N^{\frac{d - 1}{2}}} \right). \label{W0_def}
	\end{equation}
	
	This means, since $\|h(t)\|_{L^2(\OmegaLv)} \leq Y(t)$ from inequality (\ref{h_W_bound}) and then rearranging, this gives the ODI
	\begin{align}
	W'(t) \leq& \left(C_H C_{\chi} \eta(h_0) - \lambda_0 \right)W(t)  + C_H C_{\chi} \eta(h_0) \widetilde{\zeta}(h_0) + W(0). \label{W_ODI1}
	\end{align}
	At this stage, for the ODI (\ref{W_ODI1}) to be useful, the linear term must be negative to control the derivative.  By recalling the definition of $\eta$ in (\ref{eta_def}), this linear term can be made negative by noting that $\eta(h_0)$ is controlled by $\|h_0\|_{H^2_{\lambda + 2}(\OmegaLv)}$.  More precisely, by assumption (\ref{long_time_assumption1}), the initial perturbation $h_0$ has weighted Sobolev norm $\|h_0\|_{H^2_{\lambda + 2}(\OmegaLv)} \leq \delta_{\eta}$, for $\delta_{\eta} > 0$ small enough to cause
	\begin{equation}
	C_H C_{\chi} \eta(h_0) \leq \frac{1}{2} \lambda_0. \label{spectral_gap_bound}
	\end{equation}
	In this case, the ODI (\ref{W_ODI1}) can be written as
	\begin{align}
	W'(t) \leq& - \frac{1}{2}\lambda_0 W(t) + \frac{1}{2}\lambda_0 \widetilde{\zeta}(h_0) + \lambda_0 W(0). \label{W_ODI2}
	\end{align}
	\begin{rem}
		In the definition (\ref{eta_def}) of $\eta$, only the unweighted Sobolev norm $\|h_0\|_{H^2}$ appears.  Since this bound is actually being used to control the weighted Sobolev norms, as per Remark \ref{Hsk_remark}, it is conjectured that the definition of $\eta$ for this purpose would also have to include the weighted norm $\|h_0\|_{H^2_{\lambda + 2}}$.  
	\end{rem}
	
	Then, by applying the Gronwall inequality for linear ODIs to (\ref{W_ODI2}) and using the inequality (\ref{h_W_bound}),
	\begin{align}
	\|h(t)\|_{L^2(\OmegaLv)} \leq W(t) \leq& ~e^{- \frac{1}{2} \lambda_0 t} \Biggl(W(0) - \Biggl(\widetilde{\zeta}(h_0) + 2 W(0) \Biggr)\Biggr) + \widetilde{\zeta}(h_0) + 2 W(0) \nonumber \\
	=& ~\widetilde{\zeta}(h_0) + 2 W(0) - \left(\widetilde{\zeta}(h_0) + W(0) \right)e^{- \frac{1}{2} \lambda_0 t} \nonumber \\
	\leq& ~C_{\zeta} \|h_0\|_{L^2(\OmegaLv)} + 2W(0), \label{h_bound}
	\end{align}
	because $\widetilde{\zeta}(h_0) \leq C_{\zeta} \|h_0\|_{L^2(\OmegaLv)}$ from (\ref{C_zeta_def}) and $\left(\widetilde{\zeta}(h_0) + W(0) \right)e^{- \frac{1}{2} \lambda_0 t} \geq 0$, where $W(0)$ is given by (\ref{W0_def}).
	
	Finally, given any $\delta > 0$, when the initial data satisfies assumptions (\ref{long_time_assumption1}) and (\ref{long_time_assumption2}), so that
	\begin{equation*}
	\|h_0\|_{L^2(\OmegaLv)} \leq \|h_0\|_{H^2_{\lambda + 2}(\OmegaLv)} \leq \min \left(\delta_{\eta}, \frac{1}{2 C_{\zeta}} \delta, \frac{1}{8}\delta \right),
	\end{equation*}
	and the domain size $\Lv$, number of Fourier modes $N$ and extension function $\Chi$ are chosen such that
	\begin{equation*}
	\mathcal{O}(\delchi) + \frac{1}{\lambda_0} \left(O_{\frac{d}{2} + 2} + \frac{\mathcal{O} \left({\Lv}^{2\lambda + 2} \right)}{N^{\frac{d - 1}{2}}} \right) \leq \frac{1}{8} \delta,
	\end{equation*}
	by definition of $W(0)$ in (\ref{W0_def}),
	\begin{equation*}
	W(0) \leq \frac{1}{8} \delta + \frac{1}{8} \delta = \frac{1}{4} \delta.
	\end{equation*}
	Therefore, by the bound (\ref{h_bound}), the perturbation $h = g - \Meq$ satisfies
	\begin{align*}
	\|h(t)\|_{L^2(\OmegaLv)} \leq ~C_{\zeta} \left(\frac{1}{2 C_{\zeta}} \delta \right) + 2 \left(\frac{1}{4} \delta \right) \leq \delta,
	\end{align*}
	for all time $t > 0$, which is the required result stated in Lemma \ref{long_time_lemma}.
\end{proof}

\section{Conclusion}
A mathematical analysis was presented to prove an estimate in $L^2$ spaces on the error between the approximate solution obtained by the conservative spectral method for solving space-homogeneous FPL type equations associated to hard potentials and the corresponding true solution.  This is the first time an error estimate has been derived for a direct method to FPL type equations, with any range of potentials.  

In order to obtain such a result, it was also shown that the there is always a unique solution to the conservative spectral method.  For a large enough cut-off velocity domain and a sufficient number of Fourier modes, the moments and $L^2$-norm of this solution also remain bounded.  Furthermore, regularity of the solution was also shown to propagate by proving that its derivatives remain bounded in $L^2$ spaces, up to any order.

\begin{appendices}
\section{Results for the Landau Equation}
\index{Appendix!Results for the Landau Equation @\emph{Results for the Landau Equation}}%

\subsection{The Weak Form}
\begin{lemma} \label{Weak_Landau_Lemma}
	If $f$ is a function such that $f(\boldsymbol{v}) \to 0$ as $|\boldsymbol{v}| \to 0$, the weak form of the operator $Q^a$ is
	\begin{equation}
	\int_{\mathbb{R}^d} Q(f,g) \phi ~\textrm{d}\boldsymbol{v} = \int_{\mathbb{R}^d} f \left(\bar{a}_{i,j} \frac{\partial^2 \phi}{\partial v_i \partial v_j} + 2 \bar{b_i} \frac{\partial \phi}{\partial v_i} \right) ~\textrm{d}\boldsymbol{v}. \label{Q_weak_form}
	\end{equation}
\end{lemma}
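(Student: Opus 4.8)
The plan is to start from the representation $Q(f,g)=\bar a_{i,j}\,\dfrac{\partial^2 f}{\partial v_i\partial v_j}-\bar c\,f$ recorded just before (\ref{Q^a_weak}) (with the Einstein summation convention in force), multiply by the test function $\phi$, integrate over $\mathbb{R}^d$, and move both derivatives off $f$ and onto $\phi$ by integrating by parts twice. The only facts needed beyond the divergence theorem are that convolution commutes with differentiation — so that $\partial_{v_j}\bar a_{i,j}=(\partial_{z_j}a_{i,j})*g=\bar b_i$ and $\partial_{v_i}\bar b_i=(\partial_{z_i}b_i)*g=\bar c$, using the definitions (\ref{a_def})--(\ref{c_def}) — together with the symmetry $a_{i,j}=a_{j,i}$.

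Concretely, I would first integrate by parts in $v_i$:
\[
\int_{\mathbb{R}^d}\bar a_{i,j}\,\frac{\partial^2 f}{\partial v_i\partial v_j}\,\phi~\dv
=-\int_{\mathbb{R}^d}\big(\partial_{v_i}\bar a_{i,j}\big)\phi\,\frac{\partial f}{\partial v_j}~\dv
-\int_{\mathbb{R}^d}\bar a_{i,j}\,\frac{\partial\phi}{\partial v_i}\,\frac{\partial f}{\partial v_j}~\dv ,
\]
and replace $\partial_{v_i}\bar a_{i,j}=\bar b_j$ in the first integral. Then I would integrate by parts once more in $v_j$ in each of the two resulting integrals; in the first this produces $\bar c\,\phi f+\bar b_j(\partial_{v_j}\phi)f$ after using $\partial_{v_j}\bar b_j=\bar c$, and in the second it produces $\bar b_i(\partial_{v_i}\phi)f+\bar a_{i,j}(\partial^2_{v_iv_j}\phi)f$ after using $\partial_{v_j}\bar a_{i,j}=\bar b_i$. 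Relabelling the dummy index $i\leftrightarrow j$ shows the two "$\bar b$" contributions coincide and combine into $2\bar b_i(\partial_{v_i}\phi)f$, while the $\bar c\,\phi f$ term cancels exactly against the $-\bar c f$ contribution coming from $Q(f,g)\phi$. Collecting the survivors gives $\int f\big(\bar a_{i,j}\,\partial^2_{v_iv_j}\phi+2\bar b_i\,\partial_{v_i}\phi\big)~\dv$, which is (\ref{Q_weak_form}).

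The only genuine point to check is the vanishing of the boundary terms at infinity generated by each integration by parts; this is where the decay hypothesis on $f$ is used, together with the implicit regularity and controlled growth of $g$ (hence of $\bar a,\bar b,\bar c$) and of $\phi$ — which in the applications of the lemma is either a collision invariant or $\mathrm{sgn}(g)\langle\boldsymbol v\rangle^k$ times smooth factors, i.e. at most polynomially growing. In the compactly supported / Schwartz setting in which the lemma is actually invoked later this is immediate, so I would simply note that each intermediate integrand is integrable with the relevant products decaying at infinity, making the surface integrals vanish, and otherwise the argument is the purely algebraic rearrangement above. This is routine bookkeeping rather than a real obstacle; the substance of the lemma is just the correct matching of the $\bar a,\bar b,\bar c$ relations.
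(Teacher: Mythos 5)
Your proposal is correct and follows essentially the same route as the paper's proof: both start from the representation $Q(f,g)=\bar a_{i,j}\,\partial^2_{v_iv_j}f-\bar c f$, integrate by parts twice using the relations $\partial_{v_j}\bar a_{i,j}=\bar b_i$, $\nabla\cdot\bar{\boldsymbol b}=\bar c$ and the symmetry of $\bar a$, combine the two $\bar b$ contributions into $2\bar b_i\,\partial_{v_i}\phi$, cancel the $\bar c\,\phi f$ term against $-\bar c f\phi$, and dispose of the boundary terms via the decay of $f$ (the paper does this explicitly with the divergence theorem on balls $\Omega_R$ and $R\to\infty$). No gaps; your index bookkeeping matches the paper's calculation.
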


\begin{proof}
	To make the notation a little easier, define the vector $\bar{\boldsymbol{a}}_i$ as the $i$th row of the matrix $\bar{a}$ so that $\bar{\boldsymbol{a}}_i = (\bar{a}_{i,1}, \ldots, \bar{a}_{i,d})$, for $i = 1, \ldots, d$.  Then,
	\begin{align*}
	\int_{\mathbb{R}^d} \bar{a}_{i,j} \frac{\partial^2 f}{\partial v_i \partial v_j} \phi ~\textrm{d}\boldsymbol{v} &= \sum_{i = 1}^{d} \int_{\mathbb{R}^d} \nabla\left(\frac{\partial f}{\partial v_i}\right) \cdot \phi \bar{\boldsymbol{a}}_i ~\textrm{d}\boldsymbol{v} \\
	&= \sum_{i = 1}^{d} \lim\limits_{R \to \infty} \int_{\Omega_R} \nabla\left(\frac{\partial f}{\partial v_i}\right) \cdot \phi \bar{\boldsymbol{a}}_i ~\textrm{d}\boldsymbol{v} \\
	&= \sum_{i = 1}^{d} \lim\limits_{R \to \infty} \left(\int_{\partial \Omega_R} \frac{\partial f}{\partial v_i} \phi \bar{\boldsymbol{a}}_i \cdot \boldsymbol{n} ~\textrm{d}s + \int_{\Omega_R} \frac{\partial f}{\partial v_i} \nabla \cdot (\phi \bar{\boldsymbol{a}}_i) ~\textrm{d}\boldsymbol{v} \right) \\
	&= \sum_{i = 1}^{d} \lim\limits_{R \to \infty}\int_{\Omega_R} \frac{\partial f}{\partial v_i} \left(\nabla \phi \cdot \bar{\boldsymbol{a}}_i + \phi \nabla \cdot \bar{\boldsymbol{a}}_i \right) ~\textrm{d}\boldsymbol{v},
	\end{align*}
	where the boundary integral evaluates to zero as the limit as $R \to \infty$ is taken as it must also be that $\frac{\partial f}{\partial v_i} \to 0$ as $|\boldsymbol{v}| \to \infty$.  
	
	Now, note that $\nabla \cdot \bar{\boldsymbol{a}}_i = \frac{\partial}{\partial v_j}(a_{i,j}) = b_i$ and 
	\begin{equation*}
	\frac{\partial f}{\partial v_i} \nabla \phi \cdot \bar{\boldsymbol{a}}_i = \frac{\partial f}{\partial v_i} \frac{\partial \phi}{\partial v_j} \bar{a}_{i,j} = \frac{\partial f}{\partial v_i} \left(\bar{a} \nabla \phi\right)_i = \nabla f \cdot \bar{a} \nabla \phi.
	\end{equation*}
	So,
	\begin{align*}
	\int_{\mathbb{R}^d} \bar{a}_{i,j} \frac{\partial^2 f}{\partial v_i \partial v_j} \phi ~\textrm{d}\boldsymbol{v} &= \lim\limits_{R \to \infty}\int_{\Omega_R} \nabla f \cdot \left(\phi \bar{\boldsymbol{b}} + \bar{a} \nabla \phi \right) ~\textrm{d}\boldsymbol{v} \\
	& = \lim\limits_{R \to \infty} \left(\int_{\partial \Omega_R} f \left(\phi \bar{\boldsymbol{b}} + \bar{a} \nabla \phi \right) \cdot \boldsymbol{n} ~\textrm{d}s \right. \\
	&~~~~~~~~~~~~ + \int_{\Omega_R} f \biggl(\nabla \phi \cdot \bar{\boldsymbol{b}} + \phi \nabla \cdot \bar{\boldsymbol{b}} + \frac{\partial \bar{a}_{i,j}}{\partial v_i} \frac{\partial \phi}{\partial v_j} \\
	&~~~~~~~~~~~~~~~~~~~~~~~~~~~~~~~~~~~~~~~~~~~~~~ + \left. \bar{a}_{i,j} \frac{\partial^2 \phi}{\partial v_i \partial v_j} \biggr) ~\textrm{d}\boldsymbol{v} \right) \\
	& = \int_{\mathbb{R}^d} f \left(\bar{a}_{i,j} \frac{\partial^2 \phi}{\partial v_i \partial v_j} + 2 \bar{b_i} \frac{\partial \phi}{\partial v_i} + \bar{c} \phi \right) ~\textrm{d}\boldsymbol{v},
	\end{align*}
	since $\nabla \phi \cdot \bar{\boldsymbol{b}} = \bar{b_i} \frac{\partial \phi}{\partial v_i}$, $\nabla \cdot \bar{\boldsymbol{b}} = \bar{c}$ and the boundary term disappears again as the limit $R \to \infty$ is taken.
	
	Therefore, 
	\begin{flalign*}
	&& \int_{\mathbb{R}^d} Q(f,g) \phi ~\textrm{d}\boldsymbol{v} &= \int_{\mathbb{R}^d} \bar{a}_{i,j} \frac{\partial^2 f}{\partial v_i \partial v_j} \phi ~\textrm{d}\boldsymbol{v} - \int_{\mathbb{R}^d} \bar{c} \phi ~\textrm{d}\boldsymbol{v} && \\
	&& &= \int_{\mathbb{R}^d} f \left(\bar{a}_{i,j} \frac{\partial^2 \phi}{\partial v_i \partial v_j} + 2 \bar{b_i} \frac{\partial \phi}{\partial v_i} \right) ~\textrm{d}\boldsymbol{v}, && \textrm{as required.}
	\end{flalign*}
\end{proof}


\subsection{Collision Operator Decay Estimate}
\begin{lemma} \label{Q_decay}
	If $f$ is a function such that $f(\boldsymbol{v}) \to 0$ as $|\boldsymbol{v}| \to \infty$ and $f(\boldsymbol{v}) = 0$ when $\boldsymbol{v} \in \Omega_{L_{\boldsymbol{v}}}$ then, for any $\lambda \in (0, 1]$ and $k' \geq 2$,
	\begin{align}
	\left| \int_{\mathbb{R}^d \backslash \Omega_{L_{\boldsymbol{v}}}} Q(f,g) ~\textrm{d}\boldsymbol{v} \right| &\leq O_{k'} \Bigl(m_{0}(g) m_{k' + \lambda}(f) + m_{\lambda}(g) m_{k'}(f) \nonumber \\
	&~~~~~~~~~~~ + m_{2}(g) m_{k' + \lambda - 2}(f) \nonumber \\
	&~~~~~~~~~~~~~~+ m_{\lambda + 2}(g) m_{k' - 2}(f)\Bigr) \label{Q_bound}
	\end{align}
\end{lemma}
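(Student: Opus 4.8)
The plan is to bound the exterior integral in (\ref{Q_bound}) by a $\Lv$-negligible multiple of a weighted $L^1$ norm of $Q(f,g)$, and then to estimate that norm by means of the weak form (\ref{Q^a_weak}), in which the convolution coefficients $\bar a_{i,j}$ and $\bar b_i$ become moments of $g$. First I would use that every point of $\Rd\setminus\OmegaLv$ has $|\boldsymbol{v}|\ge\Lv$, so $\mathbbm{1}_{\Rd\setminus\OmegaLv}(\boldsymbol{v})\le\Lv^{-k'}\langle\boldsymbol{v}\rangle^{k'}$ and hence
\begin{equation*}
\left|\int_{\Rd\setminus\OmegaLv}Q(f,g)\,\dv\right|\le\Lv^{-k'}\int_{\Rd}\bigl|Q(f,g)(\boldsymbol{v})\bigr|\,\langle\boldsymbol{v}\rangle^{k'}\,\dv.
\end{equation*}
Then I would apply the weak form (\ref{Q^a_weak}) with test function $\phi(\boldsymbol{v})=\langle\boldsymbol{v}\rangle^{k'}$ --- invoked, as in the proof of Theorem \ref{Q_moment_theorem}, before the absolute value is moved under the integral. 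Using $\partial_i\langle\boldsymbol{v}\rangle^{k'}=k'v_i\langle\boldsymbol{v}\rangle^{k'-2}$ and $\partial^2_{ij}\langle\boldsymbol{v}\rangle^{k'}=k'\bigl(\delta_{ij}\langle\boldsymbol{v}\rangle^{k'-2}+(k'-2)v_iv_j\langle\boldsymbol{v}\rangle^{k'-4}\bigr)$, together with $|a_{i,j}(\boldsymbol{z})|\le|\boldsymbol{z}|^{\lambda+2}$ and $|\boldsymbol{v}\cdot\boldsymbol{b}(\boldsymbol{z})|\le2|\boldsymbol{v}|\,|\boldsymbol{z}|^{\lambda+1}$ with $\boldsymbol{z}=\boldsymbol{v}-\boldsymbol{v}_*$, the right-hand side is bounded by a dimensional, $k'$-dependent constant times
\begin{equation*}
\int_{\mathbb{R}^{2d}}|f(\boldsymbol{v})|\,|g(\boldsymbol{v}_*)|\,\langle\boldsymbol{v}\rangle^{k'-2}\Bigl(|\boldsymbol{z}|^{\lambda+2}+|\boldsymbol{v}|\,|\boldsymbol{z}|^{\lambda+1}\Bigr)\,\dv_*\,\dv.
\end{equation*}

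To recover exactly the four products appearing in (\ref{Q_bound}) I would split $|\boldsymbol{z}|^{\lambda+2}=|\boldsymbol{z}|^{\lambda}|\boldsymbol{z}|^{2}$ with the subadditivity $|\boldsymbol{z}|^{\lambda}\le\langle\boldsymbol{v}\rangle^{\lambda}+\langle\boldsymbol{v}_*\rangle^{\lambda}$ (valid since $0<\lambda\le1$) and $|\boldsymbol{z}|^{2}\le2\langle\boldsymbol{v}\rangle^{2}+2\langle\boldsymbol{v}_*\rangle^{2}$, and handle the odd power through the Young-type bound $|\boldsymbol{v}|\,|\boldsymbol{z}|\le\tfrac32|\boldsymbol{z}|^{2}+\tfrac12|\boldsymbol{v}_*|^{2}$, which gives $|\boldsymbol{v}|\,|\boldsymbol{z}|^{\lambda+1}\le\tfrac32|\boldsymbol{z}|^{\lambda+2}+\tfrac12|\boldsymbol{v}_*|^{2}|\boldsymbol{z}|^{\lambda}$. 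After expanding, every surviving term is of the form $\langle\boldsymbol{v}\rangle^{k'-2+p}\langle\boldsymbol{v}_*\rangle^{q}$ with $(p,q)\in\{(\lambda+2,0),(\lambda,2),(2,\lambda),(0,\lambda+2)\}$, so the integral factors and produces, respectively, $m_{k'+\lambda}(f)\,m_{0}(g)$, $m_{k'+\lambda-2}(f)\,m_{2}(g)$, $m_{k'}(f)\,m_{\lambda}(g)$ and $m_{k'-2}(f)\,m_{\lambda+2}(g)$. Absorbing the constant and the factor $\Lv^{-k'}$ into $O_{k'}$ gives (\ref{Q_bound}).

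I expect the main obstacle to be the odd power $|\boldsymbol{z}|^{\lambda+1}$ coming from the $\bar b$-term: without the splitting $|\boldsymbol{v}|\,|\boldsymbol{z}|\le\tfrac32|\boldsymbol{z}|^{2}+\tfrac12|\boldsymbol{v}_*|^{2}$ one is left with moment products not in the list, so this is the step that keeps the estimate on target; organising the resulting cross terms is then routine. A secondary, purely technical point is that ``apply the weak form, then take absolute values inside the integral'' is not literally an identity; it can be justified by regularising $\mathrm{sgn}$ as in (\ref{H_delta}), or sidestepped by noting that, since $f\equiv0$ on $\OmegaLv$ forces $Q(f,g)=\bar a_{i,j}\partial^2_{ij}f-\bar c f\equiv0$ on $\OmegaLv$ and $\int_{\Rd}Q(f,g)\,\dv=0$ by mass conservation, the left-hand side of (\ref{Q_bound}) actually vanishes and the inequality holds a fortiori; I keep the explicit right-hand side because that is the form invoked in Theorem \ref{Q_moment_theorem} and Lemma \ref{moment_derivative_lemma}.
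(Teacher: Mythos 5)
Your proposal is correct and follows the same skeleton as the paper's proof — insert the weight, pull out the $O_{k'}=\mathcal{O}({\Lv}^{-k'})$ factor on the exterior, pass to the weak form (\ref{Q^a_weak}) with a radial test function, and use $|\boldsymbol{v}-\boldsymbol{v}_*|^{\lambda}\le\langle\boldsymbol{v}\rangle^{\lambda}+\langle\boldsymbol{v}_*\rangle^{\lambda}$ to factor into moments — but the middle step is genuinely different. The paper does not estimate the $\bar a$ and $\bar b$ contributions separately: it quotes the identity from Desvillettes and Villani's moment-propagation proof (Theorem 3 of \cite{D&V_1}), in which, for $\phi=|\boldsymbol{v}|^{k'}$, the two contributions combine into $k'|\boldsymbol{v}-\boldsymbol{v}_*|^{\lambda}|\boldsymbol{v}|^{k'-2}\bigl(-2|\boldsymbol{v}|^{2}+2|\boldsymbol{v}_*|^{2}+(k'-2)\frac{|\boldsymbol{v}|^{2}|\boldsymbol{v}_*|^{2}-(\boldsymbol{v}\cdot\boldsymbol{v}_*)^{2}}{|\boldsymbol{v}|^{2}}\bigr)$, and only then takes absolute values, bounding the quotient by $|\boldsymbol{v}_*|^{2}$; this yields exactly your four moment products with constants $2k'$ and $k'^{2}$. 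Your route replaces that quoted identity with elementary term-by-term bounds, $|a_{i,j}(\boldsymbol{z})|\lesssim|\boldsymbol{z}|^{\lambda+2}$ and $|\boldsymbol{v}\cdot\boldsymbol{b}(\boldsymbol{z})|\le2|\boldsymbol{v}|\,|\boldsymbol{z}|^{\lambda+1}$, plus the Young-type splitting $|\boldsymbol{v}|\,|\boldsymbol{z}|\le\tfrac32|\boldsymbol{z}|^{2}+\tfrac12|\boldsymbol{v}_*|^{2}$, and your exponent bookkeeping does land on the same four pairs $(p,q)\in\{(\lambda+2,0),(\lambda,2),(2,\lambda),(0,\lambda+2)\}$. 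What your version buys is self-containedness: no appeal to \cite{D&V_1} and no need for the paper's parenthetical check that their computation still works at the endpoint $s=2$; what you give up is the cancellation structure carried by the D\&V identity, which is irrelevant here (only an upper bound on the absolute value is needed) but is exactly what Lemma \ref{moment_derivative_lemma} exploits to produce its negative leading term — presumably why the paper reuses the same identity in both places. Your two side remarks also match the paper's own handling: the ordering issue (weak form first, absolute values afterwards) is acknowledged explicitly in the proof of Theorem \ref{Q_moment_theorem}, and your observation that, as literally stated with test function $1$ and $f$ vanishing on $\OmegaLv$, the left-hand side is actually $0$ by mass conservation is correct — the substantive content is the weighted variants with $\phi=v_j$, $|\boldsymbol{v}|^{2}$, $|\boldsymbol{v}|^{k'}$ invoked in Theorem \ref{Q_moment_theorem}, so you are right to keep the explicit right-hand side.
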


\begin{proof}
	First, for any $k' \geq 0$
	\begin{align}
	\left| \int_{\mathbb{R}^d \backslash \Omega_{L_{\boldsymbol{v}}}} Q(f,g) ~\textrm{d}\boldsymbol{v} \right| &= \left| \int_{\mathbb{R}^d \backslash \Omega_{L_{\boldsymbol{v}}}} Q(f,g) |\boldsymbol{v}|^{-k'} |\boldsymbol{v}|^{k'} ~\textrm{d}\boldsymbol{v} \right| \nonumber \\
	&\leq (\sqrt{d} L_{\boldsymbol{v}})^{-k'} \left| \int_{\mathbb{R}^d \backslash \Omega_{L_{\boldsymbol{v}}}} Q(f,g) |\boldsymbol{v}|^{k'} ~\textrm{d}\boldsymbol{v} \right| \label{Q_a/c_bound}
	\end{align}
	Now, by using identity (\ref{Q^a_weak}) for the weak form of $Q$,
	\begin{align*}
	&\left| \int_{\mathbb{R}^d \backslash \Omega_{L_{\boldsymbol{v}}}} Q(f,g) |\boldsymbol{v}|^{k'} ~\textrm{d}\boldsymbol{v} \right| \\
	=~ & \left| \int_{\mathbb{R}^d \backslash \Omega_{L_{\boldsymbol{v}}}} f \left(\bar{a}_{i,j} \frac{\partial^2}{\partial v_i \partial v_j}\left(|\boldsymbol{v}|^{k'} \right) + 2 \bar{b_i} \frac{\partial}{\partial v_i}\left(|\boldsymbol{v}|^{k'} \right) \right) ~\textrm{d}\boldsymbol{v} \right|. \\
	=~ & \left| k' \int_{\mathbb{R}^d \backslash \Omega_{L_{\boldsymbol{v}}}} \int_{\mathbb{R}^d} f(\boldsymbol{v}) g(\boldsymbol{v}_*) |\boldsymbol{v} - \boldsymbol{v}_*|^{\lambda} |\boldsymbol{v}|^{k' - 2} \biggl(-2|\boldsymbol{v}|^{2} + 2|\boldsymbol{v}_*|^{2} \nonumber \right. \\
	&~~~~~~~~~~~~~~~~~~~~~~~~~~~~~~~~~~~~~ \left. + (k' - 2) \frac{|\boldsymbol{v}|^{2} |\boldsymbol{v}_*|^{2} - (\boldsymbol{v} \cdot \boldsymbol{v}_*)^2}{|\boldsymbol{v}|^{2}} \biggr) ~\textrm{d}\boldsymbol{v}_* ~\textrm{d}\boldsymbol{v} \right|,
	\end{align*}
	which used an identity similar to one derived in Desvillette and Villani's proof of moment propagation in Theorem $3(i)$ in \cite{D&V_1}, where they use $\phi(\boldsymbol{v}) = (1 + |\boldsymbol{v}|^2)^{\frac{s}{2}}$ for $s > 2$ instead, so the details can be found there (and it can also be checked that their proof up to this result does still work for $s = 2$).  Then, by bringing the absolute values inside the integral and expanding the domain of integration for $\boldsymbol{v}$ to all of $\mathbb{R}^d$,
	\begin{align}
	&\left| \int_{\mathbb{R}^d \backslash \Omega_{L_{\boldsymbol{v}}}} Q(f,g) |\boldsymbol{v}|^{k'} ~\textrm{d}\boldsymbol{v} \right| \nonumber \\
	\leq~ & k' \int_{\mathbb{R}^d} \int_{\mathbb{R}^d} |f(\boldsymbol{v})\|g(\boldsymbol{v}_*)\|\boldsymbol{v} - \boldsymbol{v}_*|^{\lambda} |\boldsymbol{v}|^{k' - 2} \biggl(2|\boldsymbol{v}|^{2} + 2|\boldsymbol{v}_*|^{2} \nonumber \\
	&~~~~~~~~~~~~~~~~~~~~~~~~~~~~~~~~~~~~~~~~~~ + (k' - 2) \frac{|\boldsymbol{v}|^{2} |\boldsymbol{v}_*|^{2} - (\boldsymbol{v} \cdot \boldsymbol{v}_*)^2}{|\boldsymbol{v}|^{2}} \biggr) ~\textrm{d}\boldsymbol{v}_* \textrm{d}\boldsymbol{v}, \label{D&V_identity}
	\end{align}
	where it has also been used that $k' \geq 2$ and that $|\boldsymbol{v}|^{2} |\boldsymbol{v}_*|^{2} - (\boldsymbol{v} \cdot \boldsymbol{v}_*)^2 \geq 0$ by the Cauchy-Schwarz inequality.  Furthermore, since $(\boldsymbol{v} \cdot \boldsymbol{v}_*)^2 \geq 0$,
	\begin{equation}
	\frac{|\boldsymbol{v}|^{2} |\boldsymbol{v}_*|^{2} - (\boldsymbol{v} \cdot \boldsymbol{v}_*)^2}{|\boldsymbol{v}|^{2}} \leq \frac{|\boldsymbol{v}|^{2} |\boldsymbol{v}_*|^{2}}{|\boldsymbol{v}|^{2}} = |\boldsymbol{v}_*|^2 \label{Q^a_quotient_bound},
	\end{equation}
	and using this in (\ref{D&V_identity}) along with $|\boldsymbol{v} - \boldsymbol{v}_*|^{\lambda} \leq |\boldsymbol{v}|^{\lambda} + |\boldsymbol{v}_*|^{\lambda}$ gives
	\begin{align*}
	&\left| \int_{\mathbb{R}^d \backslash \Omega_{L_{\boldsymbol{v}}}} Q(f,g) |\boldsymbol{v}|^{k'} ~\textrm{d}\boldsymbol{v} \right| \\
	\leq~ & k' \int_{\mathbb{R}^d} \int_{\mathbb{R}^d} |f(\boldsymbol{v})\|g(\boldsymbol{v}_*)| \left(|\boldsymbol{v}|^{\lambda} + |\boldsymbol{v}_*|^{\lambda} \right) |\boldsymbol{v}|^{k' - 2} \left(2|\boldsymbol{v}|^{2} + k' |\boldsymbol{v}_*|^{2} \right) ~\textrm{d}\boldsymbol{v}_* \textrm{d}\boldsymbol{v}, \\
	=~ & k' \int_{\mathbb{R}^d} \int_{\mathbb{R}^d} |f(\boldsymbol{v})\|g(\boldsymbol{v}_*)| \Bigl(2|\boldsymbol{v}|^{k' + \lambda} + 2|\boldsymbol{v}|^{k'}|\boldsymbol{v}_*|^{\lambda} + k' |\boldsymbol{v}|^{k' + \lambda - 2}|\boldsymbol{v}_*|^{2} \\
	&~~~~~~~~~~~~~~~~~~~~~~~~~~~~~~~~~~~~~~~~~~~~~~~~~~~+ k' |\boldsymbol{v}|^{k' - 2}|\boldsymbol{v}_*|^{\lambda + 2} \Bigr) ~\textrm{d}\boldsymbol{v}_* \textrm{d}\boldsymbol{v}, \\
	=~ & 2 k' \Bigl(m_{0}(g) m_{k' + \lambda}(f) + m_{\lambda}(g) m_{k'}(f) \Bigr) \\
	&~~ + k'^2 \Bigl(m_{2}(g) m_{k' + \lambda - 2}(f) + m_{\lambda + 2}(g) m_{k' - 2}(f)\Bigr).
	\end{align*}
	Therefore, using this last expression in (\ref{Q_a/c_bound}) gives 
	\begin{align*}
	\left| \int_{\mathbb{R}^d \backslash \Omega_{L_{\boldsymbol{v}}}} Q(f,g) ~\textrm{d}\boldsymbol{v} \right| &= \left| \int_{\mathbb{R}^d \backslash \Omega_{L_{\boldsymbol{v}}}} Q(f,g) |\boldsymbol{v}|^{-k'} |\boldsymbol{v}|^{k'} ~\textrm{d}\boldsymbol{v} \right| \nonumber \\
	&\leq k' (\sqrt{d} L_{\boldsymbol{v}})^{-k'} \biggl( 2 \Bigl(m_{0}(g) m_{k' + \lambda}(f) + m_{\lambda}(g) m_{k'}(f) \Bigr) \\
	&~~~~~~~~~~~~~~~ + k' \Bigl(m_{2}(g) m_{k' + \lambda - 2}(f) + m_{\lambda + 2}(g) m_{k' - 2}(f)\Bigr) \biggr) 
	\end{align*}
	which is the result (\ref{Q_bound}), as required.
\end{proof}

\subsection{Lower Bound on Moments of Cut-off}
\begin{lemma} \label{moment_cutoff_lemma}
	Given some $\varepsilon_{\chi} \in (0,1)$, any solution $g(t, \boldsymbol{v})$ to the semi-discrete problem (\ref{semi-discrete2}), for which there exist constants $C \geq 1$ and $r \in (0, 1]$ such that
	\begin{equation}
	|g(t, \boldsymbol{v})| \leq \frac{C \rho_0}{(2\pi T_{0})^{\frac{d}{2}}} e^{-\frac{r |\boldsymbol{v}|^2}{2 T_{0}}} \label{moment_bound_assumption}
	\end{equation}
	\begin{flalign*}
	\textrm{where} && \rho_0 = \int_{\mathbb{R}^d} g_0(\boldsymbol{v}) ~\textrm{d}\boldsymbol{v} ~~~\textrm{and}~~~ T_0 = \frac{1}{3\rho_0} \int_{\mathbb{R}^d} g_0(\boldsymbol{v}) |\boldsymbol{v}|^2 ~\textrm{d}\boldsymbol{v}, &&
	\end{flalign*}
	satisfies
	\begin{flalign}
	(i) ~~~m_0(\mbox{\Large$\chi$}g) &\geq (1 - \varepsilon_{\chi}) m_0(g_0), && \label{moment_cutoff_bound_0} \\
	(ii) ~~~m_k(\mbox{\Large$\chi$}g) &\geq (1 - \varepsilon_{\chi}) m_k(g), \textrm{for any } k \geq 0. \label{moment_cutoff_bound_k} &&
	\end{flalign}
\end{lemma}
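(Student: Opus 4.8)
The plan is to exploit the three defining features of the cut-off $\Chi$ from (\ref{chi_def}): it is identically $1$ on $\Omega_{(1-\delchi)\Lv}$, it satisfies $0\le\Chi\le1$ everywhere, and it is supported inside $\OmegaLv$. Since the solution $g$ is extended by zero outside $\OmegaLv$, one has $m_k(g)=\int_{\OmegaLv}|g|\langle\boldsymbol{v}\rangle^{k}~\dv$, and writing $\Chi = 1-(1-\Chi)$ gives, for every $k\ge0$,
\begin{equation*}
m_k(\Chi g)=m_k(g)-\int_{\OmegaLv}(1-\Chi)|g|\langle\boldsymbol{v}\rangle^{k}~\dv \ \ge\ m_k(g)-\int_{\OmegaLv\setminus\Omega_{(1-\delchi)\Lv}}|g|\langle\boldsymbol{v}\rangle^{k}~\dv,
\end{equation*}
because $1-\Chi$ vanishes on $\Omega_{(1-\delchi)\Lv}$ and is bounded by $1$ on the shell. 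Thus the whole statement reduces to showing that the weighted mass of $|g|$ carried by the thin shell $\OmegaLv\setminus\Omega_{(1-\delchi)\Lv}$ is at most $\varepsilon_{\chi}$ times the quantity appearing on the right of the claimed inequality.

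For that I would invoke the Maxwellian bound (\ref{moment_bound_assumption}): the shell integral is dominated by $\int_{\Rd\setminus\Omega_{(1-\delchi)\Lv}}\frac{C\rho_0}{(2\pi T_0)^{d/2}}e^{-r|\boldsymbol{v}|^2/(2T_0)}\langle\boldsymbol{v}\rangle^{k}~\dv$, a fixed Gaussian-times-polynomial tail over the complement of the cube of half-side $(1-\delchi)\Lv$, which is monotone decreasing in $\Lv$ and tends to $0$ as $\Lv\to\infty$. This is precisely the type of tail that the domain-selection discussion of Subsection \ref{Domain_Choice} (the tolerance $\delta_M$) forces to be negligible, so one may assume $\Lv$ (equivalently $\delchi\Lv$) chosen large enough that the tail is $\le\varepsilon_{\chi}\rho_0$. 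Combining this with $m_k(g)\ge m_0(g)\ge\bigl|\int_{\Rd}g(t,\boldsymbol{v})~\dv\bigr|=\rho_0$ — using $\langle\boldsymbol{v}\rangle^{k}\ge1$ and the mass conservation built into $Q_c$ (its correction lies in $\mathcal{B}$, so $\frac{d}{dt}\int g=0$) — gives $m_k(\Chi g)\ge(1-\varepsilon_{\chi})m_k(g)$, which is (ii).

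For (i) I would run the same computation at $k=0$ but keep the sign: $m_0(\Chi g)\ge\int_{\Omega_{(1-\delchi)\Lv}}|g(t)|~\dv\ge\bigl|\int_{\Omega_{(1-\delchi)\Lv}}g(t)~\dv\bigr|\ge\bigl|\int_{\Rd}g(t)~\dv\bigr|-\int_{\OmegaLv\setminus\Omega_{(1-\delchi)\Lv}}|g(t)|~\dv\ge\rho_0-\varepsilon_{\chi}\rho_0$, again using the shell tail bound and $\int g(t)=\int g_0=\rho_0$ (the zeroth Fourier mode, hence the mass, is preserved by $\Pi^N_{2\Lv}$ and by the conservation routine). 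Identifying $\rho_0=\int g_0$ with $m_0(g_0)$ — exact when $g_0\ge0$, and otherwise up to the $O(\varepsilon)$ negative part controlled by the stability condition (\ref{stability_init}) — yields (i).

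The main obstacle is the quantitative step of turning ``the tail is eventually negligible'' into the clean inequality with constant exactly $\varepsilon_{\chi}$: this forces one to commit to the interdependence between $\varepsilon_{\chi}$, $\delchi$ and $\Lv$ that is only informally present in the domain-choice paragraph, and to note that (ii) is asked for \emph{every} $k\ge0$, so the required smallness of the tail (hence of $1/\Lv$) formally depends on $k$. Since the applications (Lemmas \ref{moment_derivative_lemma} and \ref{L2_norm_derivative_lemma}) only ever use a single fixed $k$, this dependence is harmless and introduces no circularity.
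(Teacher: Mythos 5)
Your proposal follows essentially the same route as the paper's own proof: the decomposition $\Chi = 1-(1-\Chi)$ reduces both bounds to the weighted tail integral over the complement of $\Omega_{(1-\delchi)\Lv}$, the Gaussian envelope (\ref{moment_bound_assumption}) makes that tail at most $\varepsilon_{\chi}\rho_0$ for $\Lv$ large enough and $\delchi$ small enough (with the same harmless $k$-dependence you flag), and mass conservation gives $\rho_0 \leq m_k(g)$ for part (ii). The only differences are cosmetic — for (i) you lower-bound by the inner-cube integral plus a reverse triangle inequality instead of subtracting the shell directly, and your caveat about passing from $\rho_0$ to $m_0(g_0)$ corresponds to the same identification the paper itself makes at that step — so the proposal is correct and matches the paper's argument.
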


\begin{proof}
	By adding and subtracting the $k$-th moment of $g$ to that of \mbox{\Large$\chi$}$g$
	\begin{equation*}
	m_k(\mbox{\Large$\chi$}g) = m_k(g) - \int_{\mathbb{R}^d} (1 - \mbox{\Large$\chi$}) |g(\boldsymbol{v})| \langle \boldsymbol{v} \rangle^{k} ~\textrm{d}\boldsymbol{v}.
	\end{equation*}
	Then, by noting that \mbox{\Large$\chi$}$(\boldsymbol{v}) = 1$ when $\boldsymbol{v} \in \Omega_{(1-\delta_{\chi})L_{\boldsymbol{v}}}$ and $0 \leq$ \mbox{\Large$\chi$}$(\boldsymbol{v}) < 1$ otherwise,
	\begin{align*}
	\int_{\mathbb{R}^d} (1 - \mbox{\Large$\chi$}) |g(\boldsymbol{v})| \langle \boldsymbol{v} \rangle^{k} ~\textrm{d}\boldsymbol{v} & = \int_{\mathbb{R}^d \backslash \Omega_{(1-\delta_{\chi})L_{\boldsymbol{v}}}} (1 - \mbox{\Large$\chi$}) |g(\boldsymbol{v})| \langle \boldsymbol{v} \rangle^{k} ~\textrm{d}\boldsymbol{v} \\
	& \leq \int_{\mathbb{R}^d \backslash \Omega_{(1-\delta_{\chi})L_{\boldsymbol{v}}}} |g(\boldsymbol{v})| \langle \boldsymbol{v} \rangle^{k} ~\textrm{d}\boldsymbol{v} \\
	& \leq \int_{\mathbb{R}^d \backslash \Omega_{(1-\delta_{\chi})L_{\boldsymbol{v}}}} \frac{C \rho_0}{(2\pi T_{0})^{\frac{d}{2}}} e^{-\frac{r |\boldsymbol{v}|^2}{2 T_{0}}} \langle \boldsymbol{v} \rangle^{k} ~\textrm{d}\boldsymbol{v} \\
	& = \tau_k \rho_0,
	\end{align*}
	\begin{flalign*}
	\textrm{for } && \tau_k = \int_{\mathbb{R}^d \backslash \Omega_{(1-\delta_{\chi})L_{\boldsymbol{v}}}} \frac{C}{(2\pi T_{0})^{\frac{d}{2}}} e^{-\frac{r |\boldsymbol{v}|^2}{2 T_{0}}} \langle \boldsymbol{v} \rangle^{k} ~\textrm{d}\boldsymbol{v} &&
	\end{flalign*}
	where the domain $\Omega_{(1-\delta_{\chi})L_{\boldsymbol{v}}}$ has $L_{\boldsymbol{v}}$ chosen large enough and $\delta_{\chi}$ small enough so that $\tau_k \leq \varepsilon_{\chi}$ for any required value of $k \geq 0$, due to the Gaussian decay being so much faster than any polynomial can increase.
	
	\begin{flalign*}
	\textrm{Also, } && \rho_0 = \int_{\mathbb{R}^d} g_0(\boldsymbol{v}) ~\textrm{d}\boldsymbol{v} \leq \int_{\mathbb{R}^d} |g_0(\boldsymbol{v})| ~\textrm{d}\boldsymbol{v} = m_0(g_0), &&
	\end{flalign*}
	which means that $\int_{\mathbb{R}^d} (1 - \mbox{\Large$\chi$}) |g(\boldsymbol{v})| \langle \boldsymbol{v} \rangle^{k} ~\textrm{d}\boldsymbol{v} \leq \varepsilon_{\chi} m_0(g_0)$ and so, for $k = 0$,
	\begin{equation*}
	m_0(\mbox{\Large$\chi$}g) \geq m_0(g) - \varepsilon_{\chi} m_0(g) = (1 - \varepsilon_{\chi}) m_0(g_0),
	\end{equation*}
	which is result (\ref{moment_cutoff_bound_0}).
	
	Furthermore, since $g$ is a solution of equation (\ref{semi-discrete2}) which conserves mass,
	\begin{equation*}
	\rho_0 = \int_{\mathbb{R}^d} g_0(\boldsymbol{v}) ~\textrm{d}\boldsymbol{v} = \int_{\mathbb{R}^d} g(\boldsymbol{v}) ~\textrm{d}\boldsymbol{v} \leq \int_{\mathbb{R}^d} |g(\boldsymbol{v})| ~\textrm{d}\boldsymbol{v} \leq \int_{\mathbb{R}^d} |g(\boldsymbol{v})| \langle \boldsymbol{v} \rangle^{k} ~\textrm{d}\boldsymbol{v} = m_k(g).
	\end{equation*}
	This means that $\int_{\mathbb{R}^d} (1 - \mbox{\Large$\chi$}) |g(\boldsymbol{v})| \langle \boldsymbol{v} \rangle^{k} ~\textrm{d}\boldsymbol{v} \leq \varepsilon_{\chi} m_k(g)$ and therefore
	\begin{equation*}
	m_k(\mbox{\Large$\chi$}g) \geq m_k(g) - \varepsilon_{\chi} m_k(g) = (1 - \varepsilon_{\chi}) m_k(g),
	\end{equation*}
	which is result (\ref{moment_cutoff_bound_k}).
	
\end{proof}

\section{Additional Mathematical Results}
\index{Appendix!Additional Mathematical Results @\emph{Additional Mathematical Results}}%

\subsection{A Sobolev Embedding Result} \label{Sobolev_result}
\begin{lemma}
	For $\Omega \subset \Rd$ and any $1 \leq p < d$, given a function $h \in \dot{H}^1(\Omega)$, there exists some constant $C^S_{p, d}$ that depends on $p$ and the dimension $d$ but is independent of $\Omega$ such that 
	\begin{equation*}
	\| h \|_{L^{\frac{dp}{d - p}}(\Omega)} \leq C^S_{p, d} \| h \|_{\dot{H}^1(\Omega)}.
	\end{equation*}
\end{lemma}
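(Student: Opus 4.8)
The plan is to reduce to the classical Gagliardo--Nirenberg--Sobolev inequality on $\Rd$ and then specialise. First I would observe that, for the uses made of this lemma, the function $h$ is compactly supported inside $\Omega$: in the proof of Lemma \ref{L2_norm_derivative_lemma} it carries the factor $\Chi$, which vanishes near $\partial \OmegaLv$. Hence it suffices to establish the estimate for $h \in C^{\infty}_c(\Omega)$ and then pass to the completion. Extending such an $h$ by zero to all of $\Rd$ leaves both $\|h\|_{L^{dp/(d-p)}}$ and the gradient norm unchanged, so there is no loss in working on $\Rd$; in particular the constant produced below depends only on $p$ and $d$, never on $\Omega$.

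The core estimate is the case $p = 1$, proved by the Loomis--Whitney argument. For $h \in C^{\infty}_c(\Rd)$ one writes, for each coordinate direction $i$, $|h(\boldsymbol{v})| \le \int_{\mathbb{R}} |\partial_{i} h|\, \mathrm{d}t$ (integration of $\partial_i h$ in the $i$-th variable, the others fixed), hence $|h(\boldsymbol{v})|^{\frac{d}{d-1}} \le \prod_{i=1}^{d}\Big(\int_{\mathbb{R}} |\partial_{i} h|\, \mathrm{d}t\Big)^{\frac{1}{d-1}}$; integrating successively in $v_1,\dots,v_d$ and applying the generalised H\"older inequality at each step gives $\|h\|_{L^{\frac{d}{d-1}}(\Rd)} \le \prod_{i=1}^{d}\|\partial_{i} h\|_{L^1(\Rd)}^{1/d} \le \| |\nabla h| \|_{L^1(\Rd)}$, the last step by the inequality of arithmetic and geometric means. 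For $1 < p < d$ I would then apply this to $|h|^{\gamma}$ with $\gamma := \frac{p(d-1)}{d-p} > 1$, chosen so that $\frac{\gamma d}{d-1} = (\gamma-1)\frac{p}{p-1} = \frac{dp}{d-p} =: p^{*}$. Since $\nabla(|h|^{\gamma}) = \gamma |h|^{\gamma-1}\mathrm{sgn}(h)\nabla h$ (justified for smooth $h$ by replacing $|h|$ with $(h^2+\varepsilon^2)^{1/2}$ and letting $\varepsilon\to 0$), H\"older with exponents $p$ and $p' = p/(p-1)$ yields
\[
\|h\|_{L^{p^{*}}(\Rd)}^{\gamma} \le \gamma\, \big\||h|^{\gamma-1}\big\|_{L^{p'}(\Rd)}\, \||\nabla h|\|_{L^p(\Rd)} = \gamma\, \|h\|_{L^{p^{*}}(\Rd)}^{\gamma-1}\, \||\nabla h|\|_{L^p(\Rd)}.
\]
Dividing by the finite quantity $\|h\|_{L^{p^{*}}(\Rd)}^{\gamma-1}$ gives $\|h\|_{L^{p^{*}}(\Rd)} \le \gamma\, \||\nabla h|\|_{L^p(\Rd)}$ with $\gamma = \gamma(p,d)$, which is the claimed inequality with $C^{S}_{p,d} := \gamma$ (the fixed constant from the $p=1$ step being absorbed).

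Finally I would reconcile the right-hand side with the notation of the statement: for the exponent $p = 2$ actually used in the paper, Plancherel's theorem gives $\||\nabla h|\|_{L^2(\Omega)} = \||\boldsymbol{\xi}|\,\hat h\|_{L^2} = \|h\|_{\dot H^1(\Omega)}$, so the estimate reads precisely $\|h\|_{L^{\frac{2d}{d-2}}(\Omega)} \le C^{S}_{2,d}\|h\|_{\dot H^1(\Omega)}$, as needed in the proof of Lemma \ref{L2_norm_derivative_lemma}; for $p \neq 2$ the same display holds with $\|h\|_{\dot H^1(\Omega)}$ read as the homogeneous $W^{1,p}$ seminorm $\||\nabla h|\|_{L^p(\Omega)}$. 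The only point requiring genuine care is the first paragraph: making sure the reduction to compactly supported smooth functions is legitimate in the setting at hand and, crucially, that extension by zero degrades none of the norms, so that the resulting constant is independent of $\Omega$. The Loomis--Whitney computation and the exponent bookkeeping are then entirely routine.
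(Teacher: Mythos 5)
The paper never proves this lemma: it is stated in the appendix and used (in the proof of Lemma \ref{L2_norm_derivative_lemma}, with $p=2$) as the standard Gagliardo--Nirenberg--Sobolev embedding, so there is no in-paper argument to compare against. Your proposal supplies the classical proof and it is correct: the $p=1$ case by the Loomis--Whitney/Gagliardo slicing argument with iterated generalised H\"older, then the case $1<p<d$ by applying it to $|h|^{\gamma}$ with $\gamma=\frac{p(d-1)}{d-p}$; the exponent bookkeeping checks out, since $\frac{\gamma d}{d-1}=(\gamma-1)\frac{p}{p-1}=\frac{dp}{d-p}$, and the division by $\|h\|_{L^{p^*}}^{\gamma-1}$ is legitimate for compactly supported smooth $h$ (the inequality being trivial if that norm vanishes). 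The most valuable part of your write-up is the opening reduction, and it is in fact necessary rather than optional: as literally stated, with $\dot{H}^1(\Omega)$ defined in Section 3 as the completion of $C^{\infty}(\Omega)$ under the seminorm and with no boundary condition, the inequality cannot hold with a constant independent of $\Omega$ (a nonzero constant on a bounded $\Omega$ has zero seminorm but positive $L^{\frac{dp}{d-p}}$-norm), so the lemma must be read as the zero-trace/compact-support version. That is exactly the situation in which the paper invokes it, since the function there is $\gchi(\boldsymbol{v})(1+|\boldsymbol{v}|^2)^{\frac{\lambda}{4}}$ and the cut-off \mbox{\Large$\chi$} vanishes near $\partial\OmegaLv$, so extension by zero changes neither side and produces a constant depending only on $p$ and $d$. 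Your reconciliation of the right-hand side --- $\||\nabla h|\|_{L^2}=\|h\|_{\dot{H}^1}$ by Plancherel for $p=2$, the homogeneous $W^{1,p}$ seminorm otherwise --- is also the sensible reading of the statement's slightly loose notation. No gaps.
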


\subsection{A Fourier Approximation Estimate}
\begin{lemma} \label{Fourier_approx_lemma}
	Given $s \in \mathbb{N}$ and $\OmegaLv = [-\Lv, \Lv]^d$, for $\Lv > 0$ and $d \in \mathbb{N}$, if $f \in H^s(\OmegaLv)$ then the remainder of a partial Fourier series of $f$ satisfies
	\begin{equation*}
	\left\|\left(1 - \Pi^N_{\Lv} \right) f \right\|_{L^2(\OmegaLv)} \leq \frac{1}{\left(2\pi \right)^{\frac{d}{2}}} \left(\frac{\Lv}{2\pi N} \right)^s \left\|f \right\|_{H^s(\OmegaLv)}.
	\end{equation*}
\end{lemma}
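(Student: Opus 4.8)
The plan is to identify $(1-\Pi^N_{\Lv})f$ with the tail of the Fourier series of $f$ on $\OmegaLv$ and to control that tail by Parseval's identity. First I would write, for $f\in H^s(\OmegaLv)\subset L^2(\OmegaLv)$,
\begin{equation*}
\left(1-\Pi^N_{\Lv}\right)f(\boldsymbol{v}) = \frac{1}{(2\Lv)^{d}}\sum_{\substack{k \in \mathbb{Z}^d: \\ \max_i |k_i| > N}} \hat{f}(\boldsymbol{\xi}_k)\, e^{i\boldsymbol{\xi}_k\cdot\boldsymbol{v}}, \qquad \boldsymbol{\xi}_k = \tfrac{2\pi}{\Lv}k,
\end{equation*}
since $\Pi^N_{\Lv}$ retains exactly the modes with $\max_i|k_i|\le N$. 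The orthogonality of $\{e^{i\boldsymbol{\xi}_k\cdot\boldsymbol{v}}\}_{k\in\mathbb{Z}^d}$ on $\OmegaLv$ then gives, by Parseval's identity in the normalisation fixed in Section \ref{Notation},
\begin{equation*}
\left\|\left(1-\Pi^N_{\Lv}\right)f\right\|^2_{L^2(\OmegaLv)} = \kappa_{d,\Lv} \sum_{\max_i |k_i| > N} \bigl|\hat{f}(\boldsymbol{\xi}_k)\bigr|^2
\end{equation*}
for a fixed normalisation constant $\kappa_{d,\Lv}$, which will be the eventual source of the $(2\pi)^{-d/2}$ prefactor.

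Next I would exploit that every discarded mode is genuinely high frequency: if $\max_i|k_i|>N$ then $|\boldsymbol{\xi}_k|\ge\frac{2\pi}{\Lv}(N+1)>\frac{2\pi N}{\Lv}$, so $|\boldsymbol{\xi}_k|^{-2s}<\bigl(\frac{\Lv}{2\pi N}\bigr)^{2s}$. Inserting $1=|\boldsymbol{\xi}_k|^{2s}|\boldsymbol{\xi}_k|^{-2s}$ into each summand, pulling out this uniform bound, and using $|\boldsymbol{\xi}_k|^{2}\le 1+|\boldsymbol{\xi}_k|^{2}=\langle\boldsymbol{\xi}_k\rangle^2$ gives
\begin{equation*}
\sum_{\max_i |k_i| > N} \bigl|\hat{f}(\boldsymbol{\xi}_k)\bigr|^2 \le \Bigl(\tfrac{\Lv}{2\pi N}\Bigr)^{2s}\sum_{k\in\mathbb{Z}^d}|\boldsymbol{\xi}_k|^{2s}\bigl|\hat{f}(\boldsymbol{\xi}_k)\bigr|^2 \le \Bigl(\tfrac{\Lv}{2\pi N}\Bigr)^{2s}\sum_{k\in\mathbb{Z}^d}\langle\boldsymbol{\xi}_k\rangle^{2s}\bigl|\hat{f}(\boldsymbol{\xi}_k)\bigr|^2.
\end{equation*}
Finally, the remaining sum is, up to the same constant $\kappa_{d,\Lv}$, exactly $\|f\|^2_{H^s(\OmegaLv)}$: since the Fourier coefficients of $D^{\alpha}f$ are $(i\boldsymbol{\xi}_k)^{\alpha}\hat{f}(\boldsymbol{\xi}_k)$ — this is the commutation identity (\ref{derivative_commuting}) — summing over $|\alpha|\le s$ and applying Parseval term by term reconstructs the Sobolev norm (\ref{Hsk_def}), consistently with the spectral characterisation $\|f\|_{H^s(\OmegaLv)}=\|\langle\boldsymbol{\xi}\rangle^s\hat{f}\|_{L^2(\OmegaLv)}$ recalled in Section \ref{Notation}. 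Chaining the three displays and taking square roots produces the stated inequality.

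The only delicate point is the bookkeeping of constants, namely checking that the normalisation factor $\kappa_{d,\Lv}$ produced by Parseval on the cube of side $2\Lv$ is identical on both sides (the left-hand $L^2$-norm and the right-hand $H^s$-sum), so that it cancels and leaves precisely the prefactor $(2\pi)^{-d/2}$. A secondary point worth stating explicitly is that $\Pi^N_{\Lv}$ truncates in the $\ell^{\infty}$-cube $\max_i|k_i|\le N$ rather than in an $\ell^2$-ball, which is exactly what makes the bound $|\boldsymbol{\xi}_k|>\frac{2\pi N}{\Lv}$ valid for every single discarded mode. No genuine analytic difficulty arises beyond these normalisation checks.
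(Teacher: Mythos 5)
The paper states Lemma \ref{Fourier_approx_lemma} without proof in the appendix, so there is no in-paper argument to compare against; judged on its own, your proposal is the standard spectral-accuracy argument and is essentially sound: discrete Parseval on the tail, the lower bound $|\boldsymbol{\xi}_k| > \tfrac{2\pi N}{\Lv}$ for every discarded mode (correctly tied to the fact that $\Pi^N_{\Lv}$ truncates in the $\ell^{\infty}$ cube), and the multiplier characterisation of $H^s$. Two points that you defer to "bookkeeping" deserve explicit treatment. First, the step converting the weighted coefficient sum into $\|f\|_{H^s(\OmegaLv)}$ rests on the identity $\widehat{D^{\alpha}f}(\boldsymbol{\xi}_k) = (i\boldsymbol{\xi}_k)^{\alpha}\hat{f}(\boldsymbol{\xi}_k)$, i.e.\ on (\ref{derivative_commuting}); for a general $f \in H^s(\OmegaLv)$ the underlying integration by parts produces boundary terms, and the lemma as stated is actually false without their vanishing (e.g.\ $f(v)=v$ on $\OmegaLv$ lies in every $H^s$ yet its truncation error decays only like $N^{-1/2}$). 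Your proof therefore needs the hypothesis that $f$ is periodic on $\OmegaLv$, or vanishes near $\partial\OmegaLv$ to order $s-1$ — a condition satisfied wherever the lemma is invoked in the paper, since the arguments there are damped by the cut-off \Chi\ or supported strictly inside the box, but it should be stated rather than left implicit. Second, the constant: carrying your Parseval computation through with the paper's normalisation yields $\bigl\|(1-\Pi^N_{\Lv})f\bigr\|_{L^2(\OmegaLv)} \leq \left(\tfrac{\Lv}{2\pi N}\right)^{s}\|f\|_{H^s(\OmegaLv)}$, up to a dimension-dependent factor if one uses the derivative-sum definition (\ref{Hsk_def}) instead of the $\langle\boldsymbol{\xi}\rangle^{s}$ characterisation; the additional prefactor $(2\pi)^{-\frac{d}{2}}$ in the statement does not emerge from this argument and cannot be recovered by it. This is harmless for the paper, since only the $N^{-s}$ decay rate is used (e.g.\ in (\ref{Fourier_projection_tail1}) and in the discussion preceding Theorem \ref{L2_error_estimate_theorem}), but your write-up should either track the normalisation to the end or state the bound with a generic constant rather than promise the exact prefactor.
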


\subsection{A Nonlinear Gronwall Inequality}
\begin{lemma} \label{Gronwall_lemma}
	Given an ODI for $u(t)$, defined for $0 < t < T$, with constant coefficients $A, B \in \mathbb{R}$ of the form
	\begin{flalign*}
	&& ~~~~~~~~~~~~~~~\frac{d u}{d t} \leq A u(t) + B (u(t))^{\alpha}, && \textrm{with } 0 \leq \alpha < 1,
	\end{flalign*}
	if $\frac{B}{A} \geq 0$ then the solution satisfies
	\begin{equation*}
	u(t) \leq e^{At} \Biggl(\bigl(u(0) \bigr)^{1-\alpha} + \frac{B}{A} \Biggr)^{\frac{1}{1-\alpha}}.
	\end{equation*}
\end{lemma}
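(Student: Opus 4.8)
The plan is to linearise the differential inequality by a Bernoulli-type substitution. Assume $u \ge 0$, which is automatic in every application of the lemma (there $u$ is a squared $L^2$-norm), and set $w(t) := \bigl(u(t)\bigr)^{1-\alpha}$. Since $1-\alpha > 0$, the map $s \mapsto s^{1-\alpha}$ is continuous and nondecreasing on $[0,\infty)$, and wherever $u(t) > 0$ it is differentiable with $w'(t) = (1-\alpha)\bigl(u(t)\bigr)^{-\alpha} u'(t)$. The whole point of the substitution is that multiplying the given ODI $u' \le A u + B u^{\alpha}$ by the nonnegative factor $(1-\alpha)u^{-\alpha}$ converts the nonlinear term $B u^{\alpha}$ into the constant $(1-\alpha)B$, producing the \emph{linear} ODI
\begin{equation*}
w'(t) \le (1-\alpha)\bigl(A\,w(t) + B\bigr).
\end{equation*}

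Next I would integrate this linear ODI with the integrating factor $e^{-(1-\alpha)At}$, which gives $\tfrac{d}{dt}\!\bigl(e^{-(1-\alpha)At} w(t)\bigr) \le (1-\alpha)B\, e^{-(1-\alpha)At}$; integrating from $0$ to $t$ yields
\begin{equation*}
e^{-(1-\alpha)At} w(t) \;\le\; w(0) + \frac{B}{A}\Bigl(1 - e^{-(1-\alpha)At}\Bigr).
\end{equation*}
Here the hypothesis $B/A \ge 0$ is exactly what is needed: since $e^{-(1-\alpha)At} \ge 0$ one has $1 - e^{-(1-\alpha)At} \le 1$, so the right-hand side is at most $w(0) + B/A$. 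Undoing the integrating factor gives $w(t) \le e^{(1-\alpha)At}\bigl(w(0) + B/A\bigr)$, i.e. $\bigl(u(t)\bigr)^{1-\alpha} \le e^{(1-\alpha)At}\bigl((u(0))^{1-\alpha} + B/A\bigr)$. Raising both sides to the power $\tfrac{1}{1-\alpha} > 0$ — a monotone operation on $[0,\infty)$ — and using $\bigl(e^{(1-\alpha)At}\bigr)^{1/(1-\alpha)} = e^{At}$ yields the claimed bound. When $\alpha = 0$ the substitution is the identity and the statement reduces to the classical linear Gronwall inequality.

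The one point needing care is the possible vanishing of $u$, where $u^{-\alpha}$ is singular and $w$ may fail to be differentiable. I would handle this by a standard regularisation: in the relevant regime $A \ge 0$ (hence $B \ge 0$, since $B/A \ge 0$), the shifted function $u_{\varepsilon} := u + \varepsilon$ satisfies, for every $\varepsilon > 0$, $u_{\varepsilon}' = u' \le A u + B u^{\alpha} \le A u_{\varepsilon} + B u_{\varepsilon}^{\alpha}$ because $s \mapsto s^{\alpha}$ is nondecreasing; running the argument above for $u_{\varepsilon}$, which is bounded below away from $0$ so that $w_{\varepsilon} = u_{\varepsilon}^{1-\alpha}$ is genuinely $C^1$, and then letting $\varepsilon \to 0^{+}$ recovers the estimate for $u$. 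I expect this regularisation, together with keeping the (harmless) sign bookkeeping straight, to be the only real obstacle; everything else is the routine Bernoulli/integrating-factor computation.
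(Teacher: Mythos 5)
Your argument is correct and essentially reconstructs, from scratch, the general nonlinear Gronwall estimate that the paper simply imports: the paper's proof quotes Theorem 21 of its cited Gronwall reference, which for non-constant coefficients gives $u(t) \leq \bigl( (u(0))^{1-\alpha} e^{(1-\alpha)\int_0^t A\,\mathrm{d}s} + (1-\alpha)\int_0^t B\, e^{(1-\alpha)\int_s^t A\,\mathrm{d}r}\,\mathrm{d}s \bigr)^{\frac{1}{1-\alpha}}$, and then specialises to constant $A,B$ and performs exactly the same final manipulation you do (evaluating the integral to $\tfrac{B}{A}\bigl(e^{(1-\alpha)At}-1\bigr)$ and discarding the $-1$ using $\tfrac{B}{A}\geq 0$). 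Your Bernoulli substitution $w = u^{1-\alpha}$ plus the integrating factor $e^{-(1-\alpha)At}$ is precisely the standard proof of that quoted theorem, so what your route buys is self-containedness and a more elementary presentation, at the cost of having to address the degeneracy at $u=0$, which the paper sidesteps by citation. Your $\varepsilon$-shift handles that degeneracy cleanly in the regime $A\geq 0$, $B\geq 0$, which is indeed the only regime in which the lemma is invoked (in the ODI for $Z=\|f-g\|_{L^2}^2$ both coefficients are positive); just be aware that the lemma as stated also admits $A<0$, $B<0$, where $u+\varepsilon$ no longer satisfies the same ODI because $A(u+\varepsilon) < Au$ — there one must instead accept an $\varepsilon$-perturbed inequality $u_\varepsilon' \leq A u_\varepsilon + B u_\varepsilon^{\alpha} + |A|\varepsilon + |B|\varepsilon^{\alpha}$ and let $\varepsilon \to 0^+$ at the end, or verify the bound directly from the integrated form. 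This is a minor completion, not a flaw in the method, and your proof is otherwise a valid (indeed more detailed) substitute for the paper's citation-based one.
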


\begin{proof}
	This results follows directly from Theorem 21 in \cite{Gronwall} and then using the fact that the coefficients are constant.  First, by that result, the solution satisfies
	\begin{equation*}
	u(t) \leq \Biggl(\bigl(u(0) \bigr)^{1-\alpha} e^{\left((1-\alpha) \int_{0}^{t} A ~\textrm{d}s \right)} + (1-\alpha) \int_{0}^{t} B e^{\left((1-\alpha) \int_{s}^{t} A ~\textrm{d}r \right)} ~\textrm{d}s\Biggr)^{\frac{1}{1-\alpha}}.
	\end{equation*}
	So, by evaluating the integrals in the case of constant coefficients $A$ and $B$,
	\begin{align*}
	u(t) &\leq \Biggl(\bigl(u(0) \bigr)^{1-\alpha} e^{\left((1-\alpha) At \right)} + (1-\alpha) B e^{\left((1-\alpha) At \right)} \int_{0}^{t} e^{-\left((1-\alpha) As \right)} ~\textrm{d}s \Biggr)^{\frac{1}{1-\alpha}} \\
	&= \Biggl(\bigl(u(0) \bigr)^{1-\alpha} e^{\left((1-\alpha) At \right)} + \frac{B}{A} e^{\left((1-\alpha) At \right)} \left(1 - e^{-\left((1-\alpha) At \right)} \right) \Biggr)^{\frac{1}{1-\alpha}} \\
	&\leq e^{At} \Biggl(\bigl(u(0) \bigr)^{1-\alpha} + \frac{B}{A} \Biggr)^{\frac{1}{1-\alpha}}, ~~~~~~~~~~~~~~~~~~~~~~~~~~~~\textrm{if } \frac{A}{B} \geq 0, ~~\textrm{as required.}
	\end{align*}
\end{proof}

\end{appendices}

\section{ACKNOWLEDGMENTS}
The authors would like to thank Chenglong Zhang for help in understanding the code used to implement the conservative spectral method and being available to give advice on any developments.  Karl Schulz has also been extremely helpful in understanding modern high performance computing techniques used to improve the code structure.  The Institute for Computational Engineering Sciences has been incredibly supportive as well.  Both authors have been  partially funded by grants DMS-RNMS-1107291 (Ki-Net),  NSF DMS1715515 and  DOE DE-SC0016283 project \emph{Simulation Center for Runaway Electron Avoidance and Mitigation}.  This manuscript is based on a section of Clark Pennie Ph.D.'s Thesis Dissertation \cite{clark_pennie_phd_thesis}, written at The University of Texas at Austin, under the advising of the second author.

\section*{REFERENCES}


\bibliographystyle{acm}
\bibliography{Refs}
\end{document}